\newtheorem{mythm}{Theorem}[section]
\newtheorem*{mythm*}{Theorem}
\newtheorem{myprop}[mythm]{Proposition}
\newtheorem*{myprop*}{Proposition}
\newtheorem{mylemma}[mythm]{Lemma}
\newtheorem{mycor}[mythm]{Corollary}
\newtheorem*{mycor*}{Corollary}
\theoremstyle{definition}
\newtheorem{mydef}[mythm]{Definition}
\newtheorem{myrem}[mythm]{Remark}
\newtheorem{myex}[mythm]{Example}
\newtheorem{myconjecture}[mythm]{Conjecture}
\newcommand{\C}{\mathbb{C}}
\newcommand{\Z}{\mathbb{Z}}
\newcommand{\K}{\mathbb{K}}
\newcommand{\id}{\mathrm{Id}}
\newcommand{\R}{\mathbb{R}}
\newcommand{\tot}{0}
\newcommand{\totimes}{
\raisebox{-0.2pt}{\includegraphics[scale = 0.6]{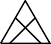}}
}
\newcommand{\dpath}[5]{
\left(\vcenter{\hbox{
\begin{tikzpicture}[scale = 0.9]
	\foreach \z in {1,...,#1} {
		\node at (0.4, \z) {$\z$};
	}
	\renewcommand{\tot}{0}
	\foreach \x [count=\i, evaluate=\i as \j using \i-1] in {#2}{
		\coordinate (A\i) at (\i/2,\x);
		\xdef\tot{\j}
	}
	\foreach \j [evaluate=\j as \k using \j+1] in {1,...,\tot}{
		\draw (A\j) -- node (C\j) {} (A\k);
	}
	\foreach \x in {#3}{
		\filldraw[fill=white] (C\x) circle (0.1);
	}
	\foreach \y in {#4}{
			\filldraw[color=red, fill=white] (C\y) circle (0.1);
	}
	\foreach \a in {#5}{
			\filldraw[fill=black] (C\a) circle (0.1);
	}
\end{tikzpicture}
}
}
\right)
}
\DeclareMathOperator{\coker}{\text{coker}}
\DeclareMathOperator{\im}{\text{Im}}
\DeclareMathOperator{\Hom}{\text{Hom}}
\DeclareMathOperator{\sgn}{\mathrm{sgn}}
\title[Preprojective Algebras of $d$-Representation Finite Species with Relations]{Preprojective Algebras of $d$-Representation Finite Species with Relations}
\author{Christoffer S\"oderberg}
\begin{document}
\begin{abstract}
	In this article we study the properties of preprojective algebras of representation finite species. To understand the structure of a preprojective algebra, one often studies its Nakayama automorphism. A complete description of the Nakayama automorphism is given by Brenner, Butler and King when the algebra is given by a path algebra. We generalize this result to the species case.
	
	We show that the preprojective algebra of a representation finite species is an almost Koszul algebra. With this we know that almost Koszul complexes exist. It turns out that the almost Koszul complex for a representation finite species is given by a mapping cone of a chain map, which is homogeneous of degree $1$ with respect to a certain grading. We also study a higher dimensional analogue of representation finite hereditary algebras called $d$-representation finite algebras. One source of $d$-representation finite algebras comes from taking tensor products. By introducing a functor called the Segre product, we manage to give a complete description of the almost Koszul complex of the preprojective algebra of a tensor product of two species with relations with certain properties, in terms of the knowledge of the given species with relations. This allows us to compute the almost Koszul complex explicitly for certain species with relations more easily.
\end{abstract}

\maketitle

\tableofcontents

\section{Introduction}\label{Section - Introduction}
In the paper \cite{gabriel1973indecomposable} Gabriel gathered results regarding the problem to determine which algebras were of finite representation type. One of these results, known as Gabriel's Theorem \cite{gabriel1972unzerlegbare}, says that a path algebra over an algebraically closed field is representation finite if and only if its underlying diagram is a disjoint union of Dynkin diagrams of type ADE. Gabriel also sketched the proof in \cite{gabriel1973indecomposable} of how one could extend one direction of Gabriel's Theorem to $\K$-species when $\K$ is a perfect field, i.e. a $\K$-species is representation finite if its diagram is a disjoint union of Dynkin diagrams of type ADE. The general case was later solved completely by Dlab and Ringel in \cite{dlab1975algebras}. They proved that a species is representation finite if and only if its diagram is a finite disjoint union of Dynkin diagrams of type ABCDEFG. Thus by Dlab and Ringel we have a complete set of representation finite species. Given a hereditary finite dimensional $\K$-algebra $\Lambda$ we can consider its preprojective algebra $\Pi(\Lambda)$. It is a graded algebra with degree $0$ equal to $\Lambda$ and as a $\Lambda$-module it gives an additive generator of the category of preprojective $\Lambda$-modules. Moreover, $\Lambda$ is representation finite if and only if $\Pi(\Lambda)$ is a finite dimensional self-injective algebra. In this paper we study the properties of preprojective algebras for representation finite species, such as the Nakayama permutation, Nakayama automorphism and Koszul properties of the preprojective algebra. 

The study of the preprojective algebra of a quiver shows up in all kind of topics in mathematics such as cluster algebras \cite{geiss2013cluster}, quiver varieties \cite{nakajima1994instantons}, quantum groups \cite{lusztig1991quivers, kashiwara1996geometric}, and many more topics. This is a motivation to study the Koszul properties of the preprojective algebra $\Pi(S)$ of some representation finite species $S$. In the case when $S$ is not representation finite, the Koszul properties of $\Pi(S)$ are studied in \cite{athaiokrmy}. When $S$ is representation finite, $\Pi(S)$ is a Frobenius algebra, and so we have that $D\Pi(S) \cong \Pi(S)$ as left $\Pi(S)$-modules. This isomorphism yields an isomorphism of $\Pi(S)$-$\Pi(S)$-bimodules by introducing a twist on $\Pi(S)$ denoted by $\gamma$ and it is called the Nakayama automorphism. The paper \cite{brenner2002periodic} gave a complete description of the Nakayama automorphism $\gamma$ of the preprojective algebra of a path algebra of a Dynkin quiver. Later Grant \cite{grant2019nakayama} gave a partial description of the Nakayama automorphism of the preprojective algebra of any hereditary $\K$-algebra using the Nakayama permutation $\sigma$ and the Auslander-Reiten translation. In this paper we compute $\sigma$ for all Dynkin diagrams in Theorem~\ref{Theorem - Nakayama permutation} so we can use Grant's description to give an explicit description of $\gamma$. This is formulated in the following theorem.

\begin{mythm*}(Theorem~\ref{Theorem - description of nakayama automorphism true})
	Let $S$ be a species, with Dynkin diagram $\Delta$, over division algebras $F\subset G$. The Nakayama automorphism $\gamma$ of $\Pi(S)$ is given by
	\begin{equation*}
		\gamma(y_\alpha^i) = \begin{cases}
			y^i_{\sigma(\alpha)}, & \mbox{if }\alpha\in Q_1 \\
			\sgn(\sigma(\alpha)) y^i_{\sigma(\alpha)}, & \mbox{if }\alpha\not\in Q_1
		\end{cases}.
	\end{equation*}
\end{mythm*}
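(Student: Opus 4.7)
The plan is to combine Grant's partial description of the Nakayama automorphism of a preprojective algebra of a hereditary $\K$-algebra with the explicit computation of the Nakayama permutation $\sigma$ from Theorem~\ref{Theorem - Nakayama permutation}. Grant's result determines $\gamma$ on the degree-one part of $\Pi(S)$ (the arrow component of the double species) in terms of $\sigma$ together with the Auslander-Reiten translation; since $\Pi(S)$ is generated in degrees $0$ and $1$ over $\Lambda$, specifying $\gamma$ on a basis $\{y_\alpha^i\}$ of this piece determines it completely. The strategy is therefore to pull Grant's formula into the coordinates $y_\alpha^i$ and read off the two cases of the theorem from the behaviour on original arrows $\alpha \in Q_1$ versus opposite arrows $\alpha \notin Q_1$.

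The first step is to fix a species $S$ over $F \subset G$ with Dynkin diagram $\Delta$, to set notation for the double species, and to choose compatible bases $\{y_\alpha^i\}$ of the arrow bimodules so that the mesh relation takes its standard form $\sum_{\alpha \in Q_1}(y_\alpha y_{\alpha^*} - y_{\alpha^*} y_\alpha) = 0$. The second step is to rewrite Grant's description of $\gamma$ in this basis; substituting the explicit values of $\sigma$ computed in Theorem~\ref{Theorem - Nakayama permutation} yields a candidate formula on generators. For $\alpha \in Q_1$, the image arrow $\sigma(\alpha)$ still lies in $Q_1$, and Grant's recipe produces exactly $y^i_{\sigma(\alpha)}$ with no twist. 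For $\alpha \notin Q_1$, the permutation may move the arrow across the $Q_1$/$Q_1^{\mathrm{op}}$ divide, and the minus sign in the mesh relation forces the correction factor $\sgn(\sigma(\alpha))$ in order that $\gamma$ descend to $\Pi(S)$.

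The third step is verification: one must check that the candidate map preserves the mesh relation, which is where the sign convention is pinned down. Applying $\gamma$ to $\sum_\alpha(y_\alpha y_{\alpha^*}-y_{\alpha^*}y_\alpha)$ and using the two case formulas should produce a telescoping sum that vanishes precisely because the contributions from $Q_1$ and from its complement cancel with the chosen signs. This is the main obstacle: the sign $\sgn(\sigma(\alpha))$ must be compatible with the choices of basis $\{y_\alpha^i\}$ on the $G$-$F$-bimodules and with the pairings induced by the Frobenius form on $\Pi(S)$, and an inconsistent choice here would invalidate the formula. A case-by-case inspection across the Dynkin types $A$, $B$, $C$, $D$, $E$, $F$, $G$, guided by the explicit $\sigma$ from Theorem~\ref{Theorem - Nakayama permutation}, will confirm the sign assignment.

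Once the formula is verified on the generators and shown to respect the mesh relations, it extends multiplicatively to all of $\Pi(S)$ and agrees with Grant's intrinsic description, establishing the theorem. The specialization to path algebras then recovers the Brenner-Butler-King formula, providing a consistency check with the known result.
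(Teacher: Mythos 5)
Your overall strategy---start from Grant's description of $\gamma$, feed in the explicit Nakayama permutation $\sigma$ from Theorem~\ref{Theorem - Nakayama permutation}, and pin down the signs using the Casimir (mesh) relation---is the same skeleton the paper uses, but there is a genuine gap at the heart of your plan. Grant's recipe does not produce $\gamma(y_\alpha^i)=y^i_{\sigma(\alpha)}$ ``with no twist'' for $\alpha\in Q_1$: a priori one only gets $\gamma(y_\alpha)=f^1_\alpha\, y_{\sigma(\alpha)}\, f^2_\alpha$ with unknown units coming from the choices of isomorphisms $P_{\sigma(i)}\cong\tau^{l_i-1}I_i$ and the duality maps in \eqref{eq - isomorphism from grant proposition 3.4}. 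Removing these scalars on the unstarred arrows requires exploiting the fact that $\gamma$ is only well defined up to inner automorphisms: the paper picks a root of the tree $Q$ and inductively conjugates by idempotent-supported units to normalize one arrow at a time. Your proposal skips this normalization entirely, and in the non-simply laced cases there is the further issue that the scalars a priori lie in $G$ rather than $F$ and need not commute with the basis elements $y_\alpha^i$ for $i>1$.

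The more serious problem is your third step. Checking that the candidate map preserves $\langle c\rangle$ can only determine the starred coefficients up to a single global unit: since $c$ is homogeneous of degree $2$ with exactly one starred factor in each term, replacing every $\gamma(y_{\alpha^*}^i)$ by $f\cdot\gamma(y_{\alpha^*}^i)$ for a fixed $f\in F^\times$ still sends $\langle c\rangle$ to $\langle c\rangle$. So the mesh-relation verification you describe yields only $\gamma(y_{\alpha^*}^i)=\sgn(\sigma(\alpha^*))\,f\,y^i_{\sigma(\alpha^*)}$ for some undetermined $f$, which is exactly where the paper's ``Proof of (1)'' stops. Showing $f=1$ is the hard part of the theorem and cannot be extracted from the relation alone: the paper does it by introducing the non-degenerate Frobenius form $\beta$ with $\beta(x,y)=\beta(y,\gamma(x))$, choosing explicit nonzero socle elements $u_i\in\mathrm{Soc}(\Pi(S)e_i)$, and verifying by lengthy diagrammatic computations in each of the types B, C, F, G (and by citing the Brenner--Butler--King computation in types ADE) that $v_1y_\alpha^1=u_i=y_{\sigma(\alpha^*)}^1v_2$ and $y_{\sigma(\alpha)}^1v_1=u_j=-\sgn(\alpha^*)\sgn(\sigma(\alpha^*))v_2y_{\alpha^*}^1$, from which $\beta(z,u_i)=\beta(z,u_if)$ and non-degeneracy force $f=1$. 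You gesture at ``the pairings induced by the Frobenius form'' but give no mechanism for how they determine the constant; without that, the proof establishes the automorphism only up to an unspecified twist by $f$.
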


The proof of the classification of representation finite species by Dlab and Ringel shows that every representation finite species is isomorphic to a species, whose underlying diagram is a Dynkin diagram, over two division algebras $F\subset G$, therefore this theorem applies to all representation finite species. This sums up the first part of this paper.

In the second part of the paper we change focus from studying tensor algebras of species to algebras given by species with relations. More specifically, we study so called $d$-representation finite algebras. We say that a finite dimensional $\K$-algebra $\Lambda$ with $\mathrm{gldim}\Lambda \le d$ is a $d$-representation finite algebra if there exists a $d$-cluster tilting $\Lambda$-module. A hereditary representation finite algebra is $1$-representation finite. For a $d$-representation finite algebra $\Lambda$, there exists a positive integer $l_P$ for each indecomposable projective $\Lambda$-module $P$ such that $\tau_d^{-(l_P-1)}P$ is an indecomposable injective $\Lambda$-module. Here $\tau_d$ is the higher dimensional analogue of the Auslander-Reiten translation $\tau$. When $l_P=l$ for all indecomposable projective $\Lambda$-modules $P$ we say that $\Lambda$ is $l$-homogeneous. If $\Lambda_i$ is a $d_i$-representation finite $l$-homogeneous algebra for each $i\in\{1, 2\}$ and $\K$ is a perfect field, then $\Lambda_1\otimes_\K \Lambda_2$ is $l$-homogeneous and $(d_1+d_2)$-representation finite \cite[Corollary 1.5]{herschend2011n}. For the case when $d_1 = d_2$, McMahon and Williams showed that the $(2d+1)$-preprojective algebra of $\Lambda_1\otimes_\K \Lambda_2$ has a $d$-precluster tilting module \cite{MR4301041}. In general, $\Lambda_1\otimes_\K\Lambda_2$ is not $(d_1 + d_2)$-representation finite if we drop the assumption that $\Lambda_1$ and $\Lambda_2$ are $l$-homogeneous. Relaxing the assumptions on $\Lambda_1$ and $\Lambda_2$ Pasquali proved that if $\Lambda_i$ is an acyclic $d_i$-complete algebra for each $i\in \{1, 2\}$ then $\Lambda_1\otimes_\K \Lambda_2$ is $(d_1 + d_2)$-complete \cite{pasquali2019tensor}. If $\Lambda_i$ is a $d_i$-representation infinite algebra such that $\Lambda_i/J_i$ is semi-simple for each $i\in \{1, 2\}$, then $\Lambda_1\otimes_\K \Lambda_2$ is a $(d_1 + d_2)$-representation infinite algebra \cite[Theorem 2.10]{herschend2014n}. The preprojective algebra of such algebras were studied in \cite{MR4064763}. In this paper we will further study the former case, that $\Lambda_i$ is a $d_i$-representation finite $l$-homogeneous algebra, but we also assume that $\Lambda_i$ is a Koszul algebra for each $i\in \{1, 2\}$ and focus on the Koszul properties of the preprojective algebra of $\Lambda_1\otimes_\K \Lambda_2$.

Let $\Pi(\Lambda)$ be the $(d+1)$-preprojective algebra introduced by Iyama and Oppermann in \cite{iyama2013stable}. Iyama and Oppermann proved that $\Pi(\Lambda)$ is self-injective if $\Lambda$ is $d$-representation finite. The $(d+1)$-preprojective algebra $\Pi(\Lambda)$ was further studied by Grant and Iyama in \cite{grant2020higher} and they showed that when $\Lambda$ is a $d$-representation finite Koszul algebra, the $(d+1)$-preprojective algebra is an almost Koszul algebra in the sense of \cite{brenner2002periodic}. In this setting the homologies of the almost Koszul complex for $\Pi(\Lambda)$ can be described using the Nakayama automorphism. We investigate various properties for $\Pi(\Lambda_1)$ and $\Pi(\Lambda_2)$ that are related to the same properties for $\Pi(\Lambda_1\otimes \Lambda_2)$. For example, we investigate how one can describe the almost Koszul complexes of $\Pi(\Lambda_1\otimes_\K \Lambda_2)$ using the knowledge of the almost Koszul complexes of $\Pi(\Lambda_1)$ and $\Pi(\Lambda_2)$. This is formulated in the following theorem.

\begin{mythm*}(Theorem~\ref{Theorem - Koszul complex for tensor product of species})
	Let $\K$ be a perfect field. Let $\Lambda_i$ be an acyclic $d_i$-representation finite $l$-homogeneous Koszul algebra for each $i\in \{1, 2\}$. Let $S^{\Lambda_i}$ be a simple $\Pi(\Lambda_i)$-module and let $\varphi^{\Lambda_i}: Q_\bullet^{\Lambda_i}\rightarrow R_\bullet^{\Lambda_i}$ be an almost quasi-isomorphism as in Theorem~\ref{Theorem - Koszul complex is given by a mapping cone} such that $C(\varphi^{\Lambda_i})$ is the almost Koszul complex for $S^{\Lambda_i}$. The complex	
	\begin{equation*}
		C(\mathrm{Tot}(\varphi^{\Lambda_1}\totimes \varphi^{\Lambda_2}):\mathrm{Tot}(Q^{\Lambda_1}_\bullet\totimes_\K Q^{\Lambda_2}_\bullet) \rightarrow \mathrm{Tot}(R^{\Lambda_1}_\bullet \totimes_\K R^{\Lambda_2}_\bullet))
	\end{equation*}
	is the almost Koszul complex for $S^{\Lambda_1}\otimes_\K S^{\Lambda_2}\in \Pi(\Lambda_1\otimes_\K \Lambda_2)\mathrm{-mod}$.
\end{mythm*}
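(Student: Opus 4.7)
The plan is to exploit the uniqueness of the almost Koszul complex: if $C(\mathrm{Tot}(\varphi^{\Lambda_1}\totimes\varphi^{\Lambda_2}))$ is a complex of projective $\Pi(\Lambda_1\otimes_\K\Lambda_2)$-modules whose differentials are homogeneous of degree $1$ in the internal grading, and whose homology is concentrated at position $0$ equal to $S^{\Lambda_1}\otimes_\K S^{\Lambda_2}$ and at the appropriate top position equal to the correct Nakayama twist thereof, then it must coincide with the almost Koszul complex for $S^{\Lambda_1}\otimes_\K S^{\Lambda_2}$. Hence the verification breaks into three steps: termwise projectivity, internal homogeneity of degree $1$, and the homology computation.

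The first two steps rely on the Segre product identification developed earlier in the paper. Under the hypotheses on $\Lambda_i$ and the Herschend--Iyama result quoted in the introduction, $\Lambda_1\otimes_\K\Lambda_2$ is acyclic $(d_1+d_2)$-representation finite and $l$-homogeneous, so by the theorem of Grant--Iyama $\Pi(\Lambda_1\otimes_\K\Lambda_2)$ is almost Koszul and an almost Koszul complex for $S^{\Lambda_1}\otimes_\K S^{\Lambda_2}$ exists. Identifying $\Pi(\Lambda_1\otimes_\K\Lambda_2)$ with the Segre product $\Pi(\Lambda_1)\totimes\Pi(\Lambda_2)$, a tensor product of indecomposable projective modules over the two factors restricts on the diagonal grading to a projective module over the Segre algebra. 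Termwise projectivity of $\mathrm{Tot}(Q^{\Lambda_1}_\bullet\totimes Q^{\Lambda_2}_\bullet)$ and $\mathrm{Tot}(R^{\Lambda_1}_\bullet\totimes R^{\Lambda_2}_\bullet)$, and hence of the cone, then follows. Homogeneity of the differentials of degree $1$ in the internal grading is inherited from the factors because the Segre product respects this grading by construction.

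For the homology computation I would filter the double complexes by column degree and apply a Künneth-type spectral sequence, which computes the homology of the total complexes in terms of the homologies of $Q^{\Lambda_i}_\bullet$ and $R^{\Lambda_i}_\bullet$. Combining this with the long exact sequence of a mapping cone reduces the problem to the knowledge that each $C(\varphi^{\Lambda_i})$ has homology concentrated at the two extremal positions $0$ and $p_i+1$ prescribed by Theorem~\ref{Theorem - Koszul complex is given by a mapping cone}. After passing through the diagonal grading only two groups survive: the simple $S^{\Lambda_1}\otimes_\K S^{\Lambda_2}$ at position $0$ and its Nakayama-twisted version at the expected top position, and the latter matches the prescribed answer using the multiplicativity of Nakayama automorphisms under $\otimes_\K$.

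The main obstacle will be controlling the cancellation of the intermediate homology. A naive Künneth argument produces cross terms in which the bottom homology of one factor is paired with the top homology of the other, together with mixed contributions from the $Q$-- and $R$--parts; these must all be shown to cancel in the mapping cone. This relies essentially on the hypothesis that each $\varphi^{\Lambda_i}$ is an \emph{almost} quasi-isomorphism in the sense of Theorem~\ref{Theorem - Koszul complex is given by a mapping cone}, so that the discrepancy between $Q^{\Lambda_i}_\bullet$ and $R^{\Lambda_i}_\bullet$ is concentrated precisely at the extremes. A secondary technical point is tracking the internal-grading shifts that arise because the Segre product does not commute with mapping cones on the nose; this bookkeeping is required in order to identify the top homology with the correct Nakayama twist of $S^{\Lambda_1}\otimes_\K S^{\Lambda_2}$ and to verify that the differentials remain homogeneous of degree $1$ after totalization.
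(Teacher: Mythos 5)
Your overall strategy coincides with the paper's: identify $\Pi(\Lambda_1\otimes_\K\Lambda_2)$ with the Segre product $\Pi(\Lambda_1)\totimes_\K\Pi(\Lambda_2)$ to get termwise projectivity, and compute the homology of the totalized cone via the K\"unneth formula for $\totimes_\K$, reducing everything to the extremal homology of the two factors. The gap is that the step you yourself flag as ``the main obstacle'' --- the cancellation of the K\"unneth cross terms --- is precisely the heart of the proof, and you do not resolve it. In particular it is not correct that this cancellation ``relies essentially on the hypothesis that each $\varphi^{\Lambda_i}$ is an almost quasi-isomorphism'': that hypothesis only gives that $H_0(\varphi^{\Lambda_1})$ is a monomorphism, so a priori the cross term $H_0(Q^{\Lambda_1}_\bullet)\totimes_\K H_i(Q^{\Lambda_2}_\bullet)\to H_0(R^{\Lambda_1}_\bullet)\totimes_\K H_i(R^{\Lambda_2}_\bullet)$ for $0<i<d_2$ has a potential cokernel coming from $\coker(H_0(\varphi^{\Lambda_1}))\totimes_\K H_i(R^{\Lambda_2}_\bullet)$, which would inject unwanted intermediate homology into the mapping cone.

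What actually kills these terms is a $\star$-degree computation that your proposal gestures at (``passing through the diagonal grading'') but never carries out. Concretely: $\coker(H_0(\varphi^{\Lambda_1}))=S^{\Lambda_1}$ is concentrated in $\star$-degree $0$, while $H_i(R^{\Lambda_2}_\bullet)$ for $i\neq 0$ lives in $\star$-degree at least $1$ because $H_i(\varphi^{\Lambda_2})$ is surjective there and $\deg^\star(\varphi^{\Lambda_2})=1$; since the Segre product retains only matching degrees, the offending product vanishes and $\varphi^{\Lambda_1}_0\totimes\varphi^{\Lambda_2}_i$ becomes an isomorphism on homology. Dually, for the cross terms involving $\varphi^{\Lambda_1}_{d_1}$ or $\varphi^{\Lambda_2}_{d_2}$ one must show that $\ker(H_{d_1}(\varphi^{\Lambda_1}))$ is concentrated in $\star$-degree $l$ while $H_i(Q^{\Lambda_2}_\bullet)$ for $i\neq d_2$ sits in $\star$-degree at most $l-1$; this is exactly where the $l$-homogeneity hypothesis enters, and your proposal lists that hypothesis but never uses it. The same degree bookkeeping is what identifies $H_0$ of the cone with $S^{\Lambda_1}\otimes_\K S^{\Lambda_2}$ and the top homology with $H_{d_1}(C(\varphi^{\Lambda_1}))\totimes_\K H_{d_2}(C(\varphi^{\Lambda_2}))$ (the paper computes the latter directly rather than via Nakayama automorphisms). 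Without these two degree arguments the proof does not close.
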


Here, the $C(\varphi)$ denotes the mapping cone of $\varphi$ and $\totimes_\K$ is the Segre product defined in Section~\ref{Section - Diagonal Tensor Product}. We define almost quasi-isomorphisms in Section~\ref{Section - Koszul Complex for Higher Species} and we define acyclic algebras in Section~\ref{Section - Preliminaries}. This theorem allows us to have total control of the almost Koszul complexes in $\Pi(\Lambda_1\otimes_\K \Lambda_2)$, and thus one of the corollaries is the following.

\begin{mycor*}(Corollary~\ref{Corollary - Pi(AxB) is almost Koszul})
	Let $\K$ be a perfect field. Let $\Lambda_i$ be an acyclic $d_i$-representation finite $l$-homogeneous Koszul algebra for each $i\in \{1, 2\}$. If $\Pi(\Lambda_i)$ is an $(p_i, q_i)$-almost Koszul algebra, then $\Pi(\Lambda_1\otimes_\K \Lambda_2)$ is an $(p_1 + p_2 - l + 1, q_1 + q_2 - 1)$-almost Koszul algebra.
\end{mycor*}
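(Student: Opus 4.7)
The plan is to invoke Theorem~\ref{Theorem - Koszul complex for tensor product of species} to obtain the almost Koszul complex for every simple $\Pi(\Lambda_1 \otimes_\K \Lambda_2)$-module, and then to read off the pair $(p, q)$ directly from its homological length and top internal degree. Since $\K$ is perfect and each $\Lambda_i$ is finite dimensional, every simple $\Pi(\Lambda_1 \otimes_\K \Lambda_2)$-module has the form $S^{\Lambda_1} \otimes_\K S^{\Lambda_2}$, and the cited theorem furnishes exactly one almost Koszul complex per simple, namely the mapping cone $C(\mathrm{Tot}(\varphi^{\Lambda_1} \totimes \varphi^{\Lambda_2}))$.

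Next I would unpack the shape of each factor. As $\Pi(\Lambda_i)$ is $(p_i, q_i)$-almost Koszul, the cone $C(\varphi^{\Lambda_i})$ has homological length $p_i + 1$: the linear part $Q^{\Lambda_i}_\bullet$ is a length-$p_i$ complex concentrated diagonally in the bigrading, with its $j$th term generated in internal degree $j$, while the tail $R^{\Lambda_i}_\bullet$ contributes a single extra term generated in internal degree $p_i + q_i$. The $l$-homogeneity hypothesis makes the homological and internal gradings of these complexes commensurate in uniform steps, which is precisely what is needed for the Segre product $\totimes_\K$ to produce a well-defined bigraded complex over $\Pi(\Lambda_1 \otimes_\K \Lambda_2)$. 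A careful bigrading count then shows that the totalized Segre product of the two linear parts has length $p_1 + p_2 - (l-1)$, and that the mapping cone appends one further term in internal degree $(p_1 + q_1) + (p_2 + q_2) - l$. Rewriting this as $(p_1 + p_2 - l + 1) + (q_1 + q_2 - 1)$ exhibits the claimed parameters.

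The main obstacle will be precisely this bigrading bookkeeping: one must justify rigorously that passing to the diagonal tensor product $\totimes_\K$ truncates the combined linear length by exactly $l - 1$ (rather than $l$ or $0$), and that the tail contribution is shifted by exactly $-l$ in internal degree. Both corrections rely essentially on the common homogeneity parameter $l$ shared by the two factors, which is why the corollary requires this uniformity. Tracking them correctly will demand working explicitly with the description of $\totimes_\K$ from Section~\ref{Section - Diagonal Tensor Product} together with the almost quasi-isomorphism property of $\varphi^{\Lambda_i}$ from Section~\ref{Section - Koszul Complex for Higher Species}. Once the length and top internal degree are pinned down, the $(p_1 + p_2 - l + 1, q_1 + q_2 - 1)$-almost Koszul condition for $\Pi(\Lambda_1 \otimes_\K \Lambda_2)$ follows immediately from the definition, since the almost Koszul complexes of all simples have been produced in the required shape.
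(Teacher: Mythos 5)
Your overall strategy --- invoke Theorem~\ref{Theorem - Koszul complex for tensor product of species} and then read off $(p,q)$ from the homological length of the resulting complex and the internal degree of its top homology --- is exactly the paper's, and your final numbers agree with the statement. But the structural picture you use to extract those numbers is wrong, and the bookkeeping would not survive being written out. In Theorem~\ref{Theorem - Koszul complex is given by a mapping cone} the complexes $Q_\bullet$ and $R_\bullet$ are \emph{both} complexes of projectives of homological length $d_i=q_i-1$, sitting at $\star$-levels $0$ and $1$ respectively and interleaved via $C(\varphi)_j=Q_{j-1}\oplus R_j$; $R_\bullet$ is not ``a single extra tail term,'' and the tail $W_i$ is the top \emph{homology} of the cone, not one of its terms. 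Consequently $C(\varphi^{\Lambda_i})$ has homological length $q_i$, not $p_i+1$ (for a Dynkin species these differ badly: $q=2$ while $p=h-2$), and the Segre product does not ``truncate the combined linear length by $l-1$'': the cone of $\mathrm{Tot}(\varphi^{\Lambda_1}\totimes \varphi^{\Lambda_2})$ has homological length $q_1+q_2-1=(d_1+d_2)+1$, with no $l$ appearing anywhere in the length count. The parameter $l$ enters only through internal degrees: $l$-homogeneity forces $W_i=H_{\mathrm{top}}(C(\varphi^{\Lambda_i}))$ to be concentrated in $\star$-degree $l$ and total degree $p_i+q_i$, and under the regrading $(\Pi(\Lambda_1)\totimes_\K\Pi(\Lambda_2))_{mk}=\bigoplus_{m_1+m_2-k=m}\Pi(\Lambda_1)_{m_1k}\otimes_\K\Pi(\Lambda_2)_{m_2k}$ of Proposition~\ref{Proposition - d+1 total grading for tensor product} the tail $W_1\otimes_\K W_2$ lands in total degree $(p_1+q_1)+(p_2+q_2)-l$; subtracting $q=q_1+q_2-1$ gives $p=p_1+p_2-l+1$. (A smaller inaccuracy: over a perfect field a simple $\Pi(\Lambda_1\otimes_\K\Lambda_2)$-module need not literally equal some $S^{\Lambda_1}\otimes_\K S^{\Lambda_2}$, only occur as a direct summand of one; the paper avoids this by running the argument for all of $\Pi(\Lambda_i)_0$ at once and using additivity.)

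One further remark, so you are not misled when comparing with the paper: the version of this corollary printed in the body states $p=p_1+p_2-l+2$, and its proof ends with the claim $\deg \ker f=p_1+p_2+q_1+q_2-l+1$; both are off by one from the statement you were given (which matches the introduction). The value $-l+1$ is the correct one: the socle of $\Pi(\Lambda_1)\totimes_\K\Pi(\Lambda_2)$ sits in $\star$-degree $l-1$ and hence in total degree $p_1+p_2-(l-1)$, as one checks directly for $\K A_3\otimes_\K \K A_3$ with the $\sigma$-stable orientation, whose $3$-preprojective algebra has top total degree $3=2+2-2+1$. So your target value is right, but the route you propose to it is not.
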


Note that given the assumptions in the above corollary $\Lambda_1\otimes_\K\Lambda_2$ is a $(d_1 + d_2)$-representation finite $l$-homogeneous Koszul algebra and therefore we can apply the corollary iteratively. Moreover, for a species $S$ over a Dynkin diagram $\Delta$ with Coxeter number $h$, the preprojective algebra $\Pi(S)$ is a $(h-2, 2)$-almost Koszul algebra (Corollary~\ref{Corollary - Pi(S) is an almost Koszul algebra}).

The article has the following structure. In Section~\ref{Section - Preliminaries} we introduce preliminary concepts and notations. In Section~\ref{Section - Properties for Representation Finite Species} we show that the Nakayama permutation only depends on the underlying diagram and we compute the Nakayama permutation for all Dynkin diagrams of type ABCDEFG. In Section~\ref{Section - The Preprojective Algebra} we compare different descriptions of the preprojective algebra. In Section~\ref{Section - Nakayama Automorphism} we give a description of the Nakayama automorphism for species of Dynkin type BCFG, and we extend the results from \cite{brenner2002periodic} to species of Dynkin type ADE. In Section~\ref{Section - Koszul algebras} we extend results from \cite{brenner2002periodic}. In \cite{brenner2002periodic} it is shown that the preprojective algebra of a path algebra is almost Koszul if it is of Dynkin type ADE and otherwise it is Koszul. We partially extend the former result to the species case, i.e. the preprojective algebra of a species is almost Koszul if the diagram is a Dynkin diagram of type ABCDEFG. In Section~\ref{Section - Higher preprojective algebra} we introduce $d$-representation finite algebras together with the $(d+1)$-preprojective algebra, also known as the higher preprojective algebra. In Section~\ref{Section - $n$-Representation Finite Species} we investigate various properties of the tensor product of algebras relevant to the paper. In Section~\ref{Section - Diagonal Tensor Product} we define the Segre product of graded algebras and modules and state some basic properties and the Künneth formula for the Segre product. In Section~\ref{Section - Koszul Complex for Higher Species} we investigate the structure of the almost Koszul complexes. In Section~\ref{Section - Examples} we finish the paper by computing some examples to illustrate how every main theorem comes together to yield concrete results. \\

\noindent\textbf{Acknowledgements.} The author wishes to thank his supervisor Martin Herschend for all the support during the writing of this article. The author would also like to thank Joseph Grant and Daniel Simson for their helpful comments.

\section{Preliminaries}\label{Section - Preliminaries}
Let $\K$ be a field. We first define species as it is given in \cite{berg2011structure, ringel1976representations, gabriel1973indecomposable}.

\begin{mydef}\label{Definition - Species}(Species)
	Let $Q$ be a finite quiver. A species $S=(D_i, M_\alpha)_{i\in Q_0, \alpha\in Q_1}$ is a collection of division rings $D_i$ and $M_\alpha\in D_j$-$D_i$-$\mathrm{mod}$, where $\alpha:i\rightarrow j$, such that $\mathrm{Hom}_{D_i^{op}}(M_\alpha, D_i)\cong \mathrm{Hom}_{D_j}(M_\alpha, D_j)$ as $D_j$-$D_i$-modules. We say that a species $S$ is a $\mathbb{K}$-species if all $D_i$ are finite dimensional over a common central subfield $\mathbb{K}$ and all $M_\alpha$ are finite dimensional over $\mathbb{K}$ satisfying $\lambda m = m\lambda$ for all $m\in M_\alpha, \lambda\in\mathbb{K}$. 
\end{mydef}

\begin{mydef}
	For a species $S$ let $D=\bigoplus_{i\in Q_0}D_i$ and $M=\bigoplus_{\alpha\in Q_1}M_\alpha\in D$-$D\mathrm{-mod}$. We define the tensor algebra $T(S)$ to be the tensor ring $T(D, M)$. More explicitly,
	\begin{equation*}
		T(S) = T(D, M) = D\oplus \bigoplus_{k\ge 1} M^{\otimes_D k}.
	\end{equation*}
\end{mydef}

\begin{myrem}\label{Remark - General definition of a species}
	In some contexts it is useful to allow for a more general definition of a species by requiring that $D$ has to be Morita equivalent to a sum of division rings, and not necessarily a division ring. In Section~\ref{Section - Examples} we will briefly discuss the fact that tensor products of tensor algebras of species is not necessarily a tensor algebra over a species by Definition~\ref{Definition - Species}, but it will be a species according to this more general definition. Another generalization of species called Pro-species is studied in \cite{kulshammer2017pro, li2015representations}.
\end{myrem}

In this paper we assume that all our species are $\K$-species. Since $T(S)$ is a tensor algebra, $T(S)$ has a natural $\Z$-grading which we will refer to as the path length grading. Also note that if $S$ is a $\K$-species then $T(S)$ is a $\K$-algebra. One could relax the assumption on the species from being a $\K$-species to be a polynomial identity species, i.e. when the division algebras $D_i$ are polynomial identity algebras. In this setting, it does not have to exist a common subfield $\K$ such that all $D_i$ are finitely generated over $\K$. Hence we cannot use \cite[Theorem B]{dlab1975algebras}. Therefore, for convenience, we assume that all species are $\K$-species.

We denote $e_i$ the identity element in $D_i$. The set $\{e_1, \dots, e_{|Q_0|}\}$ is a complete set of pairwise orthogonal primitive idempotents for $T(S)$.

If $Q$ is a quiver with multiple arrows $\alpha, \alpha': i\to j$ we consider a modified quiver $Q'$ which is obtained by replacing $\alpha$ and $\alpha'$ with $\beta: i\to j$. Then if $S$ is a species over $Q$, we can modify $S$ to a species $S'$ over $Q'$ by setting $M_\beta = M_\alpha\oplus M_{\alpha'}$. We see that
\begin{equation*}
	T(S) = T(D, M) = T(D, M') = T(S').
\end{equation*}
Therefore we assume that $Q$ has no multiple arrows.

A simple example of a species would be $S=(D_i, M_\alpha)_{i\in Q_0, \alpha\in Q_1}$ where $Q: 1\rightarrow 2 \rightarrow 3$ and
\begin{equation*}
	\begin{aligned}
		D_i &= \begin{cases}
			\C, & \mbox{if } i=1 \\
			\R, & \mbox{if } i=2,3
		\end{cases} \\
		M_\alpha &= \begin{cases}
			\C, & \mbox{if }\alpha:1\rightarrow 2 \\
			\R, & \mbox{if }\alpha:2\rightarrow 3
		\end{cases}.
	\end{aligned}
\end{equation*}
We often write $S$ as a decorated quivers given by decorated subquivers of the form
\begin{equation*}
	D_{s(\alpha)}\xrightarrow{M_\alpha}D_{t(\alpha)}
\end{equation*}
for all $\alpha\in Q_1$. So in our example above we would write
\begin{equation*}
	\C \xrightarrow{\C} \R \xrightarrow{\R}\R.
\end{equation*}

When $Q$ is a finite and acyclic quiver and $S$ a species over $Q$, the tensor algebra $(S)$ is a basic finite dimensional $\K$-algebra. Moreover, since $\mathrm{rad}(T(S)) \cong \bigoplus_{\substack{\alpha\in Q_1 \\ s(\alpha) = i}} (T(S)e_{t(\alpha)})^{\dim_{D_{t(\alpha)}}M_\alpha}$, and thus projective, we can apply \cite[Theorem 2.35]{lam2012lectures} to see that $T(S)$ is indeed hereditary.

\begin{mydef}\label{Definition - Diagram of a species}
	Let $S$ be a species over an acyclic quiver $Q$. Then the diagram $\Delta$ of $S$ is defined to have its vertices as $Q_0$ and for every $\alpha\in Q_1$ we have an edge with valuation
	\begin{equation*}
		(\dim_{D_{s(\alpha)}}(M_\alpha), \dim_{D_{t(\alpha)}}(M_\alpha)).
	\end{equation*}
	If there is an edge with valuation $(1, k)$ then we write $s(\alpha)\xRightarrow{(k)}t(\alpha)$, if we have the valuation $(k, 1)$ then we write $t(\alpha)\xRightarrow{(k)}s(\alpha)$ and if we have the valuation $(1, 1)$ we simply write the edge without valuation, i.e. $\begin{tikzcd}[column sep = 13]
		s(\alpha) \arrow[r, -] & t(\alpha).
	\end{tikzcd}$
\end{mydef}

\begin{mydef}
	A representation $V=(V_i, \phi_\alpha)$ over $S$ is a collection of $D_i$-modules $V_i$ and $D_{t(\alpha)}$-module morphisms
	\begin{equation*}
	\phi_\alpha: M_\alpha\otimes_{V_s(\alpha)} V_{s(\alpha)}\rightarrow V_{t(\alpha)}.
	\end{equation*}
	A morphism $f: V\rightarrow V'$ between two representations $V$ and $V'$ over a species $S$ is given by a family of morphisms $f_i: V_i\rightarrow V_i'$ such that
	\begin{equation*}
		\begin{tikzcd}
		M_\alpha\otimes_{s(\alpha)}V_{s(\alpha)} \arrow[r, "1\otimes f_{s(\alpha)}"] \arrow[d, "\phi_\alpha"] & M_\alpha\otimes_{s(\alpha)}V_{s(\alpha)}' \arrow[d, "\phi'_\alpha"] \\
		V_{t(\alpha)} \arrow[r, "f_{t(\alpha)}"] & V_{t(\alpha)}'
		\end{tikzcd}
	\end{equation*}
	commutes for all $\alpha\in Q_1$. The set of representations over $S$ together with the morphism forms an abelian category and is denoted by $\mathfrak{Rep}S$. The full subcategory of $\mathfrak{Rep}S$ consisting of representations $V$ where $\dim_\mathbb{K}V_i<\infty$ for all $i\in Q_0$ is denoted by $\mathfrak{rep}S$.
\end{mydef}

In this paper we will focus on the module category $T(S)\mathrm{-mod}$, which is the category consisting of finitely generated $T(S)$-modules, and usually when we relate representations over $S$ and modules over $T(S)$ we use the following result.

\begin{myprop}\cite[Proposition 10.1]{dlab1975algebras}
	Let $S$ be a species. The category $\mathfrak{Rep}S$ is equivalent to the category $T(S)\mathrm{-Mod}$.
\end{myprop}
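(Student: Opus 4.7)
The plan is to produce quasi-inverse functors $F:\mathfrak{Rep}S\to T(S)\mathrm{-Mod}$ and $G:T(S)\mathrm{-Mod}\to\mathfrak{Rep}S$. The conceptual heart of the argument is the universal property of the tensor algebra $T(S)=T(D,M)$: giving a $T(S)$-module structure on an abelian group $N$ is the same data as a $D$-module structure on $N$ together with a $D$-bimodule morphism $M\otimes_D N\to N$. Everything else is bookkeeping around this translation.

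For $G$, given a $T(S)$-module $N$, set $V_i=e_iN$. Since $\{e_i\}_{i\in Q_0}$ is a complete set of orthogonal idempotents, $N=\bigoplus_{i\in Q_0}V_i$ as an abelian group and each $V_i$ is a $D_i$-module. For each $\alpha:i\to j$, the bimodule $M_\alpha$ sits inside $T(S)$ as $e_jMe_i$, so left multiplication by elements of $M_\alpha$ restricts to a $D_j$-linear map $\phi_\alpha:M_\alpha\otimes_{D_i}V_i\to V_j$. On morphisms, set $G(f)_i=f|_{V_i}$; the required commuting square comes directly from $T(S)$-linearity of $f$. For $F$, set $F(V)=\bigoplus_{i\in Q_0}V_i$ with the obvious $D$-action, and assemble the $\phi_\alpha$ into a $D$-bimodule map $\mu:M\otimes_D F(V)\to F(V)$. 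By the universal property, this extends uniquely to a $T(S)$-module structure on $F(V)$. A morphism $f:V\to V'$ of representations yields $F(f)=\bigoplus_i f_i$, which is $D$-linear and commutes with $\mu$ by the compatibility square, hence is $T(S)$-linear by the same universal property.

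Finally, the natural isomorphisms $F\circ G\cong\id$ and $G\circ F\cong\id$ are verified on underlying data: on $T(S)\mathrm{-Mod}$ one uses the decomposition $N=\bigoplus_i e_iN$ together with the fact that multiplication by $m\in M_\alpha\subset e_jMe_i$ lands in $e_jN$, so the reconstructed action agrees with the original; on $\mathfrak{Rep}S$ one has $e_i\bigoplus_j V_j=V_i$, and the reconstructed $\phi_\alpha$ is the restriction of $\mu$ to $M_\alpha\otimes_{D_i}V_i$, which is the original $\phi_\alpha$ by construction. Exactness of both $F$ and $G$ is immediate from their pointwise definitions, so this is an equivalence of abelian categories. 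The main conceptual step, and the only place real content enters, is the application of the universal property of $T(D,M)$ to extend $\mu$ to a genuine algebra action; everything else is routine verification.
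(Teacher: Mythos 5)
Your argument is correct and is the standard one: the paper itself gives no proof of this proposition, citing it directly from Dlab--Ringel, and the proof there (and in essentially every treatment of species) is exactly the idempotent decomposition plus the universal property of the tensor ring $T(D,M)$ that you use. The only imprecision worth flagging is that the structure map $\mu: M\otimes_D F(V)\to F(V)$ should be a morphism of \emph{left} $D$-modules rather than of $D$-bimodules ($F(V)$ carries only a left $D$-action); with that correction the universal-property step goes through verbatim, and note also that the duality hypothesis $\mathrm{Hom}_{D_i^{op}}(M_\alpha,D_i)\cong\mathrm{Hom}_{D_j}(M_\alpha,D_j)$ in the definition of a species plays no role here, as your proof correctly reflects.
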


Here $T(S)\mathrm{-Mod}$ is the category consisting of all $T(S)$-modules. When we restrict ourself to a finite quiver we have the following.

\begin{mycor}\cite[Corollary 2.2]{berg2011structure}
	Let $S$ be a species over $Q$, where $Q$ is a finite quiver. The category $\mathfrak{rep}S$ is equivalent to the category $T(S)\mathrm{-mod}$.
\end{mycor}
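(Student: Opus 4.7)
The plan is to deduce the corollary from the preceding proposition by restricting the stated equivalence $\mathfrak{Rep}S \simeq T(S)\mathrm{-Mod}$ to the full subcategories cut out by the finiteness conditions on each side. Since the restriction of an equivalence of categories to full subcategories defined by object-level conditions is again an equivalence as soon as the conditions correspond, the only real work is to match these conditions.

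Under the equivalence, the underlying $\K$-vector space of the $T(S)$-module $M$ associated to a representation $V = (V_i, \phi_\alpha)$ is $\bigoplus_{i \in Q_0} V_i$, with $e_i$ acting as the projection onto $V_i$. Because $Q_0$ is finite, I would first note that $V$ being finite-dimensional at each vertex is equivalent to $M$ being finite-dimensional as a $\K$-vector space. Indeed, a finite sum of finite-dimensional $\K$-spaces is finite-dimensional, and conversely $e_i M = V_i$ is a $\K$-subspace of $M$.

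It then remains to identify finite $\K$-dimension of $M$ with finite generation of $M$ over $T(S)$. One direction is immediate: any finite $\K$-basis is a fortiori a finite set of $T(S)$-generators, so finite-dimensional implies finitely generated. For the converse I would invoke the observation made earlier in this section that, for $Q$ finite and acyclic, $T(S)$ is itself a finite-dimensional $\K$-algebra; consequently every finitely generated $T(S)$-module is finite-dimensional over $\K$, and in particular each $V_i = e_i M$ is finite-dimensional over $\K$.

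The only delicate point is this converse direction, which really does require $T(S)$ to be finite-dimensional. The hypothesis that $Q$ is finite alone would not suffice: in the presence of an oriented cycle, $T(S)$ is itself a finitely generated $T(S)$-module whose vertex components on the cycle are infinite-dimensional, so the correspondence would break. Within the acyclic setting of this paper this obstacle disappears, and the corollary follows at once from the proposition.
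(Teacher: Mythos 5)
The paper does not prove this corollary at all; it is quoted directly from \cite[Corollary 2.2]{berg2011structure}, so there is no internal argument to compare against. Your proof is the natural one and it is correct: you restrict the equivalence $\mathfrak{Rep}S \simeq T(S)\mathrm{-Mod}$ of the preceding proposition to the full subcategories on each side, and the only content is matching the two finiteness conditions, namely that $\dim_\K V_i < \infty$ for all $i \in Q_0$ corresponds to finite generation of $M = \bigoplus_{i\in Q_0} V_i$ over $T(S)$. Your identification $e_iM = V_i$ and the finiteness of $Q_0$ handle the easy equivalence with $\dim_\K M < \infty$, and the implication from finite generation back to finite $\K$-dimension does indeed hinge on $T(S)$ being finite dimensional over $\K$. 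You are right to flag that this is where the statement, read literally with only ``$Q$ finite'' as hypothesis, is delicate: for a quiver with an oriented cycle, $T(S)$ is finitely generated over itself but its vertex components along the cycle are infinite dimensional, so the two subcategories do not correspond under the equivalence. The paper sidesteps this by citation, and in every context where the corollary is used the quiver is acyclic (indeed a tree), so your argument covers all cases that matter here; it is worth noting that the $\K$-species assumption (each $D_i$ and $M_\alpha$ finite dimensional over $\K$) is also needed for $T(S)$ to be finite dimensional, and the paper does impose this standing assumption.
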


We say that a species $S$ is representation finite if there is only a finite number of finitely generated indecomposable $T(S)$-modules up to isomorphism. Gabriel characterized all representation finite species $S$ of the form $D_i = M_\alpha = F$ for some field $F$ in \cite{gabriel1980auslander}. Gabriel's result was extended by Dlab and Ringel in the following theorem.

\begin{mythm}\cite[Theorem B]{dlab1975algebras})
	A species $S$ is representation finite if and only if $\Delta$ is a finite disjoint union of Dynkin diagrams.
\end{mythm}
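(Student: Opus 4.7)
The plan is to prove the equivalence in two independent directions, following Dlab--Ringel's adaptation to valued graphs of the Bernstein--Gelfand--Ponomarev treatment of Gabriel's theorem.

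For the direction ``$\Delta$ Dynkin implies $S$ representation finite,'' I would construct reflection functors $S_i^+$ and $S_i^-$ at each sink or source $i\in Q_0$ of $Q$. In the species setting these are defined using the bimodule duality $\Hom_{D_i^{op}}(M_\alpha, D_i) \cong \Hom_{D_j}(M_\alpha, D_j)$ built into Definition~\ref{Definition - Species}: the new representation is obtained as a kernel or cokernel of the canonical map assembling all arrows incident at $i$, tensored with the appropriate dualized $M_\alpha$. Each $S_i^\pm$ gives an equivalence between the full subcategories of $\mathfrak{rep}\,S$ and $\mathfrak{rep}\,S'$ not containing the simple at $i$, and acts on dimension vectors by the simple reflection at $i$ in the root lattice of $\Delta$. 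Composing these into a Coxeter functor, an inductive BGP-style argument shows that every indecomposable in $\mathfrak{rep}\,S$ is obtained by iterated application of $S_i^-$ to some simple, so its dimension vector is a positive root of $\Delta$. Since $\Delta$ is Dynkin, the root system is finite, and a separate argument shows the dimension vector determines the indecomposable up to isomorphism, yielding finitely many of them.

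For the converse direction I would associate to $S$ the Tits form $q_S$ on $\Z^{Q_0}$, the integral symmetrizable quadratic form built from $\dim_\K D_i$ and the valuation pair $(\dim_{D_{s(\alpha)}} M_\alpha, \dim_{D_{t(\alpha)}} M_\alpha)$. Using the hereditarity of $T(S)$ and the identity
\begin{equation*}
\dim_\K \Hom_{T(S)}(V,V) - \dim_\K \mathrm{Ext}^1_{T(S)}(V,V) = q_S(\underline{\dim}\,V),
\end{equation*}
one argues that representation finiteness forces $q_S$ to be positive definite: otherwise, on a dimension vector $d$ with $q_S(d)\le 0$, a geometric argument on the representation variety (with the product of general linear groups acting by change of basis) produces a positive-dimensional family of pairwise non-isomorphic indecomposables, contradicting finiteness. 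The classification of connected positive definite integral symmetrizable quadratic forms (equivalently, of positive definite symmetrizable generalized Cartan matrices) then forces $\Delta$ to be a finite disjoint union of Dynkin diagrams of type ABCDEFG.

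The main obstacle, and the reason this is genuinely harder than Gabriel's original proof, is the bookkeeping imposed by the bimodules $M_\alpha$: unlike the quiver case where $M_\alpha=\K$, both the construction of the reflection functors and the verification that they act on dimension vectors via the symmetrizable Cartan matrix attached to $\Delta$ require systematic use of the duality in Definition~\ref{Definition - Species} and careful tracking of the two dimensions $\dim_{D_{s(\alpha)}} M_\alpha$ and $\dim_{D_{t(\alpha)}} M_\alpha$. It is precisely this asymmetry that makes the non-simply-laced Dynkin types B, C, F, G appear in the classification, whereas Gabriel's original theorem sees only type ADE.
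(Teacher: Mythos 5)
The paper does not prove this statement; it is quoted verbatim from Dlab--Ringel \cite{dlab1975algebras}, so there is no internal proof to compare against. Judged against the actual Dlab--Ringel argument, your forward direction (reflection functors $S_i^{\pm}$ built from the duality $\Hom_{D_i^{op}}(M_\alpha,D_i)\cong\Hom_{D_j}(M_\alpha,D_j)$, acting on dimension vectors by simple reflections of the symmetrizable root system, hence indecomposables $\leftrightarrow$ positive roots) is essentially their proof, and the sketch is sound; the only thing to make explicit is that a Dynkin diagram is a tree, so $Q$ is acyclic and an admissible sink/source ordering exists.

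The converse direction as you propose it has a genuine gap. The orbit-dimension count on the representation variety (``$q_S(d)\le 0$ produces a positive-dimensional family of pairwise non-isomorphic indecomposables'') is the Gabriel--Tits argument for quivers over an \emph{algebraically closed} field. For a $\K$-species the base field need not be algebraically closed and the vertex algebras $D_i$ are division rings, so the groups $\prod GL_{n_i}(D_i)$ acting on tuples of $D$-bimodule maps are not algebraic groups over an algebraically closed field in the form the dimension count requires; worse, over a finite base field there are no positive-dimensional families at all, yet the theorem still holds (a species with Euclidean diagram over finite fields is representation infinite). So the step ``not positive definite $\Rightarrow$ infinitely many indecomposables'' would fail as written. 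Dlab--Ringel avoid geometry entirely: if $\Delta$ is not a disjoint union of Dynkin diagrams it contains a Euclidean (extended Dynkin) full subdiagram, one restricts to the corresponding full subspecies, and there the Coxeter transformation argument (the defect, equivalently the fact that $c^{-n}[P_i]$ stays positive for all $n$) shows the preprojective component is infinite, giving infinitely many pairwise non-isomorphic indecomposables. Replacing your representation-variety step by this combinatorial argument (or by a careful base-change to $\overline{\K}$, which brings its own descent issues) is necessary to make the converse correct in the species setting.
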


The Dynkin diagrams are given in Figure~\ref{Figure - Dynkin Diagrams 1}.

\begin{figure}[h]
	\vspace{-0.5em}\begin{equation*}
		\begin{aligned}
		&\begin{tikzcd}
		(A_n) & 1 \arrow[-, r] & 2 \arrow[r, -] & \cdots \arrow[r, -] & n-1 \arrow[r, -] & n \\
		\end{tikzcd} \\[-2em]
		&\begin{tikzcd}
		(B_n) & 1 & 2 \arrow["(2)"', l, Rightarrow] \arrow[r, -] & \cdots \arrow[r, -] & n-1 \arrow[r, -] & n \\
		\end{tikzcd} \\[-2em]
		&\begin{tikzcd}
		(C_n) & 1 \arrow["(2)", r, Rightarrow] & 2 \arrow[r, -] & \cdots \arrow[r, -] & n-1 \arrow[r, -] & n \\
		\end{tikzcd} \\[-2em]
		&\begin{tikzcd}[row sep = 0em]
		& 1 \\
		(D_n) & & 3 \arrow[-, r] \arrow[lu, -] \arrow[ld, -] & \cdots \arrow[r, -] & n-1 \arrow[r, -]& n \\
		&2 \\
		\end{tikzcd} \\[-0.5em]
		&\begin{tikzcd}[row sep=0em]
		& 1 \arrow[r, -] & 2 \arrow[r, -] & 3 \arrow[r, -] \arrow[dd, -] & 4 \arrow[r, -] & 5 \\
		(E_6) \\
		& & & 6 \\
		\end{tikzcd} \\[-0.3em]
		&\begin{tikzcd}[row sep=0em]
		& 1 \arrow[r, -] & 2 \arrow[r, -] & 3 \arrow[r, -] \arrow[dd, -] & 4 \arrow[r, -] & 5 \arrow[r, -] & 6 \\
		(E_7) \\
		& & & 7 \\
		\end{tikzcd} \\[-0.3em]
		&\begin{tikzcd}[row sep=0em]
		& 1 \arrow[r, -] & 2 \arrow[r, -] & 3 \arrow[r, -] \arrow[dd, -] & 4 \arrow[r, -] & 5 \arrow[r, -] & 6 \arrow[r, -] & 7 \\
		(E_8) \\
		& & & 8 \\
		\end{tikzcd} \\[-0.3em]
		&\begin{tikzcd}
		(F_4) & 1 \arrow[r, -] & 2 \arrow["(2)", r, Rightarrow] & 3 \arrow[r, -] & 4 \\
		\end{tikzcd} \\[-2em]
		&\begin{tikzcd}
		(G_2) & 1 \arrow["(3)", r, Rightarrow] & 2
		\end{tikzcd}
		\end{aligned}
	\end{equation*}
	\caption{Dynkin Diagrams}\label{Figure - Dynkin Diagrams 1}
\end{figure}

Given a species $S$ where $\Delta$ is a Dynkin diagram. Then reading from the proof of \cite[Theorem B]{dlab1975algebras} we can assume that $S$ is of the form where $D_i, M_\alpha\in \{F, G\}$, where $F$ and $G$ are division algebras satisfying $F\subset G$. If $\Delta$ is of type ADE, we call $\Delta$ simply laced, then $F=G$. When $\Delta$ is non-simply laced, the valuation $k = \dim_\K G/\dim_\K F$. More explicitly, each case is described in Figure~\ref{Figure - Dynkin Diagrams 2}. In this description we did not specify the orientation of $Q$. This is due to the fact that the orientation of $Q$ only determines the module structure on all $M_\alpha$'s. 

\begin{figure}[h]
	\vspace{-0.5em}\begin{equation*}
		\begin{aligned}
			&\begin{tikzcd}
			(A_n) & F \arrow["F", -, r] & F \arrow["F", r, -] & \cdots \arrow["F", r, -] & F \arrow["F", r, -] & F \\
			\end{tikzcd} \\[-2em]
			&\begin{tikzcd}
			(B_n) & F & G \arrow["G"', -, l] \arrow["G", r, -] & \cdots \arrow["G", r, -] & G \arrow["G", r, -] & G \\
			\end{tikzcd} \\[-2em]
			&\begin{tikzcd}
			(C_n) & G \arrow["G", -, r] & F \arrow["F", r, -] & \cdots \arrow["F", r, -] & F \arrow["F", r, -] & F \\
			\end{tikzcd} \\[-2em]
			&\begin{tikzcd}[row sep = 0em]
			& F \\
			(D_n) & & F \arrow["F", -, r] \arrow["F"', lu, -] \arrow["F"', ld, -] & \cdots \arrow["F", r, -] & F \arrow["F", r, -]& F \\
			&F \\
			\end{tikzcd} \\[-0.5em]
			&\begin{tikzcd}[row sep=0em]
			& F \arrow["F", r, -] & F \arrow["F", r, -] & F \arrow["F", r, -] \arrow["F", dd, -] & F \arrow["F", r, -] & F \\
			(E_6) \\
			& & & F \\
			\end{tikzcd} \\[-0.3em]
			&\begin{tikzcd}[row sep=0em]
			& F \arrow["F", r, -] & F \arrow["F", r, -] & F \arrow["F", r, -] \arrow["F", dd, -] & F \arrow["F", r, -] & F \arrow["F", r, -] & F \\
			(E_7) \\
			& & & F \\
			\end{tikzcd} \\[-0.3em]
			&\begin{tikzcd}[row sep=0em]
			& F \arrow["F", r, -] & F \arrow["F", r, -] & F \arrow["F", r, -] \arrow["F", dd, -] & F \arrow["F", r, -] & F \arrow["F", r, -] & F \arrow["F", r, -] & F \\
			(E_8) \\
			& & & F \\
			\end{tikzcd} \\[-0.3em]
			&\begin{tikzcd}
			(F_4) & G \arrow["G", r, -] & G \arrow["G", r, -] & F \arrow["F", r, -] & F \\
			\end{tikzcd} \\[-2em]
			&\begin{tikzcd}
			(G_2) & G \arrow["G", r, -] & F
			\end{tikzcd}
		\end{aligned}
	\end{equation*}
	\caption{Description of species over Dynkin diagrams}\label{Figure - Dynkin Diagrams 2}
\end{figure}

For a species $S$ the Auslander-Reiten quiver $\Gamma_S$ is defined by setting the vertices to be isomorphism classes of indecomposable objects in $T(S)-\mathrm{mod}$. Given two indecomposable objects in $X, Y\in T(S)\mathrm{-mod}$, then there is an arrow $[X]\xrightarrow{d_{XY}}[Y]$ in $\Gamma_S$ if
\begin{equation*}
	\mathrm{rad}(X, Y)/\mathrm{rad}^2(X, Y)\not=0
\end{equation*}
and the valuation is given by
\begin{equation*}
	d_{XY} = \dim_\mathrm{\K}(\mathrm{rad}(X, Y)/\mathrm{rad}^2(X, Y)).
\end{equation*}
This construction is introduced in \cite{auslander1997representation} with a slight modification. In \cite{auslander1997representation} the valuation $(a, b)$ indicates that there is a minimal right almost split morphism $X^a\oplus M\rightarrow Y$ where $X$ is not a summand of $M$, and a minimal left almost split morphism $X\rightarrow Y^b\oplus N$ where $Y$ is not a summand of $N$. To see the connection between these two valuations, we introduce the following.

\begin{mydef}
	Let $M\in \Lambda\mathrm{-mod}$ be indecomposable for some $\K$-algebra $\Lambda$. Then we define
	\begin{equation*}
		\delta(M) = \dim_\K (\mathrm{End}_\Lambda(M) / \mathrm{rad}_\Lambda(M)).
	\end{equation*}
\end{mydef}

\begin{myrem}
	Let $S$ be a species where $\Delta$ is a Dynkin diagram. Then
	\begin{equation*}
		\delta(P_i) = \dim_\K (D_i)=\begin{cases}
			\dim_\K(F), & \mbox{if }D_i = F \\
			\dim_\K(G), & \mbox{if }D_i = G
		\end{cases}.
	\end{equation*}
\end{myrem}

Now $(a, b) = (d_{XY}/\delta(X), d_{XY}/\delta(Y))$. 

We define a functor $\tau = D\circ \mathrm{Tr}$, where $D = \mathrm{Hom}_\mathbb{K}(-, \mathbb{K})$ denotes the $\mathbb{K}$-dual and $\mathrm{Tr}$ is the Auslander-Bridger transpose \cite[Chapter 4.1]{auslander1997representation}. By \cite[Proposition 1.9]{auslander1997representation} there is an equivalence
\begin{equation*}
	\begin{tikzcd}
		T(S)\mathrm{-}\underline{\mathrm{mod}} \arrow[r, shift left, "\tau"] & T(S)\mathrm{-}\overline{\mathrm{mod}} \arrow[l, shift left, "\tau^{-1}"].
	\end{tikzcd}
\end{equation*}
If $S$ is a species over an acyclic quiver $Q$, then $T(S)$ is hereditary and $\tau = D\mathrm{Ext}^1_{T(S)}(-, T(S))$ is even defined on the category $T(S)\mathrm{-mod}$, and  the inverse is given by $\tau^{-1} = \mathrm{Ext}^1_{T(S)}(DT(S), T(S))\otimes_{T(S)} -$. We call $\tau$ the Auslander-Reiten translation of $\Gamma_S$.

Recall that for every species we have a complete set of orthogonal idempotents $\{e_1, \dots, e_n\}$ induced by the identity elements in $D_1, D_2, \dots, D_{|Q_0|}$. With these idempotents we can write down the indecomposable projective modules $P_i$ and the indecomposable injective modules $I_i$ as 
\begin{equation*}
	P_i = T(S)e_i, \quad I_i = D(e_iT(S)).
\end{equation*}

For a species $S$ we define the preprojective component of $\Gamma_S$ to be the full subquiver of $\Gamma_S$ consisting of all $X\in (\Gamma_S)_0$ such that $\tau^a X$ is projective for some $a\in \Z_{\ge 0}$. Similarly, we define the preinjective component of $\Gamma_S$ to be the full subquiver of $\Gamma_S$ consisting of all $Y\in (\Gamma_S)_0$ such that $\tau^{-b}Y$ is injective for some $b\in \Z_{\ge 0}$.

\begin{myprop}\label{Proposition - nakayama permutation and integer l_i}
	Let $S$ be a species with a Dynkin diagram $\Delta$. Then there exists a permutation $\sigma: Q_0\rightarrow Q_0$ and integers $l_i$ such that
	\begin{equation*}
		P_{\sigma(i)} = \tau^{l_i - 1}I_i.
	\end{equation*}
\end{myprop}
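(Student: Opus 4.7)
The plan is to produce $\sigma$ and the integers $l_i$ by iterating the Auslander-Reiten translation starting from each indecomposable injective. Since $\Delta$ is Dynkin, $S$ is representation finite by \cite[Theorem B]{dlab1975algebras}, so there are only finitely many indecomposable $T(S)$-modules up to isomorphism. Because $T(S)$ is hereditary, $\tau = D\mathrm{Ext}^1_{T(S)}(-, T(S))$ is defined on all of $T(S)\mathrm{-mod}$ and, as recalled in the excerpt, restricts to a bijection between the isomorphism classes of non-projective indecomposables and of non-injective indecomposables; in particular $\tau X$ is non-injective whenever $X$ is a non-projective indecomposable.

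For each $i\in Q_0$, I would consider the sequence $I_i,\tau I_i,\tau^2 I_i,\ldots$, which is well-defined as long as the previous term is non-projective. I claim this sequence has pairwise distinct terms: if $\tau^a I_i\cong\tau^b I_i$ for some $0\le a<b$ with both sides non-zero, then cancelling $\tau^a$ using the injectivity of $\tau$ on non-projective indecomposables gives $I_i\cong\tau^{b-a}I_i$, contradicting the fact that the right-hand side is non-injective. By finiteness of $\Gamma_S$, the sequence must therefore terminate at a projective module. Let $l_i\ge 1$ be minimal with $\tau^{l_i-1}I_i$ projective, and define $\sigma(i)$ by $P_{\sigma(i)}=\tau^{l_i-1}I_i$.

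It remains to verify that $\sigma:Q_0\to Q_0$ is a permutation, which, since $|Q_0|$ is finite, reduces to injectivity. Suppose $\sigma(i)=\sigma(j)$ with $l_i\le l_j$. Writing $l_j-1=(l_i-1)+(l_j-l_i)$, we have $\tau^{l_i-1}I_i\cong\tau^{l_i-1}(\tau^{l_j-l_i}I_j)$. By the minimality of $l_i$ and $l_j$, each intermediate $\tau^k I_i$ for $0\le k\le l_i-2$ and each $\tau^{k+l_j-l_i}I_j$ for the same range of $k$ is non-projective, so one may successively apply $\tau^{-1}$ to cancel $\tau^{l_i-1}$ on both sides and obtain $I_i\cong\tau^{l_j-l_i}I_j$. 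If $l_j>l_i$, the right-hand side is non-injective, contradicting the injectivity of $I_i$; hence $l_i=l_j$ and $I_i\cong I_j$, so $i=j$.

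The step I expect to require the most care is the cancellation of $\tau^{l_i-1}$ in the injectivity argument: one must verify at each stage that the modules involved remain non-projective (so that $\tau$ can be cancelled on them) and that the final equation pits an injective against a non-injective. No deeper input is needed beyond the stable/costable equivalence induced by $\tau$ together with the finiteness of $\Gamma_S$ provided by \cite[Theorem B]{dlab1975algebras}.
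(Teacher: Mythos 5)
Your proof is correct, but it takes a more self-contained route than the paper. The paper's proof is a one-line appeal to \cite[Theorem 2.1]{auslander1997representation}: since $\Gamma_S$ has finitely many vertices, the preprojective component coincides with the preinjective component, so every $I_i$ satisfies $\tau^{l_i-1}I_i\cong P_{\sigma(i)}$ for some $l_i$; the fact that $\sigma$ is a permutation is left implicit there (each $\tau$-orbit contains exactly one projective and one injective). You instead derive everything directly from two facts — finiteness of $\Gamma_S$ and the restriction of $\tau$ to a bijection between isomorphism classes of non-projective and non-injective indecomposables — by showing the terms $\tau^kI_i$ are pairwise distinct (forcing termination at a projective) and then running the cancellation argument to get injectivity, hence bijectivity, of $\sigma$. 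Your version buys independence from the structure theory of preprojective components (the cited theorem is itself proved by essentially the bookkeeping you carry out) and, unlike the paper's proof, explicitly verifies the permutation claim; the paper's version is shorter because it delegates exactly this bookkeeping to the reference. The one ingredient worth stating explicitly in your write-up is the fact you use at every step: for $X$ indecomposable non-projective, $\tau X$ is indecomposable and non-injective, and $\tau X\cong\tau Y$ forces $X\cong Y$; this follows from the equivalence $\tau\colon T(S)\mathrm{-}\underline{\mathrm{mod}}\xrightarrow{\sim}T(S)\mathrm{-}\overline{\mathrm{mod}}$ recalled in Section~\ref{Section - Preliminaries}.
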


\begin{proof}
	Since $S$ is representation finite $\Gamma_S$ has finitely many vertices. Therefore, by \cite[Theorem 2.1]{auslander1997representation}, the preprojective component of $\Gamma_S$ coincides with the preinjective component of $\Gamma_S$.
\end{proof}

\begin{mydef}
	Let $S$ be a species with Dynkin diagram $\Delta$. We call $\sigma$ in Proposition~\ref{Proposition - nakayama permutation and integer l_i} the Nakayama permutation of $\Delta$. Moreover, if $l_i=l$ for some integer $l\in \Z$, then we call $S$ $l$-homogeneous.
\end{mydef}

With these integers $l_i$ we can explicitly describe the Auslander-Reiten quiver for representation finite species. Let $S$ be a representation finite species over a quiver $Q$. Then by \cite[Proposition 1.15]{auslander1997representation}, the projective modules in $\Gamma_S$ form a subquiver which is isomorphic to the opposite of $Q$. In fact, the quiver $Q$, the valuations in $\Delta$ and the length of each $\tau$-orbit (i.e. the numbers $l_i$) is enough information to write down $\Gamma_S$. This follows from \cite[Theorem 2.1]{auslander1997representation} together with the fact that $\Gamma_S$ is the preprojective component of $\Gamma_S$.

We also introduce the Coxeter transformation. Since the Coxeter transformation is defined on the Grothendieck group we first define the Grothendieck group.

\begin{mydef}
	Let $S$ be a species. We define the Grothendieck group $\textbf{K}_0(T(S)\mathrm{-mod}) = [T(S)\mathrm{-mod}] / R$, where $[T(S)\mathrm{-mod}]$ is the free abelian group on the isomorphism classes $[M]$ of finitely generated $T(S)$-modules $M$ and $R$ is the subgroup generated by expressions $[A] + [C] - [B]$ whenever there is an exact sequence $0\rightarrow A\rightarrow B\rightarrow C\rightarrow 0$ of $T(S)$-modules.
\end{mydef}

First we note that $\{[D_i]\}_{i=0}^{|Q_0|}$ is a basis for $\textbf{K}_0(T(S)\mathrm{-mod})$ by \cite[Chapter 1, Theorem 1.7]{auslander1997representation}. We also have two other basis given by $\{[P_i]\}_{i=0}^{|Q_0|}$ and $\{[I_i]\}_{i=0}^{|Q_0|}$ by \cite[Chapter 8, Lemma 2.1]{auslander1997representation}.

\begin{mydef}
	Let $S$ be a species. We define the Coxeter transformation $c: \textbf{K}_0(T(S)\mathrm{-mod})\rightarrow \textbf{K}_0(T(S)\mathrm{-mod})$ by $[P_i]\mapsto -[I_i]$.
\end{mydef}

We call a non-zero element in $\textbf{K}_0(T(S)\mathrm{-mod})$ positive, respectively negative, if the coordinates are greater than or equal to $0$, respectively less than or equal to $0$ in the basis $\{[D_i]\}_{i=0}^{|Q_0|}$. For hereditary $\K$-algebras the Coxeter transformation is closely related to the Auslander-Reiten translation, and since $T(S)$ is a hereditary $\K$-algebra when $Q$ is finite and acyclic we have the following proposition.

\begin{myprop}\label{Proposition - Coxeter transformation properties}\cite[Chapter 8, Proposition 2.2]{auslander1997representation}
	Let $S$ be a species over a finite acyclic quiver $Q$. We have the following.
	\begin{enumerate}
		\item If $M\in T(S)\mathrm{-mod}$ is a non-projective indecomposable module, then $c[M] = [\tau M]$.
		\item Let $M$ be an indecomposable $T(S)$-module. Then $M$ is projective if and only if $c[M]$ is negative.
		\item If $M$ is an indecomposable module, then $c[M]$ is either positive or negative.
		\item If $M\in T(S)\mathrm{-mod}$ is a non-injective indecomposable module, then $c^{-1}[M] = [\tau^{-1} M]$.
		\item Let $M$ be an indecomposable $T(S)$-module. Then $M$ is injective if and only if $c^{-1}[M]$ is negative.
		\item If $M$ is an indecomposable module, then $c^{-1}[M]$ is either positive or negative.
	\end{enumerate}
\end{myprop}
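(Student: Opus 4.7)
The plan is to exploit the heredity of $T(S)$, which ensures every module has a minimal projective resolution (resp.\ injective coresolution) of length at most $1$. For (1), take an indecomposable non-projective $M$ with minimal projective resolution $0 \to P^1 \to P^0 \to M \to 0$, so that $[M] = [P^0] - [P^1]$ and, by the definition of $c$, $c[M] = [\nu P^1] - [\nu P^0]$ where $\nu = D\mathrm{Hom}_{T(S)}(-, T(S))$ is the Nakayama functor. Applying $\mathrm{Hom}_{T(S)}(-, T(S))$ to the resolution and using that $\mathrm{Ext}^i(-, T(S))$ vanishes for $i \geq 2$ in the hereditary case, then applying the exact duality $D$, yields the four-term exact sequence
\begin{equation*}
    0 \to \tau M \to \nu P^1 \to \nu P^0 \to \nu M \to 0,
\end{equation*}
where I have used the hereditary formula $\tau M = D\mathrm{Ext}^1(M, T(S))$. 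Taking classes in $\textbf{K}_0$ gives $[\tau M] = c[M] + [\nu M]$.

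The key lemma needed to finish (1) is that $\nu M = 0$ whenever $M$ is indecomposable and non-projective. I would prove this as follows: any nonzero $f \colon M \to T(S)$ has image a submodule of a projective, hence projective by heredity, so the surjection $M \twoheadrightarrow \mathrm{im}(f)$ splits and $\mathrm{im}(f)$ is a direct summand of the indecomposable $M$; this forces $M \cong \mathrm{im}(f)$ to be projective, a contradiction. Given (1), parts (2) and (3) follow by a clean case analysis: if $M = P_i$ then $c[M] = -[I_i]$ is negative, while if $M$ is indecomposable non-projective then $c[M] = [\tau M]$ is the class of a genuine $T(S)$-module, hence positive; since exactly one of the two cases occurs, we obtain both dichotomies.

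For (4), (5), (6) I would dualize the entire argument using the minimal injective coresolution $0 \to M \to I^0 \to I^1 \to 0$ and the inverse Nakayama functor $\nu^{-1} = \mathrm{Hom}_{T(S)}(DT(S), -)$. The analogous four-term exact sequence ends in $\tau^{-1} M$ and yields $c^{-1}[M] = [\tau^{-1} M] - [\nu^{-1} M]$ in $\textbf{K}_0$. The dual vanishing lemma asserts that $\nu^{-1} M = 0$ for indecomposable non-injective $M$, and is proved by the mirror argument: a nonzero map from an indecomposable summand of $DT(S)$ into $M$ has image a quotient of an injective, hence injective by heredity, and the inclusion into $M$ splits, contradicting non-injectivity of $M$.

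The main obstacle I anticipate is verifying the exactness of the four-term sequences at all four positions — in particular, surjectivity at the right end, which is where heredity is essential — together with careful treatment of the two vanishing lemmas; once these are established the remaining content is bookkeeping in $\textbf{K}_0$ on the bases $\{[P_i]\}$, $\{[I_i]\}$, and $\{[D_i]\}$, and observing that the class of any (nonzero) module is positive in the basis $\{[D_i]\}$ by the Jordan–Hölder filtration.
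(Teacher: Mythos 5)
The paper does not prove this proposition at all; it is quoted directly from \cite[Ch.~VIII, Prop.~2.2]{auslander1997representation}, and your argument is essentially the standard proof given there: the four-term sequence $0 \to \tau M \to \nu P^1 \to \nu P^0 \to \nu M \to 0$ obtained from a projective presentation, the vanishing $\nu M = 0$ for indecomposable non-projective $M$, and the dual statements. The argument is correct; the only small imprecision is that exactness at the right end of the four-term sequence comes from $\mathrm{Ext}^1(P^0, T(S)) = 0$ (projectivity of $P^0$), not from vanishing of $\mathrm{Ext}^{\ge 2}$ -- heredity is used only to guarantee the resolution has length one and that $\tau M = D\mathrm{Ext}^1(M, T(S))$.
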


The second and fifth condition in Proposition~\ref{Proposition - Coxeter transformation properties} yields a connection to the Auslander-Reiten quiver by considering all of the elements $[M]$, where $M$ is indecomposable $T(S)$-module, in $\textbf{K}_0(T(S)\mathrm{-mod})$.

For a $\K$-algebra $\Lambda$ we denote the bounded derived category of $\Lambda\mathrm{-mod}$ by $\mathcal{D}^b(\Lambda\mathrm{-mod})$.

\begin{mydef}
	Let $\Lambda$ be a finite dimensional $\K$-algebra with $\mathrm{gldim}(\Lambda) <\infty$.
	\begin{enumerate}
		\item We define the Nakayama functor $\nu: \mathcal{D}^b(\Lambda\mathrm{-mod})\to \mathcal{D}^b(\Lambda\mathrm{-mod})$ as the composition
		\begin{equation*}
			\nu = D\circ \textbf{R}\mathrm{Hom}_{\mathcal{D}^b(\Lambda\mathrm{-mod})}(-, \Lambda).
		\end{equation*}
		\item The Auslander-Reiten translation in the derived category is defined as $\nu_1 = \nu\circ [-1]$, where $[-1]$ is the shift functor.
	\end{enumerate}
\end{mydef}

Let $\phi$ be an endomorphism of a $\K$-algebra $\Lambda$. Using the derived tensor product we obtain an endomorphism functor $\Lambda_\phi\overset{\textbf{L}}{\otimes}_\Lambda-$ of $\mathcal{D}^b(\Lambda\mathrm{-mod})$, where $\Lambda_\phi$ is the $\Lambda$-$\Lambda$-bimodule $\Lambda$ where the right action is twisted by $\phi$, i.e. the right action is given by $a\cdot b = a\phi(b)$.

\begin{mydef}\cite[Definition 0.3]{herschend2011n}
	Let $\Lambda$ be a finite dimensional $\K$-algebra with $\mathrm{gldim}(\Lambda)<\infty$. We say that $\Lambda$ is twisted $\frac{m}{l}$-Calabi-Yau if there is an isomorphism
	\begin{equation*}
		\nu^l\simeq [m]\circ (\Lambda_\phi\overset{\textbf{L}}{\otimes}_{\Lambda}-)
	\end{equation*}
	of functors for some integer $l\not=0$ and an algebra endomorphism $\phi$ of $\Lambda$. In this case $\phi$ is always an automorphism.
\end{mydef}

We also need to define acyclic algebras since our main results in Section~\ref{Section - Koszul Complex for Higher Species} are only proved for acyclic algebras.

\begin{mydef}\cite[Definition 2.5]{pasquali2019tensor}
	We say that $\Lambda$ is cyclic if there exist indecomposable projective $\Lambda$-modules $P_1, \dots, P_m$ with non-zero non-isomorphisms $P_1\to P_2, \dots, P_{m-1}\to P_m, P_m\to P_1$ for some $m\ge 1$. We call $\Lambda$ acyclic if it is not cyclic.
\end{mydef}

\begin{myrem}
	Let $S$ be a species over the quiver $Q$. The tensor algebra $T(S)$ is acyclic if and only if $Q$ is acyclic.
\end{myrem}

\section{Properties for Representation Finite Species}\label{Section - Properties for Representation Finite Species}
In this section we compute the Nakayama permutation for all Dynkin diagrams. As an application we determine which species are $l$-homogeneous for each integer $l$. The Nakayama permutation is described in the following main theorem of this section.

\begin{mythm}\label{Theorem - Nakayama permutation}
	Let $S$ be a species with Dynkin diagram $\Delta$. Then the Nakayama permutation is given by:
	\begin{equation*}
		\begin{aligned}
		&\sigma_A(i) = n+1-i, &&\mbox{if }\Delta=A_n \\
		&\sigma_D(i)=i, &&\mbox{if }\Delta=D_{2n} \\
		&\sigma_D(i) = \begin{cases}
		2, & \mbox{if }i=1 \\
		1, & \mbox{if }i=2 \\
		i, & \mbox{otherwise} \\ 
		\end{cases}, &&\mbox{if }\Delta=D_{2n+1} \\
		&\sigma_{E_{6}}(i) = \begin{cases}
		6-i, & \mbox{if }i\not=6 \\
		6, & \mbox{if }i=6
		\end{cases}, &&\mbox{if }\Delta=E_6 \\
		&\sigma_{E_{7}}(i) = i, &&\mbox{if }\Delta=E_7 \\
		&\sigma_{E_{8}}(i) = i, &&\mbox{if }\Delta=E_8 \\
		&\sigma_\Delta(i) = i, &&\mbox{if }\Delta=B_n, C_n, F_4, G_2
		\end{aligned}
	\end{equation*}
\end{mythm}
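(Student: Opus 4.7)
The plan is to exploit the fact that the Nakayama permutation can be read off from the action of the Coxeter transformation on the Grothendieck group, and then carry out a case-by-case analysis over all Dynkin types. Concretely, since $P_{\sigma(i)} = \tau^{l_i-1} I_i$ by Proposition~\ref{Proposition - nakayama permutation and integer l_i}, applying Proposition~\ref{Proposition - Coxeter transformation properties}(4)--(6) gives $[P_{\sigma(i)}] = c^{-(l_i-1)}[I_i]$ in $\textbf{K}_0(T(S)\mathrm{-mod})$, and $l_i - 1$ is characterised as the largest $k \ge 0$ such that $c^{-k}[I_i]$ is still positive. So the entire data $(\sigma(i), l_i)$ is determined by iteratively applying $c^{-1}$ to each $[I_i]$ and watching for the step at which positivity fails.

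The first step I would carry out is showing that $\sigma$ depends only on the underlying diagram $\Delta$ and not on the orientation of $Q$ nor on the choice of division algebras $F \subset G$. This follows because the matrix of $c$ in the basis $\{[P_i]\}$ (equivalently $\{[D_i]\}$ via the base-change using the Cartan matrix) is determined by $\Delta$ and the valuations, together with the fact that positivity is preserved under re-orientation reflection functors. Consequently, one is free to pick the most convenient orientation of $Q$ in each case, e.g.\ a linear orientation $1\to 2 \to \cdots \to n$ for types $A_n, B_n, C_n, F_4, G_2$, and a fixed orientation with the branch vertex as a source for types $D_n, E_6, E_7, E_8$.

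The second step is the case-by-case computation. For a chosen orientation I would write down the dimension vectors $[P_j]$ and $[I_i]$ explicitly (both are standard, e.g.\ for the linear orientation of $A_n$ one has $[P_j] = \sum_{k\le j}[D_k]$ and $[I_i] = \sum_{k\ge i}[D_k]$), then iterate $c^{-1}$ starting at $[I_i]$, producing the sequence $[I_i], [\tau^{-1}I_i], [\tau^{-2}I_i], \dots$ until the next application of $c^{-1}$ produces a negative vector. The last positive vector is $[P_{\sigma(i)}]$ and reading its label against the list of $[P_j]$'s yields $\sigma(i)$. For $A_n$ this is an immediate induction on $n$; for $B_n, C_n, F_4, G_2$ the folding relationship to $A_{2n-1}, A_{2n-1}, E_6, D_4$ makes each orbit short; for $D_n$ one treats even and odd $n$ separately and the two branch vertices swap orbits exactly when $n$ is odd, which is visible already after computing the orbit of $[I_1]$.

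The main obstacle is types $E_7$ and $E_8$, where the Coxeter numbers are $18$ and $30$ respectively and a brute-force iteration of $c^{-1}$ is lengthy. To avoid this I would use the identification of the Coxeter transformation on $\textbf{K}_0$ with the standard Coxeter element of the Weyl group acting on the root lattice; under this identification $\sigma$ coincides with the diagram automorphism induced by $-w_0$, where $w_0$ is the longest element of the Weyl group. The values of $-w_0$ are classical: trivial for $E_7, E_8, B_n, C_n, F_4, G_2$ and for $D_{2n}$; the flip $i \mapsto n+1-i$ for $A_n$; the swap of the two exceptional legs for $D_{2n+1}$; and the unique order-$2$ diagram automorphism fixing the branch vertex for $E_6$. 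These match the formulas in the theorem. The verification that the Coxeter matrix on dimension vectors really does correspond to the Coxeter element on the root lattice (for the chosen basis and sign conventions) is the one delicate point, but it follows from the usual formula $c = -C^{-T}C$ where $C$ is the symmetrised Cartan matrix.
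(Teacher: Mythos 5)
Your overall strategy (read $\sigma$ off from the Coxeter transformation on $\textbf{K}_0$) is viable and genuinely different from the paper's, but as written it has two concrete problems. First, a direction error: from $P_{\sigma(i)}=\tau^{l_i-1}I_i$ and Proposition~\ref{Proposition - Coxeter transformation properties}(1) you get $[P_{\sigma(i)}]=c^{\,l_i-1}[I_i]$, i.e.\ you must iterate $c$, not $c^{-1}$. Indeed, by Proposition~\ref{Proposition - Coxeter transformation properties}(5) the vector $c^{-1}[I_i]$ is already negative because $I_i$ is injective, so the procedure you describe (``iteratively applying $c^{-1}$ to each $[I_i]$ and watching for the step at which positivity fails'') terminates at step zero and identifies nothing. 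The fix is routine ($l_i-1$ is the largest $k$ with $c^{k}[I_i]$ positive, by parts (1)--(2) of that proposition), but the computation you then propose is the one with $c$. Second, and more seriously, the step that is supposed to save you from the long $E_7$/$E_8$ iterations --- that $\sigma$ coincides with the diagram automorphism induced by $-w_0$ --- is exactly the content of the theorem and is asserted rather than proved. The identity $c=-C^{-T}C$ only shows that $c$ acts as a Coxeter element on the root lattice; it does not by itself tell you which positive root the $c$-orbit of $[I_i]$ ends at, which is what determines $\sigma(i)$. Relating the endpoint of each orbit to $w_0$ requires an additional argument (e.g.\ the Steinberg/Kostant description of $w_0$ as a power of a bipartite Coxeter element), and if you are instead allowed to simply cite the classical statement ``Nakayama permutation $=-w_0$'', the whole theorem reduces to a citation. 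Your reduction to a single convenient orientation is likewise only gestured at (``positivity is preserved under re-orientation reflection functors''); this needs either BGP reflection functors for species or the orientation-independence of $\Pi(S)$.

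For comparison, the paper does not pass through $w_0$ at all. For the simply laced types it first proves (Lemma~\ref{lemma - Q does not depend on K}) that $[P_i]$ and $[I_i]$, hence $\Gamma_S$ and $\sigma$, depend only on the quiver $Q$ and not on the species over it, and then quotes Gabriel's explicit AR quivers. For types $B$, $C$, $F$, $G$ it uses an argument special to species: the invariant $\delta(M)=\dim_\K\bigl(\mathrm{End}(M)/\mathrm{rad}\bigr)$ is preserved by $\tau$ (Lemma~\ref{lemma - F does not change}), and since the two endpoints of the unique valued edge have different $\delta$-values, Lemma~\ref{lemma - map between projectives and injectives} forces $\sigma$ to fix both of them; Lemma~\ref{lemma - Nakayama induces diagram automorphism} (that $\sigma$ is a diagram automorphism) then propagates the identity along the tree. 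That route avoids any root-system input and any orbit computation in the non-simply-laced cases, at the price of relying on Gabriel's tables for ADE; your route would be more uniform and computation-free across all types, but only once the $-w_0$ identification is actually established.
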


We will later see the importance of this theorem when we describe the Nakayama automorphism in Section~\ref{Section - Nakayama Automorphism}.

Before we prove this theorem we will state a couple of lemmas.

\begin{mylemma}\label{lemma - map between projectives and injectives}
	Let $S$ be a species. If there is an arrow
	\begin{equation*}
		[P_i]\xrightarrow{d_{ij}}[P_j]
	\end{equation*}
	in $\Gamma_S$, then there is an arrow
	\begin{equation*}
		[I_i]\xrightarrow{d_{ij}}[I_j]
	\end{equation*}
	in $\Gamma_S$.
\end{mylemma}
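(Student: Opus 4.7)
The plan is to show that both $\mathrm{rad}(P_i,P_j)/\mathrm{rad}^2(P_i,P_j)$ and $\mathrm{rad}(I_i,I_j)/\mathrm{rad}^2(I_i,I_j)$ are canonically isomorphic to $e_i(J/J^2)e_j$ as $\K$-vector spaces, where $\Lambda=T(S)$ and $J=\mathrm{rad}(\Lambda)$ is its Jacobson radical. Since both the existence and the valuation of an arrow in $\Gamma_S$ are determined by this quotient, the lemma will then follow at once.

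For the projective pair I would use the standard isomorphism $\mathrm{Hom}_\Lambda(P_i,P_j)=\mathrm{Hom}_\Lambda(\Lambda e_i,\Lambda e_j)\cong e_i\Lambda e_j$, $f\mapsto f(e_i)$. Under this, $\mathrm{rad}(P_i,P_j)$ is identified with $e_iJe_j$: this is clear for $i\ne j$, while for $i=j$ it amounts to identifying the Jacobson radical of the local ring $\mathrm{End}(P_i)\cong e_i\Lambda e_i$ with $e_iJe_i$. To compute $\mathrm{rad}^2$ I would decompose each intermediate module into indecomposables and use projective covers to rewrite every composition as a sum of compositions through indecomposable projectives $P_k$; this yields
\begin{equation*}
\mathrm{rad}^2(P_i,P_j)\;\cong\;\sum_{k\in Q_0} e_iJe_k\cdot e_kJe_j \;=\; e_iJ^2e_j,
\end{equation*}
so that the quotient is $e_i(J/J^2)e_j$.

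For the injective pair I would apply the $\K$-duality $D=\mathrm{Hom}_\K(-,\K)$, which is a contravariant equivalence $\Lambda\text{-mod}\to\mathrm{mod}\text{-}\Lambda$ that preserves radical powers and sends $I_i=D(e_i\Lambda)$ to the indecomposable right projective $e_i\Lambda$. Running the analogous computation in the category of right $\Lambda$-modules, via $\mathrm{Hom}_{\Lambda^{op}}(e_j\Lambda,e_i\Lambda)\cong e_i\Lambda e_j$, again identifies $\mathrm{rad}(I_i,I_j)/\mathrm{rad}^2(I_i,I_j)$ with $e_i(J/J^2)e_j$, and comparison of the two descriptions yields the arrow $[I_i]\to[I_j]$ with valuation $d_{ij}$. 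The main subtlety I foresee is the $\mathrm{rad}^2$ identification: one must confirm that allowing arbitrary intermediate modules does not enlarge $\mathrm{rad}^2$ beyond the contributions from intermediate indecomposable projectives, which is what the projective-cover argument ensures and which then transfers to the injective side by duality.
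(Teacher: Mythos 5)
Your proof is correct and takes essentially the same route as the paper's: both identify $\mathrm{Hom}_{T(S)}(P_i,P_j)$ and $\mathrm{Hom}_{T(S)}(I_i,I_j)$ with $e_iT(S)e_j$ by passing through the duality $D$ and the right projectives $e_iT(S)$. The only difference is that you spell out the point the paper leaves implicit, namely that these identifications are compatible with the radical filtration, so that both spaces $\mathrm{rad}/\mathrm{rad}^2$ become $e_i(J/J^2)e_j$ and the valuations $d_{ij}$ agree.
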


\begin{proof}
	Let $S$ be a species. Note that
	\begin{equation*}
		D:T(S)\mbox{-mod}\xrightarrow{\sim}T(S)^{op}\mbox{-mod}.
	\end{equation*}
	Therefore, we have that
	\begin{equation*}
		\mathrm{Hom}_{T(S)}(T(S)e_i, T(S)e_j) \cong e_iT(S)e_j \cong \mathrm{Hom}_{T(S)^{op}}(e_jT(S), e_iT(S)) \cong \mathrm{Hom}_{T(S)}(D(e_iT(S)), D(e_jT(S))).
	\end{equation*}
	The claim now follows from the structure of $\Gamma_S$ described in Section~\ref{Section - Preliminaries}.
\end{proof}

\begin{mylemma}\label{lemma - Nakayama induces diagram automorphism}
	Let $\Delta$ be a Dynkin diagram. The Nakayama permutation $\sigma$ of $\Delta$ induces a diagram automorphism of $\Delta$.
\end{mylemma}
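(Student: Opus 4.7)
The plan is to exploit the identification of $\Gamma_S$ with a sub translation quiver of the valued translation quiver $\mathbb{Z}Q$, noted in Section~\ref{Section - Preliminaries} as a consequence of \cite[Theorem 2.1]{auslander1997representation}. Under this identification $P_i$ corresponds to the vertex $(i,0)$ for every $i\in Q_0$, and, because $I_i = \tau^{-(l_i-1)}P_{\sigma(i)}$ while $\tau^{-1}$ shifts the second coordinate by $+1$, the injective $I_i$ corresponds to $(\sigma(i),\, l_i-1)$. Via this picture the statement reduces to a direct comparison of arrows.

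First I would fix an arrow $\alpha: i\to j$ in $Q$. The projective slice of $\Gamma_S$ is a copy of $Q^{op}$ by \cite[Chapter~1, Proposition~1.15]{auslander1997representation}, so there is an arrow $P_j\to P_i$ in $\Gamma_S$ whose valuation matches that of the edge $\{i,j\}$ of $\Delta$, namely $(\dim_{D_i}M_\alpha,\, \dim_{D_j}M_\alpha)$. Applying Lemma~\ref{lemma - map between projectives and injectives}, I obtain an arrow $I_j\to I_i$ in $\Gamma_S$ with the same valuation. (Note that $\delta(P_i)=\delta(I_i)=\dim_\K D_i$, so the valuation as a pair, and not merely the single number $d_{ji}$, is preserved.)

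Next I would read off this arrow inside $\mathbb{Z}Q$: it becomes an arrow from $(\sigma(j), l_j-1)$ to $(\sigma(i), l_i-1)$. By the definition of $\mathbb{Z}Q$, an arrow between two vertices exists only when their first coordinates are adjacent in $\Delta$, and the valuation of that arrow is precisely the valuation of the corresponding edge of $\Delta$. Consequently $\{\sigma(i),\sigma(j)\}$ is an edge of $\Delta$ whose valuation agrees with that of $\{i,j\}$. Since every edge of $\Delta$ arises from some arrow of $Q$ and $\sigma$ is a bijection of the finite set $Q_0$, this proves that $\sigma$ sends edges of $\Delta$ to edges of $\Delta$ preserving valuations, i.e.\ it induces a diagram automorphism.

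The main obstacle is really just having the ambient $\mathbb{Z}Q$ at hand: a naive attempt to translate the arrow $I_j\to I_i$ by successive applications of $\tau$ until both endpoints land in the projective slice breaks down precisely because $l_i$ and $l_j$ can differ (this already happens, for example, for linearly oriented type $A_n$, where the $l_i$ are $1,2,\dots,n$). The $\mathbb{Z}Q$ viewpoint bypasses this difficulty, and once it is in place the remaining verifications are routine.
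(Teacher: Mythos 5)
Your proof is correct, but it reaches the conclusion by a different mechanism than the paper. Both arguments begin the same way: take an arrow between projectives, transport it to an arrow between the corresponding injectives via Lemma~\ref{lemma - map between projectives and injectives}, and then try to read off adjacency of $\sigma(i)$ and $\sigma(j)$. The paper does this last step intrinsically: it proves the inequality $l_i\le l_j\le l_i+1$ by contradiction, using that $T(S)$ is hereditary (the image of an irreducible map into a projective is projective, so the induced epimorphism onto the image would split, contradicting indecomposability of a non-projective module), and then translates the arrow $[I_i]\to[I_j]$ back by $\tau^{l_i-1}$ to land on $P_{\sigma(i)}$ and either $P_{\sigma(j)}$ or $\tau^{-1}P_{\sigma(j)}$. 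You instead invoke the identification of $\Gamma_S$ with the full valued translation subquiver of $\mathbb{Z}Q^{op}$ on the vertices $(i,n)$, $0\le n\le l_i-1$ --- a fact the paper itself records in Section~\ref{Section - Preliminaries} as a consequence of \cite[Theorem 2.1]{auslander1997representation} --- place $I_i$ at $(\sigma(i),l_i-1)$, and observe that $\mathbb{Z}Q^{op}$ simply has no arrows between vertices whose first coordinates are not adjacent in $\Delta$. Your route is shorter and also keeps track of the valuations (which the paper's proof of this lemma does not address, although it is not needed for how the lemma is used later), at the price of leaning on the global structure theorem for preprojective components as a black box; the paper's argument is more self-contained, using only Lemma~\ref{lemma - map between projectives and injectives}, almost split sequences, and heredity. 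One small remark: your closing comment that the ``translate back to the projective slice'' strategy breaks down because $l_i$ and $l_j$ may differ slightly mischaracterises the paper's approach --- that strategy does work once one first bounds $|l_i-l_j|\le 1$, which is exactly what the paper's hereditary argument accomplishes.
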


\begin{proof}
	Let $S$ be a species with diagram $\Delta$. Assume that there exist an arrow $[P_i]\rightarrow [P_j]$ in $\Gamma_S$. We need to show that there is an arrow $[P_{\sigma(i)}]\rightarrow [P_{\sigma(j)}]$ or $[P_{\sigma(j)}]\rightarrow [P_{\sigma(i)}]$ in $\Gamma_S$. Note that there is an arrow $[I_i]\rightarrow [I_j]$ by Lemma~\ref{lemma - map between projectives and injectives}. By Proposition~\ref{Proposition - nakayama permutation and integer l_i} there exists an integer $l_i$ such that $\tau^{l_i-1}I_i = P_{\sigma(i)}$. We only need to show that $l_i\le l_j\le l_i+1$. Indeed, if $l_j=l_i$ then there is an arrow $[P_{\sigma(i)}]\rightarrow [P_{\sigma(j)}]$ in $\Gamma_S$, and if $l_j=l_i+1$ then there is an arrow $[P_{\sigma(j)}]\rightarrow [P_{\sigma(i)}]$ in $\Gamma_S$, which is immediate since if $\tau^{l_i-1}I_j$ is not projective, then there exists an almost split sequence ending in $\tau^{l_i-1}I_j$ by \cite[Chapter 5, Corollary 2.4]{auslander1997representation}.
	
	Assume towards contradiction that $l_j>l_i+1$. Then $\tau^{l_i}I_j$ is not a projective module. Let $f:\tau^{l_i}I_j\rightarrow \tau^{l_i-1}I_i$ be an irreducible map. Using the fact that $T(S)$ is hereditary, we get that $\im f$ is projective. Thus $\hat{f}:\tau^{l_i}I_j\rightarrow \im f$ is a split epimorphism, which contradicts the fact that $\tau^{l_i}I_j$ is indecomposable and non-projective. A similar argument can be used when $l_j<l_i$. Thus $l_i\le l_j\le l_i+1$.
\end{proof}

The following proposition is a special case of \cite[Theorem 10.1 and 10.5]{schofield1985representations}.

\begin{myprop}
	Let $S$ be a species. There is a short exact sequence of $T(S)$-$T(S)$-bimodules
	\begin{equation}\label{eq - bimodule res of T(S)}
		0\rightarrow \bigoplus_{\substack{\alpha\in Q_1 \\ s(\alpha) = i}}T(S)e_{t(\alpha)}\otimes_{D_{t(\alpha)}} M_\alpha \otimes_{D_{s(\alpha)}}e_{s(\alpha)}T(S) \xrightarrow{d} \bigoplus_{i\in Q_0}T(S)e_i\otimes_{D_i} e_iT(S)\xrightarrow{mult} T(S)\rightarrow 0,
	\end{equation}
	where $d(a\otimes b\otimes c) = ab\otimes c - a\otimes bc$ and $mult$ denotes the natural multiplication.
\end{myprop}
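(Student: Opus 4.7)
The plan is to recognize the sequence as the standard bimodule resolution of a tensor algebra and verify exactness degree-wise. Writing $D = \bigoplus_i D_i$ and $M = \bigoplus_\alpha M_\alpha$, so that $T(S) = T_D(M) = \bigoplus_{n \geq 0} M^{\otimes_D n}$, grouping the idempotents gives canonical $T(S)$-bimodule isomorphisms
$$\bigoplus_{i \in Q_0} T(S) e_i \otimes_{D_i} e_i T(S) \;\cong\; T(S) \otimes_D T(S),$$
$$\bigoplus_{\alpha \in Q_1} T(S) e_{t(\alpha)} \otimes_{D_{t(\alpha)}} M_\alpha \otimes_{D_{s(\alpha)}} e_{s(\alpha)} T(S) \;\cong\; T(S) \otimes_D M \otimes_D T(S).$$
So it suffices to prove exactness of $0 \to T(S) \otimes_D M \otimes_D T(S) \xrightarrow{d} T(S) \otimes_D T(S) \xrightarrow{\mu} T(S) \to 0$, where I write $\mu$ for the multiplication map. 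Surjectivity of $\mu$ (via $\mu(1 \otimes a) = a$) and $\mu \circ d = 0$ (by direct computation) are immediate.

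The key idea is to exploit the path length grading. Both $d$ and $\mu$ preserve total path length, so it suffices to check exactness on each graded component separately. In degree $n$, the associativity isomorphisms give
$$(T(S) \otimes_D T(S))_n \;\cong\; \bigoplus_{p + q = n} M^{\otimes_D p} \otimes_D M^{\otimes_D q},$$
$$(T(S) \otimes_D M \otimes_D T(S))_n \;\cong\; \bigoplus_{p + q = n - 1} M^{\otimes_D p} \otimes_D M \otimes_D M^{\otimes_D q},$$
and each summand is canonically a copy of $M^{\otimes_D n}$. Parametrizing elements as tuples $(b_0, \ldots, b_n)$ and $(c_1, \ldots, c_n)$ indexed by the split position, the differentials take the explicit form
$$\mu(b_0, \ldots, b_n) = \sum_{p=0}^n b_p, \qquad d(c_1, \ldots, c_n) = \bigl(-c_1,\; c_1 - c_2,\; \ldots,\; c_{n-1} - c_n,\; c_n\bigr).$$

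With these formulas, exactness is bookkeeping: injectivity of $d$ follows by reading off $c_1 = 0$ from the first component, then $c_2 = 0$ from the second, and so on; the recipe $c_p = -(b_0 + \cdots + b_{p-1})$ produces a preimage under $d$ of any $(b_0, \ldots, b_n)$ with $\sum b_p = 0$, giving exactness in the middle; and surjectivity of $\mu$ is clear. The only point requiring care is that $d(a \otimes m \otimes b) = am \otimes b - a \otimes mb$ is a well-defined $T(S)$-bimodule morphism, which follows since the formula is $D$-balanced and commutes with left and right $T(S)$-multiplication. There is no real technical obstacle; the content of the proof is setting up the grading correctly, after which the exactness check is automatic. (Alternatively, one may simply invoke \cite{schofield1985representations} as the authors indicate.)
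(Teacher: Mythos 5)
Your proof is correct, but it is worth noting that the paper does not actually prove this proposition at all: it simply records it as a special case of Schofield's results on hereditary tensor rings (\cite[Theorems 10.1 and 10.5]{schofield1985representations}), which is the alternative you mention in your closing parenthesis. What you supply instead is the standard self-contained argument for the bimodule resolution of a tensor algebra: regroup the idempotent-indexed sums into $T(S)\otimes_D T(S)$ and $T(S)\otimes_D M\otimes_D T(S)$, observe that $d$ and $\mu$ are homogeneous for the path-length grading, identify each graded summand $M^{\otimes_D p}\otimes_D M^{\otimes_D q}$ with $M^{\otimes_D n}$ via multiplication, and verify exactness of the resulting explicit complex $(c_1,\dots,c_n)\mapsto(-c_1,\,c_1-c_2,\dots,c_n)$ by elementary linear algebra. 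I checked the identifications and the formulas for $d$ and $\mu$ in these coordinates, and they are right; the degree-$0$ component and the well-definedness of $d$ are also handled correctly. The trade-off is the expected one: the citation buys brevity and situates the statement in Schofield's general framework, while your argument is longer but elementary, transparent, and makes visible exactly why the sequence is exact (it is a split exact complex of $\K$-vector spaces in each degree). One small point in your favour: the sum over $\alpha$ in the paper's statement carries a spurious condition $s(\alpha)=i$ with an unbound $i$ (a typo inherited from the later application where one tensors with $D_i$); your version, summing over all $\alpha\in Q_1$, is the correct reading.
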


\begin{mylemma}\label{lemma - Q does not depend on K}
	Let $\Delta$ be a Dynkin diagram of type ADE. The Nakayama permutation $\sigma$ only depends on $\Delta$.
\end{mylemma}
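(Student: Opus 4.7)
The plan is to reduce the computation of $\sigma$ to a purely combinatorial procedure in the Grothendieck group $\textbf{K}_0(T(S)\mathrm{-mod})$, whose input is determined only by the diagram $\Delta$.

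First, I would invoke Proposition~\ref{Proposition - Coxeter transformation properties} to characterise $\sigma$ intrinsically: the integer $l_i$ is the smallest $\ell \geq 1$ for which $c^{\ell}[I_i]$ is a negative class, and then $[P_{\sigma(i)}] = c^{l_i - 1}[I_i]$. Thus $\sigma(i)$ is completely determined by the matrix of the Coxeter transformation $c$ in the basis $\{[D_j]\}_{j \in Q_0}$ together with the coefficients of $[I_i]$ and $[P_j]$ in that same basis.

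For the field-independence, by Figure~\ref{Figure - Dynkin Diagrams 2} a species of type ADE has $D_i = F$ and $M_\alpha = F$ for one fixed division algebra $F$. Therefore the coefficient of $[D_j]$ in $[P_i] = \sum_j (\dim_F e_j T(S) e_i)[D_j]$ equals the number of paths from $i$ to $j$ in $Q$; by $\K$-duality the corresponding statement for $[I_i]$ follows; and the Cartan matrix, hence the Coxeter matrix in the basis $\{[D_j]\}$, is determined by $Q$ alone. Consequently the characterisation above returns the same $\sigma$ regardless of the choice of $F$, so $\sigma$ depends only on the oriented quiver $Q$.

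To remove the remaining dependence on orientation and conclude that $\sigma$ only depends on $\Delta$, I would appeal to BGP reflection functors: any two orientations of a Dynkin diagram are connected by a sequence of reflections at sinks and sources, and each such reflection induces an equivalence between the subcategories of the two module categories obtained by deleting the simple at the reflected vertex. Under this equivalence $\tau$ is preserved and the indecomposable projectives and injectives at the other vertices correspond, so $l_i$ and $\sigma(i)$ are unchanged for every $i$ distinct from the reflected vertex. The main obstacle I foresee is the bookkeeping at the reflected vertex itself, where the simple projective and simple injective exchange roles; here one must verify that the $\tau$-orbit length and its terminus reshuffle in a way that leaves $\sigma$ invariant, which can be done using the $\pm 1$ relation between the $l_i$-values of adjacent vertices established in the proof of Lemma~\ref{lemma - Nakayama induces diagram automorphism}.
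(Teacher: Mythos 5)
Your proof is correct, and its first half is essentially the paper's: the paper also reduces to showing that the data needed to reconstruct $\Gamma_S$ via Proposition~\ref{Proposition - Coxeter transformation properties} (the classes $[P_i]$, $[I_i]$ and hence the Coxeter matrix in the basis $\{[D_j]\}$) depend only on the oriented quiver $Q$; it extracts this from the minimal projective resolution of $D_i$ obtained from the standard bimodule resolution of $T(S)$, whereas you count paths directly, but the content is the same. Where you genuinely diverge is the orientation-independence: the paper simply quotes Gabriel's explicit description of the Auslander--Reiten quivers of ADE path algebras, which already covers all orientations, while you propose BGP reflection functors. Your route is more self-contained and actually explains \emph{why} $\sigma$ is insensitive to orientation, but it shifts the weight onto the bookkeeping you flag, and that bookkeeping is a bit more delicate than your sketch suggests: the problematic index is not only the reflected vertex $k$ itself but also $i=\sigma^{-1}(k)$, since then the terminus $P_{\sigma(i)}=P_k=S_k$ of the $\tau$-orbit of $I_i$ is precisely the simple deleted by the equivalence (so $l_i$ drops by one under the reflection even though $\sigma(i)$ is unchanged, contrary to your claim that $l_i$ is untouched for all $i\neq k$). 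The verification does go through --- one tracks $P_j\mapsto P_j'$ and $I_j\mapsto I_j'$ for $j\neq k$, together with $\tau^{-1}P_k\mapsto P_k'$ and $I_k\mapsto\tau I_k'$, and checks $\sigma'=\sigma$ case by case --- and it is this computation, rather than the $\pm1$ relation from the proof of Lemma~\ref{lemma - Nakayama induces diagram automorphism}, that does the real work. In short: the paper's citation is shorter but opaque; your argument is longer but conceptual, at the cost of a reflection-functor verification that still needs to be written out.
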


\begin{proof}
	It is enough to show that the Coxeter transformation only depends on the Dynkin diagram $\Delta$, since we can determine the structure of $\Gamma_S$ using the Coxeter transformation by Proposition~\ref{Proposition - Coxeter transformation properties}. The description of the Auslander-Reiten quivers by \cite{gabriel1980auslander} for the ADE cases gives us the information that $\sigma$ only depends on $\Delta$ when $T(S)$ is a path algebra over a quiver $Q$ with diagram $\Delta$. This reduces our problem to showing that for a given $S$ with diagram $\Delta$, $\sigma$ only depends on $Q$ and not the species $S$ over $Q$.
	
	Let $S$ be a species with diagram $\Delta$. Applying $-\otimes_{T(S)}D_i$ on the sequence (\ref{eq - bimodule res of T(S)}) yields and exact sequence
	\begin{equation*}
		0\rightarrow \bigoplus_{\substack{\alpha\in Q_1 \\ s(\alpha) = i}}T(S)e_{t(\alpha)}\otimes_{D_{t(\alpha)}}M_\alpha \rightarrow \bigoplus_{i\in Q_0}T(S)e_i\rightarrow D_i\rightarrow 0.
	\end{equation*}
	Moreover, this is the minimal projective resolution of $D_i$, which implies that $[P_i] + \sum_{\substack{\alpha\in Q_1 \\ s(\alpha) = i}}[P_{t(\alpha)}] = [D_i]$. The construction of this sequence only depends on $Q$, therefore the coordinates of $[P_i]$ only depends on $Q$ in $\textbf{K}_0(T(S)\mathrm{-mod})$. By the dual argument we show that the coordinates of $[I_i]$ only depends on $Q$ in $\textbf{K}_0(T(S)\mathrm{-mod})$. Hence $\Gamma_S$ only depends on $Q$.
\end{proof}

\begin{mylemma}\label{lemma - F does not change}
	Let $\Lambda$ be a finite dimensional $\K$-algebra. If $M$ is an indecomposable and non-projective $\Lambda$-module then $\delta(M)= \delta(\tau M)$.
\end{mylemma}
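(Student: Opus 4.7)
The plan is to pass through the equivalence $\tau:\Lambda\text{-}\underline{\mathrm{mod}}\xrightarrow{\sim}\Lambda\text{-}\overline{\mathrm{mod}}$ recalled in Section~\ref{Section - Preliminaries}. Since $M$ is indecomposable and non-projective, $\tau M$ is an indecomposable non-injective module, and the equivalence restricts to a $\K$-algebra isomorphism $\underline{\mathrm{End}}_\Lambda(M)\cong \overline{\mathrm{End}}_\Lambda(\tau M)$ between the stable and costable endomorphism rings. It therefore suffices to compare each of these quotients with the corresponding honest endomorphism ring.

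To do so, write $P(M,M)\subseteq\mathrm{End}_\Lambda(M)$ for the ideal of endomorphisms of $M$ factoring through a projective, so that $\underline{\mathrm{End}}_\Lambda(M)=\mathrm{End}_\Lambda(M)/P(M,M)$. Since $M$ is indecomposable, $\mathrm{End}_\Lambda(M)$ is local; since $M$ is non-projective, no element of $P(M,M)$ can be a unit, for a factorization $M\xrightarrow{g}P\xrightarrow{h}M$ of an isomorphism would exhibit $g$ as a split monomorphism and hence $M$ as a summand of $P$, contradicting non-projectivity. Thus $P(M,M)$ consists of non-units, so $P(M,M)\subseteq \mathrm{rad}_\Lambda(M)$. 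Consequently the quotient map induces an isomorphism
\[
\mathrm{End}_\Lambda(M)/\mathrm{rad}_\Lambda(M)\;\cong\;\underline{\mathrm{End}}_\Lambda(M)/\mathrm{rad}(\underline{\mathrm{End}}_\Lambda(M)).
\]
The dual argument, using that $\tau M$ is indecomposable non-injective and that every endomorphism factoring through an injective fails to be an isomorphism, gives
\[
\mathrm{End}_\Lambda(\tau M)/\mathrm{rad}_\Lambda(\tau M)\;\cong\;\overline{\mathrm{End}}_\Lambda(\tau M)/\mathrm{rad}(\overline{\mathrm{End}}_\Lambda(\tau M)).
\]

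Combining the two identifications above with the $\K$-algebra isomorphism $\underline{\mathrm{End}}_\Lambda(M)\cong \overline{\mathrm{End}}_\Lambda(\tau M)$ from the first paragraph yields a $\K$-algebra isomorphism between the residue fields of $\mathrm{End}_\Lambda(M)$ and $\mathrm{End}_\Lambda(\tau M)$, and taking $\K$-dimensions gives $\delta(M)=\delta(\tau M)$. The only piece of genuine content is the containment $P(M,M)\subseteq\mathrm{rad}_\Lambda(M)$ together with its injective dual, which I expect to be the main place where care is needed; the rest is formal once the Auslander--Bridger equivalence $\tau:\underline{\mathrm{mod}}\to\overline{\mathrm{mod}}$ is invoked.
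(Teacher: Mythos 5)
Your proposal is correct and follows essentially the same route as the paper: pass through the Auslander--Bridger equivalence $\underline{\mathrm{End}}_\Lambda(M)\cong\overline{\mathrm{End}}_\Lambda(\tau M)$ and show that maps factoring through projectives (resp.\ injectives) lie in the radical of the endomorphism ring, so that both stable quotients have the same residue algebra as the honest endomorphism rings. The only cosmetic difference is that you establish $P(M,M)\subseteq\mathrm{rad}$ via locality of $\mathrm{End}_\Lambda(M)$ and the non-unit criterion, whereas the paper argues directly that the two factors $M\to P$ and $P\to M$ are non-split and hence radical morphisms; both are valid.
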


\begin{proof}
	Recall that $\tau: \Lambda\mbox{-}\underline{\mbox{mod}}\xrightarrow{\sim}\Lambda\mbox{-}\overline{\mbox{mod}}$. In particular,
	\begin{equation*}
		\underline{\mathrm{End}}_\Lambda(M) \cong \overline{\mathrm{End}}_\Lambda(\tau M).
	\end{equation*}
	
	Let $\alpha\in \mathrm{End}_\Lambda(M)$ be such that the residue class of $\alpha$ is zero in $\underline{\mathrm{End}}_\Lambda(M)$. In other words, we are in the situation
	\begin{equation*}
		\begin{tikzcd}
		M \arrow["\alpha", r] \arrow["\beta", d] & M \\
		P \arrow["\gamma"', ru] &
		\end{tikzcd}
	\end{equation*}
	where $\alpha = \gamma\beta$ and $P$ projective (not necessarily indecomposable). Since $M$ is not projective we have that $\beta$ and $\gamma$ are not split. Therefore $\beta\in \mathrm{rad}(M, P)$ and $\gamma \in \mathrm{rad}(P, M)$. This implies that $\alpha\in \mathrm{rad}(M, M)$. In other words, every endomorphism of $M$ that factors through a projective object is in the radical. Therefore we have the following
	\begin{equation*}
		\mathrm{End}_\Lambda(M)/\mathrm{rad}(\mathrm{End}_\Lambda(M))\cong \underline{\mathrm{End}}_\Lambda(M)/\mathrm{rad}(\underline{\mathrm{End}}_\Lambda(M)).
	\end{equation*}
	A similar argument can be used to show that
	\begin{equation*}
		\mathrm{End}_\Lambda(\tau M)/\mathrm{rad}(\mathrm{End}_\Lambda(\tau M))\cong \overline{\mathrm{End}}_\Lambda(\tau M)/\mathrm{rad}(\overline{\mathrm{End}}_\Lambda(\tau M)).
	\end{equation*}
	Hence, composing the three isomorphism gives us the result.
\end{proof}

We finally have everything we need to prove Theorem~\ref{Theorem - Nakayama permutation}.

\begin{proof}(Theorem~\ref{Theorem - Nakayama permutation})
	Let $S$ be a species where $\Delta$ is a Dynkin diagram. With Lemma~\ref{lemma - Q does not depend on K} we can use the description of the Auslander-Reiten quivers by \cite{gabriel1980auslander} for the $ADE$ cases.
	
	Now let $\Delta$ be non-simply laced. We define $N(i)$ to be the number of neighbours of $i$ in $\Delta$. Note that $\Delta$ satisfies $N(i)\le 2$ for all $i\in \Delta_0$. Since there is only one arrow of valuation higher than one in $\Delta$, using Lemma~\ref{lemma - map between projectives and injectives} and Lemma~\ref{lemma - F does not change} we have that the Nakayama permutation fixes the vertices that are connected to that arrow. Now we use an inductive argument to show that the rest of the vertices are also fixed. Let $i\xRightarrow{(k)}j$ be the arrow in $\Delta$ such that $k\not=1$ and $\delta(P_i)=k\delta(P_j)$. Then $\sigma(i)=i$ and $\sigma(j)=j$ as discussed. By Lemma~\ref{lemma - Nakayama induces diagram automorphism}, neighbours must be mapped to neighbours. In other words, $j'$, a neighbour of $i$ different from $j$, must be mapped to $j'$ or $j$, and since $j$ is fixed $j'$ must be fixed too. Inductively we can use this argument to show that every vertex is fixed.
\end{proof}

The following corollary is a generalization of \cite[Proposition 3.2 a)]{herschend2011n}.

\begin{mycor}\label{Corollary - l homogeneous species}
	Let $S$ be a species where $\Delta$ is a Dynkin diagram. Then $S$ is $l$-homogeneous if $Q$ is stable under $\sigma$. Moreover, for the different cases, the integer $l$ is
	\begin{center}
		\begin{tabular}{c|c|c|c|c|c|c|c|c|c}
			$\Delta$ & $A_n$ & $B_n$ & $C_n$ & $D_n$ & $E_6$ & $E_7$ & $E_8$ & $F_4$ & $G_2$ \\\hline
			$l$ & $\frac{n+1}{2}$ & $n$ & $n$ & $n-1$ & $6$ & $9$ & $15$ & $6$ & $3$
		\end{tabular}
	\end{center}
\end{mycor}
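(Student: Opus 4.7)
The plan is to exploit the $\sigma$-stability of $Q$ as a symmetry of $T(S)\text{-mod}$, and then identify the common orbit length via the Coxeter number of $\Delta$.

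First, I would lift $\sigma$ to an algebra automorphism of $T(S)$. Inspecting Figure~\ref{Figure - Dynkin Diagrams 2} together with Theorem~\ref{Theorem - Nakayama permutation}, $\sigma$ preserves the data $(D_i, M_\alpha)$ defining the species in every Dynkin type, and combined with the hypothesis that $Q$ is $\sigma$-stable, $\sigma$ extends to a species automorphism and hence to a $\K$-algebra automorphism $\phi$ of $T(S)$. Twisting modules by $\phi$ gives a self-equivalence $\Phi$ of $T(S)\text{-mod}$ that commutes with $\tau$ and sends $P_i \mapsto P_{\sigma(i)}$, $I_i \mapsto I_{\sigma(i)}$. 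Applying $\Phi$ to the defining identity $P_{\sigma(i)} = \tau^{l_i - 1} I_i$ from Proposition~\ref{Proposition - nakayama permutation and integer l_i} yields $l_i = l_{\sigma(i)}$, so $l$ is constant on $\sigma$-orbits of vertices.

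Second, by Proposition~\ref{Proposition - Coxeter transformation properties} the Coxeter transformation $c$ on $\textbf{K}_0(T(S)\mathrm{-mod})$ acts as $\tau$ on classes of non-projective indecomposable modules, and it can be identified with (the inverse of) a Coxeter element of the Weyl group $W$ of $\Delta$, of order equal to the Coxeter number $h$ of $\Delta$. Under the $\sigma$-stability hypothesis one has the classical identity $c^{h/2} = w_0$, where $w_0$ denotes the longest element of $W$; since $w_0$ maps every positive root to a negative root, every $c$-orbit on the root lattice contains exactly $h/2$ positive and $h/2$ negative elements. Via the Dlab--Ringel correspondence between positive roots of $\Delta$ and isomorphism classes of indecomposable $T(S)$-modules, each $\tau$-orbit then has length $h/2$, so $l_i = h/2$ for every $i$ and $S$ is $(h/2)$-homogeneous.

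Finally, the table is recovered by substituting the Coxeter numbers $h = n+1, 2n, 2n, 2(n-1), 12, 18, 30, 12, 6$ for types $A_n, B_n, C_n, D_n, E_6, E_7, E_8, F_4, G_2$ respectively. The hardest step is the identity $c^{h/2} = w_0$ for general $\sigma$-stable orientations: for bipartite (alternating source-sink) orientations it follows from a direct Weyl-group computation, and the general $\sigma$-stable case can be reduced to the bipartite one by noting that $\sigma$ agrees on simple roots with the diagram-automorphism part of $w_0$, so the factorization of $c$ as a product of simple reflections ordered by $Q$ is compatible with the required identity after conjugation by a suitable element of $W$.
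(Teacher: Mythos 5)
Your first step is sound, and it is a genuinely different route from the paper's: the paper disposes of the ADE cases by reducing to path algebras (Lemma~\ref{lemma - Q does not depend on K}) and citing Gabriel's explicit Auslander--Reiten quivers, and of the BCFG cases by noting that $\sigma$ is the identity there; your twist by the automorphism induced by $\sigma$ gives $l_i = l_{\sigma(i)}$ uniformly and self-containedly. The gap is in your second step, at the identity $c^{h/2}=w_0$ for an arbitrary $\sigma$-stable orientation. The proposed reduction to the bipartite case by conjugation fails: if $c'=wcw^{-1}$ and $c^{h/2}=w_0$, then $(c')^{h/2}=ww_0w^{-1}$, and this equals $w_0$ only when $w$ centralizes $w_0$. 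That is automatic precisely when $w_0=-1$ is central, i.e.\ in types $B_n$, $C_n$, $D_{2k}$, $E_7$, $E_8$, $F_4$, $G_2$ --- but in those types every Coxeter element already satisfies $c^{h/2}=-1=w_0$ because all exponents are odd, so no reduction is needed there. In the remaining types $A_{2k-1}$, $D_{2k+1}$, $E_6$, where $w_0\neq -1$ and the identity genuinely depends on the orientation, the element conjugating a non-bipartite $\sigma$-stable Coxeter element to a bipartite one has no reason to centralize $w_0$, so the step is unjustified as written. (The conclusion does appear to hold --- for instance for $1\to 2\to 3\leftarrow 4\leftarrow 5$ in type $A_5$ one checks $c^{3}=w_0$ directly --- but your argument does not establish it.)

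The good news is that you do not need $c^{h/2}=w_0$ at all. The $c$-orbit of $[I_i]$ is $[I_i],[\tau I_i],\dots,[P_{\sigma(i)}],-[I_{\sigma(i)}],\dots,-[P_i]$ and hence has cardinality $l_i+l_{\sigma(i)}$; the $n$ orbits of the classes $[I_1],\dots,[I_n]$ are pairwise distinct and partition the $nh$ roots, and each orbit size divides the order $h$ of $c$, which forces $l_i+l_{\sigma(i)}=h$ for every $i$ (this is Lemma~\ref{Lemma - l_i + l_sigma(i) = h}, and the count of positive roots as $nh/2$ is exactly what the paper's own proof uses to extract $l=h/2$ once homogeneity is known). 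Combined with $l_i=l_{\sigma(i)}$ from your first step this gives $2l_i=h$ immediately, and the table then follows from the Coxeter numbers as you state.
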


\begin{proof}
	By Lemma~\ref{lemma - Q does not depend on K} the $ADE$ cases follow from Gabriel's description of the AR-quivers \cite{gabriel1980auslander}.
	
	In the non-simply laced cases $\sigma$ is the identity by Theorem~\ref{Theorem - Nakayama permutation} and so $S$ is always $l$-homogeneous by \cite[Proposition 2.1]{herschend2011n}.
	
	The number of indecomposable modules in $T(S)\mathrm{-mod}$ is the same as the number of positive roots for $\Delta$. Thus $m = \frac{n h}{2}$ is the number of indecomposable modules, where $h$ is the Coxeter number for $\Delta$. Since $S$ is $l$-homogeneous, $l = \frac{m}{n}$ because $l$ is the number of indecomposable module in each $\tau$-orbit. Hence $l = \frac{h}{2}$, which gives the values in the above table.
\end{proof}

\section{Preprojective Algebras}\label{Section - The Preprojective Algebra}
In this section we introduce the preprojective algebra of a hereditary representation finite $\K$-algebra.

\begin{mydef}\cite{baer1987preprojective}\label{Definition - preprojective algebra BGL perspective}
	Let $\Lambda$ be a hereditary representation finite $\K$-algebra. The preprojective algebra $\Pi(\Lambda)$ is defined by
	\begin{equation}\label{Definition - eq preprojective algebra decomposition}
		\Pi(\Lambda) = \bigoplus_{i=0}^\infty \Pi_i
	\end{equation}
	where $\Pi_i = \Hom_\Lambda(\Lambda, \tau^{-i}\Lambda)$, with multiplication
	\begin{equation*}
		\begin{aligned}
			\Pi_i\times \Pi_j&\rightarrow \Pi_{i+j}, \\
			(u, v)&\mapsto uv = (\tau^{-i}(v)\circ u: \Lambda\rightarrow \tau^{-(i+j)}\Lambda).
		\end{aligned}
	\end{equation*}
\end{mydef}

\begin{myrem}\label{Remark - two natural grading on preprojective algebra}
	The preprojective algebra has a natural $\Z$-grading due to (\ref{Definition - eq preprojective algebra decomposition}). We will call this grading the $\star$-grading to avoid confusion. As it turns out in Section~\ref{Section - Koszul Complex for Higher Species} the $\star$-grading will play an important role when describing the almost Koszul complexes.
\end{myrem}

In the paper \cite{Dlab_1980} there is a more explicit description of the preprojective algebra of a species. Let $S$ be a species and let $\alpha\in Q_1$. There exist $x_1, \dots, x_n\in M_\alpha$ and $f_1, \dots, f_n\in \mathrm{Hom}_{D_{s(\alpha)}^{op}}(M_\alpha, D_{s(\alpha)})$ such that for every $x\in M_\alpha$ we have
\begin{equation*}
	x = \sum_{i=1}^n f_i(x)x_i.
\end{equation*}

\begin{mydef}
	Let $x_i$ and $f_i$ be as above. Then, the element
	\begin{equation*}
		c_\alpha = \sum_{i=1}^n x_i\otimes_{D_{s(\alpha)}} f_i\in M_\alpha\otimes_{D_{s(\alpha)}} \mathrm{Hom}_{D_{s(\alpha)}^{op}}(M_\alpha, D_{s(\alpha)})
	\end{equation*}
	is called the Casimir element of $M_\alpha\otimes_{D_{s(\alpha)}} \mathrm{Hom}_{D_{s(\alpha)}^{op}}(M_\alpha, D_{s(\alpha)})$.
\end{mydef} 

The Casimir element does not depend on the choice of $x_i$ and $f_i$ by \cite[Lemma 1.1]{Dlab_1980}.

\begin{mydef}\cite{Dlab_1980}\label{Definition - preprojective algebra of a species}
	Let $S$ be a species where $\Delta$ is a Dynkin diagram.
	\begin{enumerate}
		\item The double quiver $\overline{Q}$ is defined to be $\overline{Q}_0=Q_0$ and $\overline{Q}_1 = Q_1 \cup Q_1^*$ where
		\begin{equation*}
		Q_1^* = \{\alpha^*: j\rightarrow i\mid \alpha:i\rightarrow j\in Q_1 \}.
		\end{equation*}
		\item Let $\overline{S}$ be the species over $\overline{Q}$ where $\overline{D_i}=D_i$ for all $i\in \overline{Q}_0$ and $\overline{M}_\alpha = M_\alpha$ when $\alpha\in Q_1$ and $\overline{M}_{\alpha^*} = \mathrm{Hom}_{D_{s(\alpha)}^{op}}(M_\alpha, D_{s(\alpha)})$.
		\item For each $\alpha\in \overline{Q}_1$ let $c_\alpha$ be the Casimir element of $\overline{M}_\alpha\otimes_{D_{s(\alpha)}} \overline{M}_{\alpha^*}$. Define
		\begin{equation*}
		c = \sum_{\alpha\in \overline{Q}_1}\mathrm{sgn}(\alpha)c_\alpha,
		\end{equation*}
		where
		\begin{equation*}
		\mathrm{sgn}(\alpha) = \begin{cases}
		1, & \alpha\in Q_1 \\
		-1, & \mbox{else}.
		\end{cases}
		\end{equation*}
		We define the preprojective algebra of $S$ as $\Pi(S)=T(\overline{S})/\langle c\rangle$.
	\end{enumerate}
\end{mydef}

\begin{myrem}\label{Remark - Graded species}
	Let $S=(D, M)$ be a species. Given a decomposition 
	\begin{equation*}
		M = \bigoplus_{i\in \Z_{\ge 0}}M_i
	\end{equation*}
	we may consider $T(S)$ as a $\Z$-graded algebra by setting elements in $D$ to have degree zero and setting elements in $M_i$ to have degree $i$. This is equivalent to choosing a grading on the quiver $Q$ of $S$ by setting the arrow in $Q$ associated to $M_i$ to have degree $i$. Note that for such a graded species $S$ the quiver $Q$ may have multiple arrows between two vertices, but there will not be multiple arrows of the same degree.
	
	We will consider several different gradings for our species. For instance, setting $M = M_1$ we get a grading on $T(S)$ which corresponds to the path length grading on $T(S)$. We sometimes consider the preprojective algebra $\Pi(S) = T(\overline{S})/\langle c\rangle$ as a $\Z^2$-graded algebra. The first grading is induced by the decomposition $\overline{M} = \overline{M}_1$, and the second grading is induced by the decomposition
	\begin{equation*}
		\overline{M} = \overline{M}_0\oplus \overline{M}_1, \quad \overline{M}_0 =  \bigoplus_{\alpha\in Q}\overline{M}_\alpha, \quad \overline{M}_1 = \bigoplus_{\alpha^*\in Q^*}\overline{M}_{\alpha^*}.
	\end{equation*}
	In other words, the first grading corresponds to the path length grading and the second grading corresponds to the $\star$-grading defined in Remark~\ref{Remark - two natural grading on preprojective algebra}.
\end{myrem}

For a representation finite species $S$ over division algebras $F\subset G$, let $d_{s(\alpha)} = \dim_{D_{s(\alpha)}}\overline{M}_\alpha$ and $d_{t(\alpha)} = \dim_{D_{t(\alpha)}}\overline{M}_\alpha$. We choose elements $y_\alpha^1, \dots, y_\alpha^{\max(d_{s(\alpha)}, d_{t(\alpha)})}\in \overline{M}_\alpha$ such that
$\{y_\alpha^1, \dots, y_\alpha^{d_{s(\alpha)}} \}$ is a basis for $\overline{M}_\alpha$ when $\overline{M}_\alpha$ is viewed as a $D_{s(\alpha)}$-module and $\{y_\alpha^1, \dots, y_\alpha^{d_{t(\alpha)}} \}$ is a basis for $\overline{M}_\alpha$ when $\overline{M}_\alpha$ is viewed as a $D_{t(\alpha)}$-module. Moreover, we do this such that $c = \sum_{i=1}^{\dim_{D_{s(\alpha)}}\overline{M_\alpha}}y_\alpha^i\otimes_{D_{s(\alpha)}}y_{\alpha^*}^i$ is a Casimir element.

\begin{myex}\label{example - S species of Dynkin type C_3}
	Let $S$ be the species
	\begin{equation*}
		\C \xrightarrow{\C}\R_1 \xrightarrow{\R}\R_2.
	\end{equation*}
	Then $\overline{S}$ is given as
	\begin{equation*}
		\begin{tikzcd}
		\C \arrow[r, "\C", shift left] & \R_1 \arrow[l, "\C^*", shift left] \arrow[r, "\R", shift left] & \R_2 \arrow[l, "\R^*", shift left]
		\end{tikzcd}.
	\end{equation*}
	Pick the basis $1_\C, i_\C$ in $\C$ and $1_\R$ in $\R$, and let us denote the dual basis with $*$ as a superscript. The preprojective algebra of $S$ is
	\begin{equation*}
		\Pi(S) = T(\overline{S})/\langle c\rangle,
	\end{equation*}
	where
	\begin{equation*}
		c = - 1_\C^*\otimes_{\R_1} 1_\C - i_\C^*\otimes_{\R_1} i_\C + 1_\C\otimes_\C 1_\C^* - 1_\R^*\otimes_{\R_2} 1_\R + 1_\R \otimes_{\R_1} 1_\R^*.
	\end{equation*}
	
	Let us also describe $\Pi(T(S))$. The Auslander-Reiten quiver $\Gamma_S$ is given by
	\begin{equation*}
		\begin{tikzcd}[column sep={50pt,between origins}]
			& & P_1 \arrow[dr, red, "2"] \arrow[rr, dashed, -] & & \tau^{-1}P_1 \arrow[dr, red, "2"] \arrow[rr, dashed, -] & & I_1 \\
			& P_2 \arrow[ur, "2"] \arrow[dr, red] \arrow[rr, dashed, -] & & \tau^{-1}P_2 \arrow[ur, "2"] \arrow[dr, red] \arrow[rr, dashed, -] & & I_2 \arrow[ur, "2"] \\
			P_3 \arrow[ur] \arrow[rr, dashed, -] & & \tau^{-1}P_3 \arrow[ur] \arrow[rr, dashed, -] & & I_3 \arrow[ur]
		\end{tikzcd}
	\end{equation*}
	where the red arrows denotes the morphisms of $\star$-degree $1$. Elements in $\Pi(T(S))$ can be seen as linear combinations of paths in $\Gamma_S$. Thus we see similarities between $\Pi(S)$ and $\Pi(T(S))$ by comparing $Q_1$ with the arrows between projective modules, $Q_1^\star$ with the red arrows from projective modules and the relations $\langle c\rangle$ with the relations in $\Gamma_S$ coming from the Auslander-Reiten sequences.
\end{myex}

With Example \ref{example - S species of Dynkin type C_3} as motivation we now derive a relation between $\Pi(T(S))$ and $\Pi(S)$. In Section~\ref{Section - Preliminaries} we gave an explicit description of the Auslander-Reiten quiver $\Gamma_S$ for a representation finite species $S$. Thus we can represent all indecomposable modules in $\Gamma_S$ as translates of indecomposable projective modules. Let
\begin{equation*}
	\begin{tikzcd}[row sep = 10pt, column sep = 50pt]
		s(\alpha_1) \arrow[ddr, "\alpha_1"] & & t(\beta_1) \\
		s(\alpha_2) \arrow[dr, "\alpha_2"'] & & t(\beta_2) \\
		& i \arrow[uur, "\beta_1"] \arrow[ur, "\beta_2"'] \arrow[dr] \arrow[ddr, "\beta_m"'] \\
		\vdots \arrow[ur] & & \vdots \\
		s(\alpha_n) \arrow[uur, "\alpha_n"'] & & t(\beta_m) \\
	\end{tikzcd}
\end{equation*}
be the subquiver of $Q$ consisting of all outgoing and ingoing arrows to $i$. By \cite[Proposition 1.15]{auslander1997representation} we have a corresponding picture in $\Gamma_S$. 
\begin{equation*}
	\begin{tikzcd}[row sep = 10pt, column sep = 70pt]
		P_{t(\beta_1)} \arrow[ddr, "d_{P_{t(\beta_1)}, P_i}"] & & P_{s(\alpha_1)} \\
		P_{t(\beta_2)} \arrow[dr, "d_{P_{t(\beta_2)}, P_i}"'] & & P_{s(\alpha_2)} \\
		& P_i \arrow[uur, "d_{P_i, P_{s(\alpha_1)}}"] \arrow[ur, "d_{P_i, P_{s(\alpha_2)}}"'] \arrow[dr] \arrow[ddr, "d_{P_i, P_{s(\alpha_n)}}"'] \\
		\vdots \arrow[ur] & & \vdots \\
		P_{t(\beta_m)} \arrow[uur, "d_{P_{t(\beta_m)}, P_i}"'] & & P_{s(\alpha_m)} \\
	\end{tikzcd}
\end{equation*}

Let us now consider the almost split sequence starting at $P_i$.
\begin{equation}\label{eq - almost split seq in AR quiver}
	\begin{tikzcd}
		& \bigoplus_{\substack{\alpha\in Q_1 \\ t(\alpha) = i}}P_{s(\alpha)}^{d_{P_i, P_{s(\alpha)}}/\delta(P_{s(\alpha)})} \arrow[dr, "g_i'"] \\
		P_i \arrow[ur, "f_i"] \arrow[dr, "g_i"] & & \tau^{-1}P_i \\
		& \bigoplus_{\substack{\alpha\in Q_1 \\ s(\alpha) = i}}\tau^{-1}P_{t(\alpha)}^{d_{P_{t(\alpha)}, P_i}/\delta(P_{t(\alpha)})} \arrow[ur, "-f_i'"]
	\end{tikzcd}
\end{equation}
We write $f_i=[f_\alpha]_{\substack{\alpha\in Q_1 \\ s(\alpha)=i}}$, where $f_\alpha = \begin{bmatrix}
	f_\alpha^1 & f_\alpha^2 & \cdots & f_\alpha^{d_{P_i, P_{s(\alpha)}}/\delta(P_{s(\alpha)})}
\end{bmatrix}$. We express $g_i, f_i'$ and $g_i'$ in a similar fashion. From the fact that (\ref{eq - almost split seq in AR quiver}) is an almost split sequence, we know that $g_i'\circ f_i - f_i'\circ g_i = 0$. We want to compare this equality to $e_ice_i$, where $c$ is the Casimir element for $\Pi(S)$. Let us write out both expressions and compare them.
\begin{equation}\label{eq - fg - gf almost split sequence}
	\begin{aligned}
		g_i'\circ f_i - f_i'\circ g_i &= \sum_{\substack{\alpha\in Q_1 \\ t(\alpha)=i}}\sum_{k=1}^{d_{P_i, P_{s(\alpha)}/\delta(P_{s(\alpha)})}} {g'}_\alpha^k\circ f_\alpha^k - \sum_{\substack{\alpha\in Q_1 \\ s(\alpha)=i}}\sum_{k=1}^{d_{P_i, P_{t(\alpha)}/\delta(P_{t(\alpha)})}} {f'}_\alpha^k\circ g_\alpha^k = \\
		&= \sum_{\substack{\alpha\in Q_1 \\ t(\alpha)=i}}\sum_{k=1}^{d_{P_i, P_{s(\alpha)}/\delta(P_{s(\alpha)})}} f_\alpha^k {g'}_\alpha^k - \sum_{\substack{\alpha\in Q_1 \\ s(\alpha)=i}}\sum_{k=1}^{d_{P_i, P_{t(\alpha)}/\delta(P_{t(\alpha)})}} g_\alpha^k \tau({f'}_\alpha^k)
	\end{aligned}
\end{equation}
Note that the second equality holds since the multiplication in the $\Pi(T(S))$ is in the reversed order compared to composition. On the other hand
\begin{equation}\label{eq - fg - gf casimir element almost split}
	e_i c e_i = \sum_{\substack{\alpha\in Q_1 \\ t(\alpha) = i}}\sum_{i=1}^{\dim_{D_{s(\alpha)}}\overline{M}_\alpha} y_\alpha^k\otimes_{D_{s(\alpha)}}y_{\alpha^*}^k - \sum_{\substack{\alpha\in Q_1 \\ s(\alpha) = i}}\sum_{i=1}^{\dim_{D_{t(\alpha)}}\overline{M}_\alpha} y_{\alpha^*}^k\otimes_{D_{t(\alpha)}}y_\alpha^k.
\end{equation}
First note that if $\alpha:i\rightarrow j$ then
\begin{equation*}
	\mathrm{rad}_{T(S)}(P_j, P_i) = \mathrm{Hom}_{T(S)}(P_j, P_i) \cong e_jT(S)e_i = \overline{M}_\alpha,
\end{equation*}
and also that $\mathrm{rad}^2_{T(S)}(P_i, P_j) = 0$ since $i$ and $j$ are neighbours in $Q$ and $\Delta$ is a tree. Therefore
\begin{equation*}
	\dim_{D_{t(\alpha)}}\overline{M}_\alpha = \dim_\K \mathrm{rad}_{T(S)}(P_j, P_i) / \dim_\K D_{t(\alpha)} = d_{P_{t(\alpha)}, P_i}/\delta(P_{t(\alpha)}).
\end{equation*}
Similarly, if $\alpha:j\rightarrow i$ then
\begin{equation*}
	\dim_{D_{s(\alpha)}}\overline{M}_\alpha = d_{i, P_{s(\alpha)}}/\delta(P_{s(\alpha)}).
\end{equation*}
Hence we know the ranges of the sums in (\ref{eq - fg - gf almost split sequence}) and (\ref{eq - fg - gf casimir element almost split}) are the same. Let us now fix $f_i$ for all $i\in Q_0$, for example one can choose $f_\alpha^k = -\cdot y_\alpha^k$. Using the equivalence $\tau: T(S)\mathrm{-\underline{mod}}\rightarrow T(S)\mathrm{-\overline{mod}}$, and the fact that $\tau^{-1}: T(S)\mathrm{-mod}\to T(S)\mathrm{-mod}$ is given by $\tau^{-1} = \mathrm{Ext}^1_{T(S)}(DT(S), T(S))\otimes_{T(S)}-$, we know that $f_i'\in \tau^{-1}(\mathrm{rad}_{T(S)}(T(S), T(S)))$ for all $i$, and therefore we can choose ${f_\alpha'}^k = \tau^{-1}(f_\alpha^k)$. Now choose $g_i$ and $g_i'$ such that (\ref{eq - almost split seq in AR quiver}) becomes an almost split sequence for each $i\in Q_0$. Comparing (\ref{eq - fg - gf almost split sequence}) and (\ref{eq - fg - gf casimir element almost split}) we can construct a well-defined isomorphism stated in the following proposition.

\begin{myprop}\label{Proposition - explcit desc of iso between pi(lambda) and pi(S)}
	Let $S$ be a species, where $\Delta$ is a Dynkin diagram, over division algebras $F\subset G$. There is an isomorphism $\Pi(S)\xrightarrow{\sim}\Pi(T(S))$, where $\Pi(T(S))$ is the preprojective algebra of $T(S)$ defined in \cite{baer1987preprojective}, given by
	\begin{equation*}
		\begin{aligned}
			\Pi(S)&\rightarrow \Pi(T(S)) \\
			e_i&\mapsto 1_{D_i} \\
			y_\alpha^k &\mapsto \begin{cases}
				f_\alpha^k & \mbox{if }\alpha\in Q_1 \\
				g_\alpha^k & \mbox{if }\alpha\in Q_1^*.
			\end{cases}
		\end{aligned}
	\end{equation*}
\end{myprop}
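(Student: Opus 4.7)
The plan is to build the map in three stages: (a) construct an algebra homomorphism $\Phi\colon T(\overline{S})\to \Pi(T(S))$ via the universal property of the tensor algebra from the prescribed assignment on generators, (b) show $\Phi(c)=0$ so that $\Phi$ descends to $\overline{\Phi}\colon \Pi(S)\to \Pi(T(S))$, and (c) verify that $\overline{\Phi}$ is bijective.

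For (a), the assignment $e_i\mapsto 1_{D_i}$ yields a ring map $D\to \Pi(T(S))$, and the assignments $y_\alpha^k\mapsto f_\alpha^k$ for $\alpha\in Q_1$ together with $y_{\alpha^*}^k\mapsto g_\alpha^k$ extend by bilinearity to $D$-$D$-bimodule maps $\overline{M}_\alpha\to \Pi(T(S))$, because the chosen $y_\alpha^k$ form bases of $\overline{M}_\alpha$ over the relevant division ring. The universal property of $T(\overline{S})$ then produces $\Phi$.

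For (b), since $c=\sum_{i\in Q_0}e_ice_i$ and $\Phi$ preserves idempotents, it suffices to show $\Phi(e_ice_i)=0$ for each $i$. This is precisely the content of the comparison between (\ref{eq - fg - gf casimir element almost split}) and (\ref{eq - fg - gf almost split sequence}) carried out before the proposition: the two summations share the same index ranges once one uses the identifications $\dim_{D_{t(\alpha)}}\overline{M}_\alpha = d_{P_{t(\alpha)},P_i}/\delta(P_{t(\alpha)})$ and its dual, and the chosen lifts $f_\alpha^k = -\cdot y_\alpha^k$, ${f'}_\alpha^k = \tau^{-1}(f_\alpha^k)$. Thus $\Phi(e_ice_i) = g_i'\circ f_i - f_i'\circ g_i$, which vanishes because (\ref{eq - almost split seq in AR quiver}) is an almost split sequence (hence a chain complex). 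Therefore $\Phi$ factors through $\overline{\Phi}\colon \Pi(S)\to \Pi(T(S))$.

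For (c), surjectivity is immediate from the BGL description: $\Pi(T(S))$ is generated as a $\K$-algebra by its $\star$-degree-zero part $T(S)$ together with its $\star$-degree-one part $\bigoplus_i \Hom_{T(S)}(P_i,\tau^{-1} T(S))$, and both pieces lie in the image of $\overline{\Phi}$ since $T(S)$ is already the image of the $e_i$ and $y_\alpha^k$ with $\alpha\in Q_1$, while the $\star$-degree-one summand is spanned by the $g_\alpha^k$. Injectivity then follows from a dimension count: both algebras are finite-dimensional (since $S$ is representation finite) with total $\K$-dimension equal to $\sum_{[M]}\dim_\K M$ summed over isomorphism classes of indecomposable $T(S)$-modules, so the surjection $\overline{\Phi}$ must be bijective. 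The hardest point is step (b), which requires matching the formal Casimir element against the concrete categorical relation from the almost split sequence; once the explicit dictionary between the $y$'s and the $f$'s, $g$'s is fixed as above, this identification is exact.
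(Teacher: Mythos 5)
Your steps (a) and (b) follow the paper's own route: the paper's entire pre-proposition discussion is precisely your comparison of the Casimir element (\ref{eq - fg - gf casimir element almost split}) with the mesh relation (\ref{eq - fg - gf almost split sequence}) coming from the almost split sequence (\ref{eq - almost split seq in AR quiver}), and your universal-property construction of $\Phi$ on $T(\overline{S})$ is just the formalization of the paper's ``we can construct a well-defined isomorphism''. So the core of your argument agrees with the paper; the paper then outsources bijectivity to \cite{ringel1998preprojective} and \cite{gao2020functorial}, whereas you try to prove it.

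The gap is in your step (c), specifically injectivity. Surjectivity of $\overline{\Phi}$ only gives $\dim_\K \Pi(S) \ge \dim_\K\Pi(T(S)) = \sum_{[M]}\dim_\K M$; injectivity requires the reverse inequality, i.e.\ that the quotient $T(\overline{S})/\langle c\rangle$ has dimension at most $\sum_{[M]}\dim_\K M$. You assert this dimension formula for $\Pi(S)$ as if it followed from representation finiteness, but it does not: even finite dimensionality of $T(\overline{S})/\langle c\rangle$ is not a priori clear, and the upper bound on its dimension is exactly the nontrivial content of the theorem being proved --- it is the main theorem of \cite{Dlab_1980} (that $\Pi(S)\cong\bigoplus_{t\ge 0}\tau^{-t}T(S)$ as modules), respectively Ringel's Theorem A in the ADE case. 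As written, your dimension count begs the question; the repair is either to cite that result explicitly or to produce a spanning set of $T(\overline{S})/\langle c\rangle$ of the right size, which is substantial extra work. Two smaller points: the $\star$-degree-one part of $\Pi(T(S))$ is not $\K$-spanned by the $g_\alpha^k$ alone, only generated by them together with the degree-zero part (which is all your surjectivity argument needs); and the claim that $\Pi(T(S))$ is generated in $\star$-degrees $0$ and $1$ deserves a sentence of justification, namely that for a representation finite hereditary algebra every morphism in the preprojective component is a sum of composites of irreducible maps, each of $\star$-degree $0$ or $1$.
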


In the path algebra case, i.e. when $\Delta$ is of Dynkin type ADE, the above result is proven by Ringel in \cite[Theorem A]{ringel1998preprojective}. It is also proven for a generalization of species called phylum in \cite[Theorem 7.6]{gao2020functorial}. A species $S$ give rise to a phylum by considering the adjoint functors
\begin{equation*}
	\begin{aligned}
		X = \left(\bigoplus_{\alpha\in Q_1}\overline{M}_\alpha\right) \otimes_{D_{s(\alpha)}} -: D\mathrm{-mod} &\to D\mathrm{-mod}, \\
		Y = \left(\bigoplus_{\alpha\in Q_1}\overline{M}_{\alpha^\star}\right) \otimes_{D_{t(\alpha)}} - : D\mathrm{-mod} &\to D\mathrm{-mod}.
	\end{aligned}
\end{equation*}

\begin{myrem}
	Recall that $\Pi(S)$ is $\Z^2$-graded. We can define a $\Z^2$-grading on $\Pi(T(S))$, motivated by Proposition~\ref{Proposition - explcit desc of iso between pi(lambda) and pi(S)}, in the following way. We define a $\Z^2$-grading on $\Pi(T(S))$ by setting the first grading to be
	\begin{equation*}
		\begin{aligned}
			\Pi(T(S))_0 &= T(S)_0 = D, \\
			\Pi(T(S))_1 &= T(S)_1 \oplus \bigoplus_{\alpha\in Q_1} \mathrm{Hom}_{T(S)}(T(S)e_{s(\alpha)}, \tau^{-1}T(S)e_{t(\alpha)}), \\
			\Pi(T(S))_n &= (\Pi(T(S))_1)^n,
		\end{aligned}
	\end{equation*}
	and letting the second grading be the $\star$-grading. Then the morphism in Proposition~\ref{Proposition - explcit desc of iso between pi(lambda) and pi(S)} is compatible with the $\Z^2$-gradings since it induces isomorphisms in the degrees $(0, 0)$, $(1, 0)$ and $(1, 1)$ in which both $\Pi(S)$ and $\Pi(T(S))$ are generated.
\end{myrem}

\section{Nakayama Automorphism}\label{Section - Nakayama Automorphism}
In this section we study the Nakayama automorphism of the preprojective algebra $\Pi(\Lambda)$, where $\Lambda$ is a basic hereditary representation finite $\K$-algebra. The existence of a Nakayama automorphism is a consequence of $\Pi(\Lambda)$ being a Frobenius algebra. In the paper \cite{brenner2002periodic} they give an explicit formula for the Nakayama automorphism for preprojective algebras $\Pi(\K Q)$, where $Q$ is a quiver of Dynkin type ADE. Thus showing that $\Pi(\K Q)$ is self-injective. Iyama and Oppermann later showed, in a more general setting, that the preprojective algebra of a representation finite hereditary $\K$-algebra is self-injective in the paper \cite{iyama2013stable}. An alternative proof of this statement was given in \cite{grant2019nakayama}.

\begin{mythm}\cite[Theorem 4.8]{brenner2002periodic}\cite[Theorem 3.1]{grant2019nakayama}
	Let $\Lambda$ be basic, hereditary and representation finite, then $\Pi(\Lambda)$ is a Frobenius algebra.
\end{mythm}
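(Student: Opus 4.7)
The plan is to construct an isomorphism $\Pi(\Lambda)\cong D\Pi(\Lambda)$ of left $\Pi(\Lambda)$-modules, which is exactly the Frobenius condition. The starting data is the decomposition $\Pi(\Lambda) = \bigoplus_{k\ge 0}\mathrm{Hom}_\Lambda(\Lambda, \tau^{-k}\Lambda)$ from Definition~\ref{Definition - preprojective algebra BGL perspective}, combined with Proposition~\ref{Proposition - nakayama permutation and integer l_i}, which provides integers $l_i$ and a permutation $\sigma$ satisfying $P_{\sigma(i)} = \tau^{l_i - 1} I_i$. Rewriting this as $\tau^{-(l_i - 1)}P_{\sigma(i)} = I_i$ and noting that $\tau^{-l_i}P_{\sigma(i)} = \tau^{-1}I_i = 0$, only finitely many graded pieces $\Pi_k$ are non-zero, so $\Pi(\Lambda)$ is finite-dimensional.

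Next I would pin down the socle of each indecomposable graded projective left module $\Pi(\Lambda) e_j$. Unwinding the decomposition gives
\[
\Pi(\Lambda) e_j \;=\; \bigoplus_{k,m} \mathrm{Hom}_\Lambda(P_j, \tau^{-k} P_m),
\]
and the maximal non-zero degree is attained precisely when $\tau^{-k}P_m$ is an injective module with top matching $P_j$, i.e.\ $\tau^{-k}P_m = I_j$, forcing $m = \sigma(j)$ and $k = l_{\sigma(j)} - 1$. The top graded piece is then $\mathrm{Hom}_\Lambda(P_j, I_j) \cong D(e_j\Lambda e_j) \cong DD_j$, a single copy of the simple $\Pi(\Lambda)$-module at vertex $\sigma(j)$. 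Since $\Pi_{\ge 1}$ strictly raises the grading, this top-degree piece is automatically annihilated by $\Pi_{\ge 1}$ and hence lies inside the socle of $\Pi(\Lambda) e_j$.

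The crucial step is to promote this inclusion to an equality by exhibiting a non-degenerate bigraded pairing
\[
\mathrm{Hom}_\Lambda(P_j, \tau^{-k} P_m)\;\otimes_\K\; \mathrm{Hom}_\Lambda(P_m, \tau^{-(l_{\sigma(j)}-1-k)} P_{\sigma(j)}) \;\longrightarrow\; \mathrm{Hom}_\Lambda(P_j, I_j)
\]
given on simple tensors by $f\otimes g \mapsto \tau^{-k}(g)\circ f$. Summing over $k$ and $m$ and post-composing with a $\K$-linear trace on $DD_j$ yields a pairing $\Pi(\Lambda)e_j \otimes_\K e_{\sigma(j)}\Pi(\Lambda) \to \K$ that intertwines the $\Pi(\Lambda)$-actions. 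Non-degeneracy in each bidegree is extracted from Auslander--Reiten duality for the hereditary algebra $\Lambda$, which on the indecomposables above specialises to a perfect $\K$-bilinear pairing between the two $\mathrm{Hom}$-spaces.

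The main obstacle is verifying that the composition $\tau^{-k}(g)\circ f$ genuinely realises the Auslander--Reiten pairing in every bidegree and is coherently compatible with the graded multiplication on $\Pi(\Lambda)$. This is ultimately a bookkeeping exercise in the translation quiver structure recalled in Section~\ref{Section - Preliminaries}, but it is the technical heart of the argument. Once it is established, assembling over $j$ yields $\Pi(\Lambda) \cong D\Pi(\Lambda)$ as left $\Pi(\Lambda)$-modules, and as a bonus one reads off that the Nakayama permutation of $\Pi(\Lambda)$ coincides with the permutation $\sigma$ of Proposition~\ref{Proposition - nakayama permutation and integer l_i}.
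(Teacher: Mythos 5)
The paper does not actually prove this theorem --- it imports it from Brenner--Butler--King and Grant --- so I am comparing your outline against the strategy of those cited sources, which it does broadly follow: a composition pairing into a distinguished top component, shown to be perfect by Serre/Auslander--Reiten duality. However, one of your intermediate claims is false. It is not true that the maximal non-zero $\star$-degree of $\bigoplus_{k,m}\mathrm{Hom}_\Lambda(P_j,\tau^{-k}P_m)$ is attained only by $\mathrm{Hom}_\Lambda(P_j,I_j)$, nor does annihilation by $\Pi_{\ge 1}$ place the top graded piece in the socle. Take $\Lambda=\K(1\to 2\to 3)$ and $j=2$: then $l_2=2$, the top $\star$-degree is $1$, and besides $\mathrm{Hom}(P_2,\tau^{-1}P_2)=\mathrm{Hom}(P_2,I_2)$ this degree also contains $\mathrm{Hom}(P_2,\tau^{-1}P_3)=\mathrm{Hom}(P_2,S_2)\neq 0$, where $\tau^{-1}P_3=S_2$ is not injective. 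The radical of $\Pi(\Lambda)$ contains $\mathrm{rad}(\Pi_0)=\mathrm{rad}(\Lambda)$ as well as $\Pi_{\ge 1}$, and multiplying by the degree-zero radical element $\tau^{-1}(\beta)$ (for $\beta$ the arrow $2\to 3$, viewed as a map $P_3\to P_2$) carries $\mathrm{Hom}(P_2,S_2)$ non-trivially into $\mathrm{Hom}(P_2,I_2)$; so that summand sits in top $\star$-degree but not in the socle. The correct statement, that $\mathrm{Soc}(\Pi(\Lambda)e_j)$ is the single copy $\mathrm{Hom}(P_j,I_j)$, needs the Auslander--Reiten triangle argument that every radical map out of $I_j$ annihilates $P_j\twoheadrightarrow \mathrm{Soc}(I_j)\rightarrowtail I_j$, as used in Section~\ref{Section - Koszul algebras}. (Also, the top degree is $l_j-1$, not $l_{\sigma(j)}-1$; these differ already for linearly oriented $A_n$.)

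This slip is not by itself fatal, because the Frobenius property follows from non-degeneracy of the pairing alone: one defines the functional on the summand $\mathrm{Hom}(P_j,\tau^{-(l_j-1)}P_{\sigma(j)})$ and extends by zero on every other summand of the $(k,m)$-decomposition. But the step you defer as ``bookkeeping'' is where the theorem actually lives. Serre duality for the projective $P_j$ does give a perfect composition pairing $\mathrm{Hom}(P_j,X)\times\mathrm{Hom}(X,I_j)\to\mathrm{Hom}(P_j,I_j)\cong DD_j\to\K$; what remains is to prove that $g\mapsto\tau^{-k}(g)$ identifies $\mathrm{Hom}(P_m,\tau^{-(l_j-1-k)}P_{\sigma(j)})$ with $\mathrm{Hom}(\tau^{-k}P_m,I_j)$. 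A priori $\tau^{-k}$ only induces isomorphisms between stable Hom-spaces, so one must check that no maps factor through projectives or injectives at any intermediate stage; this is exactly where hereditariness and representation-finiteness enter (submodules of projectives are projective and quotients of injectives are injective, so such factorizations would split off summands of indecomposables), and it is the content of the chain of isomorphisms (\ref{eq - isomorphism from grant proposition 3.4}) that the paper quotes from Grant. Until that verification is supplied, the non-degeneracy, and hence the Frobenius property, is not established.
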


For a Frobenius algebra $\Pi(\Lambda)$ we know that $\Pi(\Lambda)\cong D\Pi(\Lambda)$ as left $\Pi(\Lambda)$-modules, but this isomorphism can be made into an isomorphism of $\Pi(\Lambda)$-$\Pi(\Lambda)$-bimodules by introducing a twist on $D\Pi(\Lambda)$, i.e. $\Pi(\Lambda)\cong D\Pi(\Lambda)_\gamma$ as $\Pi(\Lambda)$-$\Pi(\Lambda)$-bimodules. This $\gamma$ is unique up to an inner automorphism, and it is called the Nakayama automorphism of $\Pi(\Lambda)$.

From \cite{grant2019nakayama} we also have a description of the Nakayama automorphism $\gamma$. By \cite[Proposition 3.4]{grant2019nakayama} we have an isomorphism
\begin{equation}\label{eq - isomorphism from grant proposition 3.4}
	\begin{aligned}
		\mathrm{Hom}_\Lambda(P_i, \tau^{-r}P_j) &\xrightarrow{\sim} D\mathrm{Hom}_\Lambda(\tau^{-r}P_j, I_i) \xrightarrow{\sim} D\mathrm{Hom}_\Lambda(P_j, \tau^rI_i) \xrightarrow{\sim} \\ 
		&\xrightarrow{\sim} DD\mathrm{Hom}_\Lambda(\tau^rI_i, I_j)\xrightarrow{\sim} \mathrm{Hom}_\Lambda(\tau^rI_i, I_j).
	\end{aligned}
\end{equation}

In the \cite{baer1987preprojective} perspective, let 
\begin{equation*}
f:P_i\rightarrow \tau^{-r}P_j
\end{equation*}
be a map. Choosing an isomorphism $P_{\sigma(i)} \cong \tau^{l_i-1}I_i$ and combining with the isomorphism in (\ref{eq - isomorphism from grant proposition 3.4}) we define
\begin{equation*}
\gamma(f):P_{\sigma(i)}\rightarrow \tau^{l_i - l_j - r}P_{\sigma(j)}.
\end{equation*}

Note that if $f = \id_{P_i}$, then $r = 0$ and $i = j$ and therefore $\gamma(\id_{P_i}) = \id_{P_{\sigma(i)}}$.

\begin{mythm}{\normalfont \cite[Theorem 3.8]{grant2019nakayama}}\label{Theorem - Grant Nakayama automorphism}
	The morphism $\gamma$ defined above is a Nakayama automorphism of $\Pi(\Lambda)$.
\end{mythm}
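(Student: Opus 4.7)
\textbf{Proof plan for Theorem~\ref{Theorem - Grant Nakayama automorphism}.}

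My strategy is to establish three things in order: (i) that $\gamma$ is a well-defined $\K$-linear bijection $\Pi(\Lambda)\to\Pi(\Lambda)$, (ii) that $\gamma$ is multiplicative, and (iii) that $\gamma$ implements the bimodule twist characterising the Nakayama automorphism of the Frobenius algebra $\Pi(\Lambda)$. For (i), I would verify naturality of each of the four isomorphisms making up (\ref{eq - isomorphism from grant proposition 3.4}): the first and third are the standard $\K$-dualities $\Hom_\Lambda(X,I_i)\cong D\Hom_\Lambda(P_i,X)$, which are functorial in $X$; the second uses $\tau$-equivariance on the preprojective/preinjective part of $\Gamma_\Lambda$, namely $\Hom_\Lambda(\tau^{-r}X,Y)\cong\Hom_\Lambda(X,\tau^rY)$ whenever both $X$ and $Y$ lie in that component (which is the case here, since $\Lambda$ is representation finite); and the fourth is double duality. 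Summing these isomorphisms over the pairs $(i,j,r)$ and composing with the chosen identifications $P_{\sigma(i)}\cong\tau^{l_i-1}I_i$ yields a well-defined $\K$-linear map, which is bijective because each component is an isomorphism and $\sigma$ is a permutation.

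For (ii), I would show $\gamma(fg)=\gamma(f)\gamma(g)$ by tracking a product through the chain. Given $f:P_i\to\tau^{-r}P_j$ and $g:P_j\to\tau^{-s}P_k$, the product $fg\in\Pi_{r+s}$ is $\tau^{-r}(g)\circ f$. Applying the first isomorphism turns this into an element of $D\Hom_\Lambda(\tau^{-(r+s)}P_k,I_i)$, and functoriality of the AR-duality together with $\tau$-equivariance allows one to split the chain so that the factor coming from $g$ lands in $\Hom_\Lambda(\tau^{r}I_j,I_k)\cong\Hom_\Lambda(P_{\sigma(j)},\tau^{l_j-l_k-s}P_{\sigma(k)})$ and the factor from $f$ lands in $\Hom_\Lambda(P_{\sigma(i)},\tau^{l_i-l_j-r}P_{\sigma(j)})$. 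Their composite in $\Pi(\Lambda)$, after applying $\tau$ appropriately, has the correct shape $P_{\sigma(i)}\to\tau^{l_i-l_k-(r+s)}P_{\sigma(k)}$ and agrees with the image of $fg$. Since $\gamma(\mathrm{id}_{P_i})=\mathrm{id}_{P_{\sigma(i)}}$ by construction, this makes $\gamma$ an algebra automorphism.

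For (iii), I would construct the required bimodule isomorphism $\psi:\Pi(\Lambda)\to D\Pi(\Lambda)_\gamma$ directly from (\ref{eq - isomorphism from grant proposition 3.4}). The Frobenius pairing
\begin{equation*}
    \Hom_\Lambda(P_i,\tau^{-r}P_j)\;\times\;\Hom_\Lambda(P_j,\tau^{r-l_i+1}P_{\sigma(i)})\;\longrightarrow\;\K,
\end{equation*}
obtained by composing through $I_i\cong\tau^{1-l_i}P_{\sigma(i)}$ and projecting onto the socle, is non-degenerate precisely by the first isomorphism of (\ref{eq - isomorphism from grant proposition 3.4}); summing these gives a non-degenerate form on $\Pi(\Lambda)$ and hence a left-module isomorphism $\Pi(\Lambda)\cong D\Pi(\Lambda)$. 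To see that the right action on the $D$-side is twisted by $\gamma$, I would insert an element $b\in\Pi(\Lambda)$ on the right of the first factor and use the functoriality established in (i) to transport $b$ to the left of the second factor; tracing the $\tau$-shifts through the four isomorphisms shows that the transported element is exactly $\gamma(b)$, which is the defining property of the Nakayama automorphism.

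The main obstacle is the bookkeeping in step (iii): keeping track of the many powers of $\tau$, of which argument is being shifted, and ensuring that the identification $I_i\cong\tau^{1-l_i}P_{\sigma(i)}$ is fixed consistently on both sides of the pairing so that the twist element emerges as $\gamma(b)$ and not, say, $\gamma^{-1}(b)$ or a scalar rescaling thereof. Naturality of AR-duality does all the conceptual work, but the sign and shift conventions have to be handled with care.
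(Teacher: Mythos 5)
The paper does not prove this statement: Theorem~\ref{Theorem - Grant Nakayama automorphism} is quoted verbatim from \cite[Theorem 3.8]{grant2019nakayama}, so there is no in-paper proof to compare your attempt against. Judged on its own, your outline is essentially the argument of the cited source: build the Frobenius form from the composite $\mathrm{Hom}_\Lambda(P_i,\tau^{-r}P_j)\to\mathrm{Hom}_\Lambda(P_i,I_i)\to\K$, identify $\Pi(\Lambda)\cong D\Pi(\Lambda)$ as left modules, and read off the twist from how a right multiplication transports across the chain (\ref{eq - isomorphism from grant proposition 3.4}). Two remarks. First, your step (ii) is redundant once step (iii) is done: for an associative non-degenerate form $\beta$ with $\beta(x,y)=\beta(y,\gamma(x))$, the identity $\beta(y,\gamma(xz))=\beta(xz,y)=\beta(x,zy)=\beta(zy,\gamma(x))=\beta(y,\gamma(x)\gamma(z))$ and non-degeneracy force $\gamma$ to be multiplicative, so you need not track products through the four isomorphisms separately. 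Second, and this is the only place I would press you, everything of substance sits in the deferred "bookkeeping" of step (iii): as literally phrased, inserting $b$ on the right of the first factor and moving it to the left of the second factor is the associativity identity $\beta(fb,g)=\beta(f,bg)$, which involves no $\gamma$ at all; the twist only appears when you compare the socle evaluation on $\mathrm{Hom}_\Lambda(P_m,I_i)$ (where $fb$ lands) with the one on $\mathrm{Hom}_\Lambda(P_m,I_m)$, and it is precisely this change of injective that routes $b$ through (\ref{eq - isomorphism from grant proposition 3.4}) and produces $\gamma(b)$ acting on the $\sigma$-shifted idempotents. Until that comparison is written out, the proposal is a correct plan rather than a proof, but it is the right plan and matches the one in \cite{grant2019nakayama}.
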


The aim of this section is to prove the following result.

\begin{mythm}\label{Theorem - description of nakayama automorphism true}
	Let $S$ be a species, where $\Delta$ is a Dynkin diagram, over division algebras $F\subset G$. There is a Nakayama automorphism $\gamma$ of $\Pi(S)$ given by
		\begin{equation*}
			\gamma(y_\alpha^i) = \begin{cases}
			y^i_{\sigma(\alpha)}, & \mbox{if }\alpha\in Q_1 \\
			\sgn(\sigma(\alpha)) y^i_{\sigma(\alpha)}, & \mbox{if }\alpha\not\in Q_1
			\end{cases},
		\end{equation*}
		and $\gamma(e_i) = e_{\sigma(i)}$ for all $i\in Q_0$.
\end{mythm}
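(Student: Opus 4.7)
The plan is to identify $\Pi(S)$ with $\Pi(T(S))$ via Proposition~\ref{Proposition - explcit desc of iso between pi(lambda) and pi(S)} and then to apply Theorem~\ref{Theorem - Grant Nakayama automorphism} directly on generators. Under this identification, $e_i$ corresponds to $\mathrm{id}_{P_i}$, each $y_\alpha^k$ with $\alpha \in Q_1$ corresponds to a component $f_\alpha^k$ of the left almost split morphism out of an indecomposable projective in $T(S)\mathrm{-mod}$, and each $y_{\alpha^*}^k$ with $\alpha^* \in Q_1^*$ corresponds to a component $g_\alpha^k$ appearing in the almost split sequence (\ref{eq - almost split seq in AR quiver}). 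Since $\gamma$ is an algebra automorphism, it suffices to verify the formula on these generators and to check that the prescription preserves the ideal of relations.

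First I would verify that the formula defines a well-defined algebra endomorphism. Because $\sigma$ is a diagram automorphism of $\Delta$ by Lemma~\ref{lemma - Nakayama induces diagram automorphism}, it permutes the vertices of $\overline{Q}$ and, together with the pairing $\alpha \leftrightarrow \alpha^*$, extends to a permutation of $\overline{Q}_1$; Lemma~\ref{lemma - F does not change} guarantees that the division-ring types match at paired vertices, so the bases $\{y_\alpha^k\}$ can be transported along $\sigma$. A direct calculation on the Casimir element $c = \sum_{\alpha \in \overline{Q}_1} \sgn(\alpha) c_\alpha$ then gives $\gamma(c) = -c \in \langle c \rangle$, so $\gamma$ descends to $\Pi(S)$; bijectivity is immediate because $\sigma$ is a permutation.

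Next I would verify that this $\gamma$ realises the Nakayama automorphism described by Theorem~\ref{Theorem - Grant Nakayama automorphism}. On $\mathrm{id}_{P_i}$ the conclusion is immediate. For $f_\alpha^k \in \mathrm{Hom}_{T(S)}(P_{t(\alpha)}, P_{s(\alpha)})$ corresponding to $y_\alpha^k$ with $\alpha \in Q_1$, the chain (\ref{eq - isomorphism from grant proposition 3.4}) produces an element of $\mathrm{Hom}_{T(S)}(P_{\sigma(t(\alpha))}, \tau^{l_{t(\alpha)} - l_{s(\alpha)}} P_{\sigma(s(\alpha))})$. The shift $l_{t(\alpha)} - l_{s(\alpha)} \in \{-1, 0, 1\}$ by the proof of Lemma~\ref{lemma - Nakayama induces diagram automorphism}, so the image is either an irreducible morphism between two indecomposable projectives (the case $\sigma(\alpha) \in Q_1$) or a component of a $g$-map in an almost split sequence (the case $\sigma(\alpha) \in Q_1^*$); in both cases it is identified with $y_{\sigma(\alpha)}^k$ under Proposition~\ref{Proposition - explcit desc of iso between pi(lambda) and pi(S)}, with no sign provided the bases are chosen compatibly with the Casimir element.

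The same computation for $g_\alpha^k$ corresponding to $y_{\alpha^*}^k$ with $\alpha^* \in Q_1^*$ produces the image $y_{\sigma(\alpha^*)}^k$, but now the isomorphism chain picks up an extra sign exactly when $\sigma(\alpha^*) \in Q_1^*$: this is the only place where the antisymmetry $g'_i f_i - f'_i g_i = 0$ of the almost split sequence (\ref{eq - fg - gf almost split sequence}) feeds into the computation, and it matches exactly the minus sign of $\sgn(\alpha)$ in the Casimir element $c$, producing the factor $\sgn(\sigma(\alpha^*))$ in the formula. The main obstacle is precisely this sign bookkeeping: the identification between the tensor-algebra description of $\Pi(S)$ and the Hom-space description of $\Pi(T(S))$ has a built-in asymmetry between arrows in $Q_1$ and $Q_1^*$, and the factor $\sgn(\sigma(\alpha))$ in the theorem is what corrects for this asymmetry when $\sigma$ either preserves or does not preserve the $Q_1$/$Q_1^*$ decomposition.
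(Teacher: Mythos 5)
Your overall strategy --- transporting Grant's description of $\gamma$ (Theorem~\ref{Theorem - Grant Nakayama automorphism}) through the identification of Proposition~\ref{Proposition - explcit desc of iso between pi(lambda) and pi(S)} and then checking the formula on generators --- is the same starting point as the paper's proof, and your remarks about well-definedness (that $\sigma$ permutes $\overline{Q}_1$ and that $\gamma$ preserves $\langle c\rangle$) are fine. But the proposal has a genuine gap at its core: you assert that the isomorphism chain (\ref{eq - isomorphism from grant proposition 3.4}) sends $y_\alpha^k$ to exactly $y_{\sigma(\alpha)}^k$ (respectively $\sgn(\sigma(\alpha^*))\,y_{\sigma(\alpha^*)}^k$) ``with no sign provided the bases are chosen compatibly with the Casimir element.'' That sentence is precisely the statement to be proved, not an observation. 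The chain (\ref{eq - isomorphism from grant proposition 3.4}) involves $\K$-duality twice and a choice of isomorphism $P_{\sigma(i)}\cong\tau^{l_i-1}I_i$, and Grant's $\gamma$ is only canonical up to inner automorphism; so a priori one only gets $\gamma(y_\alpha^k)=f_\alpha^1\,y_{\sigma(\alpha)}^k\,f_\alpha^2$ for unknown elements $f_\alpha^1, f_\alpha^2$ of the division rings. Establishing that compatible choices exist is the content of the first half of the paper's argument: one normalizes the constants on the arrows of $Q_1$ by composing with explicit inner automorphisms $g(j,d)$ along a spanning-tree enumeration of $Q$, then compares $c$ with $\gamma'(c)$ inside $\langle c\rangle$ to pin the remaining constants down to $\sgn(\sigma(\alpha^*))\,f$ for a single scalar $f$, with extra work in types BCFG to show $f$ lies in the small field $F$ rather than $G$.

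The second, and more serious, omission is that even after all of this normalization one residual global constant $f\in F$ survives --- it cannot be removed by inner automorphisms --- and your proposal never addresses why $f=1$. In the paper this occupies the entire second half of the proof: one uses the Frobenius form $\beta$ with $\beta(x,y)=\beta(y,\gamma(x))$, fixes socle generators $u_i$, and verifies the compatibility $y^1_{\sigma(\alpha)}v_1=u_j=-\sgn(\alpha^*)\sgn(\sigma(\alpha^*))v_2 y^1_{\alpha^*}$ by explicit computations with the preprojective relations, case by case in types ADE (imported from \cite{brenner2002periodic}) and B, C, F, G (new diagrammatic calculations). Without an argument of this kind your proof only establishes the theorem up to an undetermined scalar, i.e.\ statement (1) of the paper's proof but not statement (2). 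To repair the proposal you would need either to carry out this normalization and the $f=1$ computation, or to exhibit a genuinely canonical choice of the isomorphisms in (\ref{eq - isomorphism from grant proposition 3.4}) for which the scalar is visibly $1$ --- and no such canonical choice is supplied.
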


Note that Theorem \ref{Theorem - description of nakayama automorphism true} is a generalization of \cite[Corollary 4.7]{brenner2002periodic}. 

\begin{proof}
	We prove the following statements separately.
	\begin{enumerate}
		\item There is a Nakayama automorphism $\gamma$ of $\Pi(S)$ given by
		\begin{equation*}
			\gamma(y_\alpha^i) = \begin{cases}
			y^i_{\sigma(\alpha)}, & \mbox{if }\alpha\in Q_1 \\
			\sgn(\sigma(\alpha))f y^i_{\sigma(\alpha)}, & \mbox{if }\alpha\not\in Q_1
			\end{cases},
		\end{equation*}
		for some constant $f\in F$, and $\gamma(e_i) = e_{\sigma(i)}$ for all $i\in Q_0$.
		\item $f=1$.
	\end{enumerate}
	\textbf{Proof of (1):} We will divide the proof in two parts. The first part will consist of the proof for when $\Delta$ is simply laced, and the other part will cover when $\Delta$ is non-simply laced.
	
	Assume that $\Delta$ is simply laced. Since $\Delta$ is a Dynkin diagram of type ADE we have that $D_i = F$ for all $i\in Q_0$ and also $M_\alpha = F$ for all $\alpha\in Q_1$. Using Theorem~\ref{Theorem - Grant Nakayama automorphism} together with the isomorphism in Proposition~\ref{Proposition - explcit desc of iso between pi(lambda) and pi(S)} we see that the Nakayama automorphism is defined on the generators as
	\begin{equation*}
		\gamma(y_\alpha) = f^1_\alpha y_{\sigma(\alpha)} f^2_\alpha,
	\end{equation*}
	where $f^1_\alpha, f^2_\alpha\in F$, and note that $\gamma(e_i) = e_{\sigma(i)}$. Since $M_\alpha = F$ for all $\alpha\in Q_1$, we have that $\dim_{D_{s(\alpha)}}\overline{M}_\alpha = \dim_F F = 1$ and thus we can choose $y_\alpha = 1_F$ as a basis for $\overline{M}_\alpha$. Everything commutes with $1_F$ in $F$ and therefore
	\begin{equation*}
		\gamma(y_\alpha) = f_\alpha y_{\sigma(\alpha)},
	\end{equation*}
	where $f_\alpha = f^1_\alpha f^2_\alpha$. Since the Nakayama automorphism is unique up to an inner automorphism, we can use inner automorphisms to simplify $\gamma$. We define a map
	\begin{equation*}
		\begin{aligned}
			g(-, -): Q_0\times D^\times&\rightarrow \mathrm{Inn}(\Pi(S)), \\
			(j, d = (d_1, d_2, \dots, d_{|Q_0|}))&\mapsto \left[x\mapsto \left(\sum_{\substack{i\in Q_0 \\ i\not=j}}e_i + d_j^{-1}e_j\right)x\left(\sum_{\substack{i\in Q_0 \\ i\not=j}}e_i + d_je_j\right)\right].
		\end{aligned}
	\end{equation*}
	We can compute explicitly that $g(t(\beta), f_\beta)\circ \gamma$ is a Nakayama automorphism given by
	\begin{equation}\label{eq - inner automorphism applied to nakayama automorphism}
		(g(t(\beta), f_\beta)\circ \gamma)(y_\alpha) = \begin{cases}
			y_\alpha, & \mbox{if }\alpha = \beta \\
			f_\beta f_\alpha y_\alpha, & \mbox{if }s(\alpha) = t(\beta) \\
			f_\beta^{-1} f_\alpha y_\alpha, & \mbox{if }t(\alpha) = t(\beta) \\
			f_\alpha y_\alpha, & \mbox{else}
		\end{cases}
	\end{equation}
	
	Note that by replacing $\gamma$ with $(g(t(\beta), f_\beta)\circ \gamma)(y_\alpha)$ we change $f_\beta$ to $1$ and leave $f_\alpha$ unchanged for all $\alpha$ not adjacent to $t(\beta)$. We can also remove $f_\beta$ by considering the Nakayama automorphism $g(s(\beta), f^{-1}_\beta)\circ \gamma$. 
	
	Since $Q$ is a tree we can enumerate $Q_0$ by picking a root $0$ and then proceed to label the other vertices by $1, 2, \dots$ such that for each $i\ge 1$ there exists an arrow $\alpha:i\rightarrow j\in Q_1$ or $\alpha: j\rightarrow i\in Q_1$, where $j<i$, and let us denote said arrow by $\alpha_i$. We claim that this induces an enumeration on the arrows in $Q$. It is enough to show that $\alpha_i$ is unique since $Q$ has $|Q_0|-1$ arrows. Let $\Delta_i$ be the full subdiagram of $\Delta$ with vertices $\{0, 1, \dots, i-1\}$, where $i\le |Q_0|$. Now assume that there is an edge $a$ in $\Delta$ between $i$ and $j$ and an edge $a'$ between $i$ and $j'$, where $j,j'<i$. Since $\Delta_i$ is connected there is a path $\omega$ from $j$ to $j'$. Extending $\omega$ with $a$ and $a'$ yields a cycle in $\Delta$ which is absurd since $Q$ is a tree. This proves uniqueness of $\alpha_i$.
	
	We can now use induction on the arrows $y_{\alpha_i}$ to simplify $\gamma$. Assume that $\gamma(y_{\alpha_i}) = y_{\sigma(\alpha_i)}$ for all $i\le n$. Without loss of generality we can assume that $t(y_{\alpha_{n+1}}) = n+1$, since the proof for $s(y_{\alpha_{n+1}}) = n+1$ uses the same argument. Let us now consider $g(n+1, f_{\alpha_{n+1}})\circ \gamma$. By (\ref{eq - inner automorphism applied to nakayama automorphism}) we see that $(g(n+1, f_{\alpha_{n+1}})\circ \gamma)(y_{\alpha_{n+1}}) = y_{\sigma(\alpha_{n+1})}$. Since $y_{\alpha_{n+1}}$ is the unique arrow connecting $\{1, \dots, n\}$ to $n+1$ we still have that $(g(n+1, f_{\alpha_{n+1}})\circ \gamma)(y_{\alpha_i}) = y_{\sigma(\alpha_i)}$ for all $i\le n$. Therefore, by inductively modifying $\gamma$ we get
	\begin{equation*}
		\gamma(y_\alpha) = \begin{cases}
			y_{\sigma(\alpha)}, & \mbox{if }\alpha\in Q_1 \\
			f_\alpha y_{\sigma(\alpha)}, & \mbox{if }\alpha\not\in Q_1
		\end{cases}.
	\end{equation*}
	
	It is left to show that $f_\alpha$ does not depend on $\alpha$ up to a sign. This is shown by comparing $c$ and $\gamma(c)$. Recall that
	\begin{equation*}
		c = \sum_{\alpha\in \overline{Q}_1}\sgn(\alpha)y_\alpha\otimes_{D_{s(\alpha)}}y_{\alpha^*}.
	\end{equation*}
	Now applying $\gamma$ gives
	\begin{equation*}
		\begin{aligned}
			\gamma(c) &= \sum_{\alpha\in 		Q_1}y_{\sigma(\alpha)}\otimes_{D_{s(\alpha)}}f_{\alpha^*}y_{\sigma(\alpha)^*} - \sum_{\alpha^*\in Q^*_1}f_{\alpha^*}y_{\sigma(\alpha)^*}\otimes_{D_{s(\alpha^*)}}y_{\sigma(\alpha)}
		\end{aligned}
	\end{equation*}
	
	Note that $\gamma$ can be viewed as a graded morphism where the grading is given by $\deg(y_\alpha) = 1$ for all $\alpha\in \overline{Q_1}$ and $\deg(e_i) = 0$ for all $i\in Q_0$. Let $\gamma': T(\overline{S})\rightarrow T(\overline{S})$ be defined by having the same constants as $\gamma$. Since $\gamma'$ induces $\gamma$ we get that $\gamma'$ sends $\langle c \rangle$ to $\langle c \rangle$. We get that
	\begin{equation*}
		\gamma'(e_i c e_i) = e_{\sigma(i)}\gamma'(c)e_{\sigma(i)}\in e_{\sigma(i)}T(\overline{S})_2e_{\sigma(i)}\cap \langle c \rangle = e_{\sigma(i)}T(\overline{S})_0ce_{\sigma(i)}
	\end{equation*}
	since $c$ has degree $2$. Now $e_{\sigma(i)}T(\overline{S})_0 = T(\overline{S})_0 e_{\sigma(i)} = D_i = F$ yields
	\begin{equation*}
		\gamma'(e_ice_i) \in F\langle e_{\sigma(i)}ce_{\sigma(i)}\rangle.
	\end{equation*}
	Let $f_i\in F$ be a constant such that
	\begin{equation*}
		\gamma'(e_ice_i) = \sum_{\substack{\alpha\in \overline{Q}_1, \\ s(\alpha) = i \mbox{ or } t(\alpha)=i}} -\sgn(\sigma(\alpha)) f_i y_{\sigma(\alpha)}\otimes_{D_{s(\alpha)}} y_{\sigma(\alpha)^*}.
	\end{equation*}
	Thus
	\begin{equation*}
		f_{\alpha^*} = -\sgn(\alpha)\sgn(\sigma(\alpha))f_i = -\sgn(\alpha^*)\sgn(\sigma(\alpha^*))f_i = \sgn(\sigma(\alpha^*))f_i
	\end{equation*}
	for all $\alpha\in Q^*_1$ such that $s(\alpha) = i$ or $t(\alpha)=i$. Since $Q$ is connected, $f_i = f_j$ for all $i,j\in Q_0$, and therefore
	\begin{equation*}
		\gamma(y_\alpha) = \begin{cases}
			y_{\sigma(\alpha)}, & \mbox{if }\alpha\in Q_1 \\
			\sgn(\sigma(\alpha))f y_{\sigma(\alpha)}, & \mbox{if }\alpha\not\in Q_1
		\end{cases},
	\end{equation*}
	for some $f\in F$.
	
	Now we assume that $\Delta$ is non-simply laced. We want to reduce the problem so that we can reuse the logic we used for when $\Delta$ is simply laced. Similar to the simply laced case, we can assume that $y_\alpha^1 = 1_F$. Since $\Delta$ is a Dynkin diagram of BCFG, there exists a unique arrow $\alpha_0\in Q_1$ such that $M_{\alpha_0} = {}_FG_G$ or $M_{\alpha_0} = {}_GG_F$. We will assume the latter since both cases use similar arguments. Then we know $\dim_{D_{t(\alpha)}} M_\alpha = 1$ for all $\alpha\in Q_1$. Therefore we can write
	\begin{equation*}
		\gamma(y^i_\alpha) = d_\alpha y^i_{\sigma(\alpha)},
	\end{equation*}
	for all $\alpha\in Q_1$ where $d_\alpha \in D_{t(\alpha)}$. Let us now enumerate the vertices and apply the same argument as in the simply laced case to simplify $\gamma$. Pick $0 = s(\alpha_0)$ as our root and let the other vertices be defined in the same way as before. Then the same induction argument works for this case, since $\dim_{D_{t(\alpha)}}M_\alpha = 1$ and $\dim_{D_{s(\beta)}} M_\beta = \dim_{D_{t(\beta)}}M_\beta = 1$ for all $\alpha\not=\beta\in Q_1$, and we can write
	\begin{equation*}
		\gamma(y^i_\alpha) = \begin{cases}
			y^i_{\sigma(\alpha)}, & \mbox{if }\alpha\in Q_1, \\
			d_\alpha y^i_{\sigma(\alpha)}, & \mbox{if }\alpha\not\in Q_1.
		\end{cases}
	\end{equation*}
	
	Now we want to show that $d_\alpha\in F$ for all $\alpha\in Q_1$ and that $d_\alpha$ does not depend on $\alpha$. Let $\gamma'$ be defined in the same way as in the simply laced case. We again compare $c$ and $\gamma'(c)$. Recall that
	\begin{equation*}
		c = \sum_{\alpha\in \overline{Q}_1}\sum_{i=1}^{\dim_{D_{s(\alpha)}}(\overline{M}_\alpha)}\sgn(\alpha)y^i_\alpha\otimes_{D_{s(\alpha)}}y^i_{\alpha^*}.
	\end{equation*}
	Now applying $\gamma'$ gives
	\begin{equation*}
		\begin{aligned}
			\gamma'(c) =& \sum_{\alpha\in Q_1}\sum_{i=1}^{\dim_{D_{s(\alpha)}}(\overline{M}_\alpha)}y^i_{\sigma(\alpha)}\otimes_{D_{s(\alpha)}}d_{\alpha^*}y^i_{\sigma(\alpha)^*} - \sum_{\alpha^*\in Q^*_1}\sum_{i=1}^{\dim_{D_{s(\alpha^*)}}(\overline{M}_{\alpha^*})}d_{\alpha^*}y^i_{\sigma(\alpha)^*}\otimes_{D_{s(\alpha^*)}}y^i_{\sigma(\alpha)}
		\end{aligned}
	\end{equation*}
	
	The same argument as in the simply laced case is used to show that $\gamma'(c) \in \langle c\rangle$. More explicitly,
	\begin{equation}\label{eq - ideal of c maps to <c>}
		\gamma'(e_i ce_i) = -d_ie_{\sigma(i)}ce_{\sigma(i)},
	\end{equation}
	for some $d_i\in D_i$. If we show that $\gamma(c_\alpha) = d_{\alpha^*} c_{\sigma(\alpha)}$ and $\gamma(c_{\alpha^*}) = d_\alpha c_{\sigma(\alpha)^*}$ for all $\alpha\in Q_1$, then we can use the same argument as in the simply laced case to say that $d_{\alpha^*} = d\sgn(\sigma(\alpha^*))$ for some $d\in G$, and also use that $d\in D_i$ for all $i\in Q_0$ to say that $d\in F$.
	
	Now we prove that $\gamma'(c_\alpha) = d_{\alpha^*} c_{\sigma(\alpha)}$ and $\gamma'(c_{\alpha^*}) = d_{\alpha^*} c_{\sigma(\alpha)^*}$ for all $\alpha\in Q_1$. The latter is immediate from
	\begin{equation*}
		\begin{aligned}
			\gamma'(c_{\alpha^*}) = \gamma\left(\sum_{i=1}^{\dim_{D_{s(\alpha^*)}}(\overline{M}_{\alpha^*})}y^i_{\alpha^*}\otimes_{D_{s(\alpha^*)}}y^i_\alpha\right) = \sum_{i=1}^{\dim_{D_{s(\alpha^*)}}(\overline{M}_{\alpha^*})}d_{\alpha^*}y^i_{\sigma(\alpha)^*}\otimes_{D_{s(\alpha^*)}}y^i_\alpha = d_{\alpha^*}c_{\sigma(\alpha)^*}.
		\end{aligned}
	\end{equation*}
	To prove that $\gamma'(c_\alpha) = d_{\alpha^*} c_{\sigma(\alpha)}$ we first do a similar computation
	\begin{equation*}
		\begin{aligned}
			\gamma'(c_\alpha) &=\gamma\left(\sum_{i=1}^{\dim_{D_{s(\alpha)}}(\overline{M}_\alpha)}y^i_\alpha\otimes_{D_{s(\alpha)}}y^i_{\alpha^*}\right) = \sum_{i=1}^{\dim_{D_{s(\alpha)}}(\overline{M}_\alpha)}y^i_{\sigma(\alpha)}\otimes_{D_{s(\alpha)}}d_{\alpha^*}y^i_{\sigma(\alpha)^*} = \\
			&= \sum_{i=1}^{\dim_{D_{s(\alpha)}}(\overline{M}_\alpha)}d^i_{\alpha^*}y^i_{\sigma(\alpha)}\otimes_{D_{s(\alpha)}}y^i_{\sigma(\alpha)^*},
		\end{aligned}
	\end{equation*}
	where $d_{\alpha^*}^i$ is defined via $d^i_{\alpha^*}y^i_{\sigma(\alpha)} = y^i_{\sigma(\alpha)}d_{\alpha^*}$. By (\ref{eq - ideal of c maps to <c>}) we get $d_{\alpha^*}^i = d_{\alpha^*}^j$ for all $i,j = 1, \dots, \dim_{D_{s(\alpha)}}(\overline{M}_\alpha)$. Moreover, since $y^1_{\alpha} = 1_F$ we have $d^1_{\alpha^*} = d_{\alpha^*}$. Hence $\gamma'(c_\alpha) = d_{\alpha^*} c_{\sigma(\alpha)}$.
	
	\textbf{Proof of (2):} For this part we will use an alternative description of Frobenius algebras that uses a bilinear form. In our case, there is a non-degenerate bilinear form $\beta: \Pi(S)\times \Pi(S) \to \K$ such that $\beta(xy, z) = \beta(x, yz)$ and $\beta(x, y) = \beta(y, \gamma(x))$ for all $x,y,z\in \Pi(S)$. Let $\alpha: i\to j\in \overline{Q}_1$ and fix a non-zero element $u_i\in \mathrm{Soc}(\Pi(S)e_i)$. Since $\Pi(S)y_\alpha^1\subset \Pi(S)e_i$ and that $\mathrm{Soc}(\Pi(S)e_i)$ is simple, we have that $\mathrm{Soc}(\Pi(S)e_i)\subset \Pi(S)y_\alpha^1$. In other words, $u_i = v_1y_\alpha^1$ for some element $v_1\in \Pi(S)$. Dually, we have that $\mathrm{Soc}(\Pi(S)e_i)\subset y_{\sigma(\alpha^*)}^1\Pi(S)$ and thus $u_i = y_{\sigma(\alpha^*)}^1v_2$ for some element $v_2\in \Pi(S)$. Consider the element $u_j = y_{\sigma(\alpha)}^1 v_1$. We now claim that we are done if we can show that
	\begin{equation*}
		u_j = -\sgn(\alpha^*)\sgn(\sigma(\alpha^*))v_2 y_{\alpha^*}^1.
	\end{equation*}
	To summarise
	\begin{equation*}
		\begin{aligned}
			v_1y_\alpha^1 &= u_i = y_{\sigma(\alpha^*)}^1v_2, \\
			y_{\sigma(\alpha)}^1 v_1 &= u_j = -\sgn(\alpha^*)\sgn(\sigma(\alpha^*))v_2 y_{\alpha^*}^1.
		\end{aligned}
	\end{equation*}
	Indeed, if we show that $\beta(z, u_i) = \beta(z, u_if)$, then since $\beta$ is non-degenerate we have that $u_i = u_if$ and therefore $f=1$. Note that if $\beta(z, u_i) = 0$ for all $z\in \Pi(S)$ if and only if $\beta(z, u_i) = 0$ for all $z\in D_{\sigma(i)}$ since $u_i\in \mathrm{Soc}(\Pi(S)e_i)$. Hence without loss of generality we can assume that $z\in D_{\sigma(i)}$. Also recall that $y_{\sigma(\alpha)}^1 = 1_F$, and therefore $zy_{\sigma(\alpha^*)}^1 = y_{\sigma(\alpha^*)}^1 z'$ and $z' y_{\sigma(\alpha)}^1 = y_{\sigma(\alpha)}^1z$ for some $z'\in D$. The computation
	\begin{equation}\label{eq - beta and f=1}
		\begin{aligned}
			\beta(z, u_i) &= \beta(z, y_{\sigma(\alpha^*)}^1v_2) = \beta(zy_{\sigma(\alpha^*)}^1, v_2) = \beta(y_{\sigma(\alpha^*)}^1 z', v_2) = \beta(y_{\sigma(\alpha^*)}^1, z'v_2) = \\
			&= \begin{cases}
				\beta(z'v_2, y_{\alpha^*}^1), & \mbox{if }\sigma(\alpha^*)\in Q_1 \\
				\beta(z'v_2, f\sgn(\alpha^*)y_{\alpha^*}^1), & \mbox{if }\sigma(\alpha^*)\not\in Q_1
			\end{cases} = \\
			&= \begin{cases}
				\beta(z', -\sgn(\alpha^*) u_j), & \mbox{if }\sigma(\alpha^*)\in Q_1 \\
				\beta(z', u_jf), & \mbox{if }\sigma(\alpha^*)\not\in Q_1
			\end{cases} = \\
			&= \begin{cases}
				-\sgn(\alpha^*)\beta(z', y_{\sigma(\alpha)}^1 v_1), & \mbox{if }\sigma(\alpha^*)\in Q_1 \\
				\beta(z', y_{\sigma(\alpha)}^1 v_1f), & \mbox{if }\sigma(\alpha^*)\not\in Q_1
			\end{cases} = \\
			&= \begin{cases}
				-\sgn(\alpha^*)\beta(y_{\sigma(\alpha)}^1, zv_1), & \mbox{if }\sigma(\alpha^*)\in Q_1 \\
				\beta(y_{\sigma(\alpha)}^1, zv_1f), & \mbox{if }\sigma(\alpha^*)\not\in Q_1
			\end{cases} = \\
			&= \begin{cases}
				-\sgn(\alpha^*)\beta(zv_1, \sgn(\alpha)fy_{\alpha}^1), & \mbox{if }\sigma(\alpha^*)\in Q_1 \\
				\beta(zv_1f, y_{\alpha}^1), & \mbox{if }\sigma(\alpha^*)\not\in Q_1
			\end{cases} = \\
			&= \begin{cases}
				-\sgn(\alpha^*)\sgn(\alpha)\beta(z, v_1y_{\alpha}^1f), & \mbox{if }\sigma(\alpha^*)\in Q_1 \\
				\beta(z, v_1y_{\alpha}^1f), & \mbox{if }\sigma(\alpha^*)\not\in Q_1
			\end{cases} = \\
			&= \beta(z, u_if)
		\end{aligned}
	\end{equation}
	shows that $f=1$. It is left to show that the claim holds in each case.
	
	In the simply laced case we can use the calculations done in \cite{brenner2002periodic}. Let $S$ be a species of type ADE such that $\Pi(S) = \K Q$ and let $\alpha:i\to j\in \overline{Q}_1$. In the proof of \cite[Proposition 4.5 (b)]{brenner2002periodic} where they computed the constant $C=-1$, they chose $u_i$ and $u_j$ such that $v_1 y_\alpha^1 = u_i = y_{\sigma(\alpha^*)}^1 v_2$ and $y_{\sigma(\alpha)}^1 v_1 = u_j = C\sgn(\alpha^*)\sgn(\sigma(\alpha^*))v_2 y_{\alpha^*}^1 = -\sgn(\alpha^*)\sgn(\sigma(\alpha^*))v_2 y_{\alpha^*}^1$ for some elements $v_1, v_2\in \Pi(S)$. The sign $-\sgn(\alpha^*)\sgn(\sigma(\alpha^*))$ only depends on the different signs appearing in front of terms in the Casimir element $c$. Since the orientation of $Q$ decides the signs appearing in $c$ we can use the same computations to prove $u_j = -\sgn(\alpha^*)\sgn(\sigma(\alpha^*))v_2 y_{\alpha^*}^1$ for arbitrary species of type ADE. Thus the claim holds.
	
	Before we prove the claim for the non-simply laced cases we will introduce some notation. Recall that $y_\alpha^1 = 1_F$ for all $\alpha\in \overline{Q}$. Fix $\alpha\in \overline{Q}_1$ such that $D_{s(\alpha)} = G$ and assume that $2\dim_\K F = \dim_\K G$. Let $a, a'\in G$ be such that $y_\alpha^2 = y_\alpha^1 a$ and $y_{\alpha^*}^2 = a' y_{\alpha^*}^1$. By \cite[Lemma 1.1]{Dlab_1980} the Casimir elements $c_\alpha$ and $c_{\alpha^*}$ does not depend on the chose of basis. In particular,
	\begin{equation}\label{eq - casimir elements in bgl 1}
		c_\alpha = y_\alpha^1 \otimes_G y_{\alpha^*}^1 = y_\alpha^2 \otimes_G y_{\alpha^*}^2 = y_\alpha^1 a \otimes_G a' y_{\alpha^*}^1 = y_\alpha^1 aa' \otimes_G y_{\alpha^*}^1
	\end{equation}
	and thus $a' = a^{-1}$. Note that
	\begin{equation}\label{eq - casimir elements in bgl 2}
		\begin{aligned}
			ac_{\alpha^*} a^{-1} &= a(y_{\alpha^*}^1\otimes_{D_{t(\alpha)}} y_\alpha^1 + y_{\alpha^*}^2\otimes_{D_{t(\alpha)}} y_\alpha^2)a^{-1} = \\
			&= a(y_{\alpha^*}^1\otimes_{D_{t(\alpha)}} y_\alpha^1 + a^{-1}y_{\alpha^*}^1\otimes_{D_{t(\alpha)}} y_\alpha^1 a)a^{-1} = \\
			&= a y_{\alpha^*}^1\otimes_{D_{t(\alpha)}} y_\alpha^1a^{-1} + y_{\alpha^*}^1\otimes_{D_{t(\alpha)}} y_\alpha^1.
		\end{aligned}
	\end{equation}
	We define $y_\alpha^0 = y_\alpha^1a^{-1}$ and $y_{\alpha^*}^0 = ay_{\alpha^*}^1$ so we can rewrite \eqref{eq - casimir elements in bgl 2} as
	\begin{equation}\label{eq - casimir elements in bgl 3}
		ac_{\alpha^*}a^{-1} = y_{\alpha^*}^1\otimes_{D_{t(\alpha)}} y_\alpha^1 + y_{\alpha^*}^0\otimes_{D_{t(\alpha)}} y_\alpha^0.
	\end{equation}
	Also note that
	\begin{equation}\label{eq - casimir elements in bgl 4}
		y_\alpha^2\otimes_G y_{\alpha^*}^1 = y_\alpha^1\otimes_G y_{\alpha^*}^0, \quad y_\alpha^1\otimes_G y_{\alpha^*}^2 = y_\alpha^0\otimes_G y_{\alpha^*}^1.
	\end{equation}
	
	We want to represent the elements in $\Pi(S)$ in the \cite{baer1987preprojective} perspective and thus we introduce the following notation:
	\begin{enumerate}
		\item The element $y_\alpha^1$ will be represented by an undecorated edge from $s(\alpha)$ to $t(\alpha)$. 
		\item The element $y_\alpha^2$ will be represented by an edge from $s(\alpha)$ to $t(\alpha)$ decorated with a black circle.
		\item The element $y_\alpha^0$ will be represented by an edge from $s(\alpha)$ to $t(\alpha)$ decorated with a red circle.
		\item The element $y_\alpha^3$ will be represented by an edge from $s(\alpha)$ to $t(\alpha)$ decorated with disk.
	\end{enumerate}
	In this notation we can rewrite \eqref{eq - casimir elements in bgl 1} and $c_{\alpha^*}$ as
	\begin{equation*}
		c_\alpha = \left(\vcenter{\hbox{\begin{tikzpicture}[scale=0.9]
			\node at (0, 1) {$s(\alpha)$};
			\node at (0, 2) {$t(\alpha)$};
			
			\draw (0.5,2) -- (1,1) -- (1.5,2);
		\end{tikzpicture} }}\right) = \left(\vcenter{\hbox{\begin{tikzpicture}[scale=0.9]
			\node at (0, 1) {$s(\alpha)$};
			\node at (0, 2) {$t(\alpha)$};
			
			\draw (0.5,2) -- node (A1) {} (1,1) -- node (A2) {} (1.5,2);
			
			\filldraw[fill = white] (A1) circle (0.1);
			\filldraw[fill = white] (A2) circle (0.1);
		\end{tikzpicture} }}\right), \quad c_{\alpha^*} = \left(\vcenter{\hbox{\begin{tikzpicture}[scale=0.9]
			\node at (0, 1) {$s(\alpha)$};
			\node at (0, 2) {$t(\alpha)$};
			
			\draw (0.5,1) -- (1,2) -- (1.5,1);
		\end{tikzpicture} }}\right) + \left(\vcenter{\hbox{\begin{tikzpicture}[scale=0.9]
			\node at (0, 1) {$s(\alpha)$};
			\node at (0, 2) {$t(\alpha)$};
			
			\draw (0.5,1) -- node (A1) {} (1,2) -- node (A2) {} (1.5,1);
			
			\filldraw[fill = white] (A1) circle (0.1);
			\filldraw[fill = white] (A2) circle (0.1); 
		\end{tikzpicture} }}\right)
	\end{equation*}
	\eqref{eq - casimir elements in bgl 3} as
	\begin{equation*}
		ac_{\alpha^*}a^{-1} = \left(\vcenter{\hbox{\begin{tikzpicture}[scale=0.9]
			\node at (0, 1) {$s(\alpha)$};
			\node at (0, 2) {$t(\alpha)$};
			
			\draw (0.5,1) -- (1,2) -- (1.5,1);
		\end{tikzpicture} }}\right) + \left(\vcenter{\hbox{\begin{tikzpicture}[scale=0.9]
			\node at (0, 1) {$s(\alpha)$};
			\node at (0, 2) {$t(\alpha)$};
			
			\draw (0.5,1) -- node (A1) {} (1,2) -- node (A2) {} (1.5,1);
			
			\filldraw[color = red, fill = white] (A1) circle (0.1);
			\filldraw[color = red, fill = white] (A2) circle (0.1); 
		\end{tikzpicture} }}\right)
	\end{equation*}
	and \eqref{eq - casimir elements in bgl 4} as
	\begin{equation*}
		\left(\vcenter{\hbox{\begin{tikzpicture}[scale=0.9]
			\node at (0, 1) {$s(\alpha)$};
			\node at (0, 2) {$t(\alpha)$};
			
			\draw (0.5,2) -- (1,1) -- node (A1) {} (1.5,2);
			
			\filldraw[fill = white] (A1) circle (0.1);
		\end{tikzpicture} }}\right) = \left(\vcenter{\hbox{\begin{tikzpicture}[scale=0.9]
			\node at (0, 1) {$s(\alpha)$};
			\node at (0, 2) {$t(\alpha)$};
			
			\draw (0.5,2) -- node (A2) {} (1,1) -- (1.5,2);
			
			\filldraw[color = red, fill = white] (A2) circle (0.1);
		\end{tikzpicture} }}\right), \quad \left(\vcenter{\hbox{\begin{tikzpicture}[scale=0.9]
			\node at (0, 1) {$s(\alpha)$};
			\node at (0, 2) {$t(\alpha)$};
			
			\draw (0.5,2) -- node (A1) {} (1,1) -- (1.5,2);
			
			\filldraw[fill = white] (A1) circle (0.1);
		\end{tikzpicture} }}\right) = \left(\vcenter{\hbox{\begin{tikzpicture}[scale=0.9]
			\node at (0, 1) {$s(\alpha)$};
			\node at (0, 2) {$t(\alpha)$};
			
			\draw (0.5,2) -- (1,1) -- node (A2) {} (1.5,2);
			
			\filldraw[color = red, fill = white] (A2) circle (0.1);
		\end{tikzpicture} }}\right).
	\end{equation*}
	\begin{myrem}
		Note that we will only use $(1)$ and $(2)$ for all of the non-simply laced cases, $(3)$ will be used for all of the non-simply laced cases except for type $G$ and $(4)$ will only be used in type G.
	\end{myrem}
	For example, if $S$ is a species over $Q: 1\xrightarrow{\alpha}2\xrightarrow{\beta}3$, then the element $y_\beta^1y_{\beta^*}^1y_{\beta}^1y_\alpha^1$ will be represented by
	\begin{equation*}
		\dpath{3}{1,2,3,2,3}{}{}{},
	\end{equation*}
	where the numbers on the left hand side indicates the vertices in $Q_0$. If the element $y_\beta^0y_{\beta^*}^1y_{\beta}^3y_\alpha^2$ would exist it would be represented by
	\begin{equation*}
		\dpath{3}{1,2,3,2,3}{1}{4}{2}.
	\end{equation*}
	
	In all of the non-simply laced cases we label the vertices in the same way as in Figure~\ref{Figure - Dynkin Diagrams 1}.
	
	Let $S$ be a species of type B. Computing $e_kce_k$, for all $k\in \{1, 2, \dots, n\}$, under the isomorphism in Proposition~\ref{Proposition - explcit desc of iso between pi(lambda) and pi(S)} we have, in our notation, that the relations in $\Pi(S)$ are given by
	\begin{equation*}
		\dpath{2}{1,2,1}{}{}{} + \dpath{2}{1,2,1}{1,2}{}{} = 0, \quad \left(\vcenter{\hbox{\begin{tikzpicture}[scale=0.9]
			\node at (0, 1) {$k$};
			\node at (0, 2) {$k+1$};
			\node at (0, 3) {$k+2$};
			
			\draw (0.5,2) -- (1,1) -- (1.5,2);
		\end{tikzpicture} }}\right) = \epsilon_k \left(\vcenter{\hbox{\begin{tikzpicture}[scale=0.9]
			\node at (0, 1) {$k$};
			\node at (0, 2) {$k+1$};
			\node at (0, 3) {$k+2$};
			
			\draw (0.5,2) -- (1,3) -- (1.5,2);
		\end{tikzpicture} }}\right), \quad \left(\vcenter{\hbox{\begin{tikzpicture}[scale=0.9]
			\node at (0.2, 1) {$n-1$};
			\node at (0.2, 2) {$n$};
			
			\draw (0.5,2) -- (1,1) -- (1.5,2);
		\end{tikzpicture} }}\right) = 0
	\end{equation*}
	where $\epsilon_k$ is a sign that depends on the orientation of $Q$. Let $\alpha:n\to n-1\in \overline{Q}_1$. Consider the non-zero elements
	\begin{equation*}
		u_n = \left(\vcenter{\hbox{\begin{tikzpicture}[scale=0.9]
			\node at (0.2, 1) {1};
			\node at (0.2, 2) {2};
			\node at (0.2, 3) {3};
			\node at (0.2, 4) {$n-2$};
			\node at (0.2, 5) {$n-1$};
			\node at (0.2, 6) {$n$};
			
			\draw (0.5, 6) -- (1,5) -- (1.5, 4);
			\draw[dotted] (1.5,4) -- (2,3);
			\draw (2,3) -- (2.5,2) -- (3, 1) -- node (A1) {} (3.5,2) -- (4,3);
			\draw[dotted] (4,3) -- (4.5,4);
			\draw (4.5,4) -- (5, 5) -- (5.5, 6);
			
			\filldraw[fill=white] (A1) circle (0.1);
		\end{tikzpicture} }}\right), \quad u_{n-1} = \left(\vcenter{\hbox{\begin{tikzpicture}[scale=0.9]
			\node at (0.2, 1) {1};
			\node at (0.2, 2) {2};
			\node at (0.2, 3) {3};
			\node at (0.2, 4) {$n-2$};
			\node at (0.2, 5) {$n-1$};
			\node at (0.2, 6) {$n$};
			
			\draw (1,5) -- (1.5, 4);
			\draw[dotted] (1.5,4) -- (2,3);
			\draw (2,3) -- (2.5,2) -- (3, 1) -- node (A1) {} (3.5,2) -- (4,3);
			\draw[dotted] (4,3) -- (4.5,4);
			\draw (4.5,4) -- (5, 5) -- (5.5, 6) -- (6, 5);
			
			\filldraw[fill=white] (A1) circle (0.1);
		\end{tikzpicture} }}\right)
	\end{equation*}
	in $\mathrm{Soc}(\Pi(S))$.
	\begin{figure}
		\begin{equation*}
			\begin{aligned}
				u_{n-1} &= \left(\vcenter{\hbox{\begin{tikzpicture}[scale=0.9]
					\node at (0.2, 1) {1};
					\node at (0.2, 2) {2};
					\node at (0.2, 3) {3};
					\node at (0.2, 4) {$n-2$};
					\node at (0.2, 5) {$n-1$};
					\node at (0.2, 6) {$n$};
					
					\draw (1,5) -- (1.5, 4);
					\draw[dotted] (1.5,4) -- (2,3);
					\draw (2,3) -- (2.5,2) -- (3, 1) -- node (A1) {} (3.5,2) -- (4,3);
					\draw[dotted] (4,3) -- (4.5,4);
					\draw (4.5,4) -- (5, 5) -- (5.5, 6) -- (6, 5);
					
					\filldraw[fill=white] (A1) circle (0.1);
				\end{tikzpicture} }}\right) = \epsilon\left(\vcenter{\hbox{\begin{tikzpicture}[scale=0.9]
					\node at (0.2, 1) {1};
					\node at (0.2, 2) {2};
					\node at (0.2, 3) {3};
					\node at (0.2, 4) {$n-2$};
					\node at (0.2, 5) {$n-1$};
					\node at (0.2, 6) {$n$};
					
					\draw (1,5) -- (1.5, 4);
					\draw[dotted] (1.5,4) -- (2,3);
					\draw (2,3) -- (2.5,2) -- (3,1) -- node (A1) {} (3.5,2) -- (4,1) -- (4.5, 2) -- (5, 3);
					\draw[dotted] (5,3) -- (5.5,4);
					\draw (5.5, 4) -- (6, 5);
					
					\filldraw[fill=white] (A1) circle (0.1);
				\end{tikzpicture} }}\right) = \\
				&= \epsilon\left(\vcenter{\hbox{\begin{tikzpicture}[scale=0.9]
					\node at (0.2, 1) {1};
					\node at (0.2, 2) {2};
					\node at (0.2, 3) {3};
					\node at (0.2, 4) {$n-2$};
					\node at (0.2, 5) {$n-1$};
					\node at (0.2, 6) {$n$};
					
					\draw (1,5) -- (1.5, 4);
					\draw[dotted] (1.5,4) -- (2,3);
					\draw (2,3) -- (2.5,2) -- node (A1) {} (3,1) -- (3.5,2) -- (4,1) -- (4.5, 2) -- (5, 3);
					\draw[dotted] (5,3) -- (5.5,4);
					\draw (5.5, 4) -- (6, 5);
					
					\filldraw[color = red, fill=white] (A1) circle (0.1);
				\end{tikzpicture} }}\right) = - \epsilon\left(\vcenter{\hbox{\begin{tikzpicture}[scale=0.9]
					\node at (0.2, 1) {1};
					\node at (0.2, 2) {2};
					\node at (0.2, 3) {3};
					\node at (0.2, 4) {$n-2$};
					\node at (0.2, 5) {$n-1$};
					\node at (0.2, 6) {$n$};
					
					\draw (1,5) -- (1.5, 4);
					\draw[dotted] (1.5,4) -- (2,3);
					\draw (2,3) -- (2.5,2) -- node (A1) {} (3,1) -- node (A2) {} (3.5,2) -- node (A3) {} (4,1) -- (4.5, 2) -- (5, 3);
					\draw[dotted] (5,3) -- (5.5,4);
					\draw (5.5, 4) -- (6, 5);
					
					\filldraw[color = red, fill=white] (A1) circle (0.1);
					\filldraw[color = red, fill=white] (A2) circle (0.1);
					\filldraw[color = red, fill=white] (A3) circle (0.1);
				\end{tikzpicture} }}\right) = \\
				&= -\epsilon\left(\vcenter{\hbox{\begin{tikzpicture}[scale=0.9]
					\node at (0.2, 1) {1};
					\node at (0.2, 2) {2};
					\node at (0.2, 3) {3};
					\node at (0.2, 4) {$n-2$};
					\node at (0.2, 5) {$n-1$};
					\node at (0.2, 6) {$n$};
					
					\draw (1,5) -- (1.5, 4);
					\draw[dotted] (1.5,4) -- (2,3);
					\draw (2,3) -- (2.5,2) -- (3,1) -- (3.5,2) -- node (A1) {} (4,1) -- (4.5, 2) -- (5, 3);
					\draw[dotted] (5,3) -- (5.5,4);
					\draw (5.5, 4) -- (6, 5);
					
					\filldraw[color = red, fill=white] (A1) circle (0.1);
				\end{tikzpicture} }}\right) = -\epsilon\left(\vcenter{\hbox{\begin{tikzpicture}[scale=0.9]
					\node at (0.2, 1) {1};
					\node at (0.2, 2) {2};
					\node at (0.2, 3) {3};
					\node at (0.2, 4) {$n-2$};
					\node at (0.2, 5) {$n-1$};
					\node at (0.2, 6) {$n$};
					
					\draw (1,5) -- (1.5, 4);
					\draw[dotted] (1.5,4) -- (2,3);
					\draw (2,3) -- (2.5,2) -- (3,1) -- (3.5,2) -- (4,1) -- node (A1) {} (4.5, 2) -- (5, 3);
					\draw[dotted] (5,3) -- (5.5,4);
					\draw (5.5, 4) -- (6, 5);
					
					\filldraw[fill=white] (A1) circle (0.1);
				\end{tikzpicture} }}\right) = \\
				&= -\left(\vcenter{\hbox{\begin{tikzpicture}[scale=0.9]
					\node at (0.2, 1) {1};
					\node at (0.2, 2) {2};
					\node at (0.2, 3) {3};
					\node at (0.2, 4) {$n-2$};
					\node at (0.2, 5) {$n-1$};
					\node at (0.2, 6) {$n$};
					
					\draw (1, 5) -- (1.5, 6) -- (2,5) -- (2.5, 4);
					\draw[dotted] (2.5,4) -- (3,3);
					\draw (3,3) -- (3.5,2) -- (4,1) -- node (A1) {} (4.5, 2) -- (5, 3);
					\draw[dotted] (5,3) -- (5.5,4);
					\draw (5.5, 4) -- (6, 5);
					
					\filldraw[fill=white] (A1) circle (0.1);
				\end{tikzpicture} }}\right)
			\end{aligned}
		\end{equation*}\caption{Computation for $f=1$ in type B}\label{Figure - computation type B}
	\end{figure}
	Reading from Figure~\ref{Figure - computation type B}, where $\epsilon = \prod_{k=1}^{n-2}\epsilon_k$, we get that
	\begin{equation*}
		y_{\alpha^*}^1 v_1 = u_n = v_2 y_\alpha^1, \quad y_{\alpha}^1 v_1 = u_{n-1} = -v_2 y_{\alpha^*}^1
	\end{equation*}
	for some elements $v_1,v_2\in \Pi(S)$. Hence $f = 1$.
	
	Let $S$ be a species of type C. Similarly as in type B the elements $e_kce_k$ for $k\in \{2, 3, \dots, n\}$ give rise to the relations in $\Pi(S)$ given by
	\begin{equation*}
		\begin{aligned}
			\dpath{2}{1,2,1}{}{}{} = 0, \quad &\left(\vcenter{\hbox{\begin{tikzpicture}[scale=0.9]
				\node at (0.2, 1) {1};
				\node at (0.2, 2) {2};
				\node at (0.2, 3) {3};
				
				\draw (0.5,2) -- (1,1) -- (1.5,2);
			\end{tikzpicture} }}\right) + \left(\vcenter{\hbox{\begin{tikzpicture}[scale=0.9]
				\node at (0.2, 1) {1};
				\node at (0.2, 2) {2};
				\node at (0.2, 3) {3};
				
				\draw (0.5,2) -- node (A1) {} (1,1) -- node (A2) {} (1.5,2);
				
				\filldraw[fill = white] (A1) circle (0.1);
				\filldraw[fill = white] (A2) circle (0.1);
			\end{tikzpicture} }}\right) = \epsilon_1 \left(\vcenter{\hbox{\begin{tikzpicture}[scale=0.9]
				\node at (0.2, 1) {1};
				\node at (0.2, 2) {2};
				\node at (0.2, 3) {3};
				
				\draw (0.5,2) -- (1,3) -- (1.5,2);
			\end{tikzpicture} }}\right), \\ 
			\left(\vcenter{\hbox{\begin{tikzpicture}[scale=0.9]
				\node at (0.1, 1) {$k$};
				\node at (0.1, 2) {$k+1$};
				\node at (0.1, 3) {$k+2$};
				
				\draw (0.5,2) -- (1,1) -- (1.5,2);
			\end{tikzpicture} }}\right) &= \epsilon_k \left(\vcenter{\hbox{\begin{tikzpicture}[scale=0.9]
				\node at (0.1, 1) {$k$};
				\node at (0.1, 2) {$k+1$};
				\node at (0.1, 3) {$k+2$};
				
				\draw (0.5,2) -- (1,3) -- (1.5,2);
			\end{tikzpicture} }}\right), \quad \left(\vcenter{\hbox{\begin{tikzpicture}[scale=0.9]
				\node at (0.2, 1) {$n-1$};
				\node at (0.2, 2) {$n$};
				
				\draw (0.5,2) -- (1,1) -- (1.5,2);
			\end{tikzpicture} }}\right) = 0
		\end{aligned}
	\end{equation*}
	where $\epsilon_k$ is a sign that depends on the orientation of $Q$. Similarly to the type B case we assume that $\alpha:n\to n-1\in \overline{Q}_1$ and consider the non-zero elements
	\begin{equation*}
		u_n = \left(\vcenter{\hbox{\begin{tikzpicture}[scale=0.9]
			\node at (0.2, 1) {1};
			\node at (0.2, 2) {2};
			\node at (0.2, 3) {3};
			\node at (0.2, 4) {$n-2$};
			\node at (0.2, 5) {$n-1$};
			\node at (0.2, 6) {$n$};
			
			\draw (0.5, 6) -- (1,5) -- (1.5, 4);
			\draw[dotted] (1.5,4) -- (2,3);
			\draw (2,3) -- (2.5,2) -- (3, 1) -- (3.5,2) -- (4,3);
			\draw[dotted] (4,3) -- (4.5,4);
			\draw (4.5,4) -- (5, 5) -- (5.5, 6);
		\end{tikzpicture} }}\right), \quad u_{n-1} = \left(\vcenter{\hbox{\begin{tikzpicture}[scale=0.9]
			\node at (0.2, 1) {1};
			\node at (0.2, 2) {2};
			\node at (0.2, 3) {3};
			\node at (0.2, 4) {$n-2$};
			\node at (0.2, 5) {$n-1$};
			\node at (0.2, 6) {$n$};
			
			\draw (1,5) -- (1.5, 4);
			\draw[dotted] (1.5,4) -- (2,3);
			\draw (2,3) -- (2.5,2) -- (3, 1) -- (3.5,2) -- (4,3);
			\draw[dotted] (4,3) -- (4.5,4);
			\draw (4.5,4) -- (5, 5) -- (5.5, 6) -- (6, 5);
		\end{tikzpicture} }}\right).
	\end{equation*}
	in $\mathrm{Soc}(\Pi(S))$. Let $\epsilon = \prod_{k=1}^{n-2} \epsilon_k$.
	\begin{figure}[h]
		\begin{equation*}
			\begin{aligned}
				u_{n-1} &= \left(\vcenter{\hbox{\begin{tikzpicture}[scale=0.9]
					\node at (0.5, 1) {1};
					\node at (0.5, 2) {2};
					\node at (0.5, 3) {3};
					\node at (0.5, 4) {$n-2$};
					\node at (0.5, 5) {$n-1$};
					\node at (0.5, 6) {$n$};
					
					\draw (1,5) -- (1.5, 4);
					\draw[dotted] (1.5,4) -- (2,3);
					\draw (2,3) -- (2.5,2) -- (3, 1) -- (3.5,2) -- (4,3);
					\draw[dotted] (4,3) -- (4.5,4);
					\draw (4.5,4) -- (5, 5) -- (5.5, 6) -- (6, 5);
				\end{tikzpicture} }}\right) = \epsilon\left(\vcenter{\hbox{\begin{tikzpicture}[scale=0.9]
					\node at (0.5, 1) {1};
					\node at (0.5, 2) {2};
					\node at (0.5, 3) {3};
					\node at (0.5, 4) {$n-2$};
					\node at (0.5, 5) {$n-1$};
					\node at (0.5, 6) {$n$};
					
					\draw (1,5) -- (1.5, 4);
					\draw[dotted] (1.5,4) -- (2,3);
					\draw (2,3) -- (2.5,2) -- (3,1) -- (3.5,2) -- node (A1) {} (4,1) -- node (A2) {} (4.5, 2) -- (5, 3);
					\draw[dotted] (5,3) -- (5.5,4);
					\draw (5.5, 4) -- (6, 5);
					
					\filldraw[fill=white] (A1) circle (0.1);
					\filldraw[fill=white] (A2) circle (0.1);
				\end{tikzpicture} }}\right) = \\
				&= \left(\vcenter{\hbox{\begin{tikzpicture}[scale=0.9]
					\node at (0.5, 1) {1};
					\node at (0.5, 2) {2};
					\node at (0.5, 3) {3};
					\node at (0.5, 4) {$n-2$};
					\node at (0.5, 5) {$n-1$};
					\node at (0.5, 6) {$n$};
					
					\draw (1, 5) -- (1.5, 6) -- (2,5) -- (2.5, 4);
					\draw[dotted] (2.5,4) -- (3,3);
					\draw (3,3) -- (3.5,2) -- node (A2) {} (4,1) -- node (A1) {} (4.5, 2) -- (5, 3);
					\draw[dotted] (5,3) -- (5.5,4);
					\draw (5.5, 4) -- (6, 5);
					
					\filldraw[fill=white] (A1) circle (0.1);
					\filldraw[fill=white] (A2) circle (0.1);
				\end{tikzpicture} }}\right) = - \left(\vcenter{\hbox{\begin{tikzpicture}[scale=0.9]
					\node at (0.5, 1) {1};
					\node at (0.5, 2) {2};
					\node at (0.5, 3) {3};
					\node at (0.5, 4) {$n-2$};
					\node at (0.5, 5) {$n-1$};
					\node at (0.5, 6) {$n$};
					
					\draw (1, 5) -- (1.5, 6) -- (2,5) -- (2.5, 4);
					\draw[dotted] (2.5,4) -- (3,3);
					\draw (3,3) -- (3.5,2) -- (4,1) -- (4.5, 2) -- (5, 3);
					\draw[dotted] (5,3) -- (5.5,4);
					\draw (5.5, 4) -- (6, 5);
				\end{tikzpicture} }}\right)
			\end{aligned}
		\end{equation*}\caption{Computation for $f=1$ in type C}\label{Figure - computation type C}
	\end{figure}
	The computation in Figure~\ref{Figure - computation type C} show that
	\begin{equation*}
		y_{\alpha^*}^1 v_1 = u_n = v_2 y_\alpha^1, \quad y_{\alpha}^1 v_1 = u_{n-1} = -v_2 y_{\alpha^*}^1
	\end{equation*}
	for some elements $v_1, v_2\in \Pi(S)$. Which in turn implies that $f=1$.
	
	Let $S$ be a species of type F. Similarly as in Type B, the preprojective algebra $\Pi(S)$ is described with the relations
	\begin{equation}\label{eq - type F case relations}
		\begin{aligned}
			\dpath{4}{4,3,4}{}{}{}=0, \quad &\dpath{4}{3,4,3}{}{}{} = \epsilon\dpath{4}{3,2,3}{}{}{}, \\
			\dpath{4}{2,3,2}{}{}{} + \dpath{4}{2,3,2}{1,2}{}{} &= \epsilon'\dpath{4}{2,1,2}{}{}{}, \quad \dpath{4}{1,2,1}{}{}{} = 0.
		\end{aligned}
	\end{equation}
	where $\epsilon$ and $\epsilon'$ are signs that depend on the orientation of $Q$. Let $\alpha: 4\to 3\in Q_1$ and choose the following non-zero elements
	\begin{equation*}
		u_3 = \dpath{4}{3,2,1,2,3,4,3,2,3,4,3}{8}{}{}, \quad 
		u_4 = \dpath{4}{4,3,2,1,2,3,4,3,2,3,4}{9}{}{}.
	\end{equation*}
	In this case it is not clear why $u_3$ and $u_4$ are non-zero and therefore we will begin by giving the outline of the proof why $u_3$ and $u_4$ are non-zero. The proof for $u_3$ is similar to the proof for $u_4$ and thus we only show that $u_4$ is non-zero. Using the relations \eqref{eq - type F case relations} we have four possible candidates for $u_4$ which is
	\begin{equation*}
		\begin{aligned}
			u_4^1 = \dpath{4}{4,3,2,1,2,3,4,3,2,3,4}{9}{}{},& \quad u_4^2 = \dpath{4}{4,3,2,1,2,3,4,3,2,3,4}{4}{}{} \\
			u^3_4 = \dpath{4}{4,3,2,1,2,3,4,3,2,3,4}{4,9}{}{},& \quad u_4^4 = \dpath{4}{4,3,2,1,2,3,4,3,2,3,4}{}{}{}.
		\end{aligned}
	\end{equation*}
	First we note that
	\begin{equation*}
		u_4^2 = \dpath{4}{4,3,2,1,2,3,4,3,2,3,4}{4}{}{} = \dpath{4}{4,3,2,1,2,3,4,3,4,3,4}{4}{}{} = 0
	\end{equation*}
	and similarly $u_4^4 = 0$. 
	\begin{figure}[h]
		\begin{equation*}
			\begin{aligned}
				u^3_4 &= \dpath{4}{4,3,2,1,2,3,4,3,2,3,4}{4,9}{}{} = \dpath{4}{4,3,2,1,2,3,4,3,2,3,4}{5,9}{}{} = \\
				&= \epsilon'\dpath{4}{4,3,2,3,2,3,4,3,2,3,4}{5,9}{}{} + \epsilon'\dpath{4}{4,3,2,3,2,3,4,3,2,3,4}{3,9}{}{} = \\
				&= \epsilon'\dpath{4}{4,3,4,3,2,3,4,3,2,3,4}{5,9}{}{} + \epsilon'\dpath{4}{4,3,2,3,4,3,4,3,2,3,4}{3,9}{}{} = 0
			\end{aligned}
		\end{equation*}\caption{Computation for $u_4^3 = 0$}\label{Figure - u_4^3 = 0}
	\end{figure}
	Reading from Figure~\ref{Figure - u_4^3 = 0} we have that $u_4^3 = 0$. The computation in Figure~\ref{Figure - computation type F} shows that
	\begin{equation*}
		u_3 = - \dpath{4}{3,4,3,2,1,2,3,4,3,2,3}{10}{}{}.
	\end{equation*}
	\begin{figure}
		\begin{equation*}
			\begin{aligned}
				u_3 &= \dpath{4}{3,2,1,2,3,4,3,2,3,4,3}{8}{}{} = \epsilon'\dpath{4}{3,2,3,2,3,4,3,2,3,4,3}{2,3,8}{}{} = \\
				&= \epsilon' \dpath{4}{3,2,3,2,3,2,3,2,3,2,3}{2,3,8}{}{} = \dpath{4}{3,2,3,2,3,2,1,2,3,2,3}{2,3,8}{}{} = \\
				&= \dpath{4}{3,2,3,2,3,2,1,2,3,2,3}{2}{4, 5}{} = -\dpath{4}{3,2,3,2,3,2,1,2,3,2,3}{2}{}{} = \\
				&= \dpath{4}{3,2,3,2,3,2,1,2,3,2,3}{2}{8,9}{} = \dpath{4}{3,2,3,2,3,2,1,2,3,2,3}{2,5,10}{}{} = \\
				&= \epsilon' \dpath{4}{3,2,3,2,3,2,3,2,3,2,3}{2,5,10}{}{} = \epsilon \epsilon' \dpath{4}{3,2,3,2,3,2,3,4,3,2,3}{2,5,10}{}{} = \\
				&= \epsilon \epsilon' \dpath{4}{3,2,3,2,3,2,3,4,3,2,3}{2,10}{6}{} = \epsilon \dpath{4}{3,2,3,2,1,2,3,4,3,2,3}{2,10}{6}{} = \\
				&= -\epsilon \dpath{4}{3,2,3,2,1,2,3,4,3,2,3}{10}{}{} = -\dpath{4}{3,4,3,2,1,2,3,4,3,2,3}{10}{}{}
			\end{aligned}
		\end{equation*}\caption{Computation for $f=1$ in type F}\label{Figure - computation type F}
	\end{figure}
	Thus we can conclude that
	\begin{equation*}
		y_{\alpha^*}^1 v_1 = u_4 = v_2 y_\alpha^1, \quad y_{\alpha}^1 v_1 = u_3 = -v_2 y_{\alpha^*}^1
	\end{equation*}
	for some elements $v_1, v_2\in \Pi(S)$. Hence $f=1$.
	
	Let $S$ be a species of type G. Similarly as in type B, its preprojective algebra $\Pi(S)$ is described using the relations
	\begin{equation*}
		\dpath{2}{1,2,1}{}{}{} + \dpath{2}{1,2,1}{1,2}{}{} + \dpath{2}{1,2,1}{}{}{1,2} = 0, \quad \dpath{2}{2,1,2}{}{}{} = 0.
	\end{equation*}
	Let $\alpha:1\to 2$ and choose the non-zero elements
	\begin{equation*}
		u_1 = \dpath{2}{1,2,1,2,1}{1,2}{}{}, \quad u_2 = \dpath{2}{2,1,2,1,2}{2,3}{}{}.
	\end{equation*}
	Similarly as in the previous cases the computation
	\begin{equation*}
		u_1 = \dpath{2}{1,2,1,2,1}{1,2}{}{} = - \dpath{2}{1,2,1,2,1}{1,2}{}{3,4} = \dpath{2}{1,2,1,2,1}{}{}{3,4} = -\dpath{2}{1,2,1,2,1}{3,4}{}{}
	\end{equation*}
	shows that
	\begin{equation*}
		y_{\alpha^*}^1 v_1 = u_1 = v_2 y_\alpha^1, \quad y_{\alpha}^1 v_1 = u_2 = -v_2 y_{\alpha^*}^1
	\end{equation*}
	for some elements $v_1, v_2\in \Pi(S)$ and thus $f=1$.
\end{proof}

\section{Koszul algebras}\label{Section - Koszul algebras}
The goal of this section is to show that the preprojective algebra of a representation finite species of Dynkin type $\Delta$ with Coxeter number $h$ is $(h-2, 2)$-almost Koszul. We will introduce the definition of the preprojective algebra using the derived category, which in turn will be used to compute the almost Koszul complex for $D_i\in \Pi(S)\mathrm{-mod}$.

\begin{mydef}\cite[Definition 1.1.2 and Definition 1.2.1]{MR1322847}
	Let $\Lambda=\bigoplus_{i\in \Z_{\ge 0}} \Lambda_i$ be a graded $\K$-algebra, where $\Lambda_0$ is semi-simple. We say that $\Lambda$ is a Koszul algebra, if there exists a graded exact complex
	\begin{equation*}
		\dots \rightarrow P_1\rightarrow P_0\rightarrow \Lambda_0 \rightarrow 0,
	\end{equation*}
	of $\Lambda$-modules where $P_i$ is projective and is generated by its component of degree $i$ for all $i\ge0$.
\end{mydef}

\begin{mylemma}\label{Corollary - T(S)xT(S) Koszul}
	Let $S$ be a species. Then $T(S)$ is a Koszul algebra.
\end{mylemma}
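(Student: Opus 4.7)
The plan is to exhibit an explicit linear projective resolution of $T(S)_0 = D = \bigoplus_{i \in Q_0} D_i$ as a left $T(S)$-module and observe that the terms are generated in the correct degrees. Such a resolution has in fact already appeared in the excerpt: the bimodule short exact sequence (\ref{eq - bimodule res of T(S)}) from Schofield's result, and its specialization derived in the proof of Lemma~\ref{lemma - Q does not depend on K}.

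First I would equip $T(S)$ with the path length grading, so that $T(S)_0 = D$ is a semisimple $\K$-algebra (being a direct sum of division rings $D_i$), and each $M_\alpha$ sits in degree $1$. I would then apply the functor $- \otimes_{T(S)} D_i$ to the $T(S)$-$T(S)$-bimodule resolution (\ref{eq - bimodule res of T(S)}), obtaining, for every $i \in Q_0$, the exact sequence
\begin{equation*}
0 \longrightarrow \bigoplus_{\substack{\alpha \in Q_1 \\ s(\alpha) = i}} T(S)\, e_{t(\alpha)} \otimes_{D_{t(\alpha)}} M_\alpha \longrightarrow T(S)\, e_i \longrightarrow D_i \longrightarrow 0,
\end{equation*}
which is precisely the one that appears in the proof of Lemma~\ref{lemma - Q does not depend on K}. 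Direct summation over $i \in Q_0$ yields a projective resolution of $D$ as a left $T(S)$-module.

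The next step is to verify the Koszul condition that each $P_k$ is generated in degree $k$. The term $P_0 = \bigoplus_{i \in Q_0} T(S)\, e_i \cong T(S)$ is generated by the idempotents $e_i$, which lie in degree $0$. The term $P_1 = \bigoplus_{\alpha \in Q_1} T(S)\, e_{t(\alpha)} \otimes_{D_{t(\alpha)}} M_\alpha$ is generated by elements of the form $e_{t(\alpha)} \otimes m$ with $m \in M_\alpha$; placing these generators in degree $1$ (so that the differential $e_{t(\alpha)} \otimes m \mapsto m$ is a morphism of graded modules of degree $0$) makes $P_1$ generated in degree $1$. All higher terms $P_k$ for $k \geq 2$ are zero, since $T(S)$ is hereditary, so the condition is vacuous for them.

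The only mild subtlety, and the place where one has to be a little careful, is to make sure the grading on each $P_k$ is the one induced by placing the chosen generators in degree $k$ and that the differentials are homogeneous of degree zero with respect to this grading; this is immediate from the explicit form of the maps in (\ref{eq - bimodule res of T(S)}), namely $a \otimes b \otimes c \mapsto ab \otimes c - a \otimes bc$ and the multiplication map, both of which preserve path length. With this verified, the resolution above satisfies the defining condition of a Koszul algebra, and the lemma follows.
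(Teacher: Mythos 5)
Your proof is correct and takes essentially the same approach as the paper: the paper simply applies $-\otimes_{T(S)} T(S)_0$ to the bimodule sequence (\ref{eq - bimodule res of T(S)}) in one step rather than vertex by vertex and asserts the result is the Koszul complex. Your extra verification that $P_0$ and $P_1$ are generated in degrees $0$ and $1$, that higher terms vanish by heredity, and that the differentials are homogeneous for the path length grading is exactly the detail the paper leaves implicit.
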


\begin{proof}
	Applying $-\otimes_{T(S)} T(S)_0$ to (\ref{eq - bimodule res of T(S)}) yields an exact sequence of $T(S)$-modules
	\begin{equation}\label{eq - Koszul complex for T(S)_0}
		\begin{aligned}
			P_\bullet: 0\xrightarrow{\quad} &\bigoplus_{\alpha\in Q_1}T(S)e_{t(\alpha)}\otimes_{D_{t(\alpha)}} M_\alpha \xrightarrow{\quad mult\quad }\bigoplus_{i\in Q_0}T(S)e_i\rightarrow T(S)_0\xrightarrow{\quad} 0.
		\end{aligned}
	\end{equation}
	This is the Koszul complex of $T(S)$, in other words, $T(S)$ is a Koszul algebra. 
\end{proof}

We will now shift our focus to the bounded derived category of $T(S)$. As we will see in this section, we can describe the preprojective algebra in the bounded derived category and use the theory of Auslander-Reiten triangles due to \cite{happel1987derived} to compute the almost Koszul complex for all simple $\Pi(S)$-modules. Using $\nu_1$ we have an alternate description of the preprojective algebra $\Pi(S)$ as follows
\begin{equation*}
	\Pi(S) = \bigoplus_{i\ge 0}\mathrm{Hom}_{\mathcal{D}^b(T(S)\mathrm{-mod})}(T(S), \nu_1^{-i}T(S)).
\end{equation*}
This definition is indeed equivalent to the definition given by \cite{baer1987preprojective}. This can be seen by noting that $T(S)\mathrm{-mod}$ can be viewed as a subcategory of $\mathcal{D}^b(T(S)\mathrm{-mod})$ and the fact that
\begin{equation*}
	\mathrm{Hom}_{\mathcal{D}^b(T(S)\mathrm{-mod})}(T(S), \nu_1^{-i}T(S)) \cong \mathrm{Hom}_{T(S)}(T(S), \tau^{-i}T(S))
\end{equation*}
for all $i\in \Z_{\ge 0}$. We denote the Auslander-Reiten quiver for $\mathcal{D}^b(T(S)\mathrm{-mod})$ by $\Gamma_{\mathcal{D}(S)}$. The relation between $T(S)\mathrm{-mod}$ and $\mathcal{D}^b(T(S)\mathrm{-mod})$ can be seen via $\Gamma_{\mathcal{D}(S)}$. More explicitly, we can describe $\Gamma_{\mathcal{D}(S)}$ using $\Gamma_S$ as follows. The vertex set is
\begin{equation*}
	(\Gamma_{\mathcal{D}(S)})_0 = \bigcup_{i\in \Z} (\Gamma_S[i])_0,
\end{equation*}
and the arrow set is
\begin{equation*}
	(\Gamma_{\mathcal{D}(S)})_1 = V\cup \bigcup_{i\in \Z} (\Gamma_S[i])_1,
\end{equation*}
where $V$ is a set of arrows connecting $\Gamma_S[i]$ with $\Gamma_S[i+1]$ for all $i\in \Z$. A description of $V$ will be given with the following example. Let $S$ be the species $\C \xrightarrow{\C}\R\xrightarrow{\R}\R$ over the quiver $1\rightarrow 2\rightarrow 3$. The quiver $\Gamma_{\mathcal{D}(S)}$ is
\begin{equation*}
	\begin{tikzcd}[column sep={25pt,between origins}]
		& & \cdots \arrow[dr] & & I_1[-1] \arrow[dr, "2", red] & & P_1 \arrow[dr, "2"] & & \nu_1^{-1}P_1 \arrow[dr, "2"] & & I_1 \arrow[dr, "2", red] & & P_1[1] \arrow[dr] \\
		& \cdots \arrow[dr] & & I_2[-1] \arrow[ur, "2"] \arrow[dr, red] & & P_2 \arrow[ur, "2"] \arrow[dr] & & \nu_1^{-1}P_2 \arrow[ur, "2"] \arrow[dr] & & I_2 \arrow[ur, "2"] \arrow[dr, red] & & P_2[1] \arrow[ur, "2"] \arrow[dr] & & \cdots \\
		& & I_3[-1] \arrow[ur] & & P_3 \arrow[ur] & & \nu_1^{-1} P_3 \arrow[ur] & & I_3 \arrow[ur] & & P_3[1] \arrow[ur] & & \cdots
	\end{tikzcd}
\end{equation*}
Here the red arrows are the arrows in $V$. Note that if there is an arrow $P_s\to P_t$ with valuation $d_{P_s, P_t}$, then there is an arrow $I_t[i]\to P_s[i+1]$ with valuation $d_{I_t[i], P_s[i+1]} = d_{P_s, P_t}$, for all $i\in \Z$ and these give all arrows in $V$.

The almost split sequences in $T(S)\mathrm{-mod}$ relate to Auslander-Reiten triangles in $\mathcal{D}^b(T(S)\mathrm{-mod})$ in the sense that for any almost split sequence
\begin{equation*}
	0\rightarrow M\rightarrow X\rightarrow N\rightarrow 0
\end{equation*}
in $T(S)\mathrm{-mod}$, there is an Auslander-Reiten triangle of the form
\begin{equation*}
	M\rightarrow X\rightarrow N\rightarrow M[1]
\end{equation*}
in $\mathcal{D}^b(T(S)\mathrm{-mod})$. We will use this relation between almost split sequences and Auslander-Reiten triangles to prove that representation finite species are twisted fractionally Calabi-Yau.

\begin{mylemma}\label{Lemma - l_i + l_sigma(i) = h}
	Let $S$ be a representation finite species of Dynkin type $\Delta$ with Coxeter number $h$. Then $l_i + l_{\sigma(i)} = h$ for all $i\in Q_0$. In particular, $\nu_1^{-h}P_i = P_i[2]$.
\end{mylemma}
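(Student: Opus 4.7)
The approach is to translate the module-theoretic identity $P_{\sigma(i)}=\tau^{l_i-1}I_i$ from Proposition~\ref{Proposition - nakayama permutation and integer l_i} into $\mathcal{D}^b(T(S)\mathrm{-mod})$ via the functor $\nu_1$, and then compose two such identities using the involutive nature of $\sigma$ to reach $P_i[2]$. The key combinatorial input, read off from Theorem~\ref{Theorem - Nakayama permutation}, is that $\sigma$ squares to the identity for every Dynkin diagram; this is what allows a single composition to close back up on $P_i$ after a shift.

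Concretely, I would exploit two facts about $\nu_1$ that are built into the description of $\Gamma_{\mathcal{D}(S)}$ given just above the lemma: $\nu_1$ restricts to $\tau$ on indecomposable non-projective modules, and the extra arrows in $V$ encode the relation $\nu_1^{-1}I_j\simeq P_j[1]$ for each indecomposable injective $I_j$. Applying $\nu_1^{-1}$ a total of $l_i-1$ times to $P_{\sigma(i)}$ stays inside $T(S)\mathrm{-mod}$ and ends at $I_i$; one further application jumps to $P_i[1]$, so $\nu_1^{-l_i}P_{\sigma(i)}\simeq P_i[1]$. Substituting $i\mapsto\sigma(i)$ and invoking $\sigma^2=\id$ yields the companion identity $\nu_1^{-l_{\sigma(i)}}P_i\simeq P_{\sigma(i)}[1]$. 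Composing the two (using that $\nu_1$ commutes with the shift) gives
\begin{equation*}
\nu_1^{-(l_i+l_{\sigma(i)})}P_i \simeq P_i[2].
\end{equation*}

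The remaining step is to identify $l_i+l_{\sigma(i)}$ with $h$. The slickest route is to invoke Happel's description of $\mathcal{D}^b(T(S)\mathrm{-mod})$ for Dynkin $\Delta$: the AR-quiver is $\mathbb{Z}\Delta$ and $T(S)$ is twisted fractionally $(h-2)/h$-Calabi-Yau, whence $\nu_1^{-h}\simeq[2]$ on the whole derived category and $h$ is the exact length of the $\nu_1^{-1}$-orbit connecting $P_i$ to $P_i[2]$. The ``in particular'' clause then drops out automatically. Alternatively one can verify $l_i+l_{\sigma(i)}=h$ by hand: in the $l$-homogeneous cases it reduces to $2l=h$ by Corollary~\ref{Corollary - l homogeneous species}, and in the remaining non-symmetric orientations of $A_n$, $D_{2n+1}$ and $E_6$ the values of the individual $l_i$ can be read off the explicit AR-quivers. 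The main obstacle is precisely this last identification: the first part of the argument is bookkeeping inside $\Gamma_{\mathcal{D}(S)}$, but pinning the orbit length down to the Coxeter number requires an input beyond Theorem~\ref{Theorem - Nakayama permutation}, either the fractional Calabi-Yau property or a case-by-case inspection.
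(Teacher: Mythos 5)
Your first step coincides with the paper's: both arguments use $\sigma^2=\id$ from Theorem~\ref{Theorem - Nakayama permutation} together with $\nu_1^{-l_i}P_{\sigma(i)}\simeq P_i[1]$ to obtain $\nu_1^{-(l_i+l_{\sigma(i)})}P_i\simeq P_i[2]$. Where you diverge is in identifying $a_i:=l_i+l_{\sigma(i)}$ with $h$, and here the paper has a cleaner, self-contained route that you should compare with. It first shows $a_i$ is constant: if $i$ and $j$ are adjacent in $Q$, then $0\neq \mathrm{Hom}_{T(S)}(P_j,P_i)\cong \mathrm{Hom}_{\mathcal{D}^b}(P_j,P_i[2(a_j-a_i)])$, which forces $a_i=a_j$ since $T(S)$ is hereditary and the shift is even; connectedness then gives $a_i=a$ for all $i$. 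It then pins down $a$ by counting: $\sum_i l_i$ equals the total number of indecomposable $T(S)$-modules, which is the number of positive roots $nh/2$, so $na=2\sum_i l_i=nh$ and $a=h$. Your two suggested routes both work in principle but cost more. The case-by-case inspection is valid but unnecessary. Invoking the twisted $\frac{h-2}{h}$-Calabi-Yau property (equivalently $\nu_1^{-h}\simeq[2]$) would be circular \emph{within this paper}: the proposition immediately following the lemma derives exactly that property \emph{from} the lemma, so you would have to import the valued-quiver version from the literature instead; moreover, even granted $\nu_1^{-h}\simeq[2]$, you would still need the (easy but unstated) fact that no nonzero power of $\nu_1$ fixes $P_i$ in $\mathcal{D}^b(T(S)\mathrm{-mod})$ to upgrade $\nu_1^{-a_i}P_i\simeq P_i[2]\simeq\nu_1^{-h}P_i$ to the equality $a_i=h$. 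The paper's counting argument delivers both the constancy of $a_i$ and its value with no input beyond $\sigma^2=\id$ and the root count already used in Corollary~\ref{Corollary - l homogeneous species}.
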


\begin{proof}
	Let $a_i = l_i + l_{\sigma(i)}$. By Theorem~\ref{Theorem - Nakayama permutation} we know that $\sigma^2 = \id$. Thus $\nu_1^{-a_i}P_i = P_i[2]$ for all $i\in Q_0$. Now if there is an arrow from $i\to j$ in $Q$ then $\mathrm{Hom}_{T(S)}(P_j, P_i)\neq 0$, which means that
	\begin{equation*}
		0\neq\mathrm{Hom}_{T(S)}(\nu_1^{-a_ia_j}P_j, \nu_1^{-a_ia_j}P_i) = \mathrm{Hom}_{T(S)}(P_j, P_i[2(a_j - a_i)]),
	\end{equation*}
	and so $a_j = a_i$. Hence $a_i = a$ for all $i\in Q_0$ and for some constant $a\in \Z$. On the other hand, setting $n = |Q_0|$, we get
	\begin{equation}\label{eq - average length of tau orbit is h}
		na = \sum_{i\in Q_0} a_i = 2\sum_{i\in Q_0}l_i = 2n\frac{h}{2} = nh
	\end{equation}
	and therefore $a = h$. The third equality in (\ref{eq - average length of tau orbit is h}) holds since that there are $n\frac{h}{2}$ indecomposable $T(S)$-modules and that $l_i$ is the number of indecomposable $T(S)$-modules in the $\tau$-orbit of $P_i$.
\end{proof}

\begin{myprop}
	Let $S$ be a representation finite species of Dynkin type $\Delta$ with Coxeter number $h$. Then $T(S)$ is twisted $\frac{h-2}{h}$-Calabi-Yau.
\end{myprop}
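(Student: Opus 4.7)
The plan is to deduce the statement directly from Lemma~\ref{Lemma - l_i + l_sigma(i) = h} by exhibiting the required bimodule complex. First I would unpack the identity $\nu_1^{-h} P_i \cong P_i[2]$ via $\nu_1 = \nu \circ [-1]$, obtaining $\nu^h P_i \cong P_i[h-2]$ for every indecomposable projective $P_i$. Since $T(S) = \bigoplus_{i \in Q_0} P_i$, this gives $\nu^h T(S) \cong T(S)[h-2]$ in $\mathcal{D}^b(T(S)\mathrm{-mod})$.

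Next I would move to bimodules. Since $T(S)$ has finite global dimension, $\nu = DT(S) \overset{\textbf{L}}{\otimes}_{T(S)} -$, so $\nu^h$ is the functor of tensoring with the bimodule complex $N := (DT(S))^{\overset{\textbf{L}}{\otimes}_{T(S)} h} \cong \nu^h(T(S))$. By the previous step, $N$ is quasi-isomorphic to $T(S)[h-2]$ as a complex of left $T(S)$-modules, hence its left-module cohomology is concentrated in degree $h-2$. The canonical truncation therefore provides a bimodule quasi-isomorphism $N \simeq H^{h-2}(N)[h-2]$.

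Then I would produce the twist $\phi$. The bimodule $H^{h-2}(N)$ is isomorphic to $T(S)$ as a left $T(S)$-module, so its right $T(S)$-action corresponds under the standard identification $\mathrm{End}_{T(S)}(T(S)) \cong T(S)^{op}$ to an algebra endomorphism $\phi : T(S) \to T(S)$, giving a bimodule isomorphism $H^{h-2}(N) \cong T(S)_\phi$. Invertibility of $\nu^h$ as an auto-equivalence forces $N$ to be an invertible bimodule, so $\phi$ is in fact an automorphism. Assembling everything, $\nu^h \simeq [h-2] \circ (T(S)_\phi \overset{\textbf{L}}{\otimes}_{T(S)} -)$ as functors, which is the definition of twisted $\frac{h-2}{h}$-Calabi-Yau.

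The main obstacle is the upgrade in the third step: an isomorphism of $N$ with $T(S)[h-2]$ as left modules is substantially weaker than a bimodule isomorphism, and one must carefully use the rigidity coming from $N$ being concentrated in a single cohomological degree together with $\mathrm{End}_{T(S)}(T(S)) \cong T(S)^{op}$ to detect the right action and extract $\phi$. Everything else is a direct consequence of Lemma~\ref{Lemma - l_i + l_sigma(i) = h} and the fact that $T(S)$ generates $\mathcal{D}^b(T(S)\mathrm{-mod})$ as a triangulated category.
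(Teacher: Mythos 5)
Your proof is correct and follows essentially the same route as the paper: the heart of the argument is the computation $\nu^{h}T(S) \cong (\nu_1\circ [1])^hT(S) \cong T(S)[h-2]$ coming from Lemma~\ref{Lemma - l_i + l_sigma(i) = h}. The only difference is that the paper simply invokes \cite[Proposition 4.3]{herschend2011n} to pass from this object-level isomorphism to the functorial twisted Calabi--Yau statement, whereas you reprove that reduction by hand --- via the canonical truncation of the bimodule complex $(DT(S))^{\overset{\textbf{L}}{\otimes}_{T(S)} h}$, which is concentrated in a single cohomological degree, and the identification $\mathrm{End}_{T(S)}(T(S))\cong T(S)^{op}$ to extract the twist $\phi$ --- which is a correct, if more laborious, substitute for the citation.
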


\begin{proof}
	By \cite[Proposition 4.3]{herschend2011n} it is enough to show that $\nu^hT(S) \cong T(S)[h-2]$. This can be seen via the following computation
	\begin{equation*}
		\nu^{h}T(S) \cong (\nu_1\circ [1])^hT(S) = \nu_1^hT(S)[h] \simeq T(S)[h-2].
	\end{equation*}
	The third isomorphism holds by Lemma~\ref{Lemma - l_i + l_sigma(i) = h}.
\end{proof}

Next we show for every representation finite species $S$ that all paths in $\Gamma_{\mathcal{D}(S)}$ from $X$ to $Y$ have the same length $W(X, Y)$. To do this we first introduce integers $W(i, j)$ for all $i, j\in Q_0$. We will define $W$ in several steps.

First we define $W(\omega)$ where $\omega$ is a path in $\overline{Q}$. Given a path $\omega$ in $\overline{Q}$ we can split it into a product of arrows in $\overline{Q}_1$ as follows
\begin{equation*}
	\omega = \alpha_m\alpha_{m-1}\dots\alpha_1, \quad \alpha_i\in \overline{Q}_1.
\end{equation*}
Let us define
\begin{equation}\label{eq - W(omega)}
	W(\omega) = \sum_{i=1}^mW(\alpha_i), \quad W(\alpha_i) = \begin{cases}
		-1, & \mbox{if }\alpha_i\in Q_1 \\
		1, & \mbox{if }\alpha_i\not\in Q_1
	\end{cases}
\end{equation}

\begin{myprop}\label{Proposition - W(omega) = W(omega')}
	Let $Q$ be an acyclic quiver and let $\omega$ and $\omega'$ be two paths from $i$ to $j$ in $\overline{Q}$. Then $W(\omega) = W(\omega')$. Thus we write $W(i, j) = W(\omega)$.
\end{myprop}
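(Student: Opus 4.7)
The plan is to reduce the statement to the existence of a height function $f\colon Q_0 \to \Z$ with the property that $W(\alpha) = f(t(\alpha)) - f(s(\alpha))$ for every $\alpha \in \overline{Q}_1$. Granting this, any path $\omega = \alpha_m\cdots\alpha_1$ from $i$ to $j$ satisfies, by telescoping along the composable string $t(\alpha_k) = s(\alpha_{k+1})$,
\begin{equation*}
W(\omega) = \sum_{k=1}^{m} W(\alpha_k) = \sum_{k=1}^{m} \bigl(f(t(\alpha_k)) - f(s(\alpha_k))\bigr) = f(j) - f(i),
\end{equation*}
which depends only on the endpoints, so $W(\omega) = W(\omega')$.

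To construct $f$ I use the tree structure of the underlying diagram $\Delta$ of $Q$, which is the relevant case in this paper since $\Delta$ is Dynkin and hence a tree. Fix a root $r \in Q_0$, set $f(r) := 0$, and for any other vertex $v$ let $\pi_v$ denote the unique simple walk from $r$ to $v$ in the tree $\Delta$. Lift $\pi_v$ to a path in $\overline{Q}$ by replacing each edge with the unique arrow of $\overline{Q}_1$ traversing it in the required direction (either the original arrow from $Q_1$ or its dual in $Q_1^*$), and define $f(v) := W(\pi_v)$.

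To verify $W(\alpha) = f(v) - f(u)$ for a single arrow $\alpha\colon u \to v$, observe that since $\Delta$ is a tree containing the edge $uv$, either $u$ lies on $\pi_v$ (in which case $\pi_v$ is obtained from $\pi_u$ by appending $\alpha$) or $v$ lies on $\pi_u$ (in which case $\pi_u$ is obtained from $\pi_v$ by appending the partner arrow $\beta$ of $\alpha$, meaning the unique other arrow of $\overline{Q}_1$ lying over the edge $uv$). In the first case $f(v) = f(u) + W(\alpha)$ by construction. In the second case $f(u) = f(v) + W(\beta) = f(v) - W(\alpha)$, where we use that exactly one of $\alpha, \beta$ lies in $Q_1$ and the other in $Q_1^*$, so $W(\beta) = -W(\alpha)$. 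Either way the identity holds.

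The main subtlety is that the argument genuinely uses that $\Delta$ is a tree: for an acyclic quiver whose underlying diagram has a cycle, such as $1 \to 2$, $1 \to 3$, $2 \to 3$, the two paths $1 \to 3$ and $1 \to 2 \to 3$ in $\overline{Q}$ have weights $-1$ and $-2$ respectively. Hence the proof leans on the tree hypothesis, which holds throughout the paper's applications to representation finite species.
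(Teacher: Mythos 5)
Your proof is correct, but it takes a genuinely different route from the paper's. The paper first shows that every cycle $\omega_{\mathrm{cyc}}=\alpha_m\cdots\alpha_1$ in $\overline{Q}$ has $W(\omega_{\mathrm{cyc}})=0$: it locates a consecutive backtracking pair $\alpha_k=\alpha_{k-1}^*$, whose weights cancel, deletes it, and inducts on the length; it then applies this to the cycle $\omega^*\omega'$ to conclude $W(\omega)=W(\omega')$. You instead build a potential function $f$ on $Q_0$ with $W(\alpha)=f(t(\alpha))-f(s(\alpha))$ and telescope, which avoids the cycle reduction entirely. Both arguments ultimately rest on the underlying diagram being a tree: your construction of $f$ uses the unique walk from a root, while the paper's claim that a cycle in $\overline{Q}$ must contain a pair $\alpha_k=\alpha_{k-1}^*$ is exactly the assertion that closed walks in a tree backtrack, and it is \emph{not} a consequence of $Q$ being acyclic and without multiple arrows alone --- your example $1\to 2$, $1\to 3$, $2\to 3$ defeats both the statement and that step of the paper's proof. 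So your observation that the hypothesis should really be ``underlying graph a tree'' (which holds in all of the paper's applications, since $\Delta$ is Dynkin) is a genuine, and worthwhile, sharpening. What each approach buys: yours makes the tree dependence explicit and gives the slightly stronger statement that $W(i,j)=f(j)-f(i)$ for a globally defined height function; the paper's cancellation argument is more local, needs no choice of root, and directly yields the additivity $W(i,m)=W(i,j)+W(j,m)$ used immediately afterwards (which also follows from your formula).
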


\begin{proof}
	First we show that if $\omega_{\mathrm{cyc}}$ is a cycle in $\overline{Q}$ then $W(\omega_{\mathrm{cyc}}) = 0$. First we write
	\begin{equation*}
		\omega_{\mathrm{cyc}} = \alpha_m \alpha_{m-1}\cdots \alpha_1,
	\end{equation*}
	where $\alpha_i\in \overline{Q}_1$ for all $i\in \{1, \dots, m\}$. Since $Q$ is acyclic and without multiple arrows there exists $k\in \{1, \dots, m\}$ such that $\alpha_k = \alpha_{k-1}^*$. Thus
	\begin{equation*}
		W(\omega_{\mathrm{cyc}}) = W(\omega_{\mathrm{cyc}}'),
	\end{equation*}
	where
	\begin{equation*}
		\omega_{\mathrm{cyc}}' = \alpha_m \alpha_{m-1}\cdots \alpha_{k+1}\alpha_{k-2}\cdots \alpha_1.
	\end{equation*}
	Since $\omega_{\mathrm{cyc}}'$ is a cycle in $\overline{Q}$ we have that $W(\omega_{\mathrm{cyc}}) = 0$ by induction on the length of $\omega_{\mathrm{cyc}}$.
	
	Now let $\omega$ and $\omega'$ be path from $i$ to $j$ in $\overline{Q}$. Then $\omega^*\omega'$ is a cycle, and therefore $W(\omega^*\omega') = 0$. Rewriting the left hand side we get
	\begin{equation*}
		0 = W(\omega^*\omega') = W(\omega') + W(\omega^*) = W(\omega') - W(\omega). \hfill \qedhere
	\end{equation*}
\end{proof}

Since $T(S)$ is hereditary we know that the projective $T(S)$-modules in $\Gamma_{\mathcal{D}(S)}$ form a subquiver which is isomorphic to $Q^*$. So if there is a path in $\Gamma_{\mathcal{D}(S)}$ from $P_i$ to $P_j$, it must have length $W(i, j)$. We can define $W$ on all indecomposable objects in $\mathcal{D}^b(T(S)\mathrm{-mod})$ preserving this property by observing that any path in $\Gamma_{\mathcal{D}(S)}$ from $X$ to $\nu_1^{-1}X$ has length $2$, for all indecomposable objects $X\in \mathcal{D}^b(T(S)\mathrm{-mod})$. Let $X, Y\in \mathcal{D}^b(T(S)\mathrm{-mod})$. Then since $S$ is representation finite there exist $k_1, k_2\in \Z_{\ge 0}$ such that $X = \nu_1^{-k_1}P_i$ and $Y = \nu_1^{-k_2}P_j$ for some $i, j\in Q_0$. We define
\begin{equation*}
	W(X, Y) = 2(k_2 - k_1) + W(i, j).
\end{equation*}

Let $Z = \nu_1^{-k_3}P_m$ for some $m\in Q_0$. We claim that $W(X, Z) = W(X, Y) + W(Y, Z)$. Indeed, we have that
\begin{equation*}
	\begin{aligned}
		W(X, Z) &= 2(k_3 - k_1) + W(i, m) = \\
		&=2(k_2 - k_1) + W(i, j) + 2(k_3 - k_2) + W(j, m) + W(i, m) - W(j, m) - W(i, j) = \\
		&= W(X, Y) + W(Y, Z) + W(i, m) - W(j, m) - W(i, j).
	\end{aligned}
\end{equation*}
Now let $\omega$ be a path in $\overline{Q}$ from $i$ to $j$ and $\omega'$ be a path in $\overline{Q}$ from $j$ to $m$. Then $\omega'\omega$ is a path in $\overline{Q}$ from $i$ to $m$, and by Proposition~\ref{Proposition - W(omega) = W(omega')}, $W(i, m) = W(\omega'\omega) = W(\omega') + W(\omega) = W(j, m) + W(i, j)$ which proves our claim.

\begin{myprop}
	Let $X, Y\in \mathcal{D}^b(T(S)\mathrm{-mod})$ be indecomposable modules. Any path from $X$ to $Y$ in $\mathcal{D}^b(T(S)\mathrm{-mod})$ has length $W(X, Y)$.
\end{myprop}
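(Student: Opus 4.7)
The plan is to reduce to showing that every arrow of $\Gamma_{\mathcal{D}(S)}$ has $W$-length equal to $1$; the claim then follows by induction on path length using the additivity $W(X, Z) = W(X, Y) + W(Y, Z)$ established just above. The base case $X = Y$ is a length-$0$ path with $W(X, X) = 0$, and the inductive step decomposes a length-$(n+1)$ path as a length-$n$ prefix followed by a single arrow.

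An arrow of $\Gamma_{\mathcal{D}(S)}$ lies either in some $\Gamma_S[m]$ or in the connecting set $V$. I will first observe that $W$ is invariant under applying $\nu_1^{-1}$ to both endpoints (immediate from the definition, since the two indices $k_1, k_2$ shift by the same amount) and also under the shift $[1]$ applied to both endpoints: indeed, $[1]P_a = \nu_1^{-l_a}P_{\sigma(a)}$, which follows from $\nu_1^{-1}I_a = \nu^{-1}I_a[1] = P_a[1]$, and this reduces $[1]$-invariance to the identity $2(l_j - l_i) + W(\sigma(i), \sigma(j)) = W(i, j)$ for adjacent $i, j$ in $Q$. That identity is immediate from Lemma~\ref{lemma - Nakayama induces diagram automorphism} by splitting on the two cases $l_i = l_j$ and $l_i = l_j + 1$. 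Hence it suffices to check $W = 1$ on a fundamental set of arrows.

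For arrows inside $\Gamma_S$ the fundamental representatives are, for each $\alpha : i \to j$ of $Q$: the projective-to-projective arrow $P_j \to P_i$, giving $W = 2\cdot 0 + W(\alpha^\ast) = +1$; and the mesh arrow $P_i \to \nu_1^{-1}P_j$, giving $W = 2\cdot 1 + W(\alpha) = 2 - 1 = +1$. For arrows in $V$ the fundamental case is $I_t \to P_s[1]$, corresponding to an arrow $\beta : t \to s$ of $Q$ (equivalently to an arrow $P_s \to P_t$ in $\Gamma_S$). Using $I_t = \nu_1^{-(l_t - 1)}P_{\sigma(t)}$ together with $P_s[1] = \nu_1^{-1}I_s = \nu_1^{-l_s}P_{\sigma(s)}$, I obtain
\begin{equation*}
    W(I_t, P_s[1]) = 2(l_s - l_t + 1) + W(\sigma(t), \sigma(s)),
\end{equation*}
and the proof of Lemma~\ref{lemma - Nakayama induces diagram automorphism} applied to $P_s \to P_t$ gives exactly the case split needed: when $l_t = l_s$ there is an arrow $\sigma(t) \to \sigma(s)$ in $Q$, so $W(\sigma(t), \sigma(s)) = -1$ and the total is $1$; when $l_t = l_s + 1$ there is an arrow $\sigma(s) \to \sigma(t)$ in $Q$, so $W(\sigma(t), \sigma(s)) = +1$ and the total is again $1$.

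The main obstacle is this last case: one must precisely align the inequality $l_s \le l_t \le l_s + 1$ from Lemma~\ref{lemma - Nakayama induces diagram automorphism}, the induced orientation of the image edge under $\sigma$, and the sign convention defining $W$ on arrows of $\overline{Q}$, in order to produce the exact cancellation in both sub-cases. The remaining verifications are straightforward unwindings of the definitions.
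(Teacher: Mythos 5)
Your proof is correct and follows essentially the same strategy as the paper's: induction on path length via the additivity $W(X,Z)=W(X,Y)+W(Y,Z)$, reduced to checking that every arrow of $\Gamma_{\mathcal{D}(S)}$ has $W$-value $1$. The only difference is one of explicitness — the paper handles all arrows uniformly through the dichotomy $k_3=k_2$ versus $k_3=k_2-1$, whereas you separately verify the connecting arrows $I_t\to P_s[1]$ in $V$ using the $[1]$-invariance of $W$ and Lemma~\ref{lemma - Nakayama induces diagram automorphism}, which spells out a case the paper leaves implicit.
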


\begin{proof}
	As above, $X = \nu_1^{-k_1}P_i$ and $Y = \nu_1^{-k_2}P_j$ for some $i, j\in Q_0$ and $k_1, k_2\in \Z_{\ge 0}$. By construction of $W$ the claim holds when $k_1 = k_2 = 0$. Let $Z\in \mathcal{D}^b(T(S)\mathrm{-mod})$ and let $k_3$ be such that $Z = \nu_1^{-k_3}P_l$ for some $l\in Q_0$. Assume that $\alpha: l\to j\in \overline{Q}_1$ and that there is a path of length $1$ from $Z$ to $Y$. We need to show that $W(X, Y) = W(X, Z) + 1$. Since there is a path of length $1$ from $Z$ to $Y$ we get that $|k_2 - k_3|\le 1$ and thus we have two cases. Either $k_3 = k_2$ or $k_3 = k_2 - 1$. If $k_3 = k_2$ then $\alpha\not\in Q_1$ and therefore $W(X, Y) = W(X, Z) + W(Z, Y) = W(X, Z) + W(l, j) = W(X, Z) + 1$. If $k_3 = k_2 - 1$ then $\alpha\in Q_1$ and thus $W(X, Y) = W(X, Z) + W(Z, Y) = W(X, Z) + 2(k_2 - k_3) + W(l, j) = W(X, Y) + 2 - 1 = W(X, Y) + 1$. Now the claim follows by induction on the length of the path from $X$ to $Y$.
\end{proof}

\begin{myprop}\label{Proposition - W(X, X[1]) = h}
	Let $X\in \mathcal{D}^b(T(S)\mathrm{-mod})$. Then $W(X, X[1]) = h$.
\end{myprop}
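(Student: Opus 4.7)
The plan is to relate $W(X, X[1])$ to the easier quantity $W(X, X[2])$ and then conclude via an additivity and shift-symmetry argument. By Lemma~\ref{Lemma - l_i + l_sigma(i) = h}, $\nu_1^{-h}P_i = P_i[2]$ for every $i \in Q_0$, which extends to $\nu_1^{-h}X = X[2]$ for every indecomposable $X = \nu_1^{-k}P_i \in \mathcal{D}^b(T(S)\mathrm{-mod})$ since $\nu_1$ commutes with itself. The formula defining $W$ then gives immediately
\[
	W(X, X[2]) \;=\; W\bigl(\nu_1^{-k}P_i,\; \nu_1^{-(k+h)}P_i\bigr) \;=\; 2h + W(i, i) \;=\; 2h.
\]
Combining with the additivity of $W$ established in the discussion preceding the statement, we obtain
\[
	W(X, X[1]) + W(X[1], X[2]) \;=\; 2h,
\]
so the proof reduces to showing $W(X, X[1]) = W(X[1], X[2])$.

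To establish this equality, I would use that the shift $[1] \colon \mathcal{D}^b(T(S)\mathrm{-mod}) \to \mathcal{D}^b(T(S)\mathrm{-mod})$ is a $\K$-linear auto-equivalence which preserves Auslander--Reiten triangles, and hence induces an automorphism of the translation quiver $\Gamma_{\mathcal{D}(S)}$. This yields a length-preserving bijection between directed paths from $X$ to $X[1]$ and directed paths from $X[1]$ to $X[2]$ in $\Gamma_{\mathcal{D}(S)}$. By the proposition just proved, every directed path between two indecomposables $Y, Z$ has common length $W(Y, Z)$, so provided at least one directed path from $X$ to $X[1]$ exists, the equality $W(X, X[1]) = W(X[1], X[2])$ follows, and hence $W(X, X[1]) = h$.

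The main obstacle is therefore exhibiting an explicit directed path from $X$ to $X[1]$ in $\Gamma_{\mathcal{D}(S)}$, since without such a path the length-preservation argument is vacuous. By applying $\nu_1^{-k}$ to a path constructed for $X = P_i$, it suffices to treat the projective case. I plan to build one by first iterating the Auslander--Reiten triangle structure to walk from $P_i$ along its $\nu_1^{-1}$-orbit through $2(l_{\sigma(i)} - 1)$ irreducible morphisms to $I_{\sigma(i)} = \nu_1^{-(l_{\sigma(i)}-1)}P_i$, then using a $V$-arrow from $I_{\sigma(i)}$ into $\Gamma_S[1]$ (corresponding to an edge of $Q$ at $\sigma(i)$), and finally navigating within the shifted copy $\Gamma_S[1]$ to reach $P_i[1]$, where the connectedness of $Q$ together with the translation-quiver structure of $\Gamma_{\mathcal{D}(S)}$ ensures such a path can always be completed. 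With this path in hand, the preceding argument delivers $W(X, X[1]) = h$.
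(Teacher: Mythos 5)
Your first two steps coincide with the paper's proof: it also obtains $W(X,X[2])=2h$ from Lemma~\ref{Lemma - l_i + l_sigma(i) = h} and then concludes via $W(P_i,P_i[2])=2W(P_i,P_i[1])$. You are right that this halving is not formal from the definition of $W$ (one computes $W(P_i,P_i[1])=2l_i+W(i,\sigma(i))$ and $W(P_i[1],P_i[2])=2l_{\sigma(i)}-W(i,\sigma(i))$, so the claim is equivalent to $W(i,\sigma(i))=l_{\sigma(i)}-l_i$), and your idea of deducing it from the shift automorphism of $\Gamma_{\mathcal{D}(S)}$ plus the existence of a directed path from $X$ to $X[1]$ is a legitimate way to close that step. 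The problem is that the path you exhibit does not exist in general, and since your whole argument rests on producing at least one such path, this is a genuine gap.

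Concretely, two things go wrong in your last paragraph. First, an arrow $P_s\to P_t$ in $\Gamma_S$ corresponds to an arrow $t\to s$ in $Q$, so the $V$-arrows out of $I_{\sigma(i)}$ are indexed by the arrows of $Q$ with source $\sigma(i)$; if $\sigma(i)$ is a sink of $Q$ there is no $V$-arrow out of $I_{\sigma(i)}$ at all. Second, after a $V$-arrow $I_{\sigma(i)}\to P_s[1]$, the only predecessors of $P_i[1]$ inside $\Gamma_S[1]$ are projectives, so the final leg must be a path among the $P_t[1]$, i.e.\ a path from $i$ to $s$ in $Q$, which need not exist; length counting forces that leg to have length $l_i-l_{\sigma(i)}+1$, which can be negative. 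For $T(S)=\K Q$ with $Q\colon 1\to 2\to 3$ and $i=3$ one has $l_3=1$, $l_{\sigma(3)}=l_1=3$, so your first leg already has length $2(l_1-1)=4=h$ yet ends at $I_1=\nu_1^{-2}P_3\neq P_3[1]$; since every path from $P_3$ to $P_3[1]$ has length $h$, no continuation can reach $P_3[1]$. (For $i=1$ the construction fails already at the $V$-arrow, as $\sigma(1)=3$ is a sink.) A correct path is obtained differently: $\mathrm{Hom}_{T(S)}(P_i,I_i)\neq 0$ and, by representation finiteness, any nonzero non-isomorphism between indecomposables is a sum of compositions of irreducible maps, so there is a directed path from $P_i$ to $I_i$ in $\Gamma_S$; appending the length-two path $I_i\to N_k\to P_i[1]$ through an indecomposable summand $N_k$ of the middle term of the Auslander--Reiten triangle (\ref{eq - triangle from I_i to P_i[1]}) gives a directed path from $P_i$ to $P_i[1]$. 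With that replacement the rest of your argument goes through.
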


\begin{proof}
	Without loss of generality we can assume that $X=P_i$ since
	\begin{equation*}
		W(X, X[1]) = W(\nu_1^kX, \nu_1^kX[1])
	\end{equation*}
	for all $k\in \Z$. By Lemma~\ref{Lemma - l_i + l_sigma(i) = h} we have that $\nu_1^{-h}P_i = P_i[2]$ for all $i\in Q_0$. Now $W(P_i, P_i[2]) = W(P_i, \nu_1^{-h}P_i) = 2h$ and $W(P_i, P_i[2]) = 2W(P_i, P_i[1])$ give the claim.
\end{proof}

In the derived category we can look that the following Auslander-Reiten triangle
\begin{equation}\label{eq - triangle from I_i to P_i[1]}
	I_i\rightarrow N\rightarrow P_i[1] \rightarrow I_i[1]
\end{equation}
where $\nu_1 P_i[1] = I_{i}$. Then $W(I_i, P_i[1]) = 2$ and $W(P_i, I_i) = h-2$ by Proposition~\ref{Proposition - W(X, X[1]) = h}. This holds for every $i\in Q_0$. The following proposition uses this to compute the degree of the socle of the preprojective algebra.

\begin{myprop}
	Let $S$ be a representation finite species. Then
	\begin{equation*}
		\deg \mathrm{Soc}(\Pi(S)) = h-2.
	\end{equation*}
\end{myprop}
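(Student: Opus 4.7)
The plan is to use the derived-category description
$\Pi(S) = \bigoplus_{k\ge 0} \mathrm{Hom}_{\mathcal{D}^b(T(S)\mathrm{-mod})}(T(S), \nu_1^{-k}T(S))$
together with Proposition~\ref{Proposition - explcit desc of iso between pi(lambda) and pi(S)}, in order to identify an element of $e_m\Pi(S)e_j$ of path-length $d$ with a morphism $P_j \to \nu_1^{-k}P_m$ in $\mathcal{D}^b(T(S)\mathrm{-mod})$ whose length in $\Gamma_{\mathcal{D}(S)}$ is $d = W(P_j, \nu_1^{-k}P_m)$. Since $\Pi(S)$ is a positively-graded finite-dimensional Frobenius algebra, $\mathrm{rad}(\Pi(S))$ equals the positive-degree part and the socle of each indecomposable projective $\Pi(S)e_j$ is simple; it is therefore enough to show that the top path-length degree of $\Pi(S)e_j$ equals $h-2$ for every $j\in Q_0$.

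To realize degree $h-2$ I take any nonzero $\iota_j \in \mathrm{Hom}_{T(S)}(P_j, I_j)$, which exists because $I_j$ is the injective envelope of the top simple $S_j$ of $P_j$. The Auslander--Reiten triangle $I_j \to N \to P_j[1]$ from the paragraph preceding the proposition gives $\nu_1 P_j[1] = I_j$ and $W(I_j, P_j[1]) = 2$, and Proposition~\ref{Proposition - W(X, X[1]) = h} gives $W(P_j, P_j[1]) = h$. Additivity of $W$ then yields $W(P_j, I_j) = h-2$, so $\iota_j$ sits in path-length degree $h-2$.

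For the upper bound, let $Y$ be any indecomposable $T(S)$-module with $\mathrm{Hom}(P_j, Y) \neq 0$. Embed $Y$ into its injective envelope $E(Y) = \bigoplus I_{i_k}$ and choose an index $i_k$ for which the composition $P_j \to Y \hookrightarrow E(Y) \twoheadrightarrow I_{i_k}$ remains nonzero; both $\mathrm{Hom}(P_j, Y)$ and $\mathrm{Hom}(Y, I_{i_k})$ are then nonzero. Since $T(S)$ is representation-finite hereditary, nonzero Homs between indecomposables yield oriented paths in the Auslander--Reiten quiver, so $W(P_j, Y), W(Y, I_{i_k}) \geq 0$ and additivity gives $W(P_j, Y) + W(Y, I_{i_k}) = W(P_j, I_{i_k})$. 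Using $I_i = \nu_1 P_i[1]$ and Proposition~\ref{Proposition - W(X, X[1]) = h} one computes $W(P_j, I_i) = W(j, i) + h - 2$; the non-vanishing of $\mathrm{Hom}(P_j, I_{i_k}) \cong D(e_{i_k}T(S)e_j)$ forces a path in $Q$ from $j$ to $i_k$, whose $W$-value is $\leq 0$, so $W(P_j, I_{i_k}) \leq h-2$ and hence $W(P_j, Y) \leq h-2$.

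Combining these steps, the top path-length degree of $\Pi(S)e_j$ is exactly $h-2$ for every $j$, forcing the simple socle to sit there. The main obstacle I anticipate is the upper bound, which depends on two supporting facts that need to be flagged carefully: the Hom-path dictionary in the Auslander--Reiten quiver of a representation-finite hereditary algebra (which follows from $\Gamma_{\mathcal{D}(S)}$ being of the form $\mathbb{Z}\Delta$), and additivity of $W$ shown earlier in the section.
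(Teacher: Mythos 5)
Your proof is correct, but it reaches the conclusion by a genuinely different route than the paper. Both arguments agree on the two shared ingredients: the computation $W(P_i,I_i)=h-2$ coming from the triangle $I_i\to N\to P_i[1]$ together with Proposition~\ref{Proposition - W(X, X[1]) = h}, and the fact that self-injectivity makes $\mathrm{Soc}(\Pi(S)e_i)$ simple. Where you diverge is in how you locate the socle. The paper exhibits the socle generator explicitly as the composite $P_i\twoheadrightarrow \mathrm{Top}(P_i)=\mathrm{Soc}(I_i)\rightarrowtail I_i$ and uses the shifted Auslander--Reiten triangle $P_i\to I_i\to N\to P_i[1]$ to see that every radical morphism out of $I_i$ factors through $I_i\to N$ and hence kills this element; simplicity of the socle then finishes the argument with no need to control the other graded pieces. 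You instead prove a genuine upper bound, $W(P_j,Y)\le h-2$ for every indecomposable $Y$ with $\mathrm{Hom}(P_j,Y)\neq 0$, via the injective envelope, the additivity of $W$, the positivity of $W$ along paths in $\Gamma_S$ (which uses $\mathrm{rad}^\infty=0$ in the representation-finite case), and the observation that $\mathrm{Hom}(P_j,I_{i})\cong D(e_iT(S)e_j)$ forces $W(j,i)\le 0$; then the top graded piece, being annihilated by $\mathrm{rad}(\Pi(S))$, must coincide with the simple socle. Your version costs a little more work (the upper bound and the Hom-to-path dictionary need to be justified, as you flag) but buys the extra statement that $\Pi(S)$ vanishes in path-length degrees above $h-2$, which the paper only gets a posteriori from the Frobenius property; the paper's version is shorter because the triangle does the annihilation argument in one stroke. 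I see no gap in your argument.
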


\begin{proof}
	If we shift (\ref{eq - triangle from I_i to P_i[1]}) to the right we get
	\begin{equation}\label{eq - AR triangle starting at projective}
		P_i\rightarrow I_i\rightarrow N\rightarrow P_i[1].
	\end{equation}
	Up to multiplication by a scalar the first morphism in (\ref{eq - AR triangle starting at projective}) is given by
	\begin{equation}\label{eq - map from P_i to I_i}
		P_i\twoheadrightarrow \mathrm{Top}(P_i)=\mathrm{Soc}(I_i)\rightarrowtail I_i,
	\end{equation}
	and it is an element of $\Pi(S)e_i$. Since (\ref{eq - AR triangle starting at projective}) is obtained by shifting an Auslander-Reiten triangle starting at $I_i$, any radical map from $I_i$ can be factored through $I_i\rightarrow N$. The exactness of the triangle yields that any radical morphism composed with (\ref{eq - map from P_i to I_i}) is the zero map. Hence (\ref{eq - map from P_i to I_i}) lies in the socle of $\Pi(S)e_i$. It is left to show that the $\mathrm{Soc}(\Pi(S)e_i)$ is $D$-generated by (\ref{eq - map from P_i to I_i}). Since $\Pi(S)$ is self-injective
	\begin{equation*}
		\dim_{D_{\sigma(i)}} \mathrm{Soc}(\Pi(S)e_i) = 1,
	\end{equation*}
	for all $i\in Q_0$. This proves the proposition.
\end{proof}

\begin{myprop}\label{Proposition - Koszul resolution in Dynkin case}
	Let $S$ be a species of Dynkin type $\Delta$. For every $i\in Q_0$, there is an exact sequence of left $\Pi(S)$-modules
	\begin{equation}\label{eq - Koszul resolution in Dynkin case sequence}
		0\rightarrow D_{\sigma(i)}\rightarrow \Pi(S)e_i \rightarrow \bigoplus_{\substack{\alpha\in \overline{Q}_1 \\ s(\alpha) = i}}(\Pi(S)e_{t(\alpha)})^{\dim_{D_{t(\alpha)}}\overline{M}_\alpha} \rightarrow \Pi(S)e_i\rightarrow D_i\rightarrow 0,
	\end{equation}
	where $D_i$ is the simple module at $i\in Q_0$.
\end{myprop}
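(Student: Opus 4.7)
The plan is to define the four maps in the sequence explicitly, check that consecutive compositions vanish, and establish exactness at each position. Exactness on the right half will follow from the Koszul structure of $T(\overline{S})$, and exactness on the left half from Nakayama duality using that $\Pi(S)$ is self-injective.

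First, I fix the maps. Let $\phi_0\colon \Pi(S)e_i \twoheadrightarrow D_i$ be the projection onto the top. Let $\phi_1$ be the map whose $(\alpha,k)$-component is right multiplication by $y_\alpha^k$; equivalently, it is the map obtained from the Koszul-style identification $(\Pi(S)e_{t(\alpha)})^{\dim_{D_{t(\alpha)}}\overline{M}_\alpha} \cong \Pi(S)e_{t(\alpha)} \otimes_{D_{t(\alpha)}} \overline{M}_\alpha$ followed by multiplication in $\Pi(S)$. Let $\phi_2$ send $x \in \Pi(S)e_i$ to $(\sgn(\alpha)\, x y_{\alpha^*}^k)_{\alpha,k}$, the transposed Casimir map. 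Let $\phi_3\colon D_{\sigma(i)} \hookrightarrow \Pi(S)e_i$ be the inclusion of a fixed nonzero socle generator $u_i$; the target $D_{\sigma(i)}$ is correct because $\mathrm{Soc}(\Pi(S)e_i) \cong D_{\sigma(i)}$ as a one-dimensional simple $\Pi(S)$-module by self-injectivity together with Theorem~\ref{Theorem - Nakayama permutation}. The sequence is a complex: $\phi_1\phi_2(x) = \sum_{\alpha,k}\sgn(\alpha)\,xy_{\alpha^*}^k y_\alpha^k = -x\cdot (e_i c e_i) = 0$ in $\Pi(S)$ after reindexing $\alpha \leftrightarrow \alpha^*$; $\phi_2\phi_3 = 0$ since $u_i$ annihilates $\mathrm{rad}(\Pi(S))$; and $\phi_0\phi_1 = 0$ because $\mathrm{Im}(\phi_1) \subseteq \mathrm{rad}(\Pi(S))e_i$.

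Exactness at $D_i$, $D_{\sigma(i)}$, and at the rightmost $\Pi(S)e_i$ is immediate: the first two are by construction of $\phi_0$ and $\phi_3$, while the third follows because $\mathrm{rad}(\Pi(S))e_i$ is generated as a left $\Pi(S)$-module by $\{y_\alpha^k : s(\alpha) = i\}$, which is precisely $\mathrm{Im}(\phi_1)$. The hardest step is exactness at the middle term. For this I would lift to $T(\overline{S})$, which is Koszul by Lemma~\ref{Corollary - T(S)xT(S) Koszul} applied to $\overline{S}$; in particular, the analogous map $\bigoplus_\alpha T(\overline{S})e_{t(\alpha)}^{\dim_{D_{t(\alpha)}}\overline{M}_\alpha} \hookrightarrow T(\overline{S})e_i$ is injective. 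Passing to $\Pi(S) = T(\overline{S})/\langle c\rangle$, the kernel of $\phi_1$ is generated over $\Pi(S)$ by the images of the elements $e_j c e_i \in T(\overline{S})$ acting on the generators; expanding $c = \sum_\alpha \sgn(\alpha)c_\alpha$ identifies this kernel with $\mathrm{Im}(\phi_2)$ after checking the sign conventions.

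Finally, for exactness at the leftmost $\Pi(S)e_i$, I appeal to Nakayama duality. Applying the $\K$-duality $D = \Hom_\K(-,\K)$ to the right half of the sequence and then untwisting by the Nakayama automorphism $\gamma$ of Theorem~\ref{Theorem - description of nakayama automorphism true}, one obtains (up to isomorphism) the left half of the sequence, with $D_{\sigma(i)}$ playing the role of $D_i$. The sign twist $\sgn(\sigma(\alpha))$ appearing in $\gamma$ matches the signs built into $\phi_2$ after relabeling indices by $\sigma$. Since duality preserves exactness, the already-verified exactness on the right forces exactness at the leftmost $\Pi(S)e_i$. The main obstacle I anticipate is the middle-term exactness, which rests on correctly identifying how the ideal $\langle c\rangle$ contributes to the kernel of $\phi_1$; the remainder is a bookkeeping exercise using self-injectivity and the Casimir identities.
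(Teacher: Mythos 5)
Your route is genuinely different from the paper's: the paper never writes down the maps explicitly, but instead chooses an orientation of $Q$ making $P_i$ simple, takes the Auslander--Reiten triangle starting at $P_i$ in $\mathcal{D}^b(T(S)\mathrm{-mod})$, applies $\mathrm{Hom}_{\mathcal{D}^b}(-,\nu_1^{-j}P_k)$, and sums the resulting long exact sequences over $j,k$; exactness then comes for free from the long exact sequence, and the identification of the end terms comes from the almost-split property. Your approach (explicit Casimir maps, quadratic-algebra exactness on the right, Frobenius duality on the left) is a legitimate alternative in the spirit of \cite{brenner2002periodic}, and your right half is essentially sound: the middle-term exactness you flag is exactly the standard exactness of the initial segment $A\otimes_D R \to A\otimes_D V \to A \to A_0 \to 0$ of the Koszul-type complex of a quadratic algebra $A = T_D(V)/\langle R\rangle$, once one checks $Re_i = D_i(e_ice_i) \cong D_i$ as a $D_i$-bimodule; Koszulity of $T(\overline{S})$ alone is not the right tool here (it only gives injectivity before passing to the quotient), but the needed computation of $\ker\phi_1$ modulo $\langle c\rangle$ is short and standard.

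The genuine gap is in the left half. As written, "apply $D=\Hom_\K(-,\K)$ to the right half and untwist by $\gamma$" does not typecheck: the $\K$-dual of your sequence of left modules is a sequence of \emph{right} modules, so it cannot literally be the left half of the same sequence. To repair this you must (i) first prove the right-module analogue of the right half (exactness of $\bigoplus (e_{t(\alpha)}\Pi(S))^{d} \to e_{\sigma(i)}\Pi(S) \to D_{\sigma(i)} \to 0$ at the last two spots), (ii) dualize it and identify $D(e_j\Pi(S))\cong \Pi(S)e_{\sigma(j)}$, which uses self-injectivity together with $\sigma^2=\mathrm{id}$ and the fact that $\sigma$ is a diagram automorphism (Lemma~\ref{lemma - Nakayama induces diagram automorphism}) to see that the middle terms and their multiplicities $\dim_{D_{t(\alpha)}}\overline{M}_\alpha$ match, and (iii) splice the dualized left half onto your right half at the middle term. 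Step (iii) is not automatic: duality gives you exactness for the \emph{dualized} map $\phi_2'$, and you still need $\ker\phi_2 = \mathrm{Soc}(\Pi(S)e_i)$ for \emph{your} $\phi_2$; this requires either identifying $\phi_2'$ with $\phi_2$ up to an automorphism of the terms (the "sign conventions match" claim, which is precisely where the content of Theorem~\ref{Theorem - description of nakayama automorphism true} enters and must actually be verified), or a dimension count comparing $\dim_\K\ker\phi_2 = \dim_\K\Pi(S)e_i - \dim_\K\ker\phi_1$ with $\dim_\K D_{\sigma(i)}$. None of this is insurmountable, but as it stands the exactness at the two leftmost positions is asserted rather than proved.
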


\begin{proof}
	To construct the sequence (\ref{eq - Koszul resolution in Dynkin case sequence}) we start by considering Auslander-Reiten triangles in $\mathcal{D}^b = \mathcal{D}^b(\mathrm{mod-}T(S))$ (i.e. we for the moment switch to considering right $T(S)$-modules).
	
	By \cite[Theorem]{Dlab_1980}, the preprojective algebra does not depend on the orientation of $Q$. Therefore, we choose an orientation of $Q$ such that $P_i$ is simple. Consider the Auslander-Reiten triangle
	\begin{equation}\label{eq - AR triangle starting at Projective}
		P_i\rightarrow \bigoplus_{\substack{\alpha\in Q_1 \\ s(\alpha) = i}}P_{t(\alpha)}^{d_{P_{t(\alpha)}, P_i}/\delta(P_{t(\alpha)})}\rightarrow \nu_1^{-1}P_i \rightarrow P_i[1].
	\end{equation}
	The fact that the middle term of (\ref{eq - AR triangle starting at Projective}) is a direct sum of projective modules is a consequence of $P_i$ being simple. Note that this triangle consists of two length $1$ morphisms and one length $h-2$ morphism $\nu_1^{-1}P_i\rightarrow P_i[1]$, which follows from the discussion for (\ref{eq - triangle from I_i to P_i[1]}). Let $0\le j\le l_k$. We apply $\mathrm{Hom}_{\mathcal{D}^b}(-, \nu_1^{-j}P_k)$ on (\ref{eq - AR triangle starting at Projective}) to get the long exact sequence
	\begin{equation}\label{eq - hom applied to AR triangle oldversion}
		\begin{aligned}
			\cdots &\rightarrow \bigoplus_{\substack{\alpha\in Q_1 \\ s(\alpha) = i}}\mathrm{Hom}_{\mathcal{D}^b}(P_{t(\alpha)}^{d_{P_{t(\alpha)}, P_i}/\delta(P_{t(\alpha)})}[1], \nu_1^{-j}P_k) \rightarrow \mathrm{Hom}_{\mathcal{D}^b}(P_i[1], \nu_1^{-j}P_k) \rightarrow \\
			&\rightarrow \mathrm{Hom}_{\mathcal{D}^b}(\nu_1^{-1}P_i, \nu_1^{-j}P_k) \rightarrow \bigoplus_{\substack{\alpha\in Q_1 \\ s(\alpha) = i}} \mathrm{Hom}_{\mathcal{D}^b}(P_{t(\alpha)}^{d_{P_{t(\alpha)}, P_i}/\delta(P_{t(\alpha)})}, \nu_1^{-j}P_k) \rightarrow \\
			&\rightarrow \mathrm{Hom}_{\mathcal{D}^b}(P_i, \nu_1^{-j}P_k) \rightarrow \mathrm{Hom}_{\mathcal{D}^b}(\nu_1^{-1}P_i[-1], \nu_1^{-j}P_k) \rightarrow \bigoplus_{\substack{\alpha\in Q_1 \\ s(\alpha) = i}} \mathrm{Hom}_{\mathcal{D}^b}(P_{t(\alpha)}^{d_{P_{t(\alpha)}, P_i}/\delta(P_{t(\alpha)})}[-1], \nu_1^{-j}P_k)\rightarrow \cdots
		\end{aligned}.
	\end{equation}
	
	First note that
	\begin{equation}\label{eq - hom is simple in AR triangle 1}
		\mathrm{Hom}_{\mathcal{D}^b}(P_i[1], \nu_1^{-j}P_k) = \begin{cases}
			D_{\sigma(i)}, & \mbox{if } j=l_k \mbox{ and }k=\sigma(i) \\
			\mathrm{rad}_{\mathcal{D}^b}(P_i[1], P_{\sigma(k)}[1]), & \mbox{if } j = l_k \mbox{ and } k\not=\sigma(i) \\
			0, & \mbox{otherwise}
		\end{cases},
	\end{equation}
	and since (\ref{eq - AR triangle starting at Projective}) is an Auslander-Reiten triangle starting $P_i$ we have that every morphism in $\mathrm{rad}_{\mathcal{D}^b}(P_i, P_{\sigma(k)})$ factors through the first morphism in (\ref{eq - AR triangle starting at Projective}). Hence
	\begin{equation}\label{eq - hom is simple in AR triangle new}
		\coker\left(\bigoplus_{\substack{\alpha\in Q_1 \\ s(\alpha) = i}}\mathrm{Hom}_{\mathcal{D}^b}(P_{t(\alpha)}^{d_{P_{t(\alpha)}, P_i}/\delta(P_{t(\alpha)})}[1], \nu_1^{-j}P_k) \rightarrow \mathrm{Hom}_{\mathcal{D}^b}(P_i[1], \nu_1^{-j}P_k)\right) = \begin{cases}
			D_{\sigma(i)}, & \mbox{if } j=l_k \mbox{ and }k=\sigma(i) \\
			0, & \mbox{otherwise}.
		\end{cases}
	\end{equation}
	
	We can therefore consider the non-brutal truncation of (\ref{eq - hom applied to AR triangle oldversion}), i.e.
	\begin{equation}\label{eq - hom applied to AR triangle}
		\begin{aligned}
			0 &\rightarrow \coker\left(\bigoplus_{\substack{\alpha\in Q_1 \\ s(\alpha) = i}}\mathrm{Hom}_{\mathcal{D}^b}(P_{t(\alpha)}^{d_{P_{t(\alpha)}, P_i}/\delta(P_{t(\alpha)})}[1], \nu_1^{-j}P_k) \rightarrow \mathrm{Hom}_{\mathcal{D}^b}(P_i[1], \nu_1^{-j}P_k)\right) \rightarrow \\
			&\rightarrow \mathrm{Hom}_{\mathcal{D}^b}(\nu_1^{-1}P_i, \nu_1^{-j}P_k) \rightarrow \bigoplus_{\substack{\alpha\in Q_1 \\ s(\alpha) = i}} \mathrm{Hom}_{\mathcal{D}^b}(P_{t(\alpha)}^{d_{P_{t(\alpha)}, P_i}/\delta(P_{t(\alpha)})}, \nu_1^{-j}P_k) \rightarrow \\
			&\rightarrow \mathrm{Hom}_{\mathcal{D}^b}(P_i, \nu_1^{-j}P_k) \rightarrow \mathrm{Hom}_{\mathcal{D}^b}(\nu_1^{-1}P_i[-1], \nu_1^{-j}P_k) \rightarrow \bigoplus_{\substack{\alpha\in Q_1 \\ s(\alpha) = i}} \mathrm{Hom}_{\mathcal{D}^b}(P_{t(\alpha)}^{d_{P_{t(\alpha)}, P_i}/\delta(P_{t(\alpha)})}[-1], \nu_1^{-j}P_k)\rightarrow \cdots.
		\end{aligned}
	\end{equation}
	
	Also note that
	\begin{equation}\label{eq - hom is simple in AR triangle 2}
		\mathrm{Hom}_{\mathcal{D}^b}(\nu_1^{-1}P_i[-1], \nu_1^{-j}P_k) = \begin{cases}
			D_{i}, & \mbox{if } j=0 \mbox{ and }k=i \\
			0, & \mbox{otherwise}
		\end{cases},
	\end{equation}
	which follows form the fact that we chose our orientation of $Q$ such that $P_i$ is simple. Since $\mathrm{Hom}_{\mathcal{D}^b}(T(S)[-1], T(S)) = 0$, the direct sum
	\begin{equation}\label{eq - hom is simple in AR triangle 3}
		\bigoplus_{\substack{\alpha\in Q_1 \\ s(\alpha) = i}} \mathrm{Hom}_{\mathcal{D}^b}(P_{t(\alpha)}^{d_{P_{t(\alpha)}, P_i}/\delta(P_{t(\alpha)})}[-1], \nu_1^{-j}P_k) = 0.
	\end{equation}
	
	Let us now take the direct sum of (\ref{eq - hom applied to AR triangle}) for values $k\in Q_0$ and $0\le j\le l_k$ to get the following sequence
	\begin{equation}\label{eq - direct sum of AR sequences}
		\begin{aligned}
			0 &\rightarrow \bigoplus_{k\in Q_0}\bigoplus_{j=0}^{l_k} \coker\left(\bigoplus_{\substack{\alpha\in Q_1 \\ s(\alpha) = i}}\mathrm{Hom}_{\mathcal{D}^b}(P_{t(\alpha)}^{d_{P_{t(\alpha)}, P_i}/\delta(P_{t(\alpha)})}[1], \nu_1^{-j}P_k) \rightarrow \mathrm{Hom}_{\mathcal{D}^b}(P_i[1], \nu_1^{-j}P_k)\right) \rightarrow \\ &\rightarrow \bigoplus_{k\in Q_0}\bigoplus_{j=0}^{l_k} \mathrm{Hom}_{\mathcal{D}^b}(\nu_1^{-1}P_i, \nu_1^{-j}P_k)\rightarrow \bigoplus_{\substack{\alpha\in Q_1 \\ s(\alpha) = i}} \bigoplus_{k\in Q_0}\bigoplus_{j=0}^{l_k} \mathrm{Hom}_{\mathcal{D}^b}(P_{t(\alpha)}^{d_{P_{t(\alpha)}, P_i}/\delta(P_{t(\alpha)})}, \nu_1^{-j}P_k) \rightarrow \\ 
			&\rightarrow \bigoplus_{k\in Q_0}\bigoplus_{j=0}^{l_k} \mathrm{Hom}_{\mathcal{D}^b}(P_i, \nu_1^{-j}P_k) \rightarrow \bigoplus_{k\in Q_0}\bigoplus_{j=0}^{l_k} \mathrm{Hom}_{\mathcal{D}^b}(\nu_1^{-1}P_i[-1], \nu_1^{-j}P_k) \rightarrow 0\rightarrow \cdots.
		\end{aligned}
	\end{equation}
	
	From the definition of the preprojective algebra we know
	\begin{equation*}
		\bigoplus_{k\in Q_0}\bigoplus_{j=0}^{l_k-1}\mathrm{Hom}_{\mathcal{D}^b}(P_i, \nu_1^{-j}P_k) = \Pi(S)e_i,
	\end{equation*}
	as a vector space. Naturally, we can view it as an $\Pi(S)$-module, and since all of the maps in (\ref{eq - direct sum of AR sequences}) are given by composition by certain morphisms, we can view the sequence as being a sequence of $\Pi(S)$-modules. Using the fact that $\mathrm{Hom}_{\mathcal{D}^b}(T(S), T(S)[1]) = 0$ it follows that
	\begin{equation*}
		\mathrm{Hom}_{\mathcal{D}^b}(P_i, \nu_1^{-l_k}P_k) = 0.
	\end{equation*}
	Using (\ref{eq - hom is simple in AR triangle new}), (\ref{eq - hom is simple in AR triangle 2}), (\ref{eq - hom is simple in AR triangle 3}) and the fact that $d_{P_{t(\alpha)}, P_i}/\delta(P_{t(\alpha)}) = \dim_{D_{t(\alpha)}}\overline{M}_\alpha$ we can rewrite (\ref{eq - direct sum of AR sequences}) to
	\begin{equation*}
		0 \rightarrow D_{\sigma(i)} \rightarrow \Pi(S)e_i\rightarrow \bigoplus_{\substack{\alpha\in \overline{Q}_1 \\ s(\alpha) = i}} (\Pi(S)e_{t(\alpha)})^{\dim_{D_{t(\alpha)}}\overline{M}_\alpha} \rightarrow \Pi(S)e_i \rightarrow D_i \rightarrow 0.
	\end{equation*}
\end{proof}

\begin{myrem}
	Note that taking the $\star$-degree $0$ part of the complex in Proposition~\ref{Proposition - Koszul resolution in Dynkin case} yields the Koszul complex (\ref{eq - Koszul complex for T(S)_0}).
\end{myrem}

\begin{myrem}
	In the case where the species $S$ is not of finite representation type, then $\Pi(S)$ is Koszul and the Koszul sequence is similar to \ref{eq - Koszul resolution in Dynkin case sequence} but with the last term removed. This is formulated in \cite[Proposition 8.8]{athaiokrmy}.
\end{myrem}

\begin{mydef}\cite[Definition 3.1]{brenner2002periodic}
	Let $\Lambda=\bigoplus_{i=0}^p \Lambda_i$ be a graded $\K$-algebra, where $\Lambda_0$ is semi-simple. We say that $\Lambda$ is an almost Koszul, or $(p,q)$-almost Koszul algebra, if there exists a graded exact complex
	\begin{equation*}
		0\rightarrow W \rightarrow P_q \rightarrow \dots \rightarrow P_1\rightarrow P_0\rightarrow \Lambda_0 \rightarrow 0,
	\end{equation*}
	of $\Lambda$-modules where $P_i$ is projective and is generated by its component of degree $i$ for all $i=0, \dots, q$. Here $W=\Lambda_p\otimes_{\Lambda_0} (P_q)_q$.
\end{mydef}

\begin{mycor}\label{Corollary - Pi(S) is an almost Koszul algebra}
	If $S$ is a species of Dynkin type $\Delta$, then $\Pi(S)$ is $(h-2, 2)$-almost Koszul.
\end{mycor}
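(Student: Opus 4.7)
The plan is to invoke the length-four exact sequence from Proposition~\ref{Proposition - Koszul resolution in Dynkin case} and verify that, with the path length grading on $\Pi(S)$, it satisfies the definition of an $(h-2,2)$-almost Koszul resolution. First I would note that under the path length grading, $\Pi(S)_0 = D = \bigoplus_{i\in Q_0}D_i$ is semisimple, and by the preceding proposition $\deg\mathrm{Soc}(\Pi(S)) = h-2$; since $\Pi(S)$ is finite dimensional and self-injective, its top path-length degree is exactly $h-2$, so $\Pi(S) = \bigoplus_{k=0}^{h-2}\Pi(S)_k$. This identifies the candidate parameters as $p = h-2$ and $q=2$.

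Next, for each $i \in Q_0$, consider the exact sequence
\begin{equation*}
	0\to D_{\sigma(i)}\to \Pi(S)e_i \to \bigoplus_{\substack{\alpha\in\overline{Q}_1\\ s(\alpha)=i}}(\Pi(S)e_{t(\alpha)})^{\dim_{D_{t(\alpha)}}\overline{M}_\alpha} \to \Pi(S)e_i\to D_i\to 0
\end{equation*}
from Proposition~\ref{Proposition - Koszul resolution in Dynkin case}. I would set $P_0 = \Pi(S)e_i$, view $P_1$ as a shifted sum of $\Pi(S)e_{t(\alpha)}$'s generated in degree $1$, and $P_2 = \Pi(S)e_i$ with its generator placed in degree $2$. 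Each differential is multiplication by the degree-one generators $y_\alpha$ (respectively by the socle-realising factor $P_i \twoheadrightarrow \mathrm{Top}(P_i) = \mathrm{Soc}(I_i) \hookrightarrow I_i$ constructed in the proof of $\deg\mathrm{Soc}(\Pi(S)) = h-2$), so every differential becomes homogeneous of degree $0$ under these shifts. Each $P_k$ is projective and generated by its component in degree $k$.

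The remaining point is to identify the kernel $W = D_{\sigma(i)}$ with $\Pi(S)_{h-2}\otimes_{\Pi(S)_0}(P_2)_2$. Since $(P_2)_2 = D_i$, we have $\Pi(S)_{h-2}\otimes_{\Pi(S)_0}(P_2)_2 = \Pi(S)_{h-2}e_i$, sitting in degree $h$ of the shifted module $P_2$. For any $x\in \Pi(S)_{h-2}e_i$ and $y\in \mathrm{rad}(\Pi(S))$, the product $yx$ lies in $\Pi(S)_{\geq h-1}e_i = 0$, so $\Pi(S)_{h-2}e_i \subseteq \mathrm{Soc}(\Pi(S)e_i)$. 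Conversely, the preceding socle result places $\mathrm{Soc}(\Pi(S)e_i)$ in degree $h-2$, and being Frobenius forces $\mathrm{Soc}(\Pi(S)e_i) \cong D_{\sigma(i)}$. Thus $\Pi(S)_{h-2}e_i = \mathrm{Soc}(\Pi(S)e_i) = D_{\sigma(i)} = W$, as required.

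Finally, taking the direct sum over $i\in Q_0$ of these resolutions produces a graded exact complex
\begin{equation*}
	0\to \bigoplus_{i\in Q_0}D_{\sigma(i)} \to P_2 \to P_1 \to P_0 \to \Pi(S)_0 \to 0
\end{equation*}
of the required form, proving $\Pi(S)$ is $(h-2,2)$-almost Koszul. The only non-routine step is the identification of the kernel with the top-degree part $\Pi(S)_{h-2}e_i$; this is where the path-length grading, the socle-degree computation $\deg\mathrm{Soc}(\Pi(S)) = h-2$, and the Frobenius property of $\Pi(S)$ combine, and it is the main obstacle in the argument.
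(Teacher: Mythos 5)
Your proposal is correct and takes essentially the same route as the paper: both rest on the exact sequence of Proposition~\ref{Proposition - Koszul resolution in Dynkin case} together with the fact that $\deg\mathrm{Soc}(\Pi(S)) = h-2$, the paper simply stating that all differentials have degree $1$ except the inclusion of $D_{\sigma(i)}$, which has degree $h-2$. Your explicit verification that the kernel equals $\Pi(S)_{h-2}\otimes_{\Pi(S)_0}(P_2)_2$ via the socle and the Frobenius property is a welcome expansion of what the paper leaves implicit, but it is not a different argument.
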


\begin{proof}
	From the definition of the preprojective algebra it is easy to see that it is indeed a quadratic algebra. The sequence in Proposition~\ref{Proposition - Koszul resolution in Dynkin case} is the almost Koszul complex for $S_i$. All the morphism are of degree $1$ except for the fourth morphism which is of degree $h-2$. This tells us that the third syzygy is $S_{\sigma(i)}$, and therefore $\Pi(S)$ is $(h-2, 2)$-almost Koszul.
\end{proof}

\section{Higher Preprojective Algebras}\label{Section - Higher preprojective algebra}
In this section we recall the higher analogue of the preprojective algebra and the $l$-homogeneous property for $d$-representation finite algebras.

In the setup of higher Auslander-Reiten theory \cite{iyama2007higher, iyama2008auslander}, The notion of being representation finite and hereditary is generalized by the following definition.

\begin{mydef}
	Let $\Lambda$ be a finite dimensional $\K$-algebra of global dimension $d$. We say that $\Lambda$ is $d$-representation finite if there exists a $d$-cluster tilting module $M\in \Lambda\mathrm{-mod}$, i.e.
	\begin{equation*}
		\begin{aligned}
			\mathrm{add}(M) &= \{X\in \Lambda\mathrm{-mod} \mid \mathrm{Ext}^i_\Lambda(M, X) = 0 \mbox{ for any } 0 < i < n \} = \\
			&= \{X\in \Lambda\mathrm{-mod} \mid \mathrm{Ext}^i_\Lambda(X, M) = 0 \mbox{ for any } 0 < i < n \}.
		\end{aligned}
	\end{equation*}
\end{mydef}

Before we introduce the higher analogue of being $l$-homogeneous, we need to introduce the higher Auslander-Reiten translation together with some of its properties, which are stated in the following proposition.

\begin{myprop}\cite[Proposition 1.3b)]{iyama4897cluster}\label{Proposition - higher AR translation, integer l}
	Let $\Lambda$ be a $d$-representation finite algebra. Let
	\begin{equation*}
		\begin{aligned}
			\tau_d &= \mathrm{Tor}^\Lambda_d(D\Lambda, -)\cong D\mathrm{Ext}^d_\Lambda(-, \Lambda): \Lambda\mathrm{-mod}\rightarrow \Lambda\mathrm{-mod}, \\
			\tau_d^{-1} &= D\mathrm{Tor}^\Lambda_d(D-, D\Lambda)\cong \mathrm{Ext}^d_\Lambda(D\Lambda, -): \Lambda\mathrm{-mod}\rightarrow \Lambda\mathrm{-mod}.
		\end{aligned}
	\end{equation*}
	Let $P_1, \dots, P_a$ be the isomorphism classes of indecomposable projective $\Lambda$-modules, and let $I_i$ be the indecomposable injective corresponding to $P_i$.
	\begin{enumerate}
		\item There exists a permutation $\sigma$ and positive integers $l_1, \dots, l_a$ such that $\tau_d^{l_i-1}I_i = P_{\sigma(i)}$ for all $1\le i\le a$.
		\item There exists a unique basic $d$-cluster tilting $\Lambda$-module $M$, which is given as the direct sum of the following mutually non-isomorphic indecomposable $\Lambda$-modules.
		
		\begin{center}
			\begin{tabular}{ccccc}
				$I_1$, & $\tau_d I_1$, & $\cdots$ & $\tau_d^{l_1-2}I_1$, & $\tau_d^{l_1-1}I_1 = P_{\sigma(1)}$ \\
				$I_2$, & $\tau_d I_2$, & $\cdots$ & $\tau_d^{l_2-2}I_2$, & $\tau_d^{l_2-1}I_2 = P_{\sigma(2)}$ \\
				$\cdots$ & $\cdots$ & $\cdots$ & $\cdots$ & $\cdots$ \\
				$I_a$, & $\tau_d I_a$, & $\cdots$ & $\tau_d^{l_a-2}I_a$, & $\tau_d^{l_a-1}I_a = P_{\sigma(a)}$ \\
			\end{tabular}
		\end{center}
		\item We have mutually quasi-inverse equivalences $\tau_d: \mathrm{add}(M/\Lambda) \xrightarrow{\sim} \mathrm{add}(M/D\Lambda)$ and $\tau^{-1}_d: \mathrm{add}(M/D\Lambda)\xrightarrow{\sim} \mathrm{add}(M/\Lambda)$.
	\end{enumerate}
\end{myprop}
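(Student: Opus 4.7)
The plan is to reduce everything to the action of $\tau_d$ on the unique basic $d$-cluster tilting module $M$. First I would observe that both $\Lambda$ and $D\Lambda$ lie in $\mathrm{add}(M)$: since $\Lambda$ is projective, $\mathrm{Ext}^i_\Lambda(M, \Lambda) = 0$ for all $i > 0$, which by the Ext-vanishing characterization of $\mathrm{add}(M)$ forces $\Lambda \in \mathrm{add}(M)$; dually $D\Lambda \in \mathrm{add}(M)$. In particular every indecomposable projective $P_i$ and every indecomposable injective $I_j$ appears as a direct summand of $M$.

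Next I would establish part (3) directly, since (1) and (2) follow from it by a pigeonhole argument. Using $\mathrm{gldim}\Lambda \le d$ together with the Ext-vanishing definition of $\mathrm{add}(M)$, one checks that the derived Nakayama functor $\nu_d = \nu\circ [-d]$ applied to any $X \in \mathrm{add}(M)$ has cohomology concentrated in degree zero, which identifies $\tau_d$ on $\mathrm{add}(M)$ with the underived functor $D\mathrm{Ext}_\Lambda^d(-, \Lambda)$. The same vanishing ensures that $\tau_d X$ lies again in $\mathrm{add}(M)$; the standard duality for the Nakayama functor then forces $\tau_d X$ to be non-injective whenever $X$ is non-projective, and symmetrically $\tau_d^{-1} Y$ to be non-projective whenever $Y$ is non-injective. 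Combined with $\tau_d \Lambda = 0$ and $\tau_d^{-1}(D\Lambda) = 0$, this yields the quasi-inverse equivalences between $\mathrm{add}(M/\Lambda)$ and $\mathrm{add}(M/D\Lambda)$.

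Having part (3), I would prove parts (1) and (2) simultaneously. Fix an indecomposable injective $I_i$ and iterate: the sequence $I_i, \tau_d I_i, \tau_d^2 I_i, \ldots$ consists of indecomposable summands of $M$, each non-injective after the first step by (3). Since $\mathrm{add}(M)$ has only finitely many indecomposable summands, there is a smallest $l_i \ge 1$ for which $\tau_d^{l_i - 1} I_i$ is projective; set $P_{\sigma(i)} := \tau_d^{l_i - 1} I_i$. Running $\tau_d^{-1}$ starting from each indecomposable projective $P_j$ produces a two-sided inverse to $i \mapsto \sigma(i)$, so $\sigma$ is a permutation. Every indecomposable summand of $M$ lies in exactly one such orbit, because $\tau_d^{-1}$ applied repeatedly to an arbitrary non-injective summand eventually reaches an injective by finiteness. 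This exhausts the list in part (2), and uniqueness of a basic $d$-cluster tilting module (via Krull--Schmidt) completes the description.

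The main obstacle I foresee is the derived-category analysis in step two: one must simultaneously control the cohomology of $\nu_d X$ for $X \in \mathrm{add}(M)$ and verify that the resulting module lies again in $\mathrm{add}(M)$ with the correct projective/injective behavior. This is the heart of higher Auslander--Reiten theory and requires a careful balancing of the $d$-cluster tilting Ext-vanishing conditions against the definition of $\nu_d$.
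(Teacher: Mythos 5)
This proposition is imported verbatim from \cite[Proposition 1.3b)]{iyama4897cluster}; the paper offers no proof of its own, so there is nothing internal to compare your argument against. Judged on its own terms, your outline follows the standard route of higher Auslander--Reiten theory (reduce everything to the action of $\tau_d$ on $\mathrm{add}(M)$, then run a finiteness/pigeonhole argument along $\tau_d$-orbits of the injectives), and parts (1) and (2) would indeed follow from part (3) essentially as you describe.

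However, there are two problems. First, a local slip: projectivity of $\Lambda$ gives $\mathrm{Ext}^i_\Lambda(\Lambda, M) = 0$, not $\mathrm{Ext}^i_\Lambda(M, \Lambda) = 0$, so you should invoke the \emph{second} Ext-vanishing characterization of $\mathrm{add}(M)$ to conclude $\Lambda \in \mathrm{add}(M)$ (and the first one, dually, for $D\Lambda$); as written the justification is circular. Second, and more seriously, your step establishing part (3) is not a proof but a promissory note: the claims that $\nu_d X$ has cohomology concentrated in degree zero for $X \in \mathrm{add}(M)$, that $\tau_d X$ lands back in $\mathrm{add}(M)$, and that $\tau_d$ and $\tau_d^{-1}$ are quasi-inverse on $\mathrm{add}(M/\Lambda)$ and $\mathrm{add}(M/D\Lambda)$ constitute the entire mathematical content of the proposition, and you acknowledge in your final paragraph that you do not know how to carry them out. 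In Iyama's paper these facts rest on a developed theory of maximal orthogonal ($d$-cluster tilting) subcategories and the interplay between the Ext-vanishing conditions and $\mathrm{gldim}\,\Lambda \le d$; none of that machinery appears in your sketch. Until that step is supplied, the argument for (1) and (2) has no foundation to stand on.
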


\begin{mydef}\cite[Definition 1.2]{herschend2011n}
	Let $\Lambda$ be a $d$-representation finite algebra. We say that $\Lambda$ is $l$-homogeneous if $l_i = l$ for all $i$.
\end{mydef}

In the paper \cite{iyama2013stable} Iyama and Oppermann introduced the $(d+1)$-preprojective algebra of an algebra of global dimension $d$, which is the higher analogue of Definition~\ref{Definition - preprojective algebra BGL perspective}.

\begin{mydef}\cite[Definition 2.11]{iyama2013stable}
	Let $\Lambda$ be a $d$-representation finite algebra. We denote the $(d+1)$-preprojective algebra, or the higher preprojective algebra, of $\Lambda$ by $\Pi(\Lambda)$. It is defined as
	\begin{equation*}
		\Pi(\Lambda) = \bigoplus_{i\ge 0}\Pi_i = \bigoplus_{i\ge 0}\mathrm{Hom}_\Lambda(\Lambda, \tau_d^{-i}\Lambda),
	\end{equation*}
	with multiplication
	\begin{equation*}
		\begin{aligned}
			\Pi_i\times \Pi_j&\to \Pi_{i+j}, \\
			(u, v)&\mapsto uv = (\tau^{-i}_d(v)\circ u: \Lambda\to \tau_d^{-(i+j)}\Lambda).
		\end{aligned}
	\end{equation*}
\end{mydef}

As for the preprojective algebra in Definition~\ref{Definition - preprojective algebra BGL perspective}, the $(d+1)$-preprojective algebra is naturally $\Z$-graded by setting elements in $\Pi_i$ to have degree $i$. This grading will be referred to as the $\star$-grading for $\Pi(\Lambda)$.

\begin{mydef}
	Let $\Lambda$ be a $\K$-algebra. We define $\nu_d = \nu\circ [-d]: \mathcal{D}^b(\Lambda\mathrm{-mod})\to \mathcal{D}^b(\Lambda\mathrm{-mod})$, where $\nu$ the is the Nakayama functor.
\end{mydef}

\begin{myrem}
	For a $d$-representation finite algebra $\Lambda$, we can also define the $(d+1)$-preprojective algebra using $\nu_d^{-1}$ instead of $\tau_d^{-1}$, i.e.
	\begin{equation}\label{Remark - definition of derived preprojective algebra}
		\Pi(\Lambda) = \bigoplus_{i\ge 0}\mathrm{Hom}_{\mathcal{D}^b(\Lambda\mathrm{-mod})}(\Lambda, \nu_d^{-i}\Lambda).
	\end{equation}
\end{myrem}

We have the following important result by Grant and Iyama.

\begin{mythm}\label{Theorem - Grant and Iyama almost koszul}\cite[Theorem 4.21(b)]{grant2020higher}
	Let $\Lambda$ be a Koszul $d$-hereditary algebra, and $\Pi$ its $(d+1)$-preprojective algebra with the $(d+1)$-total grading given in \cite[Definition 4.15]{grant2020higher}. If $\Lambda$ is $d$-representation finite, then $\Pi$ is almost Koszul.
\end{mythm}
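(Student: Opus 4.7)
The plan is to construct an explicit almost Koszul resolution of each simple $\Pi$-module, mirroring the strategy used in Proposition~\ref{Proposition - Koszul resolution in Dynkin case} (the $d=1$ case), with the higher Nakayama functor $\nu_d$ replacing $\nu_1$ and the Koszul bimodule resolution of $\Lambda$ replacing the hereditary $2$-term one. Since $\Lambda$ is Koszul and $d$-hereditary, each simple $\Lambda_0$-summand admits a linear projective $\Lambda$-resolution of length $d$, and this will be spliced with a tower of derived Auslander-Reiten triangles to produce the desired complex over $\Pi$.

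First I would, for each indecomposable projective $P_i\in \Lambda\mathrm{-mod}$, form the Auslander-Reiten triangle
\[
P_i \to M_i \to \nu_d^{-1}P_i \to P_i[1]
\]
in $\mathcal{D}^b(\Lambda\mathrm{-mod})$ coming from the $(d+2)$-term $d$-almost split sequence associated with $P_i$. Applying $\mathrm{Hom}_{\mathcal{D}^b}(-, \nu_d^{-j}P_k)$ to this triangle, summing the resulting long exact sequences over all $k$ and over $0 \le j \le l_k$ (with $l_k$ as in Proposition~\ref{Proposition - higher AR translation, integer l}), and invoking the derived description \eqref{Remark - definition of derived preprojective algebra} of $\Pi$, one obtains an exact complex of $\Pi$-modules. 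The inner part of this complex provides a projective resolution of the simple $\Pi$-module $S_i^\Pi$ of length $d+1$; linearity of the Koszul resolution of $\Lambda_0$ and the fact that each step through the AR triangle increments the $\star$-degree by one ensure that each inner projective is generated in the correct $(d+1)$-total degree. The leftmost term is identified with $S_{\sigma(i)}^\Pi$, and its total degree is the socle degree of $\Pi$, which is the higher analogue of the Coxeter number and is extracted from the twisted fractionally Calabi-Yau structure on $\Lambda$ (the same mechanism that underlies Lemma~\ref{Lemma - l_i + l_sigma(i) = h} and Proposition~\ref{Proposition - W(X, X[1]) = h} in the hereditary case). Reading off $q = d+1$ and $p = (\deg \mathrm{Soc}\,\Pi) - q$ exhibits $\Pi$ as $(p,q)$-almost Koszul.

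The main obstacle is handling the $(d+1)$-total grading of \cite[Definition 4.15]{grant2020higher}, which weights the path-length grading of the Koszul algebra $\Lambda$ against the $\star$-grading of $\Pi$. One must verify that the connecting morphisms arising from the tower of AR triangles are homogeneous for this combined grading and that the resulting projectives are generated strictly in the expected degrees; in the $d=1$ case this is immediate because $\Lambda=T(S)$ is concentrated in degrees $0$ and $1$, but for general $d$ the linear Koszul strands have to be interleaved carefully with the $\nu_d$-strands. A further subtlety is that, unlike in the hereditary setting, one cannot reduce to a simple projective by choosing a convenient orientation, so the argument must be carried out directly with the Koszul bimodule resolution of $\Lambda$ tensored (over $\Lambda$) with the diagonal $\Pi$-bimodule, verifying total-degree compatibility at the bimodule level and then specialising to the simple modules.
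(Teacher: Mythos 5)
First, a point of reference: the paper does not prove this statement --- it is imported verbatim from Grant--Iyama \cite[Theorem 4.21(b)]{grant2020higher}, and the only argument of this kind actually carried out in the paper is the $d=1$ species case (Proposition~\ref{Proposition - Koszul resolution in Dynkin case} and Corollary~\ref{Corollary - Pi(S) is an almost Koszul algebra}). Your overall strategy --- push higher almost split data through $\bigoplus_{j,k}\mathrm{Hom}_{\mathcal{D}^b}(-,\nu_d^{-j}P_k)$ and read off an almost Koszul resolution of each simple $\Pi$-module --- is indeed the strategy of the cited proof, and it is also what the paper itself re-traces later when it invokes the functor $F\circ H^\Z$ applied to $d$-almost split sequences in the proof of Theorem~\ref{Theorem - Koszul complex is given by a mapping cone}.

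There is, however, a concrete gap in your first step. For $d\ge 2$ the object you call $M_i$ cannot simultaneously be the third vertex of a triangle $P_i\to M_i\to \nu_d^{-1}P_i\to P_i[1]$ and the middle of the $(d+2)$-term $d$-almost split sequence: the latter consists of $d$ terms $C_d,\dots,C_1$ lying in $\mathcal{U}=\mathrm{add}\{\nu_d^{-i}\Lambda\mid i\in\Z\}$ joined by $d-1$ intermediate maps, whereas the cone of the connecting map $\nu_d^{-1}P_i\to P_i[1]$ is a single object of $\mathcal{D}^b(\Lambda\mathrm{-mod})$ that in general does not lie in $\mathcal{U}$; hence $\mathrm{Hom}_{\mathcal{D}^b}(M_i,\nu_d^{-j}P_k)$ is not a projective $\Pi$-module and the long exact sequence of your triangle does not yield a length-$(d+1)$ projective resolution. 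The mechanism of Proposition~\ref{Proposition - Koszul resolution in Dynkin case} (long exact sequence of an honest AR triangle whose middle term is projective) is special to $d=1$, where one may also reorient $Q$ so that $P_i$ is simple; neither device survives to $d\ge 2$. The correct route is to apply the functor directly to the full $(d+2)$-term $d$-almost split sequence and deduce exactness of its image from the defining lifting property of $d$-almost split sequences against objects of the $d$-cluster tilting subcategory, not from the triangulated long exact sequence. Finally, the point you yourself flag as ``the main obstacle'' --- that each term of the resulting complex is generated in the expected $(d+1)$-total degree and that the last kernel is concentrated in a single degree --- is precisely the substantive content of the theorem; without it one obtains at best a finite projective resolution of the right shape, not almost Koszulity.
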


\section{Tensor Product of algebras}\label{Section - $n$-Representation Finite Species}
In this section we study tensor products of $d$-representation finite algebras. The motivation comes from the paper \cite{herschend2011n} that showed that if $\Lambda_i$ is $l$-homogeneous and $d_i$-representation finite for $i\in \{1, 2\}$, then $\Lambda_1\otimes_\K \Lambda_2$ is again $l$-homogeneous and $(d_1 + d_2)$-representation finite when $\K$ is a perfect field. For this reason we assume that $\K$ is perfect from this point. The goal of this section is to prove that the higher preprojective algebra $\Pi(\Lambda_1\otimes_\K\Lambda_2)$ is an almost Koszul algebra given certain conditions. This is formulated in Theorem~\ref{Theorem - product of species is almost koszul}. This shows existence of almost Koszul complexes and in Section~\ref{Section - Koszul Complex for Higher Species} we give a more complete description of the almost Koszul complexes given certain assumptions on the algebras $\Lambda_1$ and $\Lambda_2$.

For completeness we will first prove the folklore statement that says that the tensor product of two Koszul algebras is a Koszul algebra. This is proven for connected $\K$-algebras in \cite[Chapter 3.1, Corollary 1.2]{polishchuk2005quadratic}.

\begin{mylemma}\label{Lemma - A,B koszul implies AxB koszul}
	Let $\Lambda_1$ and $\Lambda_2$ be two Koszul algebras. Then $\Lambda_1\otimes_\K \Lambda_2$ is Koszul.
\end{mylemma}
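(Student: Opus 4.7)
The plan is to build the Koszul resolution of $(\Lambda_1\otimes_\K\Lambda_2)_0$ by taking the total complex of the tensor product of the individual Koszul resolutions, and then verify the three things required by the definition: that the degree-zero part is semisimple, that the resulting complex is exact, and that each homological piece is projective and generated in the correct grading.

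First I would observe that the grading on $\Lambda_1\otimes_\K\Lambda_2$ is the total grading, so $(\Lambda_1\otimes_\K\Lambda_2)_0=(\Lambda_1)_0\otimes_\K(\Lambda_2)_0$. Since $\K$ is perfect (as standing assumption in this section) and both $(\Lambda_i)_0$ are finite-dimensional semisimple $\K$-algebras, their tensor product is again semisimple; this disposes of the degree-zero hypothesis required for Koszulity. Next, let $P^1_\bullet\to(\Lambda_1)_0$ and $P^2_\bullet\to(\Lambda_2)_0$ be Koszul resolutions, which exist by hypothesis. Form the double complex $P^1_\bullet\otimes_\K P^2_\bullet$ of $\Lambda_1\otimes_\K\Lambda_2$-modules, and let $T_\bullet=\mathrm{Tot}(P^1_\bullet\otimes_\K P^2_\bullet)$.

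Second, I would argue that $T_\bullet$ is a projective resolution of $(\Lambda_1)_0\otimes_\K(\Lambda_2)_0$. For exactness, since every $\Lambda_i$-module is a $\K$-vector space and hence $\K$-flat, the Künneth formula applies and yields
\begin{equation*}
H_n(T_\bullet)\cong \bigoplus_{i+j=n}H_i(P^1_\bullet)\otimes_\K H_j(P^2_\bullet),
\end{equation*}
which vanishes for $n>0$ and equals $(\Lambda_1)_0\otimes_\K(\Lambda_2)_0$ for $n=0$. For projectivity of each term, note $T_n=\bigoplus_{i+j=n}P^1_i\otimes_\K P^2_j$, and if $P^1_i$ is a summand of $\Lambda_1^{\oplus a}$ (with a grading shift by $i$) and $P^2_j$ is a summand of $\Lambda_2^{\oplus b}$ (with a grading shift by $j$), then $P^1_i\otimes_\K P^2_j$ is a summand of $(\Lambda_1\otimes_\K\Lambda_2)^{\oplus ab}$ with total-grading shift $i+j=n$, hence is projective over $\Lambda_1\otimes_\K\Lambda_2$.

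Finally, I would check the generation condition: since $P^1_i$ is generated by its component in internal degree $i$ and $P^2_j$ by its component in internal degree $j$, the summand $P^1_i\otimes_\K P^2_j$ is generated, as a $\Lambda_1\otimes_\K\Lambda_2$-module, by its component in total degree $i+j=n$. Summing over $i+j=n$ shows $T_n$ is generated in total degree $n$, which is exactly the Koszul condition. The step I expect to be the most delicate is verifying carefully that the grading-shifts on the projective summands are compatible with the tensor grading so that the resulting generator lives in the correct total degree; once this bookkeeping is done, the rest is routine homological algebra via Künneth.
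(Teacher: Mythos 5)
Your proposal is correct and follows essentially the same route as the paper: form the double complex $P^{\Lambda_1}_\bullet\otimes_\K P^{\Lambda_2}_\bullet$, pass to the total complex, and identify it as the Koszul resolution of $(\Lambda_1\otimes_\K\Lambda_2)_0$. If anything, your write-up is more careful than the paper's, which asserts exactness of the total complex rather tersely, whereas you invoke the K\"unneth formula explicitly and also verify the semisimplicity of the degree-zero part (using that $\K$ is perfect) and the projectivity and generation-in-degree-$n$ conditions term by term.
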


\begin{proof}
	Let $P^{\Lambda_1}_\bullet$ and $P^{\Lambda_2}_\bullet$ be the Koszul complex for $(\Lambda_1)_0$ and $(\Lambda_2)_0$ respectively. Consider the following double complex
	\begin{equation*}
		\begin{tikzcd}
			& \vdots \arrow[d] & \vdots \arrow[d] \\
			\cdots \arrow[r] & P_1^{\Lambda_1}\otimes_\K P_1^{\Lambda_2} \arrow[r, "1\otimes p^{\Lambda_2}_1"] \arrow[d, "p^{\Lambda_1}_1\otimes 1"] & P_1^{\Lambda_1}\otimes_\K P^{\Lambda_2}_0 \arrow[r] \arrow[d, "p^{\Lambda_1}_1\otimes 1"] & 0 \\
			\cdots \arrow[r] & P_0^{\Lambda_1}\otimes_\K P_1^{\Lambda_2} \arrow[r, "1\otimes p^{\Lambda_2}_1"] \arrow[d] & P_0^{\Lambda_1}\otimes_\K P_0^{\Lambda_2} \arrow[r] \arrow[d] & 0 \\
			& 0 & 0
		\end{tikzcd}
	\end{equation*}
	Denote this complex by $P^{\Lambda_1}_\bullet\otimes_\K P^{\Lambda_2}_\bullet$. Note that all the squares in this complex commute, and therefore we can take the total complex. At degree $i$ we have
	\begin{equation*}
		\mathrm{Tot}(P^{\Lambda_1}_\bullet\otimes_\K P^{\Lambda_2}_\bullet)_i = \bigoplus_{m + n = i}P_m^{\Lambda_1}\otimes_\K P_n^{\Lambda_2}
	\end{equation*}
	and the morphism, denoted by $d_i$, from $\mathrm{Tot}(P^{\Lambda_1}_\bullet\otimes_\K P^{\Lambda_2}_\bullet)_i$ is
	\begin{equation*}
		d_i = \begin{bmatrix}
			p^{\Lambda_1}_i\otimes 1 & -1\otimes p^{\Lambda_2}_{1} \\
			& p^{\Lambda_1}_{i-1}\otimes 1 & 1\otimes p^{\Lambda_2}_{2} \\
			& & \ddots & \ddots \\
			& & & p^{\Lambda_1}_2\otimes 1 & (-1)^{i}1\otimes p^{\Lambda_2}_i
		\end{bmatrix}
	\end{equation*}
	Since all squares in $P^{\Lambda_1}_\bullet\otimes_\K P^{\Lambda_2}_\bullet$ commute, $\mathrm{Tot}(P^{\Lambda_1}_\bullet\otimes_\K P^{\Lambda_2}_\bullet)$ is exact except at degree $0$. Moreover, $\mathrm{H}_0(\mathrm{Tot}(P^{\Lambda_1}_\bullet\otimes_\K P^{\Lambda_2}_\bullet)) = (\Lambda_1)_0\otimes_\K (\Lambda_2)_0 = (\Lambda_1 \otimes_\K \Lambda_2)_0$. The morphisms in $\mathrm{Tot}(P^{\Lambda_1}_\bullet\otimes_\K P^{\Lambda_2}_\bullet)$ have degree $1$, and thus $\mathrm{Tot}(P^{\Lambda_1}_\bullet\otimes_\K P^{\Lambda_2}_\bullet)$ is the Koszul complex of $(\Lambda_1\otimes_\K \Lambda_2)_0$.
\end{proof}

Herschend and Iyama gave a description of the Calabi-Yau properties of tensor products of fractionally Calabi-Yau algebras which is applicable to our case.

\begin{myprop}\cite[Proposition 1.4]{herschend2011n}
	Let $\mathbb{K}$ be a perfect field. If $\Lambda_i$ is $\frac{m_i}{l_i}$-CY (respectively, twisted $\frac{m_i}{l_i}$-CY) for each $i\in \{1, 2\}$, then $\Lambda_1\otimes_\K \Lambda_2$ is $\frac{m}{l}$-CY (respectively, twisted $\frac{m}{l}$-CY) for the least common multiple $l$ of $l_1, l_2$ and
	\begin{equation*}
	m=l\left(\frac{m_1}{l_1} + \frac{m_2}{l_2}\right).
	\end{equation*}
\end{myprop}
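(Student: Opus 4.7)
The plan is to compute the Nakayama functor of $\Lambda_1\otimes_\K \Lambda_2$ in terms of the Nakayama functors of each factor, and then iterate $l = \mathrm{lcm}(l_1,l_2)$ times. Throughout we use that since $\K$ is perfect, both $\Lambda_i\otimes_\K\Lambda_i^{op}$ and $\Lambda_1\otimes_\K\Lambda_2$ are again well behaved finite dimensional $\K$-algebras of finite global dimension, so that $\nu$ is defined on $\mathcal{D}^b((\Lambda_1\otimes_\K\Lambda_2)\mathrm{-mod})$.

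First I would establish the external tensor product formula
\begin{equation*}
	\nu_{\Lambda_1\otimes_\K \Lambda_2}(M_1 \otimes_\K M_2) \simeq \nu_{\Lambda_1}(M_1) \otimes_\K \nu_{\Lambda_2}(M_2)
\end{equation*}
for $M_i \in \mathcal{D}^b(\Lambda_i\mathrm{-mod})$. This follows from the identification $D(\Lambda_1\otimes_\K\Lambda_2) \cong D\Lambda_1 \otimes_\K D\Lambda_2$ of bimodules combined with the Künneth-type isomorphism
\begin{equation*}
	\textbf{R}\mathrm{Hom}_{\Lambda_1\otimes_\K\Lambda_2}(M_1\otimes_\K M_2,\, \Lambda_1\otimes_\K \Lambda_2) \simeq \textbf{R}\mathrm{Hom}_{\Lambda_1}(M_1,\Lambda_1) \otimes_\K \textbf{R}\mathrm{Hom}_{\Lambda_2}(M_2,\Lambda_2),
\end{equation*}
which holds because each $M_i$ is a perfect complex (using $\mathrm{gldim}\Lambda_i < \infty$) and $\K$-duality commutes with $\otimes_\K$ on bounded complexes with finite dimensional cohomology. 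Since objects of the form $M_1 \otimes_\K M_2$ generate $\mathcal{D}^b((\Lambda_1\otimes_\K\Lambda_2)\mathrm{-mod})$ as a triangulated category, this determines $\nu_{\Lambda_1\otimes_\K\Lambda_2}$ up to natural isomorphism.

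Second, I would iterate: for any $n\ge 1$,
\begin{equation*}
	\nu^n_{\Lambda_1\otimes_\K\Lambda_2}(M_1\otimes_\K M_2) \simeq \nu_{\Lambda_1}^n(M_1) \otimes_\K \nu_{\Lambda_2}^n(M_2).
\end{equation*}
Set $l = \mathrm{lcm}(l_1,l_2)$ and write $l = k_i l_i$. Applying the twisted $\frac{m_i}{l_i}$-CY hypothesis $k_i$ times gives
\begin{equation*}
	\nu_{\Lambda_i}^l \simeq [k_i m_i]\circ\bigl((\Lambda_i)_{\phi_i^{k_i}}\overset{\textbf{L}}{\otimes}_{\Lambda_i} -\bigr).
\end{equation*}
Combining the two factors and noting that $(\Lambda_1)_{\phi_1^{k_1}} \otimes_\K (\Lambda_2)_{\phi_2^{k_2}} = (\Lambda_1\otimes_\K\Lambda_2)_{\phi_1^{k_1}\otimes \phi_2^{k_2}}$ as bimodules, and that $[k_1m_1]\circ[k_2m_2] = [k_1m_1+k_2m_2]$, yields
\begin{equation*}
	\nu_{\Lambda_1\otimes_\K\Lambda_2}^{l} \simeq [m]\circ\bigl((\Lambda_1\otimes_\K\Lambda_2)_{\phi}\overset{\textbf{L}}{\otimes}_{\Lambda_1\otimes_\K \Lambda_2} -\bigr),
\end{equation*}
where $\phi = \phi_1^{k_1}\otimes \phi_2^{k_2}$ and $m = k_1m_1 + k_2 m_2 = l\bigl(\tfrac{m_1}{l_1} + \tfrac{m_2}{l_2}\bigr)$. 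In the untwisted case $\phi_i = \mathrm{id}$, hence $\phi = \mathrm{id}$, which gives the non-twisted statement.

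The main obstacle I expect is the Künneth-type isomorphism for $\textbf{R}\mathrm{Hom}$ together with the verification that all the identifications above are natural and compatible with the triangulated structure, so that the isomorphism of functors on the generators $M_1\otimes_\K M_2$ extends to an isomorphism of functors on all of $\mathcal{D}^b((\Lambda_1\otimes_\K\Lambda_2)\mathrm{-mod})$. The perfectness of $\K$ enters here as the standard hypothesis ensuring that $\Lambda_1\otimes_\K\Lambda_2$ remains finite dimensional and of finite global dimension, and that every simple $(\Lambda_1\otimes_\K\Lambda_2)$-module splits as a tensor product of simples, so the external tensor products indeed generate the derived category.
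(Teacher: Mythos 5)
The paper does not prove this proposition at all: it is quoted verbatim from \cite[Proposition 1.4]{herschend2011n} and used as a black box, so there is no in-paper argument to compare against. Your proof follows the same strategy as the cited source, namely the compatibility of the Nakayama functor with external tensor products (the same fact the paper invokes elsewhere as \cite[Lemma 2.11]{herschend2014n}) followed by passing to the least common multiple of the periods; the arithmetic $m=k_1m_1+k_2m_2=l(\tfrac{m_1}{l_1}+\tfrac{m_2}{l_2})$ and the identification $(\Lambda_1)_{\phi_1^{k_1}}\otimes_\K(\Lambda_2)_{\phi_2^{k_2}}\cong(\Lambda_1\otimes_\K\Lambda_2)_{\phi_1^{k_1}\otimes\phi_2^{k_2}}$ are both correct. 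Two points deserve tightening. First, a functor is not determined up to natural isomorphism by its values on a set of generators unless the comparison map is defined globally; the clean fix is to work at the level of bimodules, using that twisted $\frac{m}{l}$-CY is equivalent to an isomorphism $(D\Lambda)^{\overset{\textbf{L}}{\otimes}_\Lambda l}\simeq\Lambda_\phi[m]$ in $\mathcal{D}^b(\Lambda\otimes_\K\Lambda^{op}\mathrm{-mod})$, so that the K\"unneth identification becomes an honest isomorphism of bimodule complexes and no extension-from-generators step is needed. Second, over a perfect field the simple $(\Lambda_1\otimes_\K\Lambda_2)$-modules need not themselves be of the form $S_1\otimes_\K S_2$ (e.g.\ the simples of $\C\otimes_\R\C$), but each is a direct summand of such a module, which is all that generation as a thick subcategory requires; your phrasing should be adjusted accordingly.
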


\begin{mycor}\cite[Corollary 1.5]{herschend2011n}\label{Proposition - A x B is n_1 + n_2 RF}
	Let $\mathbb{K}$ be a perfect field and $l$ a positive integer. If $\Lambda_i$ is $l$-homogeneous $d_i$-representation finite for each $i\in \{1, 2\}$, then $\Lambda_1\otimes_\K \Lambda_2$ is an $l$-homogeneous $(d_1 + d_2)$-representation-finite algebra with a $(d_1 + d_2)$-cluster tilting module $\bigoplus_{i=0}^{l-1}(\tau_{d_1}^{-i}\Lambda_1\otimes_\K \tau_{d_2}^{-i}\Lambda_2)$.
\end{mycor}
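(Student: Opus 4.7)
The plan is to combine the preceding tensor-product Calabi-Yau proposition with the higher Auslander-Reiten machinery recalled in Section~\ref{Section - Higher preprojective algebra}. First I would reformulate the hypothesis in Calabi-Yau language: for an $l$-homogeneous $d_i$-representation finite algebra $\Lambda_i$, Proposition~\ref{Proposition - higher AR translation, integer l} gives $\tau_{d_i}^{-l}\Lambda_i\simeq \Lambda_i$ (up to a Nakayama twist on the indecomposable projectives), which in the bounded derived category translates into an isomorphism $\nu_{d_i}^{-l}\Lambda_i\simeq (\Lambda_i)_{\phi_i}$ for some algebra automorphism $\phi_i$. Using $\nu_{d_i}=\nu\circ[-d_i]$, this is precisely the statement that $\Lambda_i$ is twisted $\frac{d_il}{l}$-Calabi-Yau.

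Second, I would apply the preceding proposition with $l_1=l_2=l$. The least common multiple is $l$ and $m=l\bigl(\tfrac{d_1l}{l}+\tfrac{d_2l}{l}\bigr)=(d_1+d_2)l$, so $\Lambda_1\otimes_\K\Lambda_2$ is twisted $\frac{(d_1+d_2)l}{l}$-Calabi-Yau. Since $\K$ is perfect, the gluing $\nu_{\Lambda_1\otimes\Lambda_2}\simeq \nu_{\Lambda_1}\otimes\nu_{\Lambda_2}$ holds, and combined with $\nu_{d_1+d_2}=\nu\circ[-(d_1+d_2)]$ this implies $\tau_{d_1+d_2}^{-k}(P\otimes_\K Q)\simeq \tau_{d_1}^{-k}P\otimes_\K \tau_{d_2}^{-k}Q$ for each indecomposable projective summand $P\otimes Q$ of $\Lambda_1\otimes_\K \Lambda_2$. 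In particular the summands of the proposed module $M=\bigoplus_{i=0}^{l-1}\tau_{d_1}^{-i}\Lambda_1\otimes_\K\tau_{d_2}^{-i}\Lambda_2$ are exactly the orbits $\{\tau_{d_1+d_2}^{-i}(\Lambda_1\otimes_\K\Lambda_2)\}_{0\le i<l}$, so $M$ contains every projective and every injective, and the common orbit length $l$ yields the $l$-homogeneity once $(d_1+d_2)$-representation finiteness is established.

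It remains to verify that $M$ is $(d_1+d_2)$-cluster tilting. For this I would use the Künneth-type identity
\[
\mathrm{Ext}^k_{\Lambda_1\otimes_\K\Lambda_2}(X_1\otimes X_2,Y_1\otimes Y_2)\cong \bigoplus_{i+j=k}\mathrm{Ext}^i_{\Lambda_1}(X_1,Y_1)\otimes_\K\mathrm{Ext}^j_{\Lambda_2}(X_2,Y_2),
\]
valid when one of the factors is projective in its component (which applies to the summands of $M$ up to a shift by $\nu_d$). For $0<k<d_1+d_2$ and summands of $M$, in every term at least one of $i,j$ lies strictly between $0$ and $d_s$ for some $s\in\{1,2\}$, so the corresponding Ext for $\Lambda_s$ vanishes by the $d_s$-cluster tilting property of the basic additive generator of $\mathrm{add}(\bigoplus_{i=0}^{l-1}\tau_{d_s}^{-i}\Lambda_s)$, giving vanishing of the total Ext. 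The main obstacle is the converse: that any $X\in\Lambda_1\otimes_\K\Lambda_2\text{-mod}$ with $\mathrm{Ext}^k(M,X)=0$ for all $0<k<d_1+d_2$ must lie in $\mathrm{add}(M)$. Here I would proceed by an approximation argument, using the fractional Calabi-Yau duality on $\mathcal{D}^b(\Lambda_1\otimes_\K\Lambda_2\text{-mod})$ established above to exchange vanishing of $\mathrm{Ext}^k(M,X)$ with vanishing of $\mathrm{Ext}^{d_1+d_2-k}(X,\tau_{d_1+d_2}M)\simeq \mathrm{Ext}^{d_1+d_2-k}(X,M)$, and then inducting on the length of a minimal $\mathrm{add}(M)$-resolution of $X$, whose existence is guaranteed because both factor algebras admit such resolutions and the tensor product is preserved under the Künneth formula.
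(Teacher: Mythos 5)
The paper does not actually prove this statement; it is quoted verbatim from \cite[Corollary 1.5]{herschend2011n}, where it is deduced from the equivalence between being $l$-homogeneous $d$-representation finite and being twisted fractionally Calabi-Yau, combined with the tensor-product formula for twisted CY dimensions (the proposition you invoke). Your overall strategy is therefore the right one, but two steps in your write-up are genuinely wrong. First, the Calabi-Yau fraction: from $\tau_{d}^{l-1}I_i = P_{\sigma(i)}$ in Proposition~\ref{Proposition - higher AR translation, integer l} one gets $\nu_{d}^{-l}P_{\sigma(i)}\simeq P_i[d]$, hence $\nu^{l}\Lambda\simeq \Lambda_{\phi}[d(l-1)]$, i.e.\ $\Lambda_i$ is twisted $\frac{d_i(l-1)}{l}$-CY, not $\frac{d_il}{l}$-CY. (Sanity check against the paper: for a Dynkin species, $l=h/2$ and $d=1$ give $\frac{l-1}{l}=\frac{h-2}{h}$, matching the twisted $\frac{h-2}{h}$-CY statement in Section~\ref{Section - Koszul algebras}.) With your fraction the tensor-product proposition yields $m=(d_1+d_2)l$, which cannot be matched back to any $l'$-homogeneous $(d_1+d_2)$-representation finite algebra, so the conclusion cannot be extracted; with the correct fraction one gets $m=(d_1+d_2)(l-1)$ and the converse direction of the CY characterization finishes the proof without any direct Ext computation.

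Second, your direct verification of the cluster-tilting property does not work as stated. The claim that for $0<k<d_1+d_2$ every K\"unneth term $\mathrm{Ext}^i_{\Lambda_1}\otimes_\K\mathrm{Ext}^j_{\Lambda_2}$ with $i+j=k$ has some index strictly between $0$ and $d_s$ is false: for $k=d_1$ the term $(i,j)=(d_1,0)$ has both indices on the boundary, and $\mathrm{Ext}^{d_1}_{\Lambda_1}(M_1,M_1)\otimes_\K\mathrm{Hom}_{\Lambda_2}(M_2,M_2)$ does not vanish for arbitrary summands. This is precisely why the tensor product of $d_i$-representation finite algebras fails to be $(d_1+d_2)$-representation finite without the $l$-homogeneity hypothesis. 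The vanishing holds only for the \emph{diagonal} module $M=\bigoplus_{i}\tau_{d_1}^{-i}\Lambda_1\otimes_\K\tau_{d_2}^{-i}\Lambda_2$, and the argument must use directionality: $\mathrm{Ext}^{d_1}_{\Lambda_1}(\tau_{d_1}^{-i}\Lambda_1,\tau_{d_1}^{-j}\Lambda_1)\cong D\mathrm{Hom}_{\Lambda_1}(\tau_{d_1}^{-j}\Lambda_1,\tau_{d_1}^{-(i-1)}\Lambda_1)$ is nonzero only for $j\le i-1$, while $\mathrm{Hom}_{\Lambda_2}(\tau_{d_2}^{-i}\Lambda_2,\tau_{d_2}^{-j}\Lambda_2)$ is nonzero only for $j\ge i$, so the product vanishes. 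Your sketch skips exactly this point, and the converse inclusion (maximality of $\mathrm{add}(M)$) is only gestured at. I would recommend abandoning the direct verification and instead citing, or reproving, the characterization of $l$-homogeneous $d$-representation finiteness in terms of twisted $\frac{d(l-1)}{l}$-CY, which makes both the representation finiteness and the $l$-homogeneity of $\Lambda_1\otimes_\K\Lambda_2$ immediate.
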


Combining Lemma~\ref{Lemma - A,B koszul implies AxB koszul}, Corollary~\ref{Proposition - A x B is n_1 + n_2 RF} and Theorem~\ref{Theorem - Grant and Iyama almost koszul} yields the following result.

\begin{mythm}\label{Theorem - product of species is almost koszul}
	Let $\Lambda_i$ be an $l$-homogeneous and $d_i$-representation finite Koszul algebra. Then the $(d_1 + d_2 + 1)$-preprojective algebra $\Pi(\Lambda_1 \otimes_\K \Lambda_2)$ is an almost Koszul algebra.
\end{mythm}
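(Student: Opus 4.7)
The plan is essentially a one-line assembly of three results already at hand; the theorem is announced in the paper as a direct corollary of what precedes it, so my task is mainly to check that the hypotheses of each quoted result are genuinely satisfied in sequence.

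First I would invoke Lemma~\ref{Lemma - A,B koszul implies AxB koszul} to conclude that $\Lambda_1\otimes_\K\Lambda_2$ is itself a Koszul algebra. The Koszulness of the two factors is part of the hypothesis, and the construction of the Koszul bimodule resolution via the total complex of $P^{\Lambda_1}_\bullet\otimes_\K P^{\Lambda_2}_\bullet$ given in that lemma does not require anything about $d$-representation finiteness or perfectness of $\K$.

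Next I would apply Corollary~\ref{Proposition - A x B is n_1 + n_2 RF} (which is \cite[Corollary 1.5]{herschend2011n}) to obtain that, because $\K$ is perfect and each $\Lambda_i$ is $l$-homogeneous and $d_i$-representation finite, the tensor product $\Lambda_1\otimes_\K\Lambda_2$ is $l$-homogeneous and $(d_1+d_2)$-representation finite. By the definition recalled in Section~\ref{Section - Higher preprojective algebra}, being $(d_1+d_2)$-representation finite carries with it the bound $\mathrm{gldim}(\Lambda_1\otimes_\K\Lambda_2)\le d_1+d_2$, so this step in particular delivers $d$-hereditariness with $d=d_1+d_2$.

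Finally I would apply Theorem~\ref{Theorem - Grant and Iyama almost koszul} to $\Lambda := \Lambda_1\otimes_\K\Lambda_2$ with $d=d_1+d_2$: the three hypotheses (Koszul, $d$-hereditary, $d$-representation finite) are precisely the outputs of the two previous steps, and its conclusion is that the $(d+1)$-preprojective algebra $\Pi(\Lambda)$ is almost Koszul, which is exactly the statement to be proved. Since the proof is a direct concatenation, there is no genuine obstacle; the only point requiring a moment of attention is verifying that Corollary~\ref{Proposition - A x B is n_1 + n_2 RF} produces the full $d$-hereditary hypothesis of Theorem~\ref{Theorem - Grant and Iyama almost koszul}, which it does via the global dimension bound built into the definition of $d$-representation finiteness.
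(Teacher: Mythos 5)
Your proposal is correct and follows exactly the paper's own argument: the paper proves this theorem by the same one-line concatenation of Lemma~\ref{Lemma - A,B koszul implies AxB koszul}, Corollary~\ref{Proposition - A x B is n_1 + n_2 RF} (using the standing assumption from Section~\ref{Section - $n$-Representation Finite Species} that $\K$ is perfect), and Theorem~\ref{Theorem - Grant and Iyama almost koszul}. Your added remark on why $(d_1+d_2)$-representation finiteness supplies the $d$-hereditary hypothesis is a reasonable explication of a point the paper leaves implicit.
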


\section{Properties of the Segre Product}\label{Section - Diagonal Tensor Product}
In section~\ref{Section - Koszul Complex for Higher Species} we describe the almost Koszul complexes in $\Pi(\Lambda_1\otimes_\K \Lambda_2)$ using the almost Koszul complexes in $\Pi(\Lambda_1)$ and $\Pi(\Lambda_2)$, where $\Lambda_1$ and $\Lambda_2$ are $d_{\Lambda_1}$- and $d_{\Lambda_2}$-representation finite Koszul algebras. This is done by using the Segre product, so we devote this section to define the Segre product for graded algebras.

\begin{mydef}\cite[Chapter 3.2, Definition 1]{polishchuk2005quadratic}
	Given two graded $\K$-algebras $\Lambda_1$ and $\Lambda_2$. 
	
	\begin{enumerate}
		\item As a vector space, we can decompose $\Lambda_i = \bigoplus_{k\in \Z}\Lambda_{i, k}$, where $\Lambda_{i, k}$ is the space of elements of degree $k$ in $\Lambda_i$, for $i\in \{1, 2\}$. We define the Segre product of $\Lambda_1$ and $\Lambda_2$ as
		\begin{equation*}
			\Lambda_1\totimes_\K \Lambda_2 = \bigoplus_{k\in \Z}\Lambda_{1, k}\otimes_\K \Lambda_{2, k}.
		\end{equation*}
		\item Let $X\in \Lambda_1\mathrm{-mod}^\Z$ and $Y\in \Lambda_2\mathrm{-mod}^\Z$. We define the Segre product of $X$ and $Y$ as
		\begin{equation*}
			X\totimes_\K Y = \bigoplus_{k\in \Z} X_k\otimes_\K Y_k\in \Lambda_1\totimes_\K \Lambda_2\mathrm{-mod}^\Z,
		\end{equation*}
		where the subscript $k$ denotes the graded part at $k$.
		\item Let $f_i: X_i\to Y_i$ where $X_i, Y_i\in \Lambda_i\mathrm{-mod}^\Z$ for each $i\in \{1,2\}$. We define the Segre product of $f_1$ and $f_2$ as
		\begin{equation*}
			f_1\totimes f_2 = \bigoplus_{k\in \Z} f_{1,k}\otimes f_{2,k}: \bigoplus_{k\in \Z} X_{1,k}\otimes_\K X_{2,k}\to \bigoplus_{k\in \Z}Y_{1,k}\otimes_\K Y_{2,k} 
		\end{equation*}
		where the subscript $k$ denotes the graded part at $k$.
	\end{enumerate}
\end{mydef}

The first application of the Segre product will be used to describe the preprojective algebra of $\Lambda_1\otimes_\K \Lambda_2$.

\begin{mylemma}\label{Lemma - Pi(AxB) = Pi(A)xPi(B)}
	Let $\Lambda_i$ be an $l$-homogeneous $d_{\Lambda_i}$-representation finite algebra for each $i\in \{1, 2\}$. Then
	\begin{equation*}
		\Pi(\Lambda_1\otimes_\K \Lambda_2) \cong \Pi(\Lambda_1)\totimes_\K \Pi(\Lambda_2).
	\end{equation*}
	Moreover, this isomorphism is compatible with the $\star$-grading.
\end{mylemma}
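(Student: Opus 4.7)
The plan is to work with the derived-category description of the higher preprojective algebra from Remark~\ref{Remark - definition of derived preprojective algebra}, namely
\[
\Pi(\Lambda) = \bigoplus_{i\ge 0}\mathrm{Hom}_{\mathcal{D}^b(\Lambda\mathrm{-mod})}(\Lambda,\nu_d^{-i}\Lambda),
\]
and reduce the statement to a Künneth-type identity. Since $\K$ is perfect (standing assumption from Section~\ref{Section - $n$-Representation Finite Species}) and all modules in sight are finite dimensional, one has a natural isomorphism
\[
\mathrm{Hom}_{\Lambda_1\otimes_\K \Lambda_2}(M_1\otimes_\K M_2,\,N_1\otimes_\K N_2)\;\cong\;\mathrm{Hom}_{\Lambda_1}(M_1,N_1)\otimes_\K \mathrm{Hom}_{\Lambda_2}(M_2,N_2),
\]
and this extends to $\mathrm{Hom}_{\mathcal{D}^b}$ once we know the relevant complexes split as external tensor products. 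The goal is therefore to prove that the external tensor product $-\otimes_\K -$ intertwines $\nu_{d_1+d_2}^{-i}$ with $\nu_{d_1}^{-i}\otimes_\K \nu_{d_2}^{-i}$.

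First I would verify the identity $\nu_{d_1+d_2}^{-i}(\Lambda_1\otimes_\K \Lambda_2)\cong \nu_{d_1}^{-i}\Lambda_1\otimes_\K \nu_{d_2}^{-i}\Lambda_2$ in $\mathcal{D}^b(\Lambda_1\otimes_\K \Lambda_2\mathrm{-mod})$. Note that $\nu_d=\nu\circ[-d]$ and that the Nakayama functor is $\nu=D\circ\mathbf{R}\mathrm{Hom}(-,\Lambda)$. Using that $D_{\Lambda_1\otimes\Lambda_2}(X\otimes_\K Y)\cong D_{\Lambda_1}X\otimes_\K D_{\Lambda_2}Y$ for finite-dimensional $X,Y$, and that $\mathbf{R}\mathrm{Hom}_{\Lambda_1\otimes\Lambda_2}(P_1\otimes_\K P_2,\Lambda_1\otimes_\K\Lambda_2)\cong \mathbf{R}\mathrm{Hom}_{\Lambda_1}(P_1,\Lambda_1)\otimes_\K \mathbf{R}\mathrm{Hom}_{\Lambda_2}(P_2,\Lambda_2)$ for projective $P_i$ (applied to projective resolutions of $\Lambda_i$), one obtains $\nu(X\otimes_\K Y)\cong \nu X\otimes_\K \nu Y$, and then shift-by-$(d_1+d_2)$ splits as shift-by-$d_1$ tensor shift-by-$d_2$. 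Under the $l$-homogeneous $d_i$-representation finite hypothesis each $\nu_{d_i}^{-k}\Lambda_i$ sits in $\Lambda_i\mathrm{-mod}\subset \mathcal{D}^b(\Lambda_i\mathrm{-mod})$ for $0\le k\le l-1$, so each $\nu_{d_1}^{-k}\Lambda_1\otimes_\K \nu_{d_2}^{-k}\Lambda_2$ is a module (no higher Tor), and it coincides with $\tau_{d_1+d_2}^{-k}(\Lambda_1\otimes_\K\Lambda_2)$ by Corollary~\ref{Proposition - A x B is n_1 + n_2 RF} (the cluster tilting module formula).

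Having this, the Künneth identity gives, for each $k\ge 0$,
\[
\mathrm{Hom}_{\mathcal{D}^b}\bigl(\Lambda_1\otimes_\K \Lambda_2,\nu_{d_1+d_2}^{-k}(\Lambda_1\otimes_\K\Lambda_2)\bigr)\;\cong\;\mathrm{Hom}_{\mathcal{D}^b}(\Lambda_1,\nu_{d_1}^{-k}\Lambda_1)\otimes_\K\mathrm{Hom}_{\mathcal{D}^b}(\Lambda_2,\nu_{d_2}^{-k}\Lambda_2),
\]
which is exactly the $k$-th $\star$-graded piece of $\Pi(\Lambda_1)\totimes_\K\Pi(\Lambda_2)$. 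Summing over $k$ produces a $\Z$-graded vector space isomorphism $\Pi(\Lambda_1\otimes_\K\Lambda_2)\xrightarrow{\sim}\Pi(\Lambda_1)\totimes_\K\Pi(\Lambda_2)$, and by construction it respects the $\star$-grading.

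The last step is to check that this isomorphism is multiplicative. Multiplication in $\Pi(\Lambda)$ is given by $(u,v)\mapsto \nu_d^{-i}(v)\circ u$ (via the derived version of Definition~\ref{Definition - preprojective algebra BGL perspective}). Under the Künneth identification $u\otimes u'$ corresponds to $u\otimes_\K u'$ (external tensor of morphisms), and since $\nu_{d_1+d_2}^{-i}$ splits as $\nu_{d_1}^{-i}\otimes_\K \nu_{d_2}^{-i}$ on the relevant complexes, one has $\nu_{d_1+d_2}^{-i}(v\otimes v')\circ(u\otimes u')=(\nu_{d_1}^{-i}(v)\circ u)\otimes_\K(\nu_{d_2}^{-i}(v')\circ u')$, which is precisely the product in the Segre-product algebra $\Pi(\Lambda_1)\totimes_\K\Pi(\Lambda_2)$. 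The main obstacle I expect is the bookkeeping for step two: one must make the isomorphism $\nu_{d_1+d_2}^{-k}(\Lambda_1\otimes_\K\Lambda_2)\cong \nu_{d_1}^{-k}\Lambda_1\otimes_\K\nu_{d_2}^{-k}\Lambda_2$ natural enough in both variables that the compatibility of multiplication in the fourth step is automatic, rather than something that has to be checked on generators by hand.
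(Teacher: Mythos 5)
Your proposal is correct and follows essentially the same route as the paper: the paper's proof also uses the derived-category description of $\Pi$ from Remark~\ref{Remark - definition of derived preprojective algebra}, the identity $\nu_{d_1+d_2}(X\otimes_\K Y)\cong\nu_{d_1}(X)\otimes_\K\nu_{d_2}(Y)$ (which it simply cites from the literature rather than re-deriving as you do), and the Künneth splitting of $\mathrm{Hom}$-spaces to identify the $\star$-graded pieces. Your extra attention to multiplicativity is a point the paper leaves implicit, but it does not change the argument.
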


\begin{proof}
	We will use the definition for the preprojective algebra given in Remark~\ref{Remark - definition of derived preprojective algebra}. From \cite[Lemma 2.11]{herschend2014n} we have
	\begin{equation*}
		\nu_{d_{\Lambda_1} + d_{\Lambda_2}}(X\otimes_\K Y) = \nu_{d_{\Lambda_1}}(X)\otimes \nu_{d_{\Lambda_2}}(Y),
	\end{equation*}
	where $X\in \mathcal{D}^b(\Lambda_1\mathrm{-mod})$ and $Y\in \mathcal{D}^b(\Lambda_2\mathrm{-mod})$. Therefore
	\begin{align*}
		\Pi(\Lambda_1\otimes_\K \Lambda_2) &= \bigoplus_{i\ge 0}\mathrm{Hom}_{\mathcal{D}^b(\Lambda_1\otimes_\K \Lambda_2\mathrm{-mod})}(\Lambda_1\otimes_\K \Lambda_2, \nu_{d_{\Lambda_1} + d_{\Lambda_2}}^{-i}(\Lambda_1\otimes_\K \Lambda_2))\cong \\
		&\cong \bigoplus_{i\ge 0}\mathrm{Hom}_{\mathcal{D}^b(\Lambda_1\otimes_\K \Lambda_2\mathrm{-mod})}(\Lambda_1\otimes_\K \Lambda_2, \nu_{d_{\Lambda_1}}^{-i}(\Lambda_1)\otimes_\K \nu_{d_{\Lambda_2}}^{-i}(\Lambda_2)) \cong \\
		&\cong \bigoplus_{i\ge 0} \mathrm{Hom}_{\mathcal{D}^b(\Lambda_1\mathrm{-mod})}(\Lambda_1, \nu_{d_{\Lambda_1}}^{-i}(\Lambda_1))\otimes_\K \mathrm{Hom}_{\mathcal{D}^b(\Lambda_2\mathrm{-mod})}(\Lambda_2, \nu_{d_{\Lambda_2}}^{-i}(\Lambda_2)) = \\
		&= \Pi(\Lambda_1)\totimes_\K \Pi(\Lambda_2). \qedhere
	\end{align*}
\end{proof}

We also prove the Künneth formula for the Segre product with the use of the following lemma.

\begin{mylemma}\label{lemma - diagonal tensor product is exact}
	The Segre product $-\totimes_\K-$ is bi-exact.
\end{mylemma}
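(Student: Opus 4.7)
The plan is to reduce bi-exactness of $-\totimes_\K -$ to the elementary facts that tensor products over a field are exact and that direct sums preserve exactness.

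First I would fix $Y \in \Lambda_2\mathrm{-mod}^\Z$ and consider an arbitrary short exact sequence
\begin{equation*}
0 \to X' \to X \to X'' \to 0
\end{equation*}
in $\Lambda_1\mathrm{-mod}^\Z$. Because morphisms in the category of $\Z$-graded modules are by definition degree-preserving, this sequence restricts in each degree $k \in \Z$ to an exact sequence of $\K$-vector spaces
\begin{equation*}
0 \to X'_k \to X_k \to X''_k \to 0.
\end{equation*}

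Next, since $\K$ is a field every $\K$-module is flat, so tensoring with $Y_k$ over $\K$ preserves the above exactness for each $k$. Taking the direct sum over $k \in \Z$ of the resulting short exact sequences—which is permissible because arbitrary direct sums of exact sequences of abelian groups are exact—gives exactness of
\begin{equation*}
0 \to X'\totimes_\K Y \to X\totimes_\K Y \to X''\totimes_\K Y \to 0
\end{equation*}
in $\Lambda_1\totimes_\K \Lambda_2\mathrm{-mod}^\Z$. The argument for exactness in the second variable is completely symmetric, and together these yield bi-exactness.

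There is no real obstacle here; the only point to notice is that morphisms in $\Lambda\mathrm{-mod}^\Z$ are degree-preserving, so that the Segre product can be analyzed one graded piece at a time, after which the statement reduces to flatness of vector spaces and exactness of direct sums.
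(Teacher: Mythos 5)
Your proof is correct and follows essentially the same route as the paper: both reduce exactness of the Segre product to a degreewise check, where in each degree the Segre product is just the ordinary tensor product over the field $\K$, hence exact. The only cosmetic difference is that you spell out the flatness of $\K$-vector spaces and the exactness of direct sums explicitly, which the paper leaves implicit.
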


\begin{proof}
	Let
	\begin{equation}\label{prooflemma - exact sequence of graded modules}
		0\rightarrow X\rightarrow Y\rightarrow Z\rightarrow 0
	\end{equation}
	be an exact sequence of graded $\Lambda_2$-modules. Let $M$ be a graded $\Lambda_1$-module. We have to show that
	\begin{equation*}
		0\rightarrow M\totimes_\K X\rightarrow M\totimes_\K Y\rightarrow M\totimes_\K Z\rightarrow 0
	\end{equation*}
	is exact. It is enough to show that the sequence is exact at all degrees, i.e.
	\begin{equation*}
		0\rightarrow (M\totimes_\K X)_i\rightarrow (M\totimes_\K Y)_i\rightarrow (M\totimes_\K Z)_i\rightarrow 0
	\end{equation*}
	but then note from the definition of the Segre product that this is equivalent to taking the tensor product over $\K$, since $(M\totimes_\K -)_i = M_i\otimes_\K (-)_i$, which is exact, thus $M\totimes_\K -$ is exact. A similar argument can show that $-\totimes_\K -$ is exact in the first argument.
\end{proof}

Now we want to extend this functor to complexes. As with the usual tensor product, we extend the definition to complexes by taking the total complex of a certain double complex.

\begin{mydef}
	Let $\Lambda_1$ and $\Lambda_2$ be two graded $\K$-algebras. Let $X\in \mathcal{C}(\Lambda_1)$ and $Y\in \mathcal{C}(\Lambda_2)$ be two complexes, then consider the double complex
	\begin{equation}\label{eq - total complex of totimes}
		\begin{tikzcd}[column sep =40pt, row sep=40pt]
		 & \vdots \arrow[d] & \vdots \arrow[d] \\
		 \cdots \arrow[r] & X_i\totimes_\K Y_i \arrow[r, "d_X\totimes 1_{Y_i}"] \arrow[d, "1_{X_i}\totimes d_Y"] & X_{i-1}\totimes_\K Y_i \arrow[r] \arrow[d, "1_{X_{i-1}}\totimes d_Y"] & \cdots \\
		 \cdots \arrow[r] & X_i\totimes_\K Y_{i-1} \arrow[r, "d_X\totimes 1_{Y_{i-1}}"] \arrow[d] & X_{i-1}\totimes_\K Y_{i-1} \arrow[r] \arrow[d] & \cdots \\
		 & \vdots & \vdots
		\end{tikzcd}
	\end{equation}
	We define
	\begin{equation*}
		\mathrm{Tot}(-\totimes_\K -): \mathcal{C}(\Lambda_1)\times \mathcal{C}(\Lambda_2)\rightarrow \mathcal{C}(\Lambda_1)\totimes_\K \mathcal(\Lambda_2)
	\end{equation*}
	such that $\mathrm{Tot}(X\totimes_\K Y)$ is the total complex of (\ref{eq - total complex of totimes}).
\end{mydef}

\begin{mythm}\label{Theorem - Kunneth relation for diagonal tensor product}\cite[Theorem VI.3.1]{cartan1999homological}
	Let $\Lambda_1$ and $\Lambda_2$ be two graded $\K$-algebras. Let $X\in \mathcal{C}(\Lambda_1)$ and $Y\in \mathcal{C}(\Lambda_2)$ be two complexes. Then
	\begin{equation*}
		H(\mathrm{Tot}(X\totimes_\K Y))\cong \mathrm{Tot}(H(X)\totimes_\K H(Y)).
	\end{equation*}
	Explicitly
	\begin{equation*}
		H_n(\mathrm{Tot}(X\totimes_\K Y))\cong \bigoplus_{i + j = n}H_i(X)\totimes_\K H_j(Y).
	\end{equation*}
\end{mythm}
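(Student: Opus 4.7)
The plan is to reduce this to the classical Künneth formula over a field by restricting to each graded degree. The key observation is that the Segre product is defined degree by degree: for a fixed graded degree $k$, the graded part in degree $k$ of $A \totimes_\K B$ is simply $A_k \otimes_\K B_k$. Since everything in sight respects the $\Z$-grading coming from the grading on $\Lambda_1$ and $\Lambda_2$, restricting to graded degree $k$ should turn the Segre product into an ordinary tensor product of $\K$-complexes.

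More concretely, first I would unwind the definitions. For each $k \in \Z$, writing $(-)_k$ for the graded part in degree $k$, the complex $\mathrm{Tot}(X \totimes_\K Y)$ in homological degree $n$ and graded degree $k$ is
\begin{equation*}
\bigl(\mathrm{Tot}(X \totimes_\K Y)_n\bigr)_k \;=\; \bigoplus_{i+j=n}(X_i)_k \otimes_\K (Y_j)_k,
\end{equation*}
and the total complex differential, evaluated in graded degree $k$, is precisely the total differential of the double complex $(X_\bullet)_k \otimes_\K (Y_\bullet)_k$ of $\K$-vector spaces. In other words, as complexes of $\K$-vector spaces,
\begin{equation*}
\bigl(\mathrm{Tot}(X \totimes_\K Y)\bigr)_k \;\cong\; \mathrm{Tot}\bigl((X_\bullet)_k \otimes_\K (Y_\bullet)_k\bigr).
\end{equation*}

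Next I would apply the classical Künneth formula for complexes over a field (see \cite[Theorem VI.3.1]{cartan1999homological}), which over a field has no Tor terms and gives
\begin{equation*}
H_n\bigl(\mathrm{Tot}((X_\bullet)_k \otimes_\K (Y_\bullet)_k)\bigr) \;\cong\; \bigoplus_{i+j=n} H_i\bigl((X_\bullet)_k\bigr) \otimes_\K H_j\bigl((Y_\bullet)_k\bigr).
\end{equation*}
Since the differentials of $X$ and $Y$ preserve the internal grading, taking the graded-degree-$k$ part commutes with taking homology, so $H_i((X_\bullet)_k) = (H_i(X))_k$ and likewise for $Y$. Substituting and summing over all $k$ recovers
\begin{equation*}
H_n(\mathrm{Tot}(X \totimes_\K Y)) \;\cong\; \bigoplus_{i+j=n} H_i(X) \totimes_\K H_j(Y),
\end{equation*}
which is the claimed formula.

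The verification that everything genuinely respects the grading (in particular that the signs and the direct-sum decomposition defining $\mathrm{Tot}$ commute with the Segre product viewed degree-by-degree) is really the only thing that needs care; nothing is hard, but sign bookkeeping is where a mistake could creep in. The hypothesis that $\K$ is a field is what kills the Tor contribution, so the reduction to the classical Künneth formula is clean and no universal coefficient short exact sequence needs to be written down.
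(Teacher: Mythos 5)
Your argument is correct, but it takes a different route from the paper. The paper's proof is a one-line appeal to the abstract K\"unneth theorem for bi-exact functors of two complexes \cite[Theorem IV.8.1]{cartan1999homological}, instantiated at $T=-\totimes_\K-$, with the bi-exactness supplied by Lemma~\ref{lemma - diagonal tensor product is exact}; no degree-by-degree analysis and no reduction to the ordinary tensor product appear there. You instead observe that $\bigl(\mathrm{Tot}(X\totimes_\K Y)\bigr)_k\cong \mathrm{Tot}\bigl((X_\bullet)_k\otimes_\K (Y_\bullet)_k\bigr)$ as complexes of $\K$-vector spaces and invoke the classical K\"unneth formula over a field in each internal degree $k$, then reassemble. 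This is more elementary and makes transparent exactly why no Tor terms arise; it is essentially a hands-on verification of what the bi-exactness hypothesis buys in the abstract theorem. Note that your reduction and the paper's Lemma~\ref{lemma - diagonal tensor product is exact} rest on the same fact, namely that the differentials of $X$ and $Y$ are homogeneous of internal degree zero so that passing to the degree-$k$ part commutes with everything in sight (differentials, $\mathrm{Tot}$, and homology); you correctly flag this as the one point requiring care, and the signs cause no trouble since they are inserted uniformly in each internal degree. Both proofs are valid; the paper's is shorter because it outsources the homological algebra, while yours is self-contained modulo the ordinary K\"unneth formula for complexes of vector spaces.
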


\begin{proof}
	Setting $T=-\totimes_\K -$ in \cite[Theorem IV.8.1]{cartan1999homological} together with Lemma~\ref{lemma - diagonal tensor product is exact} yields the result.
\end{proof}

\section{Almost Koszul Complex for Higher Species}\label{Section - Koszul Complex for Higher Species}
If we have an $l$-homogeneous $d_i$-representation finite Koszul algebra $\Lambda_i$ for each $i\in \{1, 2\}$, then we have seen earlier that $\Lambda_1 \otimes_\K \Lambda_2$ is a $(d_1 + d_2)$-representation finite $l$-homogeneous Koszul algebra and by Theorem~\ref{Theorem - product of species is almost koszul}, the higher preprojective algebra $\Pi(\Lambda_1\otimes_\K \Lambda_2)$ is almost Koszul. In this section we describe the almost Koszul complexes in $\Pi(\Lambda_1\otimes_\K \Lambda_2)$ using the almost Koszul complexes in $\Pi(\Lambda_1)$ and $\Pi(\Lambda_2)$. This is formulated in Theorem~\ref{Theorem - Koszul complex for tensor product of species}. Before doing so we need to introduce some preliminary notions.

\begin{mydef}
	Let $\mathcal{A}$ be an abelian category. Let $f: Q_\bullet \rightarrow R_\bullet$ be a morphism between two complexes in $\mathcal{C}(\mathcal{A})$. Then we define the mapping cone of $f$ as
	\begin{equation*}
	C(f)_i = Q_{i-1}\oplus R_i 
	\end{equation*}
	with the differential
	\begin{equation*}
	d^{C(f)}_i = \begin{bmatrix}
	-d^Q_{i-1} & 0 \\
	f_{i-1} & d^R_i
	\end{bmatrix}.
	\end{equation*}
\end{mydef}

Let $\mathcal{C}^b(\mathcal{A}, m)$ be the full subcategory of $\mathcal{C}^b(\mathcal{A})$ consisting of complexes of length $m$, i.e. if $X_\bullet\in \mathcal{C}^b(\mathcal{A}, m)$ then
\begin{equation*}
	X_\bullet = 0\rightarrow X_m\rightarrow X_{m-1}\rightarrow \cdots \rightarrow X_0\rightarrow 0.
\end{equation*}

\begin{mydef}
	Let $\mathcal{A}$ be an abelian category and let $f: Q_\bullet \rightarrow R_\bullet$ be a morphism in $\mathcal{C}^b(\mathcal{A}, m)$. We say that $f$ is an almost quasi-isomorphism if $H_i(f)$ is an isomorphism when $0<i<m$, and $H_0(f)$ and $H_m(f)$ are a monomorphism and an epimorphism respectively.
\end{mydef}

\begin{mylemma}\label{Lemma - a.q-iso implies homology at two places}
	Let $\mathcal{A}$ be an abelian category, and let $f:Q_\bullet \rightarrow R_\bullet$ be a morphism in $\mathcal{C}^b(\mathcal{A}, m)$. Then $H_i(C(f)) = 0$ for $0< i\le m$ if and only if $f$ is an almost quasi-isomorphism. 
\end{mylemma}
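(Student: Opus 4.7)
The plan is to reduce the statement to the long exact sequence of homology associated to the mapping cone. By construction of $C(f)$, the inclusion $R_\bullet \hookrightarrow C(f)_\bullet$ given by the second summand and the projection $C(f)_\bullet \twoheadrightarrow Q_\bullet[-1]$ onto the first summand (where $(Q_\bullet[-1])_i = Q_{i-1}$) fit into a short exact sequence
\begin{equation*}
    0 \to R_\bullet \to C(f)_\bullet \to Q_\bullet[-1] \to 0
\end{equation*}
of complexes. Standard homological algebra then yields a long exact sequence of homology groups whose connecting morphism can be identified, up to sign, with $H_\bullet(f)$. So I would first verify this identification by tracing a cycle in $Q_{i-1}$ through the snake lemma: lift it into $C(f)_i = Q_{i-1}\oplus R_i$ by $(x,0)$, apply the differential to get $(-d^Q x, f_{i-1}(x)) = (0, f_{i-1}(x))$, which gives the class of $f_{i-1}(x)$ in $H_{i-1}(R)$.

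With the connecting morphism identified, the long exact sequence breaks into five-term exact pieces
\begin{equation*}
    H_i(Q) \xrightarrow{H_i(f)} H_i(R) \to H_i(C(f)) \to H_{i-1}(Q) \xrightarrow{H_{i-1}(f)} H_{i-1}(R)
\end{equation*}
for each $i$. From this, $H_i(C(f)) = 0$ is equivalent to the conjunction of $H_i(f)$ being an epimorphism and $H_{i-1}(f)$ being a monomorphism. I would then simply quantify this over $0 < i \le m$: the conditions assemble to say that $H_0(f)$ is a monomorphism, $H_m(f)$ is an epimorphism, and every intermediate $H_i(f)$ for $0 < i < m$ is both an epi and a mono, hence an isomorphism. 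This is exactly the definition of almost quasi-isomorphism, yielding both directions simultaneously.

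No step here is a genuine obstacle; the work is mostly bookkeeping of sign conventions in the mapping cone and being careful that for $i=m$ and $i=1$ the endpoints of the five-term exact sequence are handled correctly (e.g.\ $H_{m+1}(C(f)) = 0$ automatically since $C(f)_{m+1} = Q_m$ sits at the top of the complex with no incoming differential in the right place, so the analysis at $i=m$ really only constrains $H_m(f)$ to be epi together with $H_{m-1}(f)$ mono). The main care needed is making sure that the index range $0 < i \le m$ in the hypothesis matches up with the definition of almost quasi-isomorphism, which involves $0 \le i \le m$.
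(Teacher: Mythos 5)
Your proof is correct, but it takes a different (and cleaner) route than the paper. The paper invokes the Freyd--Mitchell full embedding theorem to reduce to a module category and then proves both implications by explicit element chases in $C(f)_i = Q_{i-1}\oplus R_i$: for the forward direction it constructs a preimage of a given cycle $(x,y)$ by hand, and for the converse it argues by contradiction that failure of injectivity or surjectivity of $H_\bullet(f)$ produces a nontrivial homology class of the cone. You instead package the whole statement into the short exact sequence $0\to R_\bullet\to C(f)_\bullet\to Q_\bullet[-1]\to 0$, identify the connecting morphism with $H_\bullet(f)$ (your sign bookkeeping with the differential $\left[\begin{smallmatrix}-d^Q & 0\\ f & d^R\end{smallmatrix}\right]$ is right: a cycle $x\in Q_{i-1}$ lifts to $(x,0)$ and maps to $(0,f_{i-1}(x))$), and read off the equivalence ``$H_i(C(f))=0$ iff $H_i(f)$ is epi and $H_{i-1}(f)$ is mono'' from exactness. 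Quantifying over $1\le i\le m$ then assembles exactly the defining conditions of an almost quasi-isomorphism, including the correct one-sided conditions at $i=0$ and $i=m$; your remark that $H_{m+1}(C(f))$ is not constrained is also consistent with the lemma's range $0<i\le m$. What your approach buys is conceptual transparency and the avoidance of the embedding theorem (the long exact sequence of a termwise-split short exact sequence of complexes is available in any abelian category); what the paper's approach buys is self-containedness at the level of explicit elements, which it reuses in spirit elsewhere. The only cosmetic caveat is that your verification of the connecting map is itself phrased with elements, so either keep the appeal to Freyd--Mitchell for that step or cite the standard identification of the cone's connecting morphism.
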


\begin{proof}
	We can assume $\mathcal{A}$ is a module category of some ring due to the full imbedding theorem \cite[Theorem 4.4]{Mitchell1964}. First we assume that $f$ is an almost quasi-isomorphism. It is enough to show that for all $i$, $H_i(C(f)) = 0$ if $H_{i-1}(f)$ is a monomorphism and $H_i(f)$ is an epimorphism. Let $(x, y)\in \ker (d^{C(f)}_i)\subset Q_{i-1}\oplus R_i$. This means that
	\begin{equation*}
	\begin{aligned}
	d^Q_{i-1}(x) &= 0 \\
	f_{i-1}(x) + d_i^R(y) &= 0
	\end{aligned}
	\end{equation*}
	The second equation says that $f_{i-1}(x) \in \im(d_i^R)$. Using the fact that $H_{i-1}(f)$ is a monomorphism yields that $x\in \im (d_i^Q)$. Let $s\in Q_i$ be such that $-d^Q_i(s) = x$. Then $f_i(s) - y\in \ker(d_i^R)$ as is easily shown by the following computation
	\begin{equation*}
	d^R_i(f_i(s) - y) = f_{i-1}(d^Q_i(s)) - d^R_i(y) = -f_{i-1}(x) - d^R_i(y) = 0.
	\end{equation*}
	Since $H_i(f)$ is an epimorphism there exists a $z\in \ker (d_i^Q)$ such that
	\begin{equation*}
	f_i(z) = f_i(s)-y + d^R_{i+1}(t)
	\end{equation*}
	for some $t\in R_{i+1}$. By construction we have that $d^{C(f)}_{i+1}(s-z, t) = (x, y)$, and thus $H_i(C(f))=0$.
	
	Now for the other direction, assume that $H_i(C(f)) = 0$ if $0\le i<m$. It is enough to show that $H_{i-1}(f)$ and $H_i(f)$ are a monomorphism and an epimorphism, respectively. Assume for contradiction that $H_{i-1}(f)$ is not a monomorphism. This means that we can choose an $x\in \ker (d_{i-1}^Q)$ such that $x\not\in \im (d_i^Q)$ and $H_{i-1}(f)(x)=0$. Then there exists $y\in R_i$ such that $(x, y)\in \ker (d^{C(f)}_i)\subset Q_{i-1}\oplus R_i$. By construction, $(x, y)\not\in \im (d^{C(f)}_{i+1})$, contradicting the assumption $H_i(C(f))=0$. We have now shown that $H_{i-1}(f)$ is a monomorphism for all $0<i\le m$. We assume for contradiction that $H_i(f)$ is not an epimorphism. Then there is a $y\in \ker(d_i^R)\backslash\im (d_{i+1}^R)$ such that $y\not\in f_i(\ker(d_{i+1}^Q))$. By the construction of $y$ we see that $y\not\in \im(d_{i+1}^R) + \im(f_i)$, therefore $(0, y)\not\in \im(d_{i+1}^{C(f)})$ and $(0, y)\in \ker(d_i^{C(f)})$, contradiction the fact that $H_i(C(f))=0$. Thus $f$ is an almost quasi-isomorphism.
\end{proof}

In \cite{grant2020higher} $\Pi(\Lambda)$ is described as $\Pi(\Lambda) = T_\Lambda(E)$, where $E = \mathrm{Ext}^d_\Lambda(D\Lambda, \Lambda)$ which coincides with our definition as $\mathrm{Ext}^d_\Lambda(D\Lambda, \Lambda) \cong \tau_d^{-1}\Lambda \cong \mathrm{Hom}_{\mathcal{D}^b(\Lambda\mathrm{-mod})}(\Lambda, \nu_d^{-1}(\Lambda))$. In the case when $\Lambda$ is Koszul we can compute $E$ by a graded projective resolution and so $E$ is graded and generated in degree $-d$ \cite[Proposition 4.16]{grant2020higher}. This gives a natural $\Z^2$-grading on $\Pi(\Lambda) = T_\Lambda(E)$ where
\begin{equation*}
	\Pi(\Lambda)_{jk} = (E^{\otimes k})_j.
\end{equation*}
In order to get a grading where $\mathrm{rad}(\Pi(\Lambda))$ is the positive degree part and $\Pi(\Lambda) / \mathrm{rad}(\Pi(\Lambda))$ is the degree $0$ part. Grant and Iyama introduce the $(d+1)$-total grading by
\begin{equation*}
	\Pi(\Lambda) = \bigoplus_{(d+1)k + j = l}\Pi(\Lambda)_{jk}
\end{equation*}
so that $\Pi(\Lambda)_0 = \Lambda_0$ and $\Pi(\Lambda)_1 = \Lambda_1 \oplus E_{-d}$ and so on. From now on, when $\Lambda$ is Koszul we always consider $\Pi(\Lambda)$ as $\Z^2$-graded
\begin{equation*}
	\Pi(\Lambda) = \bigoplus_{l,k\ge 0}\Pi(\Lambda)_{lk},
\end{equation*}
where $l$ refers to the $(d+1)$-total grading and $k$ refers to the $\star$-grading.

Note that if $\Lambda = T(S)$ where $S$ is some representation finite species, then, using Definition \ref{Definition - preprojective algebra of a species}, the grading on $\Pi(\Lambda)$ is such that the elements in $\overline{M}_\alpha$ and $\overline{M}_{\alpha^*}$ are of degree $(1, 0)$ and $(1 ,1)$ respectively for all $\alpha\in Q_1$.

Now let $\Lambda = \Lambda_1\otimes_\K \Lambda_2$ and $d = d_1 + d_2$. The Segre product $\Pi(\Lambda_1)\totimes_\K \Pi(\Lambda_2)$ with respect to the $\star$-grading has in addition a grading coming from the $(d_1+1)$- respectively $(d_2+1)$-total grading on $\Pi(\Lambda_1)$ and $\Pi(\Lambda_2)$ respectively. However, it does not correspond to the $(d+1)$-total grading on $\Pi(\Lambda_1\otimes_\K \Lambda_2)$. To fix this we regard $\Pi(\Lambda_1\totimes_\K \Lambda_2)$ as a $\Z^2$-graded by
\begin{equation*}
	(\Pi(\Lambda_1)\totimes_\K \Pi(\Lambda_2))_{lk} = \bigoplus_{l_1 + l_2 - k = l}\Pi(\Lambda_1)_{l_1k}\otimes_\K \Pi(\Lambda_2)_{l_2k},
\end{equation*}
where $l_i$ refers to the $(d_i + 1)$-total grading on $\Pi(\Lambda_i)$ and $k$ refers to the $\star$-grading.

\begin{myprop}\label{Proposition - d+1 total grading for tensor product}
	Let $\Lambda_i$ be a $d_i$-representation finite $l$-homogeneous Koszul algebra for each $i\in \{1, 2\}$. Then
	\begin{equation*}
		\Pi(\Lambda_1\otimes_\K \Lambda_2)\cong \Pi(\Lambda_1)\totimes_\K\Pi(\Lambda_2)
	\end{equation*}
	as $\Z^2$-graded algebras.
\end{myprop}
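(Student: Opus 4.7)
The plan is to build on Lemma~\ref{Lemma - Pi(AxB) = Pi(A)xPi(B)}, which already supplies an algebra isomorphism $\Pi(\Lambda_1\otimes_\K \Lambda_2)\cong \Pi(\Lambda_1)\totimes_\K \Pi(\Lambda_2)$ compatible with the $\star$-grading. All that remains is to check that this isomorphism sends the $(d_1+d_2+1)$-total grading on the left to the prescribed $\Z^2$-grading on the right. The strategy is to pass to the presentation $\Pi(\Lambda)=T_\Lambda(E)$ with $E=\mathrm{Ext}^{d}_{\Lambda}(D\Lambda,\Lambda)$ from \cite[Proposition 4.16]{grant2020higher}, decompose $E$ as a tensor product via Künneth, and then verify a simple arithmetic identity relating the two total gradings.

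First I would record, using \cite[Proposition 4.16]{grant2020higher}, that each preprojective algebra is a tensor algebra $\Pi(\Lambda_i)=T_{\Lambda_i}(E_i)$ with $E_i=\mathrm{Ext}^{d_i}_{\Lambda_i}(D\Lambda_i,\Lambda_i)$ concentrated in Koszul degree $-d_i$, and likewise $\Pi(\Lambda_1\otimes_\K\Lambda_2)=T_{\Lambda_1\otimes_\K\Lambda_2}(E)$ with $E$ concentrated in degree $-d$, where $d=d_1+d_2$. Because $\K$ is perfect and $\mathrm{gldim}(\Lambda_i)=d_i$, the Künneth formula for Ext forces
\begin{equation*}
E=\mathrm{Ext}^{d}_{\Lambda_1\otimes_\K\Lambda_2}\bigl(D(\Lambda_1\otimes_\K\Lambda_2),\Lambda_1\otimes_\K\Lambda_2\bigr)\cong E_1\otimes_\K E_2
\end{equation*}
as graded $\Lambda_1\otimes_\K\Lambda_2$-bimodules, still generated in degree $-d$. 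Iterating the standard identity $(M_1\otimes_\K M_2)\otimes_{\Lambda_1\otimes_\K\Lambda_2}(N_1\otimes_\K N_2)\cong (M_1\otimes_{\Lambda_1}N_1)\otimes_\K(M_2\otimes_{\Lambda_2}N_2)$ then yields $E^{\otimes_\Lambda k}\cong E_1^{\otimes_{\Lambda_1}k}\otimes_\K E_2^{\otimes_{\Lambda_2}k}$ as graded vector spaces, which is precisely the $\star$-graded piece $\Pi(\Lambda_1)_k\otimes_\K\Pi(\Lambda_2)_k$; one checks this recovers the isomorphism of Lemma~\ref{Lemma - Pi(AxB) = Pi(A)xPi(B)}, and additionally identifies the internal Koszul grading on each side.

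It then remains to match numerical grading data. If $x_i\in\Pi(\Lambda_i)_{l_ik}$ has internal Koszul degree $j_i=l_i-(d_i+1)k$, the image $x_1\otimes x_2$ of $x_1\totimes x_2$ sits in $\star$-degree $k$ and Koszul degree $j_1+j_2$, so its $(d+1)$-total degree in $\Pi(\Lambda_1\otimes_\K\Lambda_2)$ is
\begin{equation*}
(d+1)k+(j_1+j_2)=\bigl((d_1+1)k+j_1\bigr)+\bigl((d_2+1)k+j_2\bigr)-k=l_1+l_2-k.
\end{equation*}
This is exactly the value of $l$ in the formula $(\Pi(\Lambda_1)\totimes_\K\Pi(\Lambda_2))_{lk}=\bigoplus_{l_1+l_2-k=l}\Pi(\Lambda_1)_{l_1k}\otimes_\K\Pi(\Lambda_2)_{l_2k}$ given in the statement, so the isomorphism is bihomogeneous.

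The conceptually routine parts are the arithmetic match and the tensor-product identity; the main obstacle is the Künneth step, namely verifying that $E\cong E_1\otimes_\K E_2$ as graded bimodules. This relies on the perfectness of $\K$ (so that $\Lambda_1\otimes_\K\Lambda_2$-projective resolutions can be obtained from tensor products of $\Lambda_i$-projective resolutions), on the vanishing of higher $\mathrm{Tor}$ over $\K$, and on the finite global dimensions $d_i$ forcing all Ext in degrees other than $(d_1,d_2)$ to vanish; once this is in hand the rest of the proof is bookkeeping.
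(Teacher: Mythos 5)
Your proposal is correct and follows essentially the same route as the paper: both pass to the tensor-algebra presentation $\Pi(\Lambda)=T_\Lambda(E)$, identify $E\cong E_1\otimes_\K E_2$ as graded bimodules, and conclude with the same arithmetic identity $(d+1)k+j_1+j_2=l_1+l_2-k$ matching the two $\Z^2$-gradings. The only cosmetic difference is that the paper obtains $E\cong E_1\otimes_\K E_2$ from the compatibility of the derived Nakayama functors with $\otimes_\K$ (as in Lemma~\ref{Lemma - Pi(AxB) = Pi(A)xPi(B)}), whereas you invoke the K\"unneth formula for Ext directly; these amount to the same underlying fact.
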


\begin{proof}
	The proof is similar to Lemma~\ref{Lemma - Pi(AxB) = Pi(A)xPi(B)}. In fact letting $E_i = \mathrm{Ext}^{d_i}_{\Lambda_i}(D\Lambda_i, \Lambda_i)\cong \mathrm{Hom}_{\mathcal{D}^b(\Lambda_i\mathrm{-mod})}(\Lambda_i, \nu_{d_i}^{-1}\Lambda_i)$ and $E = \mathrm{Ext}^{d}_{\Lambda}(D\Lambda, \Lambda)\cong \mathrm{Hom}_{\mathcal{D}^b(\Lambda\mathrm{-mod})}(\Lambda_, \nu_{d}^{-1}\Lambda)$ the isomorphism
	\begin{equation*}
		\mathrm{Hom}_{\mathcal{D}^b(\Lambda_1\mathrm{-mod})}(\Lambda_1, \nu_{d_1}^{-1}\Lambda_1)\otimes_\K \mathrm{Hom}_{\mathcal{D}^b(\Lambda_2\mathrm{-mod})}(\Lambda_2, \nu_{d_2}^{-1}\Lambda_2)\cong \mathrm{Hom}_{\mathcal{D}^b(\Lambda\mathrm{-mod})}(\Lambda, \nu_{d}^{-1}\Lambda)
	\end{equation*}
	yields an isomorphism $E_1\otimes_\K E_2 \cong E$ of $\Lambda$-$\Lambda$-bimodules. Thus we have an isomorphism
	\begin{equation*}
		T_\Lambda(E)\cong T_\Lambda(E_1\otimes_\K E_2) = T_{\Lambda_1}(E_1)\totimes_\K T_{\Lambda_2}(E_2)
	\end{equation*}
	which is compatible with the natural $\Z^2$-gradings on $T_\Lambda(E)$ and $T_{\Lambda_1}(E_1)\totimes_\K T_{\Lambda_2}(E_2)$. The above isomorphism induces
	\begin{equation*}
		\begin{aligned}
			\Pi(\Lambda)_{lk} &= \bigoplus_{(d+1)k + j = l} T_\Lambda(E)_{jk} \cong \\
			&\cong \bigoplus_{(d+1)k + j = l} \bigoplus_{j_1 + j_2 = j}T_{\Lambda_1}(E_1)_{j_1k}\otimes_\K T_{\Lambda_2}(E_2)_{j_2k} = \\
			&= \bigoplus_{(d+1)k + j_1 + j_2 = l}T_{\Lambda_1}(E_1)_{j_1k}\otimes_\K T_{\Lambda_2}(E_2)_{j_2k}.
		\end{aligned}
	\end{equation*}
	But $(d+1)k + j_1 + j_2 = l$ if and only if $(d_1+1)k + j_1 + (d_2 + 1)k + j_2 - k = l$, so for $l_i = (d_i+1)k + j_i$ we have
	\begin{equation*}
		\Pi(\Lambda)_{jk} = \bigoplus_{l_1 + l_2 - k = l}\Pi(\Lambda)_{l_1k}\otimes_\K \Pi(\Lambda)_{l_2k}
	\end{equation*}
	as desired.
\end{proof}

When $\Lambda$ is an $d$-representation finite algebra Pasquali studied $d$-almost split sequences and proved the following result.

\begin{mythm}\cite[Theorem 2.4]{pasquali2017tensor}\label{Theorem - Pasquali mapping cone}
	Let $\Lambda$ be a $d$-representation finite $\K$-algebra, and let $C_{d+1}\in \mathrm{add}(\tau_d^{-i}A)$ be an indecomposable non-injective module for some integer $i\in \Z$, and let
	\begin{equation*}
		C_\bullet = 0\to C_{d+1}\xrightarrow{f_{d+1}} C_{d}\to \cdots \to C_1\xrightarrow{f_1} C_0\to 0
	\end{equation*}
	be the corresponding $d$-almost split sequence. Then there are complexes $Q_\bullet\in\mathcal{C}(\mathrm{add}(\tau_d^{-i}A))$, $R_\bullet \in \mathcal{C}(\mathrm{add}(\tau_d^{-(i+1)}A))$ and a morphism $\varphi: Q_\bullet\to R_\bullet$ such that $C_\bullet = C(\varphi)$. 
\end{mythm}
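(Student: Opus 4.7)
The plan is to split each term of the $d$-almost split sequence $C_\bullet$ into the pieces that lie in $\mathrm{add}(\tau_d^{-i}A)$ and in $\mathrm{add}(\tau_d^{-(i+1)}A)$ respectively, and then to read off $Q_\bullet$, $R_\bullet$ and $\varphi$ from a block-triangular decomposition of the differentials $f_k$.

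First, since $C_{d+1}$ is an indecomposable summand of $\tau_d^{-i}A$ and is non-injective, Proposition~\ref{Proposition - higher AR translation, integer l} yields $C_0 = \tau_d^{-1}C_{d+1}\in\mathrm{add}(\tau_d^{-(i+1)}A)$, so the two end terms already live in the correct subcategories. For $1\le k\le d$ I would invoke the structural description of $d$-almost split sequences in the $d$-cluster tilting subcategory $\mathrm{add}(M)$: every indecomposable summand of a middle term $C_k$ is obtained from $C_{d+1}$ by composing at most $d+1$ irreducible morphisms in $\mathrm{add}(M)$, and such a chain can only pass through the two consecutive $\tau_d$-layers indexed by $i$ and $i+1$. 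This provides a decomposition $C_k = C_k^{\mathrm{lo}}\oplus C_k^{\mathrm{hi}}$ with $C_k^{\mathrm{lo}}\in\mathrm{add}(\tau_d^{-i}A)$ and $C_k^{\mathrm{hi}}\in\mathrm{add}(\tau_d^{-(i+1)}A)$, extended by $C_{d+1}^{\mathrm{hi}}=0$ and $C_0^{\mathrm{lo}}=0$ to cover the end terms.

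The crucial and main obstacle is to show that with respect to this decomposition every differential is block lower-triangular,
\[
f_k = \begin{pmatrix} \alpha_k & 0 \\ \gamma_k & \beta_k \end{pmatrix}\colon C_k^{\mathrm{lo}}\oplus C_k^{\mathrm{hi}} \longrightarrow C_{k-1}^{\mathrm{lo}}\oplus C_{k-1}^{\mathrm{hi}}.
\]
The vanishing of the upper-right block is equivalent to $\mathrm{Hom}_\Lambda(X,Y)=0$ for indecomposable summands $X\in\mathrm{add}(\tau_d^{-(i+1)}A)$ and $Y\in\mathrm{add}(\tau_d^{-i}A)$ with $X\not\simeq Y$, a directional property of morphisms in $\mathrm{add}(M)$ which follows from higher Auslander-Reiten theory: nonzero morphisms between indecomposable summands of the $d$-cluster tilting module can only move forward through the $\tau_d$-levels.

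With this block-triangular form in hand I would define $Q_\bullet$ by $Q_d = C_{d+1}$, $Q_{k-1} = C_k^{\mathrm{lo}}$ for $1\le k\le d$, with differential $d^Q_{k-1} = -\alpha_k$; analogously $R_\bullet$ by $R_0 = C_0$, $R_k = C_k^{\mathrm{hi}}$ for $1\le k\le d$, with differential $d^R_k = \beta_k$; and $\varphi_{k-1} = \gamma_k$. That $d^Q$ and $d^R$ are differentials and that $\varphi$ is a chain map follows formally from $f_{k-1}\circ f_k = 0$ combined with the block-triangular shape, and the identity $C_\bullet = C(\varphi)$ is then immediate from comparing $f_k$ term by term to the mapping-cone differential $\begin{pmatrix} -d^Q_{k-1} & 0 \\ \varphi_{k-1} & d^R_k \end{pmatrix}$.
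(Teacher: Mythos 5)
A preliminary remark: the paper does not prove this statement itself --- it is imported from Pasquali with a citation --- so there is no internal proof to compare against, and I assess your argument on its own terms. Your architecture is the right one and, as far as I can tell, matches the actual proof strategy: split each middle term into its $\mathrm{add}(\tau_d^{-i}\Lambda)$- and $\mathrm{add}(\tau_d^{-(i+1)}\Lambda)$-parts, observe block lower-triangularity of the differentials, and read off $Q_\bullet$, $R_\bullet$, $\varphi$. Your steps concerning the end terms, the triangularity, and the final bookkeeping are correct: the two layers share no indecomposable summand (an isomorphism $\tau_d^{-i}P\cong\tau_d^{-(i+1)}P'$ would force $P\cong\tau_d^{-1}P'$, impossible since $\tau_d^{-1}P'$ is never projective), the vanishing of the upper-right block follows from
\begin{equation*}
\mathrm{Hom}_\Lambda(\tau_d^{-(i+1)}\Lambda,\tau_d^{-i}\Lambda)\cong\mathrm{Hom}_{\mathcal{D}^b(\Lambda\mathrm{-mod})}(\Lambda,\nu_d\Lambda)\cong H^{-d}(D\Lambda)=0,
\end{equation*}
and the identification of $C_\bullet$ with $C(\varphi)$ is a formal sign check.

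The genuine gap is your second step, which is the substantive content of the theorem: the claim that every indecomposable summand of every middle term $C_k$ lies in $\mathrm{add}(\tau_d^{-i}\Lambda\oplus\tau_d^{-(i+1)}\Lambda)$. You invoke this as a known ``structural description,'' but it is precisely what Pasquali's theorem establishes, so citing it is circular; and the heuristic you offer for it --- that each summand is reached from $C_{d+1}$ by a chain of at most $d+1$ irreducible morphisms in $\mathrm{add}(M)$, and that such chains cannot leave two consecutive layers --- does not hold. In the higher Auslander--Reiten quiver of $\mathrm{add}(M)$ each arrow may raise the $\tau_d$-layer by one (these are the arrows coming from $E=\mathrm{Ext}^d_\Lambda(D\Lambda,\Lambda)$), so a chain of length $d+1$ can a priori traverse up to $d+2$ layers; already for $d=2$ and $\Lambda=\K A_5\otimes_\K \K A_5$, whose $2$-cluster tilting module has three layers, there exist length-two chains of layer-raising irreducible morphisms. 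Moreover the components of the intermediate differentials $f_k$ for $1<k<d+1$ are only radical, not irreducible, so the bound on chain length is itself unjustified. A correct argument must instead exploit the same directedness you use for triangularity: since nonzero morphisms weakly increase the layer, the exactness and minimality of the $d$-almost split sequence sandwich the layer of each summand of $C_k$ between that of $C_{d+1}$ (namely $i$) and that of $C_0=\tau_d^{-1}C_{d+1}$ (namely $i+1$); making this sandwiching precise is the real work, and as written your proposal omits it.
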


\begin{myrem}
	Note that the morphism $\varphi$ in Theorem~\ref{Theorem - Pasquali mapping cone} is of $\star$-degree $1$ which directly follows from the definition of $\star$-degree. In fact, given a morphism $\varphi$ Pasquali wrote down criterion when $C(\varphi)$ is a $d$-almost split sequence which is formulated in \cite[Lemma 2.7]{pasquali2017tensor}. For the purpose in this article we only need existence of $\varphi$ for a given $d$-almost split sequence.
\end{myrem}

As in \cite{brenner2002periodic}, we consider almost split sequences in order to construct almost Koszul complexes. Grant and Iyama generalized this idea in \cite{grant2020higher} to $d$-representation finite algebras. They introduced the functor
\begin{equation*}
	H^\Z = \bigoplus_{i, j\in \Z}\mathrm{Hom}_{\mathcal{D}^b(\Lambda\mathrm{-mod}^\Z)}(-, \nu^{-i}_d\Lambda(j)): \mathcal{U}^\Z\rightarrow \mathrm{mod}^{\Z^2}\mathrm{-}\Pi,
\end{equation*}
where
\begin{equation*}
	\mathcal{U}^\Z = \mathrm{add}\{\nu_d^{-i}\Lambda(j) \mid i, j\in \Z \}\subset \mathcal{D}^b(\Lambda\mathrm{-mod}^\Z).
\end{equation*}

Note that $H^\Z(\Lambda) = \Pi(\Lambda)\in \mathrm{mod}^{\Z^2}\mathrm{-}\Pi(\Lambda)$ so the essential image of $H^\Z$ is exactly the projective objects in $\mathrm{mod}^{\Z^2}\mathrm{-}\Pi(\Lambda)$. However, the $\Z^2$-grading on $H^\Z(\Lambda)$ does not correspond to the $\Z^2$-grading in Remark~\ref{Proposition - d+1 total grading for tensor product} since the action of $(i, j)$ on $\mathcal{U}^\Z$ is given by $\nu_d^{-i}(j)$. In order to relate these two gradings, we introduce the functor
\begin{equation*}
	\begin{aligned}
		F': \mathrm{mod}^{\Z^2}\mathrm{-}\Pi(\Lambda) &\rightarrow \mathrm{mod}^{\Z^2}\mathrm{-}\Pi(\Lambda) \\
		\bigoplus_{i,j\in \Z} X_{i, j}&\mapsto \bigoplus_{s, t\in \Z}X'_{s, t}
	\end{aligned}
\end{equation*}
where $X'_{s, t} = X_{-t, (d+1)t - s}$. Then the first part of the grading will correspond to the $(d+1)$-total grading and the second part will correspond to the $\star$-grading. In the following theorem we are only interested in the $\star$-grading, and thus we introduce another functor
\begin{equation*}
	\begin{aligned}
		F: \mathrm{mod}^{\Z^2}\mathrm{-}\Pi(\Lambda) &\rightarrow \mathrm{mod}^\Z\mathrm{-}\Pi(\Lambda) \\
		\bigoplus_{i,j\in \Z} X_{i, j}&\mapsto \bigoplus_{t\in \Z}\left(\bigoplus_{s\in \Z}X'_{s, t}\right)
	\end{aligned}.
\end{equation*}
In other words, $F$ forgets the $(d+1)$-total grading.

\begin{mythm}\label{Theorem - Koszul complex is given by a mapping cone}
	Let $\Lambda$ be an acyclic $d$-representation finite Koszul algebra and let $S$ be a simple $\Pi(\Lambda)\mathrm{-mod}^\Z$, then there exist complexes $R_\bullet, Q_\bullet\in \mathcal{C}^b(\Pi(\Lambda)\mathrm{-mod}^\Z, d)$ and an almost quasi-isomorphism $\varphi: Q_\bullet \rightarrow R_\bullet$ such that $\deg^\star (\varphi) = 1$ and $C(\varphi)$ is the almost Koszul complex for $S$.
\end{mythm}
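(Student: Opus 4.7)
The plan is to decompose the almost Koszul complex of $S$ along the $\star$-grading to obtain the desired mapping cone. Existence of the complex itself is guaranteed by Theorem~\ref{Theorem - Grant and Iyama almost koszul}: since $\Lambda$ is a Koszul $d$-representation finite algebra, $\Pi(\Lambda)$ is an almost Koszul algebra, so there is a graded exact sequence
\[
0 \to W \to K_{d+1} \to K_d \to \cdots \to K_0 \to S \to 0
\]
of $\Pi(\Lambda)$-modules in which each $K_i$ is projective and generated in $(d+1)$-total degree $i$.

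I would then exploit the $\mathbb{Z}^2$-bigrading on $\Pi(\Lambda)$ afforded by the tensor algebra description $\Pi(\Lambda) = T_\Lambda(E)$ with $E = \mathrm{Ext}^d_\Lambda(D\Lambda, \Lambda)$ concentrated in $\Lambda$-degree $-d$. Consequently the $(d+1)$-total degree $1$ part splits as $\Pi(\Lambda)_1 = \Lambda_1 \oplus E_{-d}$, with $\Lambda_1$ sitting in $\star$-degree $0$ and $E_{-d}$ in $\star$-degree $1$. A bidegree computation in this bigrading shows that each indecomposable projective summand of $K_i$ must have its generator in either $\star$-degree $0$ or $\star$-degree $1$. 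I would accordingly split
\[
K_i = R_i \oplus Q_{i-1},
\]
where $R_i$ collects the $\star$-degree $0$ generated summands and $Q_{i-1}$ the $\star$-degree $1$ generated summands, so that $R_\bullet, Q_\bullet \in \mathcal{C}^b(\Pi(\Lambda)\mathrm{-mod}^\Z, d)$ with $R_0 = K_0$ and $Q_d = K_{d+1}$.

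The differential of $K_\bullet$ then splits along this decomposition. Multiplication by $\Lambda_1$ preserves $\star$-degree and multiplication by $E_{-d}$ raises it by $1$, so the differential decomposes into an $R \to R$ component, a $Q \to Q$ component, and a $Q \to R$ component of $\star$-degree $1$. An $R \to Q$ component is ruled out by a bidegree check: it would require multiplication by an element of $\star$-degree $-1$, which is absent from a nonnegatively $\star$-graded projective module. Labelling the $R \to R$ and $Q \to Q$ parts as $d^R$ and $-d^Q$ and the $Q \to R$ part as $\varphi$ then yields $\deg^\star(\varphi) = 1$ and $K_\bullet = C(\varphi)$ via the mapping cone formula.

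Finally, since $K_\bullet$ is exact in degrees $1, \ldots, d$ as part of the almost Koszul resolution, Lemma~\ref{Lemma - a.q-iso implies homology at two places} immediately gives that $\varphi$ is an almost quasi-isomorphism, finishing the proof. The main technical obstacle is justifying the $\star$-degree splitting $K_i = R_i \oplus Q_{i-1}$: this rests on the concentration of $E$ in a single $\Lambda$-degree (a consequence of $\Lambda$ being Koszul $d$-hereditary), together with the acyclicity of $\Lambda$, which is needed to rule out higher $\star$-degree generators from appearing in the minimal almost Koszul complex.
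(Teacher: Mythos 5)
Your overall strategy --- splitting each term $K_i$ of the almost Koszul complex by the $\star$-degree of its generators and reading off the mapping cone structure --- is genuinely different from the paper's proof, which never touches the bigraded resolution directly: the paper identifies the almost Koszul complex with $F\circ H$ applied to a $d$-almost split sequence in $\mathcal{U}^\Z$ (following \cite[Theorem 4.21]{grant2020higher}) and then invokes Theorem~\ref{Theorem - Pasquali mapping cone} to split that sequence as a mapping cone. Unfortunately, your version has a gap exactly at its load-bearing step: the claim that ``a bidegree computation shows that each indecomposable projective summand of $K_i$ must have its generator in $\star$-degree $0$ or $1$'' does not follow from the bigrading. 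A generator of $K_i$ has total degree $i$, hence bidegree $(j,k)$ with $(d+1)k+j=i$ and $j\ge -dk$, which only constrains $k$ to lie in $\{0,1,\dots,i\}$. Even using minimality and induction (the image of a generator of $K_i$ lies in $\Pi_1\cdot\mathrm{gen}(K_{i-1})=(\Lambda_1\oplus E_{-d})\cdot\mathrm{gen}(K_{i-1})$, so its $\star$-degree is at most one more than some $\star$-degree occurring in $\mathrm{gen}(K_{i-1})$), you only get that $\star$-degrees up to $i$ can occur in $K_i$. The statement that exactly two \emph{adjacent} $\star$-degrees occur is equivalent to saying that the terms of the corresponding $d$-almost split sequence lie in only two adjacent $\nu_d$-orbits $\mathrm{add}(\nu_d^{-i_0}\Lambda)\cup\mathrm{add}(\nu_d^{-i_0+1}\Lambda)$; this is a nontrivial structural theorem (Pasquali's Theorem~\ref{Theorem - Pasquali mapping cone}), not a degree count, and it is the main input you would need to import or reprove.

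Relatedly, your appeal to acyclicity at the end is not doing any identifiable work. In the paper, acyclicity is used for one concrete purpose: Pasquali's theorem only applies to $d$-almost split sequences whose terms live in $\Lambda\mathrm{-mod}^\Z$ (up to shift and $\nu_d$), and the sequences ending at graded projective-injective modules escape this; acyclicity guarantees that some power of $\sigma$ moves a projective-injective to a non-injective projective (otherwise one gets a cycle of non-zero non-isomorphisms between projectives), so that after applying a suitable power of $\nu_d$ one lands in a situation where the theorem applies. If you want to salvage your direct approach you would need to prove the two-adjacent-$\star$-degrees statement, and this is where the acyclicity hypothesis would have to enter explicitly. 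The remainder of your argument (ruling out the $R\to Q$ component of the differential, the identification $K_\bullet = C(\varphi)$, and the use of Lemma~\ref{Lemma - a.q-iso implies homology at two places}) is fine once that splitting is established.
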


\begin{proof}
	We follow the proof of \cite[Theorem 4.21]{grant2020higher}. They showed that for each simple $S\in \Pi\mathrm{-mod}^\Z$ there is a $d$-almost split sequence
	\begin{equation}\label{eq - d-almost split sequence for S}
		0\rightarrow C_{d+1}\xrightarrow{f_{d+1}}C_d\rightarrow \cdots \rightarrow C_1\xrightarrow{f_1}C_0\rightarrow 0,
	\end{equation}
	in $\mathcal{U}^\Z$ such that
	\begin{equation}\label{eq - Koszul complex for S}
		F\circ H(C_{d+1})\xrightarrow{F\circ H(f_{d+1})}F\circ H(C_n)\rightarrow \cdots \rightarrow F\circ H(C_1)\xrightarrow{F\circ H(f_1)}F\circ H(C_0)\rightarrow 0.
	\end{equation}
	is the almost Koszul complex for $S$. In particular, it only has non-zero homology in position $0$ and $d+1$.
	
	Since the functors $F$ and $H$ are additive, it is enough to show that (\ref{eq - d-almost split sequence for S}) can be written as a mapping cone of some morphism which is homogeneous of $\star$-degree $1$. Using Theorem \ref{Theorem - Pasquali mapping cone} we have that every $d$-almost split sequence in $\mathcal{U}^\Z$ with terms in $\Lambda\mathrm{-mod}^\Z$ can be written as a mapping cone of some morphism $\varphi: Q_\bullet \to R_\bullet$. Moreover, $Q_i\in \mathrm{add}(\nu_d^{-i_0}\Lambda)$ and $R_i\in \mathrm{add}(\nu_d^{-i_0+1}\Lambda)$ and $\deg^\star(\varphi) = 1$. We know that $[1]$ is an equivalence and therefore every $d$-almost split sequence with terms in $(\Lambda\mathrm{-mod}^\Z)[k]$ can also be written as a mapping cone of some morphism which is homogeneous of $\star$-degree $1$ for every $k\in \Z$. Now, since $\nu_d$ is an equivalence, it is left to show that for every $d$-almost sequence $C_\bullet$ in $\mathcal{U}^\Z$ ending in a projective module $P\in \Lambda\mathrm{-mod}^\Z$, i.e. $C_{d+1} = P$, there exists an integer $i\in \Z$ such that $\nu_d^i (C_\bullet)$ is a $d$-almost split sequence with terms in $(\Lambda\mathrm{-mod}^\Z)[k]$ for some integer $k\in \Z$. When $P$ is not an injective module it is clear that $C_\bullet$ is a $d$-almost split sequence in $\Lambda\mathrm{-mod}^\Z$. Now suppose that $P = P_i = \Lambda e_i$ is a graded projective injective $\Lambda$-module. Then $\nu_d^{l_i}P_i = P_{\sigma(i)}[-d]$. If we can show that there exists $j\in \Z$ such that $P_{\sigma^j(i)}$ is not an injective module we are done. Assume that such an integer $j\in \Z$ does not exist. In other words, $P_{\sigma^j(i)}$ is a graded projective-injective $\Lambda$-module for all $j\in \Z$. Since $\Lambda$ is a finite dimensional $\K$-algebra, we know that there exists an integer $n$ such that $\sigma^n(i) = i$, and because we have non-zero morphisms
	\begin{equation*}
		P_{\sigma(i)}\twoheadrightarrow \mathrm{Top}(P_{\sigma(i)}) = \mathrm{Soc}(I_{\sigma(i)}) \rightarrowtail I_{\sigma(i)} = P_i
	\end{equation*}
	which is a contradiction since $\Lambda$ is acyclic. Hence (\ref{eq - d-almost split sequence for S}) can be written as a mapping cone of some morphism $\varphi$ with $\deg^\star(\varphi) = 1$.
	
	Finally observe $F\circ H(C(\varphi)) = C(F\circ H(\varphi))$, which holds because $F$ and $H$ are additive. Now from the definition of $F$ and $H$ we see that $\deg^\star(F\circ H(\varphi)) = 1$. The fact that $F\circ H(\varphi)$ is an almost quasi-isomorphism comes from the fact that (\ref{eq - Koszul complex for S}) only has non-zero homology at position $0$ and $d+1$ and Lemma~\ref{Lemma - a.q-iso implies homology at two places}.
\end{proof}

It is still unclear if we need to assume $\Lambda$ is acyclic since there are no known examples of a $d$-representation finite algebra $\Lambda$ such that $\Lambda$ is not an acyclic algebra. Therefore we make the following conjecture.

\begin{myconjecture}
	Theorem~\ref{Theorem - Koszul complex is given by a mapping cone} still holds if we drop the acyclic assumption on $\Lambda$.
\end{myconjecture}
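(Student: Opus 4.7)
The plan is to identify precisely the one case in which the existing proof breaks down and handle it by strengthening Pasquali's Theorem~\ref{Theorem - Pasquali mapping cone}. Acyclicity is invoked only to guarantee that, for every $i \in Q_0$, some iterate $\sigma^k(i)$ satisfies $l_{\sigma^k(i)} > 1$, so that a suitable $\nu_d^{-k}$-translate of the given $d$-almost split sequence starts at a non-injective module and Pasquali applies. Thus it suffices to treat the case where the $\sigma$-orbit of $i$ consists entirely of projective-injectives, i.e.\ $l_{\sigma^k(i)} = 1$ for every $k$. In this case $\nu_d P_{\sigma^k(i)} = P_{\sigma^{k+1}(i)}[-d]$, so the $\nu_d$-orbit of $P_i$ in $\mathcal{D}^b(\Lambda\mathrm{-mod}^\Z)$ is periodic up to a shift: if $n$ is the length of the $\sigma$-orbit, then $\nu_d^n P_i = P_i[-nd]$.

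The heart of the argument will be to prove a version of Theorem~\ref{Theorem - Pasquali mapping cone} that drops the non-injectivity assumption on $C_{d+1}$. Concretely, for any $d$-almost split sequence $C_\bullet$ in $\mathcal{U}^\Z$ with $C_{d+1} \in \mathrm{add}(\nu_d^{-i_0}\Lambda)$, I want to produce $Q_\bullet \in \mathcal{C}(\mathrm{add}(\nu_d^{-i_0}\Lambda))$, $R_\bullet \in \mathcal{C}(\mathrm{add}(\nu_d^{-(i_0+1)}\Lambda))$ and a morphism $\varphi \colon Q_\bullet \to R_\bullet$ of $\star$-degree $1$ with $C_\bullet = C(\varphi)$. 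The separation of layers will use the internal grading on $\Lambda$ (shifted by $-d$ under $\nu_d$) together with the homological shift: even when the orbit is periodic, the pair $(\text{number of applications of }\nu_d,\ \text{homological shift})$ is well-defined on the universal cover of the orbit, and pulling back gives two distinct ``layers'' $\mathrm{add}(\nu_d^{-i_0}\Lambda)$ and $\mathrm{add}(\nu_d^{-(i_0+1)}\Lambda)$ for each term $C_k$.

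Given this layer decomposition $C_k = Q_{k-1} \oplus R_k$, it remains to show that each differential $f_k \colon C_k \to C_{k-1}$ is upper-triangular with respect to it. The plan is to revisit Pasquali's construction and verify that the steps which used $C_{d+1}$ non-injective (essentially to guarantee that certain composition maps live in the radical rather than being split) can be replaced by arguments using the minimality of the $d$-almost split sequence and the fact that $\mathcal{U}^\Z$ is Hom-finite over its stable cluster category. Once this is established, applying $F \circ H$ to $C_\bullet = C(\varphi)$ reproduces the almost Koszul complex for $S$, exactly as in the existing proof.

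The main obstacle will be the upper-triangularity step for periodic orbits. In the acyclic proof, Pasquali separates morphisms by $\tau_d$-shift, which is unambiguous there; in our setting, the $\nu_d$-orbit closes up after $n$ steps, so morphisms that ``a priori should cross several layers'' can in principle land back in the original one. The resolution I anticipate is to track morphisms by their degree with respect to the $(d+1)$-total grading on $\Pi(\Lambda)$ introduced in Proposition~\ref{Proposition - d+1 total grading for tensor product}: this grading distinguishes $\mathrm{add}(\nu_d^{-i_0}\Lambda)$ from $\mathrm{add}(\nu_d^{-(i_0+1)}\Lambda)$ globally, and under $F \circ H$ it transports to the grading carried by the almost Koszul complex, forcing the desired upper-triangular form regardless of acyclicity.
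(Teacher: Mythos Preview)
This statement is posed in the paper as an open \emph{conjecture}, not a theorem; the paper gives no proof and explicitly says it is unclear whether acyclicity is needed, noting moreover that no cyclic $d$-representation finite algebra is known. There is therefore nothing in the paper to compare your attempt against --- you are trying to settle a question the paper leaves open.

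On the substance of your proposal: you have correctly isolated the single use of acyclicity in the proof of Theorem~\ref{Theorem - Koszul complex is given by a mapping cone}, namely ruling out a $\sigma$-orbit consisting entirely of projective-injectives. But what you have written is a strategy, not a proof. The decisive step --- establishing, in the periodic-orbit case, that each term $C_k$ of the $d$-almost split sequence in $\mathcal{U}^\Z$ splits as $Q_{k-1}\oplus R_k$ with $Q_{k-1}\in\mathrm{add}(\nu_d^{-i_0}\Lambda)$, $R_k\in\mathrm{add}(\nu_d^{-(i_0+1)}\Lambda)$, and that the differential is upper triangular --- is not carried out. You only ``anticipate'' that tracking the $(d+1)$-total grading will force it. That anticipation needs an argument: the grading you invoke lives on $\Pi(\Lambda)$-projectives \emph{after} applying $F\circ H^\Z$, and you have not shown that it pulls back to an honest direct-sum decomposition of each $C_k$ into objects lying in exactly two consecutive $\nu_d$-layers. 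In the periodic situation $\nu_d^{-n}P_i\cong P_i[nd]$, so the layer index is not a priori well defined on objects of $\mathcal{D}^b(\Lambda\mathrm{-mod})$, and you must argue (for instance via the $\Z^2$-grading on $H^\Z$, not just the $\Z$-grading on $F\circ H^\Z$) that the summands of $C_k$ nonetheless fall into two adjacent layers and that no component of the differential decreases $\star$-degree. Until those points are established, the conjecture remains open.
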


\begin{mythm}\label{Theorem - Koszul complex for tensor product of species}
	Let $\Lambda_i$ be an acyclic $d_i$-representation finite $l$-homogeneous Koszul algebra for each $i\in \{1, 2\}$. Let $S^{\Lambda_i}$ be a simple $\Pi(\Lambda_i)$-module and let $\varphi^{\Lambda_i}: Q_\bullet^{\Lambda_i}\rightarrow R_\bullet^{\Lambda_i}$ be an almost quasi-isomorphism as in Theorem~\ref{Theorem - Koszul complex is given by a mapping cone} such that $C(\varphi^{\Lambda_i})$ is the almost Koszul complex for $S^{\Lambda_i}$. The complex
	\begin{equation*}
		C(\mathrm{Tot}(\varphi^{\Lambda_1}\totimes \varphi^{\Lambda_2}):\mathrm{Tot}(Q^{\Lambda_1}_\bullet\totimes_\K Q^{\Lambda_2}_\bullet) \rightarrow \mathrm{Tot}(R^{\Lambda_1}_\bullet \totimes_\K R^{\Lambda_2}_\bullet))
	\end{equation*}
	is the almost Koszul complex for $S^{\Lambda_1}\otimes_\K S^{\Lambda_2}\in \Pi(\Lambda_1\otimes_\K \Lambda_2)\mathrm{-mod}^\Z$.
\end{mythm}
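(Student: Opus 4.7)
The plan is to verify directly that $C:=C(\mathrm{Tot}(\varphi^{\Lambda_1}\totimes \varphi^{\Lambda_2}))$ satisfies the defining properties of the almost Koszul complex for $S^{\Lambda_1}\otimes_\K S^{\Lambda_2}$. By Lemma~\ref{Lemma - a.q-iso implies homology at two places}, what needs to be checked is: (1) $C$ consists of graded projective $\Pi(\Lambda_1\otimes_\K\Lambda_2)$-modules with the $n$-th term generated in total degree $n$; (2) $\mathrm{Tot}(\varphi^{\Lambda_1}\totimes\varphi^{\Lambda_2})$ is an almost quasi-isomorphism of length-$(d_1+d_2)$ complexes, which by Lemma~\ref{Lemma - a.q-iso implies homology at two places} forces $H_n(C)=0$ for $0<n\le d_1+d_2$; and (3) $H_0(C)\cong S^{\Lambda_1}\otimes_\K S^{\Lambda_2}$.

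For (1), I would invoke Proposition~\ref{Proposition - d+1 total grading for tensor product} to identify $\Pi(\Lambda_1\otimes_\K\Lambda_2)\cong \Pi(\Lambda_1)\totimes_\K\Pi(\Lambda_2)$ as $\Z^2$-graded algebras. Under this identification, $\Pi(\Lambda_1)e_{i_1}\totimes_\K\Pi(\Lambda_2)e_{i_2}$ becomes the indecomposable projective $\Pi(\Lambda_1\otimes_\K\Lambda_2)(e_{i_1}\otimes e_{i_2})$, so every term of $\mathrm{Tot}(Q^{\Lambda_1}\totimes Q^{\Lambda_2})$ and $\mathrm{Tot}(R^{\Lambda_1}\totimes R^{\Lambda_2})$ is projective. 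The $(d+1)$-total grading formula in Proposition~\ref{Proposition - d+1 total grading for tensor product}, together with $\deg^\star(\varphi^{\Lambda_k})=1$ and the almost Koszul generation degrees of the complexes $Q^{\Lambda_k}$ and $R^{\Lambda_k}$, combine to show that the $n$-th entry of $C$ is generated in total degree $n$ for the $(d+1)$-total grading, where $d=d_1+d_2$.

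The central ingredient for (2) and (3) is the Künneth formula for the Segre product (Theorem~\ref{Theorem - Kunneth relation for diagonal tensor product}), which identifies
\begin{equation*}
H_n(\mathrm{Tot}(X^{\Lambda_1}\totimes X^{\Lambda_2}))\cong\bigoplus_{i+j=n} H_i(X^{\Lambda_1})\totimes H_j(X^{\Lambda_2})
\end{equation*}
for $X\in\{Q,R\}$, and identifies the map induced by $\mathrm{Tot}(\varphi^{\Lambda_1}\totimes \varphi^{\Lambda_2})$ on $H_n$ with $\bigoplus_{i+j=n} H_i(\varphi^{\Lambda_1})\totimes H_j(\varphi^{\Lambda_2})$. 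Since each $\varphi^{\Lambda_k}$ is an almost quasi-isomorphism, every summand with $0<i<d_1$ and $0<j<d_2$ is an isomorphism. The ``corner'' summands with $i\in\{0,d_1\}$ or $j\in\{0,d_2\}$ are handled by the key graded observation that $S^{\Lambda_k}$ is concentrated in degree $0$ while $H_j(Q^{\Lambda_k})$ and $H_j(R^{\Lambda_k})$ vanish in degree $0$ for $j\ge 1$, because $Q_j^{\Lambda_k}$ and $R_j^{\Lambda_k}$ are generated in strictly positive degrees. Exactness of $\totimes_\K$ (Lemma~\ref{lemma - diagonal tensor product is exact}) then forces the spurious cokernels such as $S^{\Lambda_1}\totimes H_j(R^{\Lambda_2})$ to vanish for $j\ge 1$. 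Feeding this into the long exact sequence of the mapping cone yields $H_n(C)=0$ for $1\le n\le d$; the $(0,0)$-corner cokernel reduces by the same degree argument to $S^{\Lambda_1}\otimes_\K S^{\Lambda_2}$, establishing (3), while the $(d_1,d_2)$-corner kernel identifies $H_{d+1}(C)$ as the expected $W$ for $\Pi(\Lambda_1\otimes_\K\Lambda_2)$.

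The main obstacle is the handling of the mixed corner terms $(d_1,j)$ with $0<j<d_2$ and the symmetric $(i,d_2)$ terms, whose Künneth kernels $W^{\Lambda_1}\totimes H_j(Q^{\Lambda_2})$ do not obviously vanish from the concentration of $S^{\Lambda_k}$ alone; one must show that these contributions are collected into the total $W$ at position $d+1$ rather than leaking into intermediate homology, using the finer graded bounds coming from the $(p_k,q_k)$-almost Koszul structure of $\Pi(\Lambda_k)$. Should direct analysis of the graded filtration prove too delicate, a structural alternative is to identify $C$ with $F\circ H^\Z$ applied to a $d$-almost split sequence for $\Lambda_1\otimes_\K\Lambda_2$ of the form $\mathrm{Tot}(C^{\Lambda_1}\otimes_\K C^{\Lambda_2})$, using the tensor product theory of $d$-almost split sequences, and then invoking Theorem~\ref{Theorem - Koszul complex is given by a mapping cone} together with Lemma~\ref{Lemma - Pi(AxB) = Pi(A)xPi(B)} to transport the almost Koszul conclusion across the Segre identification.
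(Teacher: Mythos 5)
Your overall architecture matches the paper's: reduce to showing that $\mathrm{Tot}(\varphi^{\Lambda_1}\totimes\varphi^{\Lambda_2})$ is an almost quasi-isomorphism via Lemma~\ref{Lemma - a.q-iso implies homology at two places}, apply the K\"unneth formula for the Segre product (Theorem~\ref{Theorem - Kunneth relation for diagonal tensor product}), and dispose of the ``corner'' summands by a degree argument. Your handling of the corners involving $H_0$ is essentially the paper's: $\mathrm{coker}(H_0(\varphi^{\Lambda_k}))=S^{\Lambda_k}$ is concentrated in $\star$-degree $0$, while $H_j$ of the other factor lives in $\star$-degree $\ge 1$, so the offending cokernels die under $\totimes_\K$. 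One caveat: the grading in which this matching must be carried out is the $\star$-grading (that is the grading defining the Segre product in Lemma~\ref{Lemma - Pi(AxB) = Pi(A)xPi(B)}), and your justification that ``$Q_j^{\Lambda_k}$ and $R_j^{\Lambda_k}$ are generated in strictly positive degrees'' reads as a statement about the $(d+1)$-total grading; the paper instead deduces positivity of the $\star$-degree of $H_j(R^{\Lambda_k}_\bullet)$, $j\ge 1$, from surjectivity of $H_j(\varphi^{\Lambda_k})$ together with $\deg^\star(\varphi^{\Lambda_k})=1$.

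The genuine gap is exactly where you flag it: the top corners $\varphi^{\Lambda_1}_{d_1}\totimes\varphi^{\Lambda_2}_j$ and $\varphi^{\Lambda_1}_i\totimes\varphi^{\Lambda_2}_{d_2}$. You state that these ``do not obviously vanish'' and defer either to unspecified ``finer graded bounds'' from the $(p_k,q_k)$-almost Koszul structure or to an unproven tensor-product theory of $d$-almost split sequences; neither is carried out. The paper closes this by the exact dual of the $H_0$ argument, and this is precisely where the $l$-homogeneity hypothesis is used: $l$-homogeneity forces the top homology $\ker(H_{d_k}(\varphi^{\Lambda_k}))$ to be concentrated in the single $\star$-degree $l$, while for $j\neq d_{k'}$ the monomorphism $H_j(\varphi^{\Lambda_{k'}}):H_j(Q^{\Lambda_{k'}}_\bullet)\rightarrowtail H_j(R^{\Lambda_{k'}}_\bullet)$, being of $\star$-degree $1$ into modules whose $\star$-degrees are bounded by $l-1$, confines $H_j(Q^{\Lambda_{k'}}_\bullet)$ to $\star$-degrees at most $l-1$. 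Hence $\ker(H_{d_1}(\varphi^{\Lambda_1}))\totimes_\K H_j(Q^{\Lambda_2}_\bullet)=0$ for $j\neq d_2$ (and symmetrically), so nothing leaks into intermediate homology, and the only surviving top kernel is $\ker(H_{d_1}(\varphi^{\Lambda_1}))\totimes_\K\ker(H_{d_2}(\varphi^{\Lambda_2}))$, which, both factors sitting in the same $\star$-degree, coincides with the ordinary tensor product. Without this step your argument does not establish $H_n(C)=0$ for $n$ in the range $d_1<n\le d_1+d_2$, so the proof is incomplete as written.
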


\begin{proof}
	Existence of $\varphi^{\Lambda_i}: Q_\bullet^{\Lambda_i}\rightarrow R^{\Lambda_i}_\bullet$ is due to Theorem~\ref{Theorem - Koszul complex is given by a mapping cone}. 
	
	Lemma~\ref{Lemma - Pi(AxB) = Pi(A)xPi(B)} implies that
	\begin{equation*}
		C(\mathrm{Tot}(\varphi^{\Lambda_1}\totimes \varphi^{\Lambda_2}))\in \mathcal{C}(\Pi(\Lambda_1\otimes_\K \Lambda_2)\mathrm{-proj}),
	\end{equation*}
	since $C(\mathrm{Tot}(\varphi^{\Lambda_1}\totimes \varphi^{\Lambda_2}))_i\in \mathrm{add}(\Pi(\Lambda_1)\totimes_\K \Pi({\Lambda_2}))$ for all $i\in \Z$. We show that $\mathrm{Tot}(\varphi^{\Lambda_1}\totimes \varphi^{\Lambda_2})$ is an almost quasi-isomorphism. First note that $\mathrm{Tot}(\varphi^{\Lambda_1}\totimes \varphi^{\Lambda_2})_0 = \varphi^{\Lambda_1}_0\totimes \varphi^{\Lambda_2}_0$ and $\mathrm{Tot}(\varphi^{\Lambda_1}\totimes \varphi^{\Lambda_2})_{d_{\Lambda_1} + d_{\Lambda_2} + 1} = \varphi^{\Lambda_1}_{d_{\Lambda_1}}\totimes \varphi^{\Lambda_2}_{d_{\Lambda_2}}$ and so on the level of homology they induce a monomorphism and an epimorphism respectively. Now let $0<i<d_{\Lambda_1} + d_{\Lambda_2} + 1$. We need to show that
	\begin{equation*}
		\mathrm{Tot}(\varphi^{\Lambda_1}\totimes \varphi^{\Lambda_2})_i = \begin{bmatrix}
		\varphi^{\Lambda_1}_0\totimes \varphi^{\Lambda_2}_i \\
		& \varphi^{\Lambda_1}_1\totimes \varphi^{\Lambda_2}_{i-1} \\
		& & \ddots \\
		& & & \varphi^{\Lambda_1}_i\totimes \varphi^{\Lambda_2}_0
		\end{bmatrix}
	\end{equation*}
	induces an isomorphism on the level of homology. Theorem~\ref{Theorem - Kunneth relation for diagonal tensor product} tells us that
	\begin{equation*}
		H_i(\mathrm{Tot}(\varphi^{\Lambda_1}\totimes \varphi^{\Lambda_2})): \bigoplus_{j+k=i}H_j(Q_\bullet^{\Lambda_1})\totimes_\K H_k(Q_\bullet^{\Lambda_2})\rightarrow \bigoplus_{j+k=i}H_j(R_\bullet^{\Lambda_1})\totimes_\K H_k(R_\bullet^{\Lambda_2}),
	\end{equation*}
	and since $\varphi^{\Lambda_1}$ and $\varphi^{\Lambda_2}$ are both almost quasi-isomorphisms, it is enough to show that all $\varphi^{\Lambda_1}_0\totimes \varphi^{\Lambda_2}_i$, $\varphi^{\Lambda_1}_i\totimes \varphi^{\Lambda_2}_0$, $\varphi^{\Lambda_1}_{d_{\Lambda_1}}\totimes \varphi^{\Lambda_2}_i$ and $\varphi^{\Lambda_1}_i\totimes \varphi^{\Lambda_2}_{d_{\Lambda_2}}$ induces an isomorphism on the level of homology. This is due to the fact that $\varphi_0^{\Lambda_1}$, $\varphi^{\Lambda_2}_0$, $\varphi_{d_{\Lambda_1}}^{\Lambda_1}$ and $\varphi_{d_{\Lambda_2}}^{\Lambda_2}$ are the only morphisms that do not induce isomorphisms on the level of homology. Let us start with $\varphi^{\Lambda_1}_0\totimes \varphi^{\Lambda_2}_i$. First note that elements in $H_i(R_\bullet^{\Lambda_2})$, for $i\not=0$, have $\star$-degree at least $1$ since
	\begin{equation*}
		H_i(\varphi^{\Lambda_2}): H_i(Q_\bullet^{\Lambda_2})\twoheadrightarrow H_i(R_\bullet^{\Lambda_2})
	\end{equation*}
	and the fact that $\deg^\star(\varphi^{\Lambda_2})=1$. Therefore the elements in $H_0(R_\bullet^{\Lambda_1})\totimes_\K H_i(R_\bullet^{\Lambda_2})$ have $\star$-degree at least $1$. Also note that $H_0(\varphi^{\Lambda_1})$ surjects onto the $\star$-degree at least $1$ part of $H_0(R_\bullet^{\Lambda_1})$ because
	\begin{equation*}
		H_0(C(\varphi^{\Lambda_1})) = \coker (H_0(\varphi^{\Lambda_1})) = S^{\Lambda_1}
	\end{equation*}
	is a simple concentrated at $\star$-degree $0$. Thus using that $H_0(\varphi^{\Lambda_1})$ is a monomorphism we get that $\varphi^{\Lambda_1}_0\totimes \varphi^{\Lambda_2}_i$ induces an isomorphism on the level of homology. A similar argument shows that $\varphi^{\Lambda_1}_i\totimes \varphi^{\Lambda_2}_0$ induces an isomorphism on the level of homology. 
	
	For the other two cases, $\varphi^{\Lambda_1}_{d_{\Lambda_1}}\totimes \varphi^{\Lambda_2}_i$ and $\varphi^{\Lambda_1}_i\totimes \varphi^{\Lambda_2}_{d_{\Lambda_2}}$ we need that $\Lambda_i$ is $l$-homogeneous. The $l$-homogeneous property of $\Lambda_i$ ensures that the socle of each projective module is in $\star$-degree $l-1$, and therefore we can use the dual version of the argument as follows. We only show that $\varphi^{\Lambda_1}_{d_{\Lambda_1}}\totimes \varphi^{\Lambda_2}_i$ induces an isomorphism on the level of homology since the argument for $\varphi^{\Lambda_1}_i\totimes \varphi^{\Lambda_2}_{d_{\Lambda_2}}$ is similar. The elements in $H_i(Q_\bullet^{\Lambda_2})$, for $i\not=d_{\Lambda_2}$, have $\star$-degree at most $l - 1$ since
	\begin{equation*}
		H_i(\varphi^{\Lambda_2}): H_i(Q_\bullet^{\Lambda_2})\rightarrowtail H_i(R_\bullet^{\Lambda_2})
	\end{equation*}
	and the fact that $\deg^\star(\varphi^{\Lambda_2}) = 1$. Thus the elements in $H_{d_{\Lambda_1}}(R_\bullet^{\Lambda_1})\totimes_\K H_i(R_\bullet^{\Lambda_2})$ have $\star$-degree at most $l-1$. Now using that
	\begin{equation*}
		H_{d_{\Lambda_1}}(C(\varphi^{\Lambda_1})) = \ker (H_{d_{\Lambda_1}}(\varphi^{\Lambda_1}))
	\end{equation*}
	is concentrated in $\star$-degree $l$, the morphism $\varphi^{\Lambda_1}_{d_{\Lambda_1}}\totimes \varphi^{\Lambda_2}_i$ will induce an isomorphism on the level of homology. Thus $\mathrm{Tot}(\varphi^{\Lambda_1}\totimes \varphi^{\Lambda_2})$ is an almost quasi-isomorphism.
	
	To conclude that this in fact is the almost Koszul complex for $S^{\Lambda_1}\otimes_\K S^{\Lambda_2}$ we need to show that
	\begin{equation*}
		H_0(C(\mathrm{Tot}(\varphi^{\Lambda_1}\totimes \varphi^{\Lambda_2}))) = S^{\Lambda_1}\otimes_\K S^{\Lambda_2},
	\end{equation*}
	and compute the homology at $d_{\Lambda_1} + d_{\Lambda_2} + 1$. This is shown by again using the fact that
	\begin{equation*}
		H_0(\varphi^{\Lambda_i}): H_0(Q_\bullet^{\Lambda_i})\rightarrow H_0(R_\bullet^{\Lambda_i})
	\end{equation*}
	surjects onto the $\star$-degree $\ge 1$ part of $H_0(R_\bullet^{\Lambda_i})$ and the fact that
	\begin{equation*}
		H_0(C(\mathrm{Tot}(\varphi^{\Lambda_1}\totimes \varphi^{\Lambda_2}))) = \coker (H_0(\varphi^{\Lambda_1})\totimes H_0(\varphi^{\Lambda_2})).
	\end{equation*}
	
	Note that we did not have to use the fact that both $\Lambda_1$ and $\Lambda_2$ are $l$-homogeneous to compute the homology at $0$, but to compute the homology at $d_{\Lambda_1} + d_{\Lambda_2} + 1$, we need to use $l$-homogeneous property to ensure that elements in $H_{d_{\Lambda_1}}(C(\varphi^{\Lambda_1}))$ and $H_{d_{\Lambda_2}}(C(\varphi^{\Lambda_2}))$ have $\star$-degree $l$. We want to show that
	\begin{equation}\label{eq - kernel of tot almost quasi-iso}
		\ker (H_{d_{\Lambda_1} + d_{\Lambda_2} + 1}(C(\mathrm{Tot}(\varphi^{\Lambda_1}\totimes \varphi^{\Lambda_2})))) = H_{d_{\Lambda_1}}(C(\varphi^{\Lambda_1})) \totimes_\K H_{d_{\Lambda_2}}(C(\varphi^{\Lambda_2})).
	\end{equation}
	
	First note that
	\begin{equation*}
		\overline{H_{d_{\Lambda_1}}(\varphi^{\Lambda_1})}: H_{d_{\Lambda_1}}(Q_\bullet^{\Lambda_1})/H_{d_{\Lambda_1}+1}(C(\varphi^{\Lambda_1}))\xrightarrow{\sim} H_{d_{\Lambda_1}}(R_\bullet^{\Lambda_1})
	\end{equation*}
	is an isomorphism, and since both $\Lambda_1$ and $\Lambda_2$ are $l$-homogeneous the morphism
	\begin{equation*}
		\begin{aligned}
			\overline{H_{d_{\Lambda_1} + d_{\Lambda_2}}(\mathrm{Tot}(\varphi^{\Lambda_1}\totimes \varphi^{\Lambda_2}))}:\qquad & \\
			(H_{d_{\Lambda_1}}(Q_\bullet^{\Lambda_1})\totimes_\K H_{d_{\Lambda_2}}(Q_\bullet^{\Lambda_2}))/&(H_{d_{\Lambda_1} + 1}(C(\varphi^{\Lambda_1}))\totimes_\K H_{d_{\Lambda_2} + 1}(C(\varphi^{\Lambda_2})))\xrightarrow{\sim} \\
			&\hspace{2cm} \xrightarrow{\sim}H_{d_{\Lambda_1}}(R_\bullet^{\Lambda_1})\totimes_\K H_{d_{\Lambda_2}}(R_\bullet^{\Lambda_2})
		\end{aligned}
	\end{equation*}
	is also an isomorphism. Hence (\ref{eq - kernel of tot almost quasi-iso}) holds.
\end{proof}

\begin{mycor}\label{Corollary - Pi(AxB) is almost Koszul}
	Let $\Lambda_i$ be an acyclic $d_i$-representation finite $l$-homogeneous Koszul algebra for each $i\in \{1, 2\}$. If $\Pi(\Lambda_i)$ is an $(p_i, q_i)$-almost Koszul algebra, then $\Pi(\Lambda_1\otimes_\K \Lambda_2)$ is an $(p_1 + p_2 - l + 2, q_1 + q_2 - 1)$-almost Koszul algebra.
\end{mycor}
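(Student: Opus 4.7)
The strategy is to apply Theorem~\ref{Theorem - Koszul complex for tensor product of species} to produce the almost Koszul complex of each simple $\Pi(\Lambda_1\otimes_\K\Lambda_2)$-module explicitly, and then to read off the parameters $p_{12}$ and $q_{12}$ using the grading identification of Proposition~\ref{Proposition - d+1 total grading for tensor product}. Since $\K$ is perfect, every simple $\Pi(\Lambda_1\otimes_\K\Lambda_2)$-module is of the form $S^{\Lambda_1}\otimes_\K S^{\Lambda_2}$, so it suffices to analyse these.

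First I would pin down the length parameter $q_{12}$. By Theorem~\ref{Theorem - Koszul complex is given by a mapping cone} the complex $C(\varphi^{\Lambda_i})$ is concentrated in homological degrees $0,\dots,d_{\Lambda_i}+1$, whence $q_i=d_{\Lambda_i}+1$. The totalised Segre products $\mathrm{Tot}(Q^{\Lambda_1}_\bullet\totimes_\K Q^{\Lambda_2}_\bullet)$ and $\mathrm{Tot}(R^{\Lambda_1}_\bullet\totimes_\K R^{\Lambda_2}_\bullet)$ both sit in homological degrees $0,\dots,d_{\Lambda_1}+d_{\Lambda_2}$, and the mapping cone extends this by one position. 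Hence $q_{12}=d_{\Lambda_1}+d_{\Lambda_2}+1=q_1+q_2-1$.

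Next I would determine the socle parameter $p_{12}$ by identifying the top kernel $W_{12}$ of the mapping cone. The K\"unneth-type computation at the end of the proof of Theorem~\ref{Theorem - Koszul complex for tensor product of species} shows that $W_{12}\cong W_1\totimes_\K W_2$, where $W_i$ is the top kernel of the $\Pi(\Lambda_i)$-almost Koszul complex. By the definition of an $(p_i,q_i)$-almost Koszul algebra, $W_i$ is concentrated in $(d_{\Lambda_i}+1)$-total degree $p_i+q_i$; and by the $l$-homogeneous hypothesis the socle of each indecomposable projective $\Pi(\Lambda_i)$-module lies in the top $\star$-degree, so $W_i$ sits in $\star$-degree $l-1$. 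Substituting $L_i=p_i+q_i$ and $k=l-1$ into the formula of Proposition~\ref{Proposition - d+1 total grading for tensor product} places $W_{12}$ in $(d_{\Lambda_1}+d_{\Lambda_2}+1)$-total degree
\begin{equation*}
L_{12}=L_1+L_2-k=(p_1+q_1)+(p_2+q_2)-(l-1),
\end{equation*}
so $p_{12}=L_{12}-q_{12}=p_1+p_2-l+2$, as claimed.

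The principal technical hurdle is verifying that $W_i$ really is concentrated in $\star$-degree $l-1$. This fact combines two ingredients: first, $W_i$ is identified with the (graded shift of the) socle of the top projective $R^{\Lambda_i}_{d_{\Lambda_i}}$; and second, the $l$-homogeneous property, through $\tau_{d_{\Lambda_i}}^{-(l-1)}\Lambda_i\cong D\Lambda_i$, forces the socle of every indecomposable projective $\Pi(\Lambda_i)$-module to occupy the top $\star$-degree $l-1$ of $\Pi(\Lambda_i)$. Once this is in hand, the rest reduces to the routine degree bookkeeping through Proposition~\ref{Proposition - d+1 total grading for tensor product} carried out above.
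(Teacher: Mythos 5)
Your proposal is correct and follows essentially the same route as the paper: apply Theorem~\ref{Theorem - Koszul complex for tensor product of species} to realise the almost Koszul complex of $S^{\Lambda_1}\otimes_\K S^{\Lambda_2}$ as the mapping cone $C(\mathrm{Tot}(\varphi^{\Lambda_1}\totimes\varphi^{\Lambda_2}))$, read off the length $q_1+q_2-1$ from the totalisation, identify the top kernel with $W_1\totimes_\K W_2$ (which the concentration in a single $\star$-degree turns into an ordinary tensor product), and do the degree bookkeeping through Proposition~\ref{Proposition - d+1 total grading for tensor product}. Your placement of $W_i$ in $\star$-degree $l-1$ is the reading consistent with the paper's statement that socles of projectives sit in $\star$-degree $l-1$ and with the final formula $p_1+p_2-l+2$, even though the paper's own proof at one point writes ``$\star$-degree $l$''.
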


\begin{proof}
	Let $\varphi^{\Lambda_i}: Q^{\Lambda_i}_\bullet \rightarrow R^{\Lambda_i}_\bullet$ be a morphism such that $C(\varphi)$ is the almost Koszul complex for $\Pi(\Lambda_i)_0$. By Theorem~\ref{Theorem - Koszul complex for tensor product of species} $C(\mathrm{Tot}(\varphi^{\Lambda_1}\totimes \varphi^{\Lambda_2}))$ is the almost Koszul complex for $\Pi(\Lambda_1\otimes_\K \Lambda_2)_0$. Since $q_i$ describes the length of the almost Koszul complex, we can use the definition of $\mathrm{Tot}(-\totimes_\K -)$ to find the length of the almost Koszul complex $C(\mathrm{Tot}(\varphi^{\Lambda_1}\totimes \varphi^{\Lambda_2}))$ to be $q_1 + q_2 - 1$. 
	
	By Theorem~\ref{Theorem - Koszul complex for tensor product of species} the last projective module of the almost Koszul complex \\ $C(\mathrm{Tot}(\varphi^{\Lambda_1}\totimes \varphi^{\Lambda_2}))$ will be given by $Q_{q_1}^{\Lambda_1}\totimes_\K Q_{q_2}^{\Lambda_2}$, i.e.
	\begin{equation*}
		\begin{aligned}
			0\rightarrow Q_{q_1}^{\Lambda_1}\totimes_\K Q_{q_2}^{\Lambda_2}\xrightarrow{f}R_{q_1}^{\Lambda_1}\totimes_\K R_{q_2}^{\Lambda_2}\oplus Q_{q_1}^{\Lambda_1}\totimes_\K Q_{q_2-1}^{\Lambda_2} \oplus Q_{q_1-1}^{\Lambda_1}\totimes_\K Q_{q_2}^{\Lambda_2} \rightarrow \cdots \rightarrow R^{\Lambda_1}_0\otimes_\K Q^{\Lambda_2}_0\rightarrow 0
		\end{aligned}
	\end{equation*}
	is the almost Koszul complex for $\Pi({\Lambda_1}\otimes_\K \Lambda_2)_0$. Here
	\begin{equation*}
		f =  \begin{bmatrix}
			\varphi^{\Lambda_1}_{q_1}\totimes \varphi^{\Lambda_2}_{q_2} \\
			1_{R_{q_1}}\totimes d_{q_2}^{C(\varphi^{\Lambda_2})} \\
			d_{q_1}^{C(\varphi^{\Lambda_1})}\totimes 1_{Q_{q_2}}
		\end{bmatrix}.
	\end{equation*}
	Thus we can compute the kernel as
	\begin{equation*}
		\ker f= \ker (\varphi^{\Lambda_1}_{q_{\Lambda_1}}\totimes \varphi^{\Lambda_2}_{q_{\Lambda_2}})\cap \ker (1_{R_{q_{\Lambda_1}}}\totimes d_{q_{\Lambda_2}}^{C(\varphi^{\Lambda_2})})\cap \ker (d_{q_{\Lambda_1}}^{C(\varphi^{\Lambda_1})}\totimes 1_{Q_{q_{\Lambda_2}}})
	\end{equation*}
	and hence $\ker f = H_{q_1}(C(\varphi^{\Lambda_1}))\totimes_\K H_{q_2}(C(\varphi^{\Lambda_2}))$. The algebras $\Lambda_1$ and $\Lambda_2$ being $l$-homogeneous ensures that $H_{q_1}(C(\varphi^{\Lambda_1}))$ and $H_{q_2}(C(\varphi^{\Lambda_2}))$ are concentrated in $\star$-degree $l$, which implies that
	\begin{equation*}
		\ker f = H_{q_1}(C(\varphi^{\Lambda_1}))\totimes_\K H_{q_2}(C(\varphi^{\Lambda_2})) = H_{q_1}(C(\varphi^{\Lambda_1}))\otimes_\K H_{q_2}(C(\varphi^{\Lambda_2}))
	\end{equation*}
	
	It is left to show that $\deg \ker f = p_1 + p_2 + q_1 + q_2 - l + 1$, but this follows from Proposition~\ref{Proposition - d+1 total grading for tensor product}.
\end{proof}

\begin{myprop}\label{Proposition - Nakayama automorphism of tensor product of algebras}
	Let $\Lambda_i$ be a $d_i$-representation finite $l$-homogeneous Koszul algebra. Let $\gamma_i$ be the Nakayama automorphism for $\Pi(\Lambda_i)$. Then $\gamma_1\totimes \gamma_2$ is the Nakayama automorphism for $\Pi(\Lambda_1\otimes_\K \Lambda_2)$.
\end{myprop}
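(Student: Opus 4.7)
The plan is to characterize Nakayama automorphisms via Frobenius forms and then build one on the Segre product as a tensor of the forms on each factor. By Lemma~\ref{Lemma - Pi(AxB) = Pi(A)xPi(B)}, $\Pi(\Lambda_1\otimes_\K\Lambda_2)\cong \Pi(\Lambda_1)\totimes_\K\Pi(\Lambda_2)$ as graded algebras, so it suffices to show that $\gamma_1\totimes\gamma_2$ is a Nakayama automorphism of the Segre product. Each $\Pi(\Lambda_i)$ is self-injective by \cite{iyama2013stable}, and since $\Lambda_i$ is $l$-homogeneous with $\gamma_i$ grading-preserving, the socle $\mathrm{Soc}(\Pi(\Lambda_i))$ is concentrated in the top $\star$-degree $l-1$; hence one can choose a Frobenius linear form $\phi_i:\Pi(\Lambda_i)\to \K$ supported entirely on $\Pi(\Lambda_i)_{l-1}$, satisfying $\phi_i(xy)=\phi_i(y\gamma_i(x))$ and such that the bilinear form $\beta_i(x,y):=\phi_i(xy)$ is non-degenerate.

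Because $\phi_i$ is homogeneous, $\beta_i$ restricts to a perfect pairing $\Pi(\Lambda_i)_a\times\Pi(\Lambda_i)_{l-1-a}\to\K$ in each $\star$-degree. I would then define $\phi:\Pi(\Lambda_1)\totimes_\K\Pi(\Lambda_2)\to \K$ by
\begin{equation*}
\phi\Bigl|_{\Pi(\Lambda_1)_k\otimes_\K\Pi(\Lambda_2)_k}=\phi_1|_{\Pi(\Lambda_1)_k}\otimes\phi_2|_{\Pi(\Lambda_2)_k},
\end{equation*}
which is supported on $\star$-degree $l-1$ of the Segre product. The resulting form $\beta(x,y):=\phi(xy)$ is associative by construction. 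For non-degeneracy, observe that in each bigraded piece it equals the external tensor of $\beta_1|_{\Pi(\Lambda_1)_a\times\Pi(\Lambda_1)_{l-1-a}}$ and $\beta_2|_{\Pi(\Lambda_2)_a\times\Pi(\Lambda_2)_{l-1-a}}$, which is a tensor of non-degenerate pairings of finite-dimensional $\K$-vector spaces and therefore non-degenerate. Summing over $a$ gives non-degeneracy of $\beta$ on the whole Segre product.

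Finally, to identify the Nakayama automorphism: for homogeneous pure tensors $x=x_1\otimes x_2$, $y=y_1\otimes y_2$ with matching $\star$-degrees,
\begin{equation*}
\beta(x,y)=\beta_1(x_1,y_1)\beta_2(x_2,y_2)=\beta_1(y_1,\gamma_1(x_1))\beta_2(y_2,\gamma_2(x_2))=\beta(y,(\gamma_1\totimes\gamma_2)(x)),
\end{equation*}
and this extends by bilinearity. Combined with non-degeneracy, this shows $(\gamma_1\totimes\gamma_2)$ is \emph{the} Nakayama automorphism of $\Pi(\Lambda_1)\totimes_\K\Pi(\Lambda_2)$ up to the ambiguity of inner automorphism, proving the claim.

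The main technical obstacle is ensuring that one may genuinely choose $\phi_i$ concentrated in a single homogeneous degree and that the tensor product of graded Frobenius forms remains non-degenerate after restriction to the Segre product; both hinge essentially on the $l$-homogeneity hypothesis, which forces the socles of $\Pi(\Lambda_1)$ and $\Pi(\Lambda_2)$ to sit in the same degree $l-1$ so that their product lives inside the Segre product rather than being killed by the off-diagonal truncation.
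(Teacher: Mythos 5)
Your proof is correct, but it takes a different route from the paper's. The paper argues directly at the level of bimodule isomorphisms: it first establishes that $\gamma_i$ can be taken homogeneous of $\star$-degree $0$ (citing \cite[Theorem 2.3]{herschend2011n} and $l$-homogeneity), and then transports the graded isomorphisms $\Pi(\Lambda_i)\cong D\Pi(\Lambda_i)_{\gamma_i}$ through the Segre product degree by degree, so that $(\Pi(\Lambda_1)\totimes_\K\Pi(\Lambda_2))_i\cong(D\Pi(\Lambda_1)_i\otimes_\K D\Pi(\Lambda_2)_i)_{\gamma_1\otimes\gamma_2}$ for every $i$. You instead work with Frobenius forms: you build a homogeneous form on the Segre product as the degreewise tensor of homogeneous forms on the factors, check non-degeneracy via the perfect pairings between complementary $\star$-degrees, and read off the Nakayama automorphism from the identity $\beta(x,y)=\beta(y,(\gamma_1\totimes\gamma_2)(x))$. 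Both arguments hinge on the same use of $l$-homogeneity --- in the paper it forces $\deg^\star(\gamma_i)=0$, in yours it places both socles in the same top $\star$-degree $l-1$ so that the product pairing is not killed by the off-diagonal truncation --- and your closing remark identifies this correctly. Your version is somewhat more self-contained (it does not lean on the cited results of Herschend--Iyama), at the price of two facts you assert rather than prove: that a Frobenius form for $\Pi(\Lambda_i)$ may be chosen homogeneous and concentrated in degree $l-1$, and that the given $\gamma_i$ is (up to inner automorphism) graded so that such a form compatible with it exists. Both are standard for graded self-injective algebras with socle in the top degree, and since the Nakayama automorphism is only defined up to inner automorphism the residual ambiguity is harmless, so I regard these as presentational gaps rather than mathematical ones.
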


\begin{proof}
	By Lemma~\ref{Lemma - Pi(AxB) = Pi(A)xPi(B)} we have to show that
	\begin{equation}\label{eq - nakayama automorphism for Pi(S^1 x S^2)}
		\Pi(\Lambda_1)\totimes_\K \Pi(\Lambda_2)\cong D(\Pi(\Lambda_1)\totimes_\K \Pi(\Lambda_2))_{\gamma_1\totimes \gamma_2}.
	\end{equation}
	
	First recall that the Nakayama automorphism is an isomorphism of graded modules, where the grading is defined via the path length. This is seen as an application of \cite[Proposition 2.4]{herschend2011n} together with the fact that inner automorphisms does not change the grading. Here we defined the grading on $D\Pi(\Lambda_i)$ to be the induced grading from $\Pi(\Lambda_i)$. We want to ensure that the Nakayama automorphism is a morphism of $\star$-graded modules, in particular, we want that $\deg^\star(\gamma_i) = 0$. Since $\Lambda_1$ and $\Lambda_2$ are $l$-homogeneous we can apply \cite[Theorem 2.3]{herschend2011n} to show $\deg^\star(\gamma_i)=0$. Thus $\deg^\star(\gamma_1\otimes \gamma_2) = 0$, which implies that $\gamma_1\totimes \gamma_2$ will be an automorphism on $\Pi(\Lambda_1)\totimes_\K \Pi(\Lambda_2)$. At degree $i$ we have
	\begin{equation*}
		\begin{aligned}
		(\Pi(\Lambda_1)\totimes_\K \Pi(\Lambda_2))_i&\cong \Pi(\Lambda_1)_i\otimes_\K \Pi(\Lambda_2)_i \cong (D\Pi(\Lambda_1)_{\gamma_1})_i\otimes_\K (D\Pi(\Lambda_2)_{\gamma_2})_i \cong \\
		&\cong (D\Pi(\Lambda_1)_i\otimes_\K D\Pi(\Lambda_2)_i)_{\gamma_1\otimes \gamma_2}.
		\end{aligned}
	\end{equation*}
	Since it holds for every $i\in \Z$, we can conclude that (\ref{eq - nakayama automorphism for Pi(S^1 x S^2)}) holds.
\end{proof}

\section{Computations for Tensor Products of Species with Relations}\label{Section - Examples}
We devote this section to examples we get when we apply the theory we developed in this paper. We can generate a lot of examples if we start with representation finite species. It is important that these species are $l$-homogeneous due to Corollary~\ref{Proposition - A x B is n_1 + n_2 RF}, and thanks to Corollary~\ref{Corollary - l homogeneous species} we have complete set of $l$-homogeneous representation finite species. For all representation finite species $S$, we have an explicit description of the almost Koszul complexes for simple $\Pi(S)$-modules and the Nakayama automorphism. By Corollary~\ref{Proposition - A x B is n_1 + n_2 RF}, tensor products between representation finite $l$-homogeneous species $S^1$ and $S^2$ are $2$-representation finite $l$-homogeneous species with relations which we will describe completely in Corollary~\ref{Corollary - tensor product of species is a species explicitly}. Applying Theorem~\ref{Theorem - product of species is almost koszul} we know that $\Pi(T(S^1)\otimes_\K T(S^2))$ is an almost Koszul algebra. Since we established earlier that we already know the almost Koszul complexes for all representation finite species, we can apply Theorem~\ref{Theorem - Koszul complex for tensor product of species} to fully describe the almost Koszul complexes for the simple $\Pi(T(S^1)\otimes_\K T(S^2))$-modules. We also have a description of the Nakayama automorphism of $\Pi(T(S^1)\otimes_\K T(S^2))$ due to Proposition~\ref{Proposition - Nakayama automorphism of tensor product of algebras}. Let us now compute some explicit examples to illustrate this.

\begin{myex}\label{example - S of Dynkin type C_3, computing koszul complex}
	Let $S$ be the species in Example~\ref{example - S species of Dynkin type C_3} over $Q: 1\to 2\to 3$. The Auslander-Reiten quiver $\Gamma_S$ is
	\begin{equation*}
		\begin{tikzcd}[sep={40pt,between origins}]
			& & P_1 \arrow[dr, "2"] & & \tau^{-1}P_1 \arrow[dr, "2"] & & I_1 \\
			& P_2 \arrow[ur, "2"] \arrow[dr] & & \tau^{-1}P_2 \arrow[ur, "2"] \arrow[dr] & & I_2 \arrow[ur, "2"] \\
			P_3 \arrow[ur] & & \tau^{-1}P_3 \arrow[ur] & & I_3 \arrow[ur]
		\end{tikzcd}
	\end{equation*}
	By Proposition~\ref{Proposition - Koszul resolution in Dynkin case} the almost Koszul complex for the simple $\Pi(S)$-module $D_1$ is
	\begin{equation}\label{eq - Almost kosul complex in example 1}
		R_\bullet: 0\rightarrow P_1\rightarrow P_2\oplus P_2 \rightarrow P_1\rightarrow 0,
	\end{equation}
	where $H_0(R_\bullet) = D_1$ and $H_2(R_\bullet) = D_3$. The map $P_1\rightarrow P_2$ in (\ref{eq - Almost kosul complex in example 1}) corresponds to the morphism $P_1\to \tau^{-1}P_2$ in the Auslander-Reiten quiver $\Gamma_S$, and so has $\star$-degree $1$. Therefore we can view (\ref{eq - Almost kosul complex in example 1}) as the mapping cone of the morphism
	\begin{equation*}
		\begin{tikzcd}
			0 \arrow[r] & P_1 \arrow[r] \arrow[d] & 0 \arrow[r] \arrow[d] & 0 \\
			0 \arrow[r] & P_2\oplus P_2 \arrow[r] & P_1 \arrow[r] & 0.
		\end{tikzcd}
	\end{equation*}
\end{myex}

\begin{myex}\label{example - S of Dynkin type D_4, computing koszul complex}
	Let $Q$ be the quiver
	\begin{equation*}
		\begin{tikzcd}[row sep = 0]
			& & 3 \\
			1 \arrow[r] & 2 \arrow[ru] \arrow[rd] \\
			& & 4
		\end{tikzcd},
	\end{equation*}
	and let $S$ be a species over $Q$ such that $T(S) = \R Q$. The Auslander-Reiten quiver $\Gamma_S$ is
	\begin{equation*}
		\begin{tikzcd}[sep={40pt,between origins}]
			P_3 \arrow[dr] & & \tau^{-1}P_3 \arrow[dr] & & I_3 \arrow[dr] \\
			& P_2 \arrow[ur] \arrow[dr] \arrow[r] & P_1 \arrow[r] & \tau^{-1}P_2 \arrow[ur] \arrow[dr] \arrow[r] & \tau^{-1}P_1 \arrow[r] & I_2 \arrow[r] & I_1 \\
			P_4 \arrow[ur] & & \tau^{-1}P_4 \arrow[ur] & & I_4 \arrow[ur]
		\end{tikzcd}
	\end{equation*}
	By Proposition~\ref{Proposition - Koszul resolution in Dynkin case} the almost Koszul complex for the simple $\Pi(S)$-module $D_2$ is
	\begin{equation}\label{eq - Almost Koszul complex for example 2}
		R_\bullet: 0\rightarrow P_2 \rightarrow P_1 \oplus P_3 \oplus P_4 \rightarrow P_2 \rightarrow 0,
	\end{equation}
	where $H_0(R_\bullet) = D_2$ and $H_2(R_\bullet) = D_2$. As in Example \ref{example - S of Dynkin type C_3, computing koszul complex} we can deduce that the morphism $P_1\rightarrow P_2$ and $P_2\rightarrow P_3\oplus P_4$ are the only morphisms of $\star$-degree $1$ by looking at $\Gamma_S$. Hence we can view (\ref{eq - Almost Koszul complex for example 2}) as the mapping cone of
	\begin{equation*}
		\begin{tikzcd}
			0 \arrow[r] & P_2 \arrow[r] \arrow[d] & P_1 \arrow[r] \arrow[d] & 0 \\
			0 \arrow[r] & P_3\oplus P_4 \arrow[r] & P_2 \arrow[r] & 0.
		\end{tikzcd}
	\end{equation*}
\end{myex}

\begin{myex}\label{example - koszul complex for C_3 otimes D_4}
	Let $S^1$ and $S^2$ be the species from Example~\ref{example - S of Dynkin type C_3, computing koszul complex} and Example~\ref{example - S of Dynkin type D_4, computing koszul complex} respectively. Reading from the table in Corollary~\ref{Corollary - l homogeneous species}, or by studying their Auslander-Reiten quivers, we see that both $S^1$ and $S^2$ are $3$-homogeneous. Therefore $T(S^1)\otimes_\K T(S^2)$ is a $3$-homogeneous $2$-representation finite algebra. We can use the almost Koszul complexes in Example~\ref{example - S of Dynkin type C_3, computing koszul complex} and Example~\ref{example - S of Dynkin type D_4, computing koszul complex} together with Theorem~\ref{Theorem - Koszul complex for tensor product of species} to compute the almost Koszul complex for the simple module $D^1_1\otimes_\K D^2_2$ in $\Pi(T(S^1)\otimes_\K T(S^2))\mathrm{-mod}$. Let us define $P_{ij} = P^1_i\totimes_\K P^2_j$, then the almost Koszul complex is given as the mapping cone of
	\begin{equation*}
		\begin{tikzcd}
			0 \arrow[r] & P_{12} \arrow[r] \arrow[d] & P_{11} \arrow[r] \arrow[d] & 0 \arrow[r] \arrow[d] & 0 \\
			0 \arrow[r] & P_{23}\oplus P_{23}\oplus P_{24}\oplus P_{24} \arrow[r] & P_{22}\oplus P_{22}\oplus P_{13}\oplus P_{14} \arrow[r] & P_{12} \arrow[r] & 0.
		\end{tikzcd}
	\end{equation*}
\end{myex}

\begin{myex}
	Let $S^1$ and $S^2$ be the species from Example~\ref{example - S of Dynkin type C_3, computing koszul complex} and Example~\ref{example - S of Dynkin type D_4, computing koszul complex} respectively. Let $S^3$ be the species over the quiver
	\begin{equation*}
		Q^3: 1\rightarrow 2\rightarrow 3\leftarrow 4\leftarrow 5
	\end{equation*}
	such that $T(S^3) = \R Q$. Similarly as in Example \ref{example - S of Dynkin type C_3, computing koszul complex} and Example \ref{example - S of Dynkin type D_4, computing koszul complex}, using Proposition~\ref{Proposition - Koszul resolution in Dynkin case} we have the almost Koszul complex for $D_4^3$ in $\Pi(S^3)\mathrm{-mod}$ described as the mapping cone of
	\begin{equation*}
		\begin{tikzcd}
			0 \arrow[r] & P_4 \arrow[r] \arrow[d] & P_5 \arrow[r] \arrow[d] & 0 \\
			0 \arrow[r] & P_3 \arrow[r] & P_4 \arrow[r] & 0.
		\end{tikzcd}
	\end{equation*}
	By Corollary~\ref{Corollary - l homogeneous species} we have that the species $S^3$ is $3$-homogeneous. Thus, by Corollary~\ref{Proposition - A x B is n_1 + n_2 RF}, the tensor product $\Lambda = T(S^1)\otimes_\K T(S^2)\otimes_\K T(S^3)$ is a $3$-homogeneous $3$-representation finite algebra.
	
	Let us now compute the almost Kosul complex for $D_1^1\otimes_\K D_2^2\otimes_\K D_4^3$ in $\Pi(\Lambda)\mathrm{-mod}$. Applying Theorem~\ref{Theorem - Koszul complex for tensor product of species} we get the following almost Koszul complex for $D_1^1\otimes_\K D_2^2\otimes_\K D_4^3$ in $\Pi(\Lambda)\mathrm{-mod}$ as the mapping cone of
	\begin{equation*}
		\begin{tikzcd}
			0 \arrow[r] & P_{124} \arrow[r] \arrow[d] & P_{114}\oplus P_{125} \arrow[r] \arrow[d] & P_{115} \arrow[r] \arrow[d] & 0 \arrow[d] \arrow[r] & 0 \\
			0 \arrow[r] & P_{233}\oplus P_{233}\oplus P_{243}\oplus P_{243} \arrow[r] & \substack{
				P_{223}\oplus P_{223}\oplus P_{133}\oplus P_{143} \\
				\oplus \\
				P_{234}\oplus P_{234} \oplus
				P_{244}\oplus P_{244}
			} \arrow[r] & \substack{P_{123}\oplus P_{224}\oplus P_{224} \\ \oplus \\ P_{134}\oplus P_{144}} \arrow[r] & P_{124} \arrow[r] & 0
		\end{tikzcd}
	\end{equation*}
	where $P_{ijk} = P_i^1\totimes_\K P_j^2\totimes_\K P_k^3$.
\end{myex}

We have now illustrated with some example on how to compute the almost Koszul complex using Proposition~\ref{Proposition - Koszul resolution in Dynkin case} and Theorem~\ref{Theorem - Koszul complex for tensor product of species}. Let us now go back to Example~\ref{example - koszul complex for C_3 otimes D_4} and compute the preprojective algebra more explicitly and describe its Nakayama automorphism. To describe the Segre products and tensor products between species with relations we need the following proposition.

\begin{myprop}\label{Proposition - diagonal tensor product between species}
	For $k\in \{1, 2\}$, let $S^k$ be a graded species as in Remark~\ref{Remark - Graded species}, with $M^k = M^k_0\oplus M^k_1$. Moreover, let $R^k\subset T(S^k)$ be a homogeneous ideal generated in degree $0$ and $1$. Then
	\begin{equation*}
		T(S^1)/R^1 \totimes_\K T(S^2)/R^2\cong T(S)/R,
	\end{equation*}
	where $T(S) = T(D, M)$, $D=D^1\otimes_\K D^2$ and $M=(M^1_0\otimes_\K D^2)\oplus (D^1\otimes_\K M^2_0) \oplus (M^1_1\otimes_\K M^2_1)$, and
	\begin{equation*}
		R = \langle R^1_0\otimes 1_{D^2}, 1_{D^1}\otimes R^2_0, R^1_1\otimes_\K M^2_1, M^1_1\otimes_\K R^2_1, [\alpha\otimes 1_{D^2}, 1_{D^1}\otimes \alpha']\mid\alpha\in M^1_0, \alpha'\in M^2_0\rangle,
	\end{equation*}
	where
	\begin{equation*}
		[\alpha\otimes 1_{D^2}, 1_{D^1}\otimes \alpha'] = (\alpha\otimes 1_{D^2})\otimes_{D^1\otimes_\K D^2} (1_{D^1}\otimes \alpha') - (\alpha\otimes 1_{D^2})\otimes_{D^1\otimes_\K D^2} (1_{D^1}\otimes \alpha').
	\end{equation*}
\end{myprop}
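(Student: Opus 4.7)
The plan is to identify both sides of the claimed isomorphism, as graded $\K$-algebras, with the tensor algebra $T_{B_1 \otimes_\K B_2}(P_1 \otimes_\K P_2)$, where $B_k := (T(S^k)/R^k)_0$ and $P_k := (T(S^k)/R^k)_1$ denote the degree-$0$ and degree-$1$ parts. Since $R^k$ is generated in degrees $0$ and $1$, one has a canonical isomorphism $T(S^k)/R^k \cong T_{B_k}(P_k)$ of graded algebras, and in particular $(T(S^k)/R^k)_n = P_k^{\otimes_{B_k} n}$.

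For the Segre side, the formula above yields $((T(S^1)/R^1) \totimes_\K (T(S^2)/R^2))_n = P_1^{\otimes_{B_1} n} \otimes_\K P_2^{\otimes_{B_2} n}$. Applying inductively the standard identity
\[
    (X_1 \otimes_\K X_2) \otimes_{B_1 \otimes_\K B_2} (Y_1 \otimes_\K Y_2) \cong (X_1 \otimes_{B_1} Y_1) \otimes_\K (X_2 \otimes_{B_2} Y_2),
\]
valid for $B_k$-bimodules $X_k, Y_k$, identifies this with $(P_1 \otimes_\K P_2)^{\otimes_{B_1 \otimes_\K B_2} n}$. Since the Segre product is a graded subalgebra of $(T(S^1)/R^1) \otimes_\K (T(S^2)/R^2)$, the multiplications agree, giving $(T(S^1)/R^1) \totimes_\K (T(S^2)/R^2) \cong T_{B_1 \otimes_\K B_2}(P_1 \otimes_\K P_2)$ as graded algebras.

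For the $T(S)/R$ side, I would build two mutually inverse graded $\K$-algebra maps with $T_{B_1 \otimes_\K B_2}(P_1 \otimes_\K P_2)$. The forward map $T(S)/R \to T_{B_1 \otimes_\K B_2}(P_1 \otimes_\K P_2)$ arises from the universal property of $T(S) = T(D, M)$: send $M^1_0 \otimes D^2$ and $D^1 \otimes M^2_0$ into the degree-$0$ part $B_1 \otimes_\K B_2$ via the canonical projections, and send $M^1_1 \otimes M^2_1$ into the degree-$1$ part $P_1 \otimes_\K P_2$ via the surjections $M^k_1 \twoheadrightarrow P_k$. Each listed generator of $R$ dies: $R^k_0$ and $R^k_1$ vanish by the definitions of $B_k$ and $P_k$, while the commutator $[\alpha \otimes 1, 1 \otimes \alpha']$ vanishes in $B_1 \otimes_\K B_2$ since $(\alpha \otimes 1)(1 \otimes \alpha') = \alpha \otimes \alpha' = (1 \otimes \alpha')(\alpha \otimes 1)$. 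The reverse map is obtained from the universal property of the tensor algebra: the subalgebra of $T(S)/R$ generated by $D, M^1_0 \otimes D^2, D^1 \otimes M^2_0$ is isomorphic to $B_1 \otimes_\K B_2$ (by precisely the commutator and degree-$0$ relations in $R$), and $M^1_1 \otimes M^2_1 \hookrightarrow (T(S)/R)_1$ extends to a $B_1 \otimes_\K B_2$-bimodule map $P_1 \otimes_\K P_2 \to (T(S)/R)_1$ because $R^1_1 \otimes M^2_1$ and $M^1_1 \otimes R^2_1$ lie in $R$. Checking that these two maps compose to the identity on generators is routine.

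The main obstacle will be the identification of the degree-$0$ subalgebra of $T(S)/R$ with $B_1 \otimes_\K B_2$. This reduces to the classical fact that for two $\K$-algebras $U_k$ presented over semi-simple $D^k$ by generators and relations, the tensor product $U_1 \otimes_\K U_2$ admits the presentation by the union of generators (interpreted in the opposite factor via unit insertion) subject to the original relations together with the commuting-square relations. Once this observation is established (cf.\ the standard construction for tensor products of quivers-with-relations in \cite{herschend2011n}), the remaining verifications are formal manipulations with tensor and Segre products.
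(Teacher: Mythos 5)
Your argument is correct in outline, but it takes a genuinely different route from the paper. The paper never passes through the presentation $T_B(P)$ of the quotient as a tensor algebra over its degree-zero part. Instead it (i) writes $T(S^i)/R^i$ as the cokernel of $R^i\hookrightarrow T(S^i)$ and uses the K\"unneth formula for the Segre product (Theorem~\ref{Theorem - Kunneth relation for diagonal tensor product}) to realise $(T(S^1)/R^1)\totimes_\K(T(S^2)/R^2)$ as the cokernel of an explicit map $f$ into $T(S^1)\totimes_\K T(S^2)$, and (ii) proves directly that $T(S)/R'\cong T(S^1)\totimes_\K T(S^2)$, where $R'$ is only the commutator ideal, by exhibiting a normal form \eqref{eq - unique representation} and a basis $G$ whose image stays linearly independent; the remaining relations are then imposed by transporting the cokernel description through this isomorphism. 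Your reduction buys a cleaner modular structure: the observation that a graded tensor algebra modulo an ideal generated in degrees $0$ and $1$ is again a tensor algebra $T_B(P)$, together with the interchange isomorphism identifying the Segre product of two tensor algebras with $T_{B_1\otimes_\K B_2}(P_1\otimes_\K P_2)$, confines all the actual work to degrees $0$ and $1$ and makes the role of each family of relations transparent. What the paper's route buys is that the one non-formal step is actually carried out rather than cited.

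Two caveats. First, the step you yourself flag as the main obstacle --- that the subalgebra of $T(S)/R$ generated by $D$, $M^1_0\otimes D^2$ and $D^1\otimes M^2_0$ is presented exactly as $B_1\otimes_\K B_2$ --- is precisely the content of the paper's normal-form argument. In the species setting (noncommutative division rings $D_i$, bimodules in place of arrows) this is not a literally citable classical fact; \cite{herschend2011n} treats quivers with relations over a field. Until you run the analogous linear-independence argument (or a universal-property argument for $\otimes_\K$ of algebras over a semisimple base) in degree $0$, and its degree-$1$ analogue identifying $(T(S)/R)_1$ with $P_1\otimes_\K P_2$ as a $B_1\otimes_\K B_2$-bimodule, the proof has a hole exactly where the paper puts its effort. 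Second, the paper also verifies that $S=(D,M)$ is a species in the generalized sense, i.e.\ that $M$ satisfies the dualisability condition of Definition~\ref{Definition - Species}; your proof omits this entirely, and it is needed for the statement to be about a species with relations rather than an arbitrary tensor ring.
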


\begin{myrem}
	In Definition~\ref{Definition - Species} we require that $D^1\otimes_\K D^2$ is a direct sum of division rings, but in general it will only be Morita equivalent to a direct sum of division rings. Therefore we consider the more general definition of a species in Remark~\ref{Remark - General definition of a species}. For example, if $D^1 = D^2 = \mathbb{H}$, then $D^1\otimes_\K D^2$ is isomorphic to the space of $4\times 4$-matrices over $\R$, which in turn is Morita equivalent to $\R$.
\end{myrem}

\begin{proof}
	Since $\mathbb{K}$ is perfect we have that $D^1\otimes_\K D^2$ is semi-simple. First we check that $S$ is a species. It is enough to show that $S$ is dualisable. Let $i\xrightarrow{\alpha}j\in Q_1^1$ and $m\xrightarrow{\beta}n\in Q_1^2$. Since $S^1$ and $S^2$ are dualisable, we have graded isomorphisms 
	\begin{equation*}
		\begin{aligned}
			\phi_\alpha^1:&\mathrm{Hom}_{(D_i^1)^{op}}(M_\alpha^1, D_i^1)\xrightarrow{\sim}\mathrm{Hom}_{D_j^1}(M_\alpha^1, D_j^1) \\ \phi_{\beta}^2:&\mathrm{Hom}_{(D_m^2)^{op}}(M_\beta^2, D_m^2)\xrightarrow{\sim}\mathrm{Hom}_{D_n^2}(M_\beta^2, D_n^2) 
		\end{aligned}
	\end{equation*}
	Now let $\psi_{D_i^k}: \mathrm{Hom}_{{D^k_i}^{op}}(D^k_i, D^k_i)\xrightarrow{\sim}\mathrm{Hom}_{D^k_i}(D^k_i, D^k_i)$ be the isomorphism that sends $L_d\mapsto R_d$, where $L_d$ and $R_d$ denote left and right multiplication by $d\in D^k$ respectively, for all $D_i^k$. Consider the following maps
	\begin{equation}\label{eq - three maps for dualiseable condition}
		\begin{aligned}
			\phi_\alpha^1\otimes \psi_{D_k^2}:&\mathrm{Hom}_{(D_i^1\otimes_\K D_k^2)^{op}}(M_\alpha^1\otimes_\K D_k^2, D_i^1\otimes_\K D_k^2)\rightarrow \mathrm{Hom}_{D_j^1\otimes_\K D_k^2}(M_\alpha^1\otimes_\K D_k^2, D_j^1\otimes_\K D_k^2), \\
			\psi_{D_k^1}\otimes\phi_\beta^2:&\mathrm{Hom}_{(D_k^1\otimes_\K D_m^2)^{op}}(D_k^1\otimes_\K M_\beta^2, D_k^1\otimes_\K D_m^2)\rightarrow \mathrm{Hom}_{D_k^1\otimes_\K D_n^2}(D_k^1\otimes_\K M_\beta^2, D_k^1\otimes_\K D_n^2), \\
			\phi_\alpha^1\otimes\phi_\beta^2:& \mathrm{Hom}_{(D_i^1\otimes_\K D_m^2)^{op}}(M^1_\beta\otimes_\K M_\beta^2, D_i^1\otimes_\K D_m^2)\rightarrow \mathrm{Hom}_{D_j^1\otimes_\K D_n^2}(M_\alpha^1\otimes_\K M_\beta^2, D_j^1\otimes_\K D_n^2),
		\end{aligned}
	\end{equation}
	The morphisms in (\ref{eq - three maps for dualiseable condition}) are graded isomorphisms since all of them are tensor product of two graded isomorphisms. This shows that $S$ is dualisable.
	
	Let $X^i_\bullet$ be the exact sequence 
	\begin{equation*}
		0\rightarrow R^i\xrightarrow{f^i} T(S^i)
	\end{equation*}
	where $X^i_0 = T(S^i)$. Then $T(S^i)/R^i = H_0(X^i_\bullet)$. We shift our focus to the total complex $\mathrm{Tot}(X^1_\bullet\totimes_\K X^2_\bullet)$ which is explicitly
	\begin{equation*}
		0\rightarrow R^1\totimes_\K R^2\rightarrow T(S^1)\totimes_\K R^2\oplus R^1\totimes_\K T(S^2)\xrightarrow{f = \begin{bmatrix}
				1\totimes f^2 & f^1\totimes 1
		\end{bmatrix}} T(S^1)\totimes_\K T(S^2) \rightarrow 0.
	\end{equation*}
	Here we see that $\coker(f) = T(S^1)/R^1 \totimes_\K T(S^2)/R^2$ by using the Künneth formula in Theorem~\ref{Theorem - Kunneth relation for diagonal tensor product}.
	
	By the structure of the tensor product
	\begin{equation*}
		(\alpha_1\otimes 1)(1\otimes \alpha_2) = \alpha_1\otimes \alpha_2 = (1\otimes \alpha_2)(\alpha_1\otimes 1),
	\end{equation*}
	where $\alpha_i\in M^i_0$. Therefore we have a natural epimorphism
	\begin{equation}\label{eq - natural epimorphism onto T(S)/R'}
		\xi: T(S)\twoheadrightarrow T(S^1)\totimes_\K T(S^2),
	\end{equation}
	with kernel $R' = \langle (\alpha_1\otimes 1)(1\otimes \alpha_2) - (1\otimes \alpha_2)(\alpha_1\otimes 1)\mid\alpha_i\in M^i_0\rangle$. To see this we first note that every element in $T(S)/R'$ can be written as a linear combination of elements of the form
	\begin{equation}\label{eq - unique representation}
		(\alpha_0\otimes 1)(1\otimes \beta_0)(\gamma_1\otimes \delta_1)(\alpha_1\otimes 1)(1\otimes \beta_1)\cdots(\gamma_N\otimes \delta_N)(\alpha_N\otimes 1)(1\otimes \beta_N),
	\end{equation}
	where $\alpha_k\in (M^1_0)^{\otimes a_k}$, $\beta_k\in (M^2_0)^{\otimes b_k}$ and $\gamma_k\otimes \delta_k\in M^1_1\totimes_\K M^2_1$ for $k\in \Z_{\ge 0}$ and some $a_k, b_k, N\in \Z_{\ge 0}$. Let $\underline{\alpha}$ and $\underline{\beta}$ be bases of $M^1$ and $M^2$ respectively, chosen such that $\underline{\alpha}$ and $\underline{\beta}$ consists of homogeneous elements. Then we can create a generating set $G\subset T(S)/ R$ consisting of non-zero elements of the form (\ref{eq - unique representation}) where $\alpha_k\in \underline{\alpha}^{\otimes a_k}$ and $\beta_k\in \underline{\beta}^{\otimes b_k}$ are of degree $0$ and $\gamma_k\in \underline{\alpha}\totimes_\K \underline{\beta}$ is of degree $1$. The image of $G$ under
	\begin{equation}\label{eq - T(S)/R' -> diagonal tensor}
		\tilde{\xi}: T(S)/R'\twoheadrightarrow T(S^1)\totimes_\K T(S^2).
	\end{equation}
	is a linearly independent set. Since $|G| = |\tilde{\xi}(G)|$ we have that $G$ is linearly independent and hence a basis of $T(S)/R'$. Therefore (\ref{eq - T(S)/R' -> diagonal tensor}) is injective. Thus (\ref{eq - T(S)/R' -> diagonal tensor}) is bijective and hence an isomorphism.
	
	Using (\ref{eq - T(S)/R' -> diagonal tensor}) we have
	\begin{equation*}
		\begin{tikzcd}
			& T(S)/R' \arrow[d, "\tilde{\xi}"] \\
			T(S^1)\totimes_\K R^2\oplus R^1\totimes_\K T(S^2) \arrow[r, "f"] \arrow[ur, "\tilde{f}"] & T(S^1)\totimes_\K T(S^2)
		\end{tikzcd},
	\end{equation*}
	where $\tilde{f}$ is defined so the diagram commutes. Then $T(S)/R\cong\coker\tilde{f}$ which proves the proposition.
\end{proof}

\begin{mycor}\label{Corollary - tensor product of species is a species explicitly}
	Let $S^i$ be a species with relations $R^i$. Then
	\begin{equation*}
		T(S^1)/R^1 \otimes_\K T(S^2)/R^2\cong T(S)/R,
	\end{equation*}
	where $T(S) = T(D=D^1\otimes_\K D^2, M=(M^1\otimes_\K D^2)\oplus (D^1\otimes_\K M^2))$ and
	\begin{equation*}
		R = \langle R^1\otimes 1_{D^2}, 1_{D^1}\otimes R^2,  [\alpha\otimes 1_{D^2}, 1_{D^1}\otimes \alpha']\mid\alpha\in M^1, \alpha'\in M^2\rangle,
	\end{equation*}
	where
	\begin{equation*}
		[\alpha\otimes 1_{D^2}, 1_{D^1}\otimes \alpha'] = (\alpha\otimes 1_{D^2})\otimes_{D^1\otimes_\K D^2} (1_{D^1}\otimes \alpha') - (\alpha\otimes 1_{D^2})\otimes_{D^1\otimes_\K D^2} (1_{D^1}\otimes \alpha').
	\end{equation*}
\end{mycor}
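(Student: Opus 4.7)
The plan is to derive this corollary as a direct specialization of Proposition~\ref{Proposition - diagonal tensor product between species} by equipping each species $S^i$ with the trivial grading. More precisely, for each $i\in\{1,2\}$ I set $M^i_0 = M^i$ and $M^i_1 = 0$, so that the decomposition $M^i = M^i_0 \oplus M^i_1$ of Remark~\ref{Remark - Graded species} is satisfied in a trivial way. Under this grading, the induced $\Z$-grading on $T(S^i)$ is concentrated in $\star$-degree $0$, and the relation ideal $R^i$, being an arbitrary ideal, is automatically generated in degrees $0$ and $1$ (in fact only in degree $0$), so the hypotheses of Proposition~\ref{Proposition - diagonal tensor product between species} are met with $R^i_1 = 0$ and $R^i_0 = R^i$.

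The key observation I would then verify is that in this trivial grading the Segre product $\totimes_\K$ agrees with the ordinary tensor product $\otimes_\K$. Indeed, since $T(S^i) = T(S^i)_0$ sits entirely in $\star$-degree $0$, the definition of the Segre product gives
\begin{equation*}
	T(S^1)/R^1 \totimes_\K T(S^2)/R^2 = \bigoplus_{k\in\Z} (T(S^1)/R^1)_k \otimes_\K (T(S^2)/R^2)_k = (T(S^1)/R^1)_0 \otimes_\K (T(S^2)/R^2)_0,
\end{equation*}
and the right-hand side is exactly $T(S^1)/R^1 \otimes_\K T(S^2)/R^2$. With this identification, the conclusion of Proposition~\ref{Proposition - diagonal tensor product between species} applied to our trivially graded species reads as an isomorphism $T(S^1)/R^1 \otimes_\K T(S^2)/R^2 \cong T(S)/R$.

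Finally, I would compare the explicit formulas for $T(S)$ and $R$ coming from the proposition with those stated in the corollary. With $M^i_1 = 0$ the middle summand $M^1_1 \otimes_\K M^2_1$ of $M$ vanishes, and the first two summands become $M^1 \otimes_\K D^2$ and $D^1 \otimes_\K M^2$, matching the corollary. Similarly, the generators $R^1_1 \otimes_\K M^2_1$ and $M^1_1 \otimes_\K R^2_1$ of $R$ vanish, while $R^1_0 \otimes 1_{D^2}$ and $1_{D^1} \otimes R^2_0$ reduce to $R^1 \otimes 1_{D^2}$ and $1_{D^1} \otimes R^2$, and the commutator generators $[\alpha\otimes 1_{D^2}, 1_{D^1}\otimes \alpha']$ are left unchanged. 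Since no real obstacle arises beyond the routine bookkeeping of identifying the trivial grading case, the proof will be essentially a one-line reduction to Proposition~\ref{Proposition - diagonal tensor product between species}; the only subtlety worth flagging is the coincidence of Segre and ordinary tensor products in this degenerate grading, which I would record explicitly to make the reduction transparent.
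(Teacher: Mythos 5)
Your proposal is correct and matches the paper's own proof, which is precisely the one-line reduction you describe: put the trivial grading on each $T(S^i)/R^i$ (everything in degree $0$) and apply Proposition~\ref{Proposition - diagonal tensor product between species}. Your additional remark that the Segre product collapses to the ordinary tensor product in this degenerate grading is exactly the implicit step the paper leaves unstated, so you have simply made the same argument more explicit.
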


\begin{proof}
	Define a grading on $T(S^i)/R^i$ such that everything lies in degree $0$ and then apply Proposition~\ref{Proposition - diagonal tensor product between species}.
\end{proof}

\begin{myex}
	Let $S^1$ and $S^2$ be the species from Example~\ref{example - S of Dynkin type C_3, computing koszul complex} and Example~\ref{example - S of Dynkin type D_4, computing koszul complex} respectively. Due to \cite{Dlab_1980} we have a complete description of $\Pi(S^1)$ and $\Pi(S^2)$. The preprojective algebra of $S^1$ is given by $\Pi(S^1) = T(\overline{S}^1)/\langle c_1\rangle$, where
	\begin{equation}\label{eq - double quiver of C_3}
		\begin{tikzcd}
			\overline{S}^1: \C^1_1 \arrow[r, shift left, "\C"] & \R^1_2 \arrow[l, shift left, "\C^*"] \arrow[r, shift left, "\R"] & \R^1_3 \arrow[l, shift left, "\R^*"]
		\end{tikzcd}
	\end{equation}
	and
	\begin{equation*}
		c_1 = \sum_{\alpha\in \overline{Q}_1^1}\sgn(\alpha)c_\alpha.
	\end{equation*}
	
	The preprojective algebra of $S^2$ is given by $\Pi(S^2) = T(\overline{S}^2)/\langle c_2\rangle$, where
	\begin{equation}\label{eq - double quiver of D_4}
		\begin{tikzcd}[row sep = 10]
			& & \R^2_3 \arrow[ld, shift left, "\R^*"] \\
			\overline{S}^2: \R^2_1 \arrow[r, shift left, "\R"] & \R^2_2 \arrow[ru, shift left, "\R"] \arrow[rd, shift left, "\R"] \arrow[l, shift left, "\R^*"] \\
			& & \R^2_4 \arrow[lu, shift left, "\R^*"]
		\end{tikzcd},
	\end{equation}
	and
	\begin{equation*}
		c_2 = \sum_{\alpha\in \overline{Q}_1^2}\sgn(\alpha)c_\alpha.
	\end{equation*}
	The subscripts in (\ref{eq - double quiver of C_3}) and (\ref{eq - double quiver of D_4}) denotes the positions in $S^1$ and $S^2$ respectively.
	
	By Lemma~\ref{Lemma - Pi(AxB) = Pi(A)xPi(B)} we know that $\Pi(T(S^1)\otimes_\K T(S^2)) = \Pi(S^1)\totimes_\K \Pi(S^2)$ and using Proposition~\ref{Proposition - diagonal tensor product between species} we can describe $\Pi(T(S^1)\otimes_\K T(S^2))$ as the species $T(S)/R$, where $S$ given by the diagram
	\begin{equation}\label{eq - quiver of C_3 x D_4}
		\begin{tikzcd}[row sep = 10, column sep = 50]
			& & \C_{13} \arrow[ddr] \\
			\C_{11} \arrow[r] \arrow[ddr] & \C_{12} \arrow[ur] \arrow[r] \arrow[ddr] & \C_{14} \arrow[ddr] \\
			& & & \R_{23} \arrow[ddr] \arrow[llu, dotted] \\
			& \R_{21} \arrow[r] \arrow[ddr] & \R_{22} \arrow[ur] \arrow[r] \arrow[ddr] \arrow[lluu, dotted] & \R_{24} \arrow[ddr] \arrow[lluu, dotted] \\
			& & & & \R_{33} \arrow[llu, dotted] \\
			& & \R_{31} \arrow[r] & \R_{31} \arrow[ur] \arrow[r] \arrow[lluu, dotted] & \R_{34} \arrow[lluu, dotted]
		\end{tikzcd}
	\end{equation}
	Here the dotted lines represents the $\star$-degree $1$ part of $M$ in $S=(D, M)$. The bimodule associated to an arrow $\alpha$ in (\ref{eq - quiver of C_3 x D_4}) is $\C$ if either the source or the target of $\alpha$ is $\C$, otherwise it is $\R$. For each arrow in (\ref{eq - quiver of C_3 x D_4}) we have a relation. More explicitly, the relations are given by
	\begin{equation*}
		R = \langle c_1\otimes_\K M_1^2, M_1^2\otimes_\K c_2, [\alpha\otimes 1_{D^2}, 1_{D^1}\otimes \alpha'] \mid \alpha\in M_0^1, \alpha'\in M_0^2 \rangle.
	\end{equation*}
	
	We can describe the Nakayama automorphism of $\Pi(T(S^1)\otimes_\K T(S^2))$ by using Theorem~\ref{Theorem - description of nakayama automorphism true} together with Proposition~\ref{Proposition - Nakayama automorphism of tensor product of algebras}. Let $\gamma_i$ be the Nakayama automorphism of $\Pi(S^i)$ for each $i\in \{1, 2\}$. Then by Proposition~\ref{Proposition - Nakayama automorphism of tensor product of algebras} $\gamma = \gamma_1\totimes \gamma_2$ is the Nakayama automorphism for $\Pi(S^1\totimes_\K S^2)$. Recall that
	\begin{equation*}
		\gamma_i(y_\alpha^k) = \begin{cases}
		y^k_{\sigma(\alpha)}, & \mbox{if }\alpha\in Q_1 \\
		\sgn(\sigma(\alpha))y^k_{\sigma(\alpha)}, & \mbox{if }\alpha\not\in Q_1
		\end{cases},
	\end{equation*}
	for each $i\in \{1, 2\}$. Since the Nakayama permutation is trivial for both $C_3$ and $D_4$ by Theorem~\ref{Theorem - Nakayama permutation} we get that $\gamma = \id_{\Pi(T(S^1)\otimes_\K T(S^2))}$.
\end{myex}

\newpage
\nocite{*}
\bibliographystyle{alpha}
\bibliography{References.bib}
\end{document}